\documentclass{amsart}[12pt]
\AtBeginDocument{\addtocontents{toc}{\protect\setlength{\parskip}{0pt}}}

\usepackage{amssymb, enumitem}
\usepackage{comment}
\usepackage{xcolor}
\usepackage{tikz}
\usepackage{hyperref}
\hypersetup{
   colorlinks,
    linkcolor={red!60!black},
    citecolor={blue!60!black},
    urlcolor={blue!90!black}
}

\DeclareMathOperator{\dist}{dist}
\DeclareMathOperator{\spt}{spt}
\DeclareMathOperator{\hdim}{\dim_H}
\DeclareMathOperator{\pdim}{\dim_P}
\DeclareMathOperator{\diam}{diam}
\newcommand{\G}{\mathbb{G}}
\newcommand{\X}{\mathbb{X}}
\newcommand{\A}{\mathbb{A}}

\newcommand{\Z}{\mathbb{Z}}
\newcommand{\R}{\mathbb{R}}
\newcommand{\N}{\mathbb{N}}
\newcommand{\m}{\mathfrak{m}}

\newtheorem{theorem}{Theorem}[section]
\newtheorem*{theorem*}{Theorem}
\newtheorem{corollary}[theorem]{Corollary}
\newtheorem{proposition}[theorem]{Proposition}
\newtheorem{lemma}[theorem]{Lemma}
\newtheorem{terminology}[theorem]{Terminology}

\theoremstyle{definition}
\newtheorem{definition}[theorem]{Definition}
\newtheorem{remark}[theorem]{Remark}

\newcommand{\e}{\varepsilon}
\numberwithin{equation}{section}

\title{Spread-out Furstenberg sets I: the sticky case}
\author{Tuomas Orponen}
\author{Pablo Shmerkin}

\begin{document}

\begin{abstract}
   We introduce spread-out measures, a notion of local non-concentration near pencils for measures on Grassmannians. We establish an optimal $\delta$-discretized dual Furstenberg estimate for ``sticky spread-out'' shadings. A simple-to-state corollary is the following. Assume that $\nu$ is an Ahlfors $s$-regular measure on $\mathbb{G}(2,1)$ with $s > 0$. Let $E \subset \R^{2}$ be a Borel set with equal Hausdorff and packing dimension.  Then,
   \begin{displaymath} \dim_{\mathrm{H}} \Pi_{V}(E) = \min\{\dim_{\mathrm{H}} E,1\} \quad \text{for $\nu$-a.e. } V \in \mathbb{G}(2,1). \end{displaymath}
  The general $\delta$-discretized estimate for ``sticky spread out'' configurations here is the starting point for a more general theorem in the companion paper of Cohen, Shmerkin, Wang and Zakharov, where the ``stickiness'' hypothesis is removed. For instance, the assumption of equal Hausdorff and packing dimension in the result stated above will be dispensed with in the companion paper.
    \end{abstract}

\maketitle

\tableofcontents

\section{Introduction}

This paper and its companion \cite{SpreadOutFurstGeneral} concern optimal Furstenberg estimates and restricted projection theorems for spread-out families of planes. Roughly speaking, a family of planes is spread-out if it is locally unconcentrated near pencils at most points and scales. The general estimates in \cite{SpreadOutFurstGeneral} are the main motivation for this work. Here we introduce spread-outness and prove the sticky $\delta$-discretized Furstenberg estimate, Theorem \ref{thm:AhlforsFurstenberg}, in which the point set is roughly Ahlfors regular (or \emph{sticky}). The terminology was introduced in the sticky Kakeya problem \cite{WangZahlStickyKakeya}, where stickiness roughly corresponds to tubes in nearby directions ``sticking together''. Cohen, the second author, Wang and Zakharov \cite{SpreadOutFurstGeneral} use this estimate to treat general Frostman and Katz--Tao configurations -- or, in short, remove the "stickiness" hypothesis. The two papers form a single project, presented separately for clarity of exposition and to distinguish the authorship of the sticky estimate from that of the passage to general configurations.

Before stating the $\delta$-discretized result, we record some of its easier-to-state corollaries and applications. We state the corollaries only in the planar case, although their natural higher-dimensional counterparts would also follow from Theorem \ref{thm:AhlforsFurstenberg}. The main reason for stating only the planar cases here is to avoid duplicate work: the higher-dimensional results in full generality will anyway be stated and proven in \cite{SpreadOutFurstGeneral}. The $\delta$-discretized result in Theorem \ref{thm:AhlforsFurstenberg} will be stated in all dimensions, since it is required as a black box in \cite{SpreadOutFurstGeneral}.

\subsection{Restricted projections}

For $V\in\G(d,k)$, let $\Pi_V\colon\R^d\to V$ denote the orthogonal projection onto $V$. The Marstrand-Mattila projection theorem \cite{Mar,MR409774}  is a cornerstone of geometric measure theory. It states that if $E \subset \R^{d}$ is a Borel set then, denoting Lebesgue measure on the Grassmannian $\G(d,k)$ of $k$-planes in $\R^d$ by $\gamma_{d,k}$, one has 
\begin{equation} \label{eq:marstrand-mattila}
    \hdim \Pi_{V}(E) = \min\{\hdim E,k\} \quad\text{ for $\gamma_{d,k}$-a.e. } V \in \G(d,k).
\end{equation}
Here $\hdim$ is Hausdorff dimension. Projection theory seeks to understand the behavior of $\hdim\Pi_{V}(E)$ for $\nu$-typical $V$, when $\nu$ is a highly singular measure on $\G(d,k)$,   perhaps under additional assumptions on $E$. See \cite{Wang26RestrictedProjections} for a recent survey on the topic, including its connection with the Kakeya problem.

A natural question is for which measures $\nu$, in place of $\gamma_{d,k}$, the Marstrand-Mattila estimate \eqref{eq:marstrand-mattila} continues to hold. This question has been extensively studied in the case of length measure on smooth curves in $\G(d,k)$; see, for example, \cite{FasslerOrponen14,KaenmakiOrponenVenieri25,PYZ22,GanGuthMaldague24} for $\G(3,1)$, \cite{FasslerOrponen14,GGGHMW22} for $\G(3,2)$, and \cite{GGW22, Du25RestrictedMarstrand, Wang26RestrictedProjections} for higher-dimensional Grassmannians and further references. By now, the picture in this case is fairly complete in dimension and co-dimension $1$ ($k=1$ or $d-1$), but much less is known for $1<k<d-1$. All the proofs in this line of work rely on the smoothness of the curve, and it is not clear how to extend them to more general measures. In a different direction, Chen \cite{Chen18RestrictedProjections} proved a restricted projection theorem for a class of random measures on $\G(3,1)$; the argument does not extend to $\G(2,1)$.

In particular, up to now there were no genuine restricted Marstrand-type results for measures on $\G(2,1)$. The following Theorem \ref{thm:projection-ADR} is a consequence of our main result. We write $\pdim$ for packing dimension. Recall that a locally finite Borel measure $\mu$ on a metric space $X$ is called \emph{uniformly perfect} if $\diam(\spt \mu) > 0$, and there exist $\beta \in [0,1)$ and $D > 1$ such that
\begin{displaymath} \mu(B(x,r)) \leq \beta \cdot \mu(B(x,Dr)) \end{displaymath}
for all closed balls $B(x,r) \subset X$ such that $\spt \mu \not\subset B(x,Dr)$. In particular, it is easy to see that every non-trivial Ahlfors $s$-regular measure with $s > 0$ is uniformly perfect. (A measure $\mu$ on a metric space $X$ is \emph{Ahlfors $s$-regular} if there is $C\ge 1$ such that $C^{-1}r^s \le \mu(B(x,r)) \le Cr^s$ for all $x\in\spt\mu,\, r\in (0,\diam(\spt\mu)].$)

\begin{theorem} \label{thm:projection-ADR}
    Let $\nu$ be a uniformly perfect measure on $\mathbb{G}(2,1)$. For any Borel set $E\subset\R^2$ with $\hdim(E)=\pdim(E)$,
    \begin{displaymath}
        \hdim \Pi_V(E) = \min\{\hdim E,1\} \quad \text{ for $\nu$-a.e. } V \in \G(2,1).
    \end{displaymath}
\end{theorem}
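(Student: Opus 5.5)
The plan is to derive Theorem \ref{thm:projection-ADR} from the $\delta$-discretized sticky Furstenberg estimate, Theorem \ref{thm:AhlforsFurstenberg}, by the standard route from discretized incidence bounds to a.e. projection statements. The argument has three ingredients: reduce to a Frostman measure on $E$ with a sticky structure, check that a uniformly perfect $\nu$ is spread-out, and run a pigeonholing contradiction argument.

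\textbf{Reductions.} The upper bound $\hdim \Pi_V(E)\le \min\{\hdim E,1\}$ is trivial, so only the lower bound needs proof. Fix $0<t<\min\{\hdim E,1\}$ and suppose, for contradiction, that $\nu(\{V: \hdim\Pi_V(E)<t\})>0$. Since $\hdim E=\pdim E=:u$, standard arguments (Frostman plus regularity of packing dimension) give a compact $K\subset E$ and a measure $\mu$ on $K$ with the following properties: $\mu(B(x,r))\lesssim r^{u-\epsilon}$ for all $x$ and $r$, and $K$ can be covered by $\lesssim r^{-u-\epsilon}$ balls of radius $r$ for all small $r$. After discarding a set of small measure and passing to a subset where the local densities converge uniformly, $\mu$ becomes roughly Ahlfors $u$-regular at all scales. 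This is exactly the sticky hypothesis. Restricting $\nu$ to the positive-measure set of bad directions and renormalizing, the restricted measure still inherits non-concentration near points, because it is dominated by $\nu$.

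\textbf{Spread-out property.} In $\G(2,1)$ a pencil is a single point, so spread-outness amounts to quantitative non-concentration of $\nu$ on small balls at most scales. Uniform perfectness gives this: iterating $\nu(B(x,r))\le\beta\,\nu(B(x,Dr))$ yields $\nu(B(x,r))\lesssim r^{\eta}$ with $\eta=\log(1/\beta)/\log D>0$. The restriction to a positive-measure subset $A$ needs more care. The natural tool is a Lebesgue density argument: at most points of $A$ and at most scales, $\nu|_A$ is comparable to $\nu$, so the non-concentration survives at most points and scales. I expect that being only required at ``most points and scales'' is precisely the flexibility the spread-out definition allows.

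\textbf{Discretization and contradiction.} For each bad $V$, cover $\Pi_V(K)$ by dyadic intervals with $\sum r_i^{t}$ small. Pigeonholing over dyadic scales gives a scale $\delta$, a set of directions of $\nu$-measure $\gtrsim 1/\log(1/\delta)$, and for each such $V$ a set of $\lesssim\delta^{-t}$ $\delta$-intervals whose preimage strips carry $\mu$-mass $\gtrsim 1/\log(1/\delta)^2$. Dually, this is a shading: each point $x\in K$ lies in the $\delta$-tubes $\Pi_V^{-1}(I)$ for a large $\nu$-measure of $V$. Theorem \ref{thm:AhlforsFurstenberg} then forces the union of tubes per direction to have cardinality $\gtrsim\delta^{-\min\{u,1\}+\epsilon'}$. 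This contradicts $\delta^{-t}$ once $\epsilon,\epsilon'$ are small relative to $\min\{u,1\}-t$.

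\textbf{Main obstacle.} I expect the hardest step to be matching the hypotheses of Theorem \ref{thm:AhlforsFurstenberg} exactly. The sticky regularity of the point set must hold at the single pigeonholed scale $\delta$, with uniform constants, and the spread-out constants of the restricted $\nu$ must be uniform across scales. I would first prove the sticky regularization via a multiscale decomposition of $\mu$ with uniform Ahlfors-type bounds at all scales in $[\delta,1]$. The reduction to a single scale should then follow by Borel–Cantelli over dyadic $\delta$.
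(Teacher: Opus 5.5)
Your overall route is the deduction the paper has in mind: extract a sticky piece of $E$ using Frostman's lemma, the equality of Hausdorff and packing dimension, and Egorov; pigeonhole a scale and a positive-measure set of bad directions; dualize and apply Theorem~\ref{thm:AhlforsFurstenberg}. The gap is the one ingredient the paper singles out in its sketch, namely that uniform perfectness implies spread-outness (Proposition~\ref{prop:uniform-is-spread-out}). Your argument does not establish it.

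You reduce spread-outness to the Frostman bound $\nu(B(x,r))\lesssim r^{\eta}$ ``at most scales''. But Definition~\ref{def:perfectSet} constrains \emph{arbitrary} Borel sets $\Theta\subset\spt\nu$, and the singular scales are measured relative to $\Theta$ itself. If every $\theta\in\Theta$ has $\mathfrak s$ dyadic scales $r$ at which $\Theta\cap B(\theta,r)$ lies in an interval of length $2\beta r$, then one needs $\nu(\Theta)\le C2^{-\beta\mathfrak s}$. A Frostman bound does not imply this. Consider a Cantor measure on $[0,1]$ built by alternating blocks of full binary branching with collapse blocks. In a collapse block, each surviving $2^{-m}$-interval keeps only one centrally placed $2^{-m-\ell}$-interval; the lengths $\ell\to\infty$ slowly, but collapse scales make up a fixed proportion $c$ of all scales. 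This measure is Frostman with every exponent below $1-c$. Yet for any fixed $\beta$, each point of its support has $\gtrsim c\log(1/\delta)$ singular scales in $[\delta,1]$. Taking $\Theta=\spt\nu$ then shows it is not $(\delta,\beta,C)$-spread-out for any fixed $\beta,C$ once $\delta$ is small. This is the ``well-spaced'' behaviour discussed after Theorem~\ref{thm:projection-ADR}: uniform perfectness excludes it, positive dimension does not.

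What is needed is the \emph{relative} decay you wrote down and then discarded in favour of an absolute bound. Iterating uniform perfectness a bounded number of times gives $\nu(B(x,8\varepsilon r))\le\tfrac14\,\nu(B(x,r))$ for all balls inside a fixed ball that does not contain $\spt\nu$. This must then be combined with a multiscale covering argument adapted to the subset $E$. In the paper this goes as follows.
\begin{itemize}
\item The singular scales of each $x\in E$ are thinned to an $\varepsilon$-separated sequence $r_j(x)$, keeping an $\varepsilon$-fraction of them. This separation is what keeps each new ball inside its parent.
\item They are smoothed to $\rho_j(x):=\max\{r_j(y):y\in E,\ |x-y|<\tfrac12 r_j(y)\}$, so that $E\cap B(x,\tfrac12\rho_j(x))\subset B(x,2\varepsilon\rho_j(x))$.
\item Nested covers of $E$ with overlap at most $2$ are built by replacing each ball of radius $\tfrac12\rho_{j+1}(x)$ with the concentric ball of radius $4\varepsilon\rho_{j+1}(x)$.
\end{itemize}
The relative decay halves the total mass at every step, so $\nu(E)\lesssim 2^{-\varepsilon\overline{\mathfrak s}}$, where $\overline{\mathfrak s}$ is the original number of singular scales.

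Your Lebesgue-density plan for restrictions is aimed at the wrong property. Uniform perfectness is not inherited by restrictions, but spread-outness is, with the constant multiplied by $1/\nu(A)$ (Lemma~\ref{lem:spread-out-subset}). So the correct order is: prove spread-outness of a chart-localized $\nu$ first, then restrict to the bad directions, then discretize with Lemma~\ref{lem:uniform-spread-out-discretization}. The comparable cube masses from that lemma are also what let you compare the shading size $M$ with the number of direction cubes. You need that comparison to turn the conclusion of Theorem~\ref{thm:AhlforsFurstenberg} into a bound per direction.
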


\begin{remark} The assumption of equal Hausdorff and packing dimension is not necessary, and it will be removed in the companion paper \cite{SpreadOutFurstGeneral}, where more general higher-dimensional results will be presented.
\end{remark}

As shown by Kaufman and Mattila \cite{MR407248}, there are singular measures $\nu$ on $\G(2,1)$, of dimension arbitrarily close to $1$, for which the Marstrand-Mattila estimate fails for some compact set $E$. Both the set $E$ and the measure $\nu$ in these examples are rather special. In particular, $\nu$ is supported on a``well-spaced'' set, that is, a set $\Theta$ such that for many pairs of scales $0<\delta\ll r$, $\Theta\cap B_r$ is contained in a $\delta$-ball for all $r$-balls $B_r$.  Uniform perfectness rules out this type of behavior.

As is usually the case, these results are derived from more general discretized statements, which are far more useful in applications. The discretized sticky Furstenberg estimate is Theorem \ref{thm:AhlforsFurstenberg}; its projection counterpart is Theorem \ref{thm:AhlforsProjection}.

\subsection{Optimal Furstenberg estimates}

Projection theorems are heuristically (and, at the discretized level, formally) special cases of Furstenberg-type estimates. In the classical Furstenberg problem, one is given a set $E\subset\R^2$ and a family of lines $\mathcal{L}$, and one assumes $\hdim(E\cap\ell)\ge s$ for all $\ell\in\mathcal{L}$. The goal is to estimate $\hdim E$ in terms of $s$ and $\hdim\mathcal{L}$. This problem was solved in \cite{23arXiv230808819R}: with $\hdim\mathcal{L}=t$, one has
\begin{displaymath}
 \hdim(E)\ge \min\left\{s+t, \frac{3s+t}{2},s+1\right\},
\end{displaymath}
and this is sharp. In particular, the $\frac{3s+t}{2}$ term is witnessed by a lattice construction, see \cite[p. 10]{Wolff99} or \cite[Section A.1]{MR4745881}. It is a natural problem to ask whether, under some additional conditions on $E\cap\ell$, this term can be removed. (One might in principle also ask if the term $s + 1$ can be removed, but this question seems less interesting: the example witnessing the $s + 1$ term is $E = C \times [0,1]$, where $C \subset [0,1]$ is an arbitrary $s$-dimensional set.)

After point-line duality, the planar Furstenberg theorem can be equivalently stated in the following form: given a set $E\subset\R^2$ with $\hdim(E)=t$ and, for each $x\in E$, a family of lines $\mathcal{L}_x$ through $x$ of dimension $\ge s$, we have
\begin{displaymath}
\hdim\left(\bigcup_{x\in E}\mathcal{L}_x\right)\ge \min\left\{s+t, \frac{3s+t}{2},s+1\right\}.
\end{displaymath}
The set on the left is a subset of the affine Grassmannian $\A(2,1)$ of lines in $\R^2$. Very recently, Cs\"ornyei and Stull \cite[Theorem~1.1]{CsornyeiStull26Hyperplanes} partially generalized the planar discretized incidence estimate to higher dimensions, replacing bundles of lines by bundles of hyperplanes under Frostman non-concentration assumptions, with exponent greater than $d-2$ for the local families of hyperplanes.

It is this dual form that we consider in this paper, and extend to higher dimensions: we replace bundles of lines by bundles of $k$-planes in $\R^d$. The following is the ``Furstenberg version'' of Theorem \ref{thm:projection-ADR}. In the statement, a locally finite Borel measure $\nu$ on a metric space (now $\mathbb{G}(2,1)$) is \emph{$s$-Frostman} if there exists a constant $C \geq 1$ such that $\nu(B(x,r)) \leq Cr^{s}$ for all $r > 0$.

\begin{theorem} \label{thm:Furstenberg-continuous}
    Let $t \in [0,2]$, and $s > 0$. Let $E\subset\R^{2}$ be a Borel set with $\hdim(E)=\pdim(E)=t$. For each $x\in E$, let $\nu_x$ be a uniformly perfect $s$-Frostman measure on $\mathbb{G}(2,1)$, and define $\Theta_x:=\spt(\nu_x)$. Then,
    \begin{displaymath}
        \hdim\left(\bigcup_{x\in E}\{x+\ell:\ell\in \Theta_x\}\right)\ge \min\{t,1\}+s.
    \end{displaymath}
\end{theorem}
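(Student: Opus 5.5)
The plan is to deduce Theorem \ref{thm:Furstenberg-continuous} from the $\delta$-discretized sticky Furstenberg estimate, Theorem \ref{thm:AhlforsFurstenberg}, in its planar case $d=2$, $k=1$, by the standard discretization argument.

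\textbf{Reductions.} Write $F:=\bigcup_{x\in E}\{x+\ell:\ell\in\Theta_x\}$. Suppose, for contradiction, that $\hdim F<\min\{t,1\}+s-4\eta$ for some small $\eta>0$. By inner regularity and a pigeonholing on the constants, pass to a compact subset of $E$, still of Hausdorff dimension $>t-\eta$. On this subset the Frostman constants $C$ and the uniform perfectness constants $(\beta,D)$ of $\nu_x$, together with a lower bound on $\diam(\spt\nu_x)$, are uniform. (Measurability of $x\mapsto\nu_x$ can be dodged by pigeonholing at each scale.)

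\textbf{Point set at a fixed scale.} Use $\pdim E=t$ (upper box dimension locally, after the usual Baire-category reduction to a piece with $\overline{\dim}_B\le t+\eta$) and $\hdim E=t$ via a Frostman measure $\mu$. These give, for a suitable single-scale $\delta$-discretization, a $\delta$-separated set $P$ that is roughly $t$-Ahlfors regular at all scales between $\delta$ and $1$, up to $\delta^{-\eta}$ losses. The reason is that the packing bound controls covering numbers from above and the Frostman bound controls them from below, at every intermediate scale. This is exactly the sticky hypothesis needed in Theorem \ref{thm:AhlforsFurstenberg}.

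\textbf{Line sets and the covering.} For each $p\in P$ we need a $\delta$-discretized family of directions from $\Theta_x$ ($x$ near $p$) that is $s$-Frostman and spread-out. Uniform perfectness gives, at every scale $r$ and every point of $\spt\nu_x$, a definite fraction of mass outside $B(\cdot,r/D)$. In $\G(2,1)$ pencils are points, so this should supply the spread-out non-concentration at all scales. Next, cover $F$ efficiently by dyadic tubes of widths $\delta_j=2^{-j}$ with $\sum_j N_j\delta_j^{\hdim F+\eta}<1$. Pigeonholing a scale $\delta$ and a large portion of $(\mu\times\nu_x)$-mass then produces a $\delta$-tube family $\mathcal T$ covering the shading. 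We have $|\mathcal T|\lesssim\delta^{-(\min\{t,1\}+s)+3\eta}$, yet each point of $P$ lies in a family of roughly $\delta^{-s}$ $s$-Frostman spread-out tubes of $\mathcal T$. This contradicts the lower bound $|\mathcal T|\gtrsim\delta^{-\min\{t,1\}-s+\epsilon}$ of Theorem \ref{thm:AhlforsFurstenberg}, once $\eta$ is chosen small relative to the $\epsilon$ from the theorem.

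\textbf{Main obstacle.} The hardest step is the simultaneous pigeonholing. The scale $\delta$ must be one where the covering of $F$ is efficient, and this must be compatible with $P$ being sticky at all scales $[\delta,1]$. Since stickiness is multiscale, I would first fix the good Ahlfors-regular structure of $E$ (which holds for all small $\delta$ after the reduction) and only then select $\delta$ from the covering. I would also verify that uniform perfectness with fixed $(\beta,D)$ yields the precise spread-out parameters Theorem \ref{thm:AhlforsFurstenberg} requires, uniformly over $x$.
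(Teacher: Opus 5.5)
Your outline is essentially the route the paper indicates. Proposition \ref{prop:uniform-is-spread-out} makes the $\nu_x$ spread-out (cite it rather than re-derive it, since the singular scales vary from point to point). One then builds a sticky spread-out configuration inside the union and compares Theorem \ref{thm:AhlforsFurstenberg} with an efficient cover via pigeonholing, as in \cite[Proposition 6.4]{OrponenShmerkin26}.

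When executing it, make three repairs. First, decouple the dimension deficit from the reduction slack: the slack must be $\ll\e'$, where $\e'$ is the theorem's constant for a loss comparable to the deficit. Second, get the local covering bounds by uniformizing (Lemma \ref{lem:uniformization}), not from global box counting alone. Third, since $x\mapsto\nu_x$ need not be measurable, work with the outer measure of a Frostman measure instead of inner regularity.
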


\begin{remark} 
    Morally, the projection cases discussed earlier correspond to the special case of Theorem \ref{thm:Furstenberg-continuous} where the family of subspaces $\Theta_x$ is independent of $x$. To make the moral precise, one needs to consider $\delta$-discretized statements. Theorem~4.6 of \cite{SpreadOutFurstGeneral} allows us to remove the assumption $\hdim(E)=\pdim(E)$ in this theorem.
\end{remark}

Theorems \ref{thm:projection-ADR} and \ref{thm:Furstenberg-continuous} follow from a $\delta$-discretized version, Theorem \ref{thm:AhlforsFurstenberg}, by standard discretization and covering arguments similar to \cite[Proposition 6.4]{OrponenShmerkin26}. We omit the details here rather completely, but include some of them in \cite{SpreadOutFurstGeneral}, where more general versions are established. The only point we mention here is that the uniform perfectness of $\nu_{x}$ assumed in Theorem \ref{thm:Furstenberg-continuous} implies that $\nu_{x}$ is a ``spread-out measure'' (Definition \ref{def:perfectSet}) thanks to Proposition \ref{prop:uniform-is-spread-out}. After the spread-outness of the measures $\nu_{x}$ has been established, one may build a "spread-out configuration" (Definition \ref{def:spread-out-cube-family}) inside $\bigcup_{x\in E}\{x+\ell:\ell\in \Theta_x\}$, and then apply Theorem \ref{thm:AhlforsFurstenberg} (following the argument in \cite[Proposition 6.4]{OrponenShmerkin26}) to bound the dimension of $\bigcup_{x\in E}\{x+\ell:\ell\in \Theta_x\}$ from below.

\subsection{Discretized formulations}
\label{s:discretized-formulations}

We now introduce the discretized sticky Furstenberg estimate. For this, we need to introduce a number of definitions.

A \emph{dyadic scale} is an element of $2^{-\N_0}$. For $r\in2^{-\N_0}$ and $m\in\N$, let
\begin{displaymath}
\mathcal D_r(\R^m):=\{r(j+[0,1)^m):j\in\Z^m\}.
\end{displaymath}
For $E\subset\R^m$, let $\mathcal D_r(E)$ be the cubes in $\mathcal D_r(\R^m)$ that meet $E$, write $|E|_r:=|\mathcal D_r(E)|$, and denote the lower-left endpoint of $Q\in\mathcal D_r(\R^m)$ by $z(Q)$. For a point $x\in\R^m$, $\mathcal D_r(x)$ denotes the unique cube in $\mathcal D_r(\R^m)$ containing $x$. For a family $\mathcal J$ of dyadic cubes, write 
\begin{equation}\label{leftCorners} z\mathcal J:=\{z(Q):Q\in\mathcal J\}. \end{equation}

We use the standard notion of \emph{Frostman} sets. For $\delta\in2^{-\N}$, $t\ge0$, and $C\ge1$, a nonempty bounded set $P\subset\R^d$ is \emph{$(\delta,t,C)$-Frostman} if
\begin{displaymath}
|P\cap Q|_\delta\le Cr^t|P|_\delta,
\qquad Q\in\mathcal D_r(\R^d),\quad r\in2^{-\N}\cap[\delta,1].
\end{displaymath}

Identify $\R^d=\R^k\times\R^{d-k}$. For $\theta\in\mathbf G_{d,k}:=\R^{k\times(d-k)}$, let
\begin{displaymath}
V_\theta:=\operatorname{Graph}(\theta^*)
=\{(x,\theta^*x):x\in\R^k\}.
\end{displaymath}
Here $\theta^*$ denotes the transpose of $\theta$. The map $\theta\mapsto V_\theta$ identifies $\mathbf G_{d,k}$ with a Zariski-dense open subset of $\G(d,k)$ (this is the higher dimensional analog of the parametrization of lines by $(a,b)\leftrightarrow y=ax+b$). We similarly identify the corresponding big cell $\mathbf A_{d,k}$ in the affine Grassmannian with $\R^{k\times(d-k)}\times\R^{d-k}$ via
\begin{displaymath}
(\theta,b)\longmapsto V_{\theta,b}:=\{(x,\theta^*x+b):x\in\R^k\},
\end{displaymath}
and use the dyadic covering numbers above in these coordinates.

\begin{definition}[Plane configurations]\label{def:planeConfigurations}
Let $M\in\N$. A \emph{$(\delta,M)$-plane configuration} is a pair $(P,\mathbf\Theta)$, where $P\subset\delta\Z^d\cap[0,1]^d$ is nonempty and, for each $p\in P$,
\begin{displaymath}
\mathbf\Theta(p)\subset\mathcal D_\delta([-1,1]^{k\times(d-k)}),
\qquad M\le|\mathbf\Theta(p)|<2M.
\end{displaymath}
We call $\mathbf\Theta(p)$ the \emph{shading} of $p$ and omit $M$ when it is not relevant. We silently identified $\mathbf{\Theta}(p)$ (a family of dyadic cubes) with its union, so that $\theta\in\mathbf{\Theta}(p)$ means $\theta$ is an element of some cube in $\mathbf{\Theta}(p)$.

The \emph{Furstenberg set associated to $(P,\mathbf\Theta)$} is defined using the lower-left corners $z\mathbf\Theta(p)$ familiar from \eqref{leftCorners}:
\begin{displaymath}
\mathcal F(P,\mathbf\Theta)
:=\{V_{\theta}+p:p\in P, \theta\in z\mathbf\Theta(p)\}\subset\mathbf A_{d,k}.
\end{displaymath}
Allowing all $\theta\in\mathbf\Theta(p)$ instead changes its $\delta$-covering number by at most a factor depending on $d,k$.
\end{definition}

The \emph{model hyperplanes} in $\mathbf G_{d,k}$ are
\begin{displaymath}
H_{[x,y,u]}:=\{\theta\in\mathbf G_{d,k}:x\cdot\theta\cdot y=u\},
\qquad x\in\R^k\setminus\{0\},\quad y\in\R^{d-k}\setminus\{0\},\quad u\in\R.
\end{displaymath}
Each model hyperplane is the intersection with $\mathbf G_{d,k}$ of a Schubert divisor in $\G(d,k)$, and every affine tangent hyperplane to a Schubert divisor at a smooth point 
is a model hyperplane in big cell coordinates. See \S\ref{s:bigcell}.

\begin{definition}[Singular scales and spread-out families]\label{def:intro-spreadout}\label{def:spread-out-cube-family}
Fix $\beta\in(0,1]$ and $C>0$, and let $\mathcal J\subset\mathcal D_\delta(\mathbf G_{d,k})$ be a nonempty finite family. All balls and distances in $\mathbf G_{d,k}$ are Euclidean. For $\mathcal J'\subset\mathcal J$ and $\theta\in\mathcal J'$, a scale $r\in2^{-\N}\cap[\delta,1]$ is \emph{$\beta$-singular for $(\mathcal J',\theta)$} if there exists a $H\in\mathfrak{H}_{d,k}$ such that
\begin{displaymath}
\mathcal J'\cap B(\theta,r)
\subset\{\omega\in\mathbf G_{d,k}:\dist(\omega,H)<\beta r\}.
\end{displaymath}

The family $\mathcal J$ is \emph{$(\delta,\beta,C)_{d,k}$-spread-out} if
\begin{displaymath}
|\mathcal J'|\le C2^{-\beta\mathfrak s  }|\mathcal J|
\end{displaymath}
whenever $\mathcal J'\subset\mathcal J$, $\mathfrak s\in\N_0$, and every $\theta\in\bigcup\mathcal J'$ has at least $\mathfrak s$ scales that are $\beta$-singular for $(\mathcal J',\theta)$. A plane configuration $(P,\mathbf\Theta)$ is \emph{$(\delta,\beta,C,M)$-spread-out} if $\mathbf\Theta(p)$ is $(\delta,\beta,C)_{d,k}$-spread-out for every $p\in P$ and $|\mathbf{\Theta}(p)| \in [M,2M]$ for all $p \in P$. 
\end{definition}

See Section \ref{sec:spread-out} for further background on model hyperplanes, their connection with Schubert cycles, and spread-out sets and measures, and Section \ref{sec:examples-applications} for examples of spread-out families.

For $\delta>0$, $\alpha\ge0$, and $C>0$, a finite nonzero Borel measure $\mu$ on $\R^d$ is \emph{$(\delta,\alpha,C)_d$-Frostman} if
    \begin{displaymath}
        \mu(B(x,r))\le Cr^\alpha
        \qquad(x\in\R^d,\ r\ge\delta).
    \end{displaymath}
    We call $\mu$ \emph{$(\alpha,C)_d$-Frostman} if this holds for every $r>0$. These bounds use the mass of $\mu$ without normalizing it; the subscript $d$ is omitted when clear from context.

    \begin{definition} \label{def:Ahlfors-regular}
        We say that a non-trivial finite measure $\mu$ on $\R^d$ is \emph{$(\delta,\alpha,C)_d$-regular} if it is $(\delta,\alpha,C)_d$-Frostman and
        \begin{displaymath}
            |\spt\mu\cap B(x,R)|_r\le C(R/r)^\alpha
            \qquad(x\in\R^d,\ \delta\le r\le R).
        \end{displaymath}
        A non-empty bounded set $P\subset\R^d$ is \emph{$(\delta,\alpha,C)_d$-regular} if the uniform probability measure $m_{P^{(\delta)}}$ on $P^{(\delta)}:=z\mathcal D_\delta(P)$ is $(\delta,\alpha,C)_d$-regular. The subscript $d$ is omitted when clear from context.
    \end{definition}

    \begin{remark}
        In \cite[Definition~6.1]{OS23}, regularity consists of a Frostman condition and a global covering bound at scale $\delta^{1/2}$ only. In Definition \ref{def:Ahlfors-regular}, we require the covering bound for all $\delta\le r\le R$.
    \end{remark}

We can now state the main $\delta$-discretized result of the paper.
\begin{theorem} \label{thm:AhlforsFurstenberg}
    Fix dimensions $1\le k<d$ and parameters $s \in (0,1]$ and $t \in (0,d]$.

    For any $\eta>0$ there is $\e' = \e'(s,t,\eta) > 0$ such that the following holds for every dyadic $\delta\ll_{s,t,\eta}1$.

    Let $(P,\mathbf{\Theta})$ be a $(\delta,s,\delta^{-\e'},M)_{d,k}$-spread-out configuration where $P \subset [-1,1]^{d}$ is $(\delta,t,\delta^{-\e'})_{d}$-regular. Then the associated Furstenberg set $\mathcal{F}=\mathcal{F}(P,\mathbf{\Theta})$ satisfies
    \begin{equation}\label{form17}
        |\mathcal{F}|_{\delta} \geq \delta^{\eta-\min\{t,d-k\}}\cdot M. \end{equation}
\end{theorem}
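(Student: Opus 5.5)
Our plan is a multiscale induction on scales. The regularity of $P$ makes the configuration self-similar across scales, and at each scale we gain from a single-scale projection-type estimate that spread-outness supplies.

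\emph{Step 1: reduction to a projection estimate.} Write $\mathcal F$ in big cell coordinates. A $\delta$-cube of $(\theta,b)$ in $\mathbf A_{d,k}$ is hit by $V_\theta+p$ exactly when $b$ is $\delta$-close to $\pi_\theta(p):=p_2-\theta^*p_1$, where $p=(p_1,p_2)\in\R^k\times\R^{d-k}$. Fix $\theta$ and let $P_\theta=\{p:\theta\in\mathbf\Theta(p)\}$. The fibre of $\mathcal F$ over $\mathcal D_\delta(\theta)$ then has cardinality about $|\pi_\theta(P_\theta)|_\delta$, so
\[ |\mathcal F|_\delta\gtrsim \sum_{\theta}|\pi_\theta(P_\theta)|_\delta . \]
By double counting, $\sum_\theta|P_\theta|\approx M|P|$. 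It therefore suffices to show that, for most incidences $(p,\theta)$, the projection $\pi_\theta$ restricted to $P_\theta$ has fibres of cardinality at most $\delta^{-\eta}|P|\delta^{\min\{t,d-k\}}$. Equivalently, we need a discretized energy bound: the number of triples $(p,q,\theta)$ with $\theta\in\mathbf\Theta(p)\cap\mathbf\Theta(q)$ and $|\pi_\theta(p)-\pi_\theta(q)|\le\delta$ should be at most $\delta^{-\eta}M|P|^2\delta^{\min\{t,d-k\}}$.

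\emph{Step 2: the single-pair geometry.} For fixed $p\neq q$ with $|p-q|\sim r$, the set of $\theta$ satisfying $|\pi_\theta(p-q)|\le\delta$ is the neighbourhood of an affine subspace cut out by $(d-k)$ linear equations $x\cdot\theta\cdot e_j=u_j$, where $x=p_1-q_1$. This set is contained in the $\delta/r$-neighbourhood of a model hyperplane $H_{[x,y,u]}$, and in fact of $d-k$ of them. The role of spread-outness is precisely to bound how much of $\mathbf\Theta(p)$ lies in such a thin neighbourhood near pencils. Locally at scale $\rho$, a $\beta$-nonsingular scale means that $\mathbf\Theta(p)\cap B(\theta,\rho)$ is not trapped in the $s\rho$-neighbourhood of a model hyperplane. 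Hence at each nonsingular scale we gain a definite factor in the "transversality" of the triple count.

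\emph{Step 3: induction on scales.} Choose $\Delta=\delta^{\e_0}$ and split $[\delta,1]$ into $N=1/\e_0$ blocks. Regularity of $P$ implies that each rescaled block $P\cap Q$, $Q\in\mathcal D_{\Delta^j}$, is again regular with the same exponent $t$. Spread-outness says that for most $\theta$ in each $\mathbf\Theta(p)$, only $\lesssim\e' \log(1/\delta)$ scales are singular. We therefore pigeonhole a subconfiguration in which, at all but a small proportion of the blocks, the rescaled shadings are non-concentrated near model hyperplanes. In such blocks we apply a base-case estimate at scale $\Delta$: an $s$-dimensional non-pencil set of directions projects a $t$-regular set at scale $\Delta$ to size $\Delta^{-\min\{t,d-k\}+\e_1}$. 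We prove this by an $L^2$/Fourier or incidence argument, using the fact that at scale $\Delta$ (bounded complexity, $\e_1$ loss) non-concentration near every model hyperplane makes the relevant $\theta$-family avoid the exceptional sets from Step 2. We then multiply the gains over the blocks, as in the Wang--Zahl and Orponen--Shmerkin sticky induction. The losses from singular blocks total at most $\Delta^{-O(\e'N)}$, and this is absorbed into $\delta^{-\eta}$.

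The main obstacle will be Step 3's base case, together with making the per-block gains multiply. To achieve the latter we need the shadings seen from $P\cap Q$ after rescaling to be the "same" shading at coarser scale, which forces a uniformization (branching) of both $P$ and the $\mathbf\Theta(p)$. Our first attempt would be to prove the base case via a Kaufman-type energy argument against model-hyperplane neighbourhoods. If the exceptional sets are genuinely lower-dimensional subvarieties (Schubert cycles), we would instead handle them through the tangent-hyperplane characterization from \S\ref{s:bigcell}.
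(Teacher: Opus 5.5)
Your outer architecture is the same as the paper's Section~\ref{sec:multiscaleFurstenberg}: reduce $|\mathcal F|_\delta$ to projections via \eqref{eq:affine-incidence-counting}, cut $[\delta,1]$ into blocks of ratio $\tau=\delta^{\e^{3}}$, use that rescaled blocks of $P$ stay regular and rescaled shadings stay spread-out, uniformize, and multiply the per-block gains. The paper carries this out through Proposition~\ref{prop:induction-on-scales}.

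The genuine gap is the per-block estimate, which you defer to ``an $L^2$/Fourier or incidence argument'' at ``bounded complexity''. The block problem does not have bounded complexity. A block at ratio $\Delta=\delta^{\e_0}$ is the full problem again: a $(\Delta,t,\Delta^{-O(\e'/\e_0)})$-regular set with $(\Delta,s)$-spread-out shadings, where you must reach the sharp exponent up to $\Delta^{\e_1}$.

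A Kaufman-type energy argument cannot reach $\min\{t,d-k\}$. For a pair $p,q$ with $|p-q|\sim r$, the exceptional set lies in some $H(\delta/r)$. Spread-outness gives only the mass bound $\lesssim(\delta/r)^{s}$ for it (Lemma~\ref{lem:spread-out-global-non-concentration}). This bound is sharp, e.g.\ for Ahlfors $s$-regular measures on $\mathbf G_{2,1}$, which are spread-out by Proposition~\ref{prop:uniform-is-spread-out}. Summing against $|P\cap B(p,r)|\approx r^{t}|P|$ gives a triple count of order $M|P|^{2}\delta^{\min\{s,t\}}$. That is the exponent $\min\{s,t\}$, which is useless when $s<\min\{t,d-k\}$, and $s$ may be tiny. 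Your Step~2 does not improve on this: a nonsingular scale only says that $\mathbf\Theta(p)\cap B(\theta,\rho)$ is not \emph{contained} in $H(s\rho)$, not that little mass lies near $H$.

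The missing idea is the multiscale, additive-combinatorial $\e$-improvement that makes up the sticky projection theorem, Theorem~\ref{thm:AhlforsProjection}:
\begin{itemize}
\item It is proved by bootstrapping the multiplicity exponent $\sigma\mapsto\sigma-\zeta$ (Proposition~\ref{prop:AhlforsProjections-induction}).
\item A counterexample, localized to a good $\sqrt\delta$-cube of directions and a good $\sqrt\delta$-slab, yields a large graph $G_\theta\subset A\times Y$. Its projections $\pi_\theta(G_\theta)$ are small for a spread-out family of $\theta$.
\item This contradicts the ABC sum-product estimate, Proposition~\ref{prop:ABC}.
\item That estimate rests on the $L^2$-flattening of $m_A*L_bm_\Theta$ (Theorem~\ref{thm:spread-out-implies-flattening}). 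This uses the inverse Theorem~\ref{thm:inverse} and the projection invariance Lemma~\ref{lem:proj-spread-out}, together with Balog--Szemer\'edi--Gowers and Pl\"unnecke--Ruzsa.
\end{itemize}
None of this appears in your plan, and it is the heart of the proof.

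There is also a second, smaller gap. Even granting a projection theorem, each block is still a Furstenberg configuration whose shadings depend on $p$. Your Step~1 does not handle the possibility that the sets $P_\theta$ are sparse in $P$, and then no estimate for projections of $P$ applies. The paper handles this as follows:
\begin{itemize}
\item The zoomed shadings $\check{\mathbf\Theta}_j(p)$ all lie in the common set $\overline\Theta_j=z\mathcal D_\tau(\mathbf\Theta_j(q_j))$, of cardinality $\lessapprox M_{Q_{j+1}}$.
\item Since $M_{Q_j}\lesssim\tau^{-k(d-k)}$ and $M_{Q_j}\lessapprox M_{Q_{j+1}}$, the ratio $M_{Q_{j+1}}/M_{Q_j}$ exceeds $\tau^{-\e}$ at only $O(\e^{-1})$ indices.
\item At every other index each shading fills a $\tau^{\e}$-proportion of $\overline\Theta_j$. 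So a $\tau^{O(\e)}$-proportion of $\theta\in\overline\Theta_j$ are used by $\tau^{2\e}$-dense subsets $\overline P_\theta\subset\check P_j$.
\item Theorem~\ref{thm:AhlforsProjection} is stated for all dense subsets $P'\subset P$ precisely for this reason. Applied in dimensions $(d,d-k)$ after $\theta\mapsto-\theta^*$, it gives $|\rho_\theta(\overline P_\theta)|_\tau\ge\tau^{-\min\{t,d-k\}+\eta/4}$ for most such $\theta$.
\end{itemize}
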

In the $(\theta,b)$-coordinates on $\mathbf{A}_{d,k}$, it is immaterial whether we measure the size of $\mathcal{F}$ using the metric on $\A(d,k)$ or the Euclidean metric on $\R^{(k+1)(d-k)}$.


In \cite[Theorem~1.6]{SpreadOutFurstGeneral}, the regularity assumption on $P$ will be replaced by the weaker Frostman or Katz--Tao conditions.

\subsection{Applications}
\label{s:applications}

Theorems \ref{thm:projection-ADR}, \ref{thm:Furstenberg-continuous}  and \ref{thm:AhlforsFurstenberg} have applications to additive-combinatorial problems for Ahlfors regular sets. It follows from Theorem \ref{thm:AhlforsFurstenberg} and a standard argument of Elekes \cite{MR1472816} that if $A\subset[1,2]$ is Ahlfors $s$-regular for some $s\in(0,1/2]$ (that is, $A$ equals the support of an Ahlfors $s$-regular measure), then, for every $\eta>0$ and all sufficiently small $\delta$,
\begin{displaymath}
    |A+A|_\delta\,|A\cdot A|_\delta\ge\delta^\eta|A|_\delta^3.
\end{displaymath}
In particular, $\max\{|A+A|_\delta,|A\cdot A|_\delta\}\ge\delta^\eta|A|_\delta^{3/2}$. See \S\ref{s:discretized-sum-product} for the proof and further discussion.

We also show in \S\ref{s:triple-sums-parabola} that if $A\subset\{(x,x^2):x\in[-1,1]\}$ is Ahlfors $s$-regular for some $s\in(0,1/2]$, then
\begin{displaymath}
    \hdim(A+A+A)=3\hdim(A).
\end{displaymath}
This is conjectured to hold for arbitrary Borel subsets of the parabola, and this is the first non-trivial class of sets for which it has been verified. The companion paper \cite{SpreadOutFurstGeneral} establishes broader sum-product statements, as well as applications to surfaces in the Grassmannian, pinned distances and self-affine sets.

\subsection{Proof strategy and wider connections}

The proof of Theorem \ref{thm:AhlforsFurstenberg} has several points in common with previous work by the authors on Furstenberg sets \cite{OS23,OrponenShmerkin26}. 
\begin{enumerate}[label=(\roman*)]
    \item We first establish the sticky projection estimate, Theorem \ref{thm:AhlforsProjection}, and then use an argument of Hong Wang which first appeared in \cite{OrponenShmerkin26} to deduce the Furstenberg estimate by multiscale analysis and pigeonholing.
    \item The projection estimate is proved by contradiction. Assuming that the estimate fails by a definite amount, we show that, after rescaling, an extremal configuration has a product structure inside typical slabs.
    \item This product structure contradicts an asymmetric sum-product estimate, Proposition \ref{prop:ABC}. Its proof uses Balog--Szemer\'edi--Gowers, Pl\"unnecke--Ruzsa, and an inverse theorem of the second author \cite{Shmerkin19,Shmerkin25Orbita}.
\end{enumerate}

Although not used in a direct way, we were also inspired over the years by Hochman's work on self-similar sets and measures based on additive combinatorics \cite{Hochman14, Hochman20}. At a heuristic level, we see the results of this work as an extension of Hochman's ideas to a setting in which the dynamical structure of self-similar or self-affine measures is replaced by the geometric structure of sticky and spread-out sets. An important conceptual analogy is that both in Hochman's work and in the present paper, an inverse theorem for convolutions is used to obtain a small improvement in the dimension of the object under study, which is then amplified by a multiscale analysis. The mechanisms by which this is achieved are completely different.

The use of spread-out sets, introduced in this paper, accounts for several differences from these earlier arguments. The main properties of spread out objects are:
\begin{enumerate}[label=(\alph*)]
    \item Invariance under scaling and refinement (which are immediate from the definitions).
    \item Invariance under projections; see Lemma \ref{lem:proj-spread-out}.
    \item Invariance under localization to small cubes; see Lemma \ref{lem:spread-out-sub-cube}.
    \item An $L^2$-flattening property; see Theorem \ref{thm:spread-out-implies-flattening}.
\end{enumerate}
See \S\ref{sec:spread-out} for a detailed discussion of spread-outness and its properties, and \S\ref{s:uniformly-perfect-are-spread-out} for a proof that uniformly perfect measures on $\R$ (or $\mathbb{G}(2,1)$) are spread-out.
\subsection{Notation}

We introduce some notation that will be used throughout the paper. 

Given a finite set $P$, we write $m_P$ for the uniform probability measure on $P$.

We write $|P|$ for the cardinality of a finite set $P$. We use the cube and covering notation of Section \ref{s:discretized-formulations}, extending the same definitions to arbitrary $\delta>0$. For $E\subset\R^d$, its \emph{$\delta$-discretization} is $E^{(\delta)}:=z\mathcal D_\delta(E)\subset\delta\Z^d$, so $|E|_\delta=|E^{(\delta)}|$. Up to a factor depending only on the ambient dimension, $|E|_\delta$ is the minimal number of $\delta$-balls needed to cover $E$.

Likewise, the \emph{$\delta$-discretization} of a finite Borel measure $\mu$ on $\R^d$ is defined by $\mu^{(\delta)}(\{z(Q)\})=\mu(Q)$ for $Q\in\mathcal D_\delta(\R^d)$.

The open $r$-neighborhood of a set $E\subset\R^d$ is denoted by $E(r)$. We write $B(0,r)$ for the ball of radius $r$ centered at the origin. For $\lambda>0$, $\lambda Q$ and $\lambda B$ denote the concentric dilations of a cube $Q$ and a ball $B$ by the factor $\lambda$.

If $(X,\mathcal{F},\mu)$ is a measure space, and $f \colon X \to Y$ is a map, we write $f\mu$ for the push-forward measure $(f\mu)(B) := \mu(f^{-1}(B))$ on $Y$.

We write $A \lessapprox_{\delta} B$ to mean $A \leq B(\log(1/\delta))^{C}$, where the implicit exponent $C$ is independent of $\delta$, but may depend on the other ambient parameters, which will be specified as needed. We define $\gtrapprox_{\delta}$ similarly.

By $A\gg_{\text{par}} 1$ we mean that $A$ is sufficiently large, depending on the parameters $\text{par}$. We define $A\ll_{\text{par}} 1$ similarly.

We will often work with dyadic scales. Occasionally, we will assume a scale $\delta$ to be dyadic even if it technically does not need to be; this means that $\delta$ is replaced by the closest dyadic scale. This will sometimes cause a constant factor loss, but this is harmless in all cases where we use this slight abuse of notation.

We will work with dimensions $1\le k\le d$. These will be fixed in each section, and we take them to be constants; in particular, all implicit constants are allowed to depend on $d,k$.

Logarithms are always base $2$ unless otherwise specified.

\subsection{AI declaration}

All the main ideas of the paper were conceived by the authors without AI involvement. For clarity, these ideas include the notion of spread-out measures, and the strategy to prove the main result, Theorem \ref{thm:AhlforsFurstenberg}. At the writing stage, P.S. used several LLMs to draft proofs of technical results in some parts of the paper, and to search for errors and potential simplifications in the arguments. The authors carefully checked all AI-generated text and revised it substantially to ensure correctness and clarity. Any remaining issues are the sole responsibility of the authors.

\subsection{Acknowledgments}

Part of this project was carried out during a visit of PS to the University of Jyväskylä in 2025, supported by a JYU Visiting Fellowship. PS acknowledges support from a Simons Fellowship and an NSERC Discovery Grant. TO is supported by the European Research Council (ERC) under the European Union's Horizon Europe research and innovation programme (grant agreement No 101087499). This material is partially based upon work supported by the Swedish Research Council under grant no. 2021-06594 while the authors were in residence at Institut Mittag-Leffler in Djursholm, Sweden, during the semester \emph{Interactions between fractal geometry, harmonic analysis, and dynamics in 2026}.

\section{Spread-out measures and their basic properties}
\label{sec:spread-out}

\subsection{Big cells in the Grassmannian}

\label{s:bigcell}

For $0\le k\le d$, let $\G(d,k)$ and $\A(d,k)$ denote the Grassmannian of $k$-dimensional linear subspaces of $\R^d$ and the affine Grassmannian of $k$-dimensional affine subspaces of $\R^d$, respectively.

The matrix space $\mathbf{G}_{d,k}=\R^{k\times(d-k)}$ can be identified with a Zariski-dense and open subset of $\G(d,k)$ via the embedding
\begin{equation}\label{def:graph}
    \theta \to V_{\theta} := \operatorname{Graph}(\theta^*) =  \{ (x, \theta^* x) : x \in \R^{k}\},
\end{equation}
where $\theta^*$ is the transpose of $\theta$. Note that $V_{\theta}^\perp$ equals $\ker(\pi_\theta)$, where $\pi_{\theta} \colon \R^{k} \times \R^{d - k} \to \R^{k}$ is the map $\pi_\theta(x,y)=x+\theta\cdot y$.  We think of $\mathbf{G}_{d,k}$ as a ``big cell'' in the Grassmannian. 

For the remainder of this section, assume that $1\le k<d$.

Let $\X_{k,d-k}=\bigl((\R^{k}\setminus \{0\})\times (\R^{d-k}\setminus \{0\})\times \R\bigr)/\sim$, where $\sim$ is the equivalence relation given by $(x,y,t)\sim (x',y',t')$ if there exist $\lambda,\mu\in\R\setminus\{0\}$ such that $x'=\lambda x$, $y'=\mu y$, and $t'=\lambda\mu t$. We write $[x,y,t]$ for the equivalence class of $(x,y,t)$. Given $[x,y,t]\in \X_{k,d-k}$, let 
\begin{displaymath}
    H_{[x,y,t]} := \bigl\{ \theta\in \mathbf{G}_{d,k}: x\cdot \theta\cdot y = t \bigr\}.    
\end{displaymath}
This is well-defined on equivalence classes. We denote the family of all such hyperplanes by $\mathfrak{H}_{d,k} := \{ H_{[x,y,t]} : [x,y,t] \in \X_{k,d-k}\}$.

Note that if we identify $\mathbf{G}_{d,k}$ with $\R^{k(d-k)}$, then $\mathfrak{H}_{d,k}\subset \A(k(d-k),k(d-k)-1)$ consists of affine hyperplanes in $\R^{k (d-k)}$. On the other hand, as a subset of $\G(d,k)$, the set $H_{[x,y,t]}$ corresponds to the Schubert divisor
\begin{equation} \label{eq:schubert-variety}
    \{\theta\in \mathbf{G}_{d,k}: \dim(V_{\theta}\cap W_{[x,y,t]})\ge 1\},
\end{equation}
where 
\begin{displaymath}
    W_{[x,y,t]}:=\bigl\{(sx,v)\in \R^k\times\R^{d-k}: y\cdot v=ts\bigr\}\in \G(d,d-k).
\end{displaymath}
At every smooth point of a Schubert divisor, its affine tangent hyperplane in big-cell coordinates is in $\mathfrak{H}_{d,k}$. The model hyperplanes are affine local models for the Schubert divisors appearing in the global non-concentration hypothesis of He's projection theorem \cite{He20}.

The set $\mathbf{G}_{d,k}$ has two natural metrics: the one induced from the natural metric on $\G(d,k)$ invariant under the action of the orthogonal group, and the one induced from the Euclidean metric on $\R^{k\times (d-k)}$. The two metrics induce the same topology and are bi-Lipschitz equivalent on compact subsets of $\mathbf{G}_{d,k}$. Unless otherwise specified, we will always use the Euclidean metric on $\mathbf{G}_{d,k}$.

These considerations extend to the affine Grassmannian $\A(d,k)$ as well. The big cell $\mathbf{A}_{d,k}$ of $\A(d,k)$ is the set of affine $k$-planes $V_{\theta,b} := \{(x,\theta^* x+b) : x \in \R^{k}\}$, with $\theta\in \mathbf{G}_{d,k}$ and $b\in \R^{d-k}$. The big cell is once again Zariski-dense and open. We identify $\mathbf{A}_{d,k}\cong \R^{(d-k)(k+1)}$ by identifying the plane $V_{\theta,b}$ with the pair $(\theta,b) \in \R^{k(d - k) + (d - k)}$. We will use a dyadic system $\{\mathcal{D}_{\delta}(\mathbf{A}_{d,k}) : \delta \in 2^{-\Z}\}$ on $\mathbf{A}_{d,k}$ which is the standard dyadic system on $\R^{(d - k)(k + 1)}$ under the identification above.  The metrics on $\mathbf{A}_{d,k}$ inherited from $\A(d,k)$ and $\R^{(k+1)(d-k)}$ are bi-Lipschitz equivalent on compact subsets of $\mathbf{A}_{d,k}$.

For $\theta\in\mathbf G_{d,k}$ and $(x,y)\in\R^k\times\R^{d-k}$, write $\rho_\theta(x,y):=y-\theta^*x=\pi_{-\theta^*}(y,x)$. The affine coordinates satisfy
\begin{equation}\label{eq:affine-incidence-coordinates}
    V_\theta+(x,y)\longleftrightarrow(\theta,\rho_\theta(x,y)).
\end{equation}
On fixed bounded sets, this coordinate map is Lipschitz, so changing point and direction representatives by $O(r)$ changes the affine coordinates by $O(r)$. Since the dyadic system on $\mathbf A_{d,k}$ is the product system, for $r>0$, any finite $\Theta\subset r\Z^{k\times(d-k)}$, and sets $P_\theta\subset\R^d$,
\begin{equation}\label{eq:affine-incidence-counting}
    \left|\bigcup_{\theta\in\Theta}\{V_\theta+p:p\in P_\theta\}\right|_r
    =\sum_{\theta\in\Theta}|\rho_\theta(P_\theta)|_r.
\end{equation}

Both $\G(d,k)$ and bounded subsets of $\A(d,k)$ can be covered by finitely many compact subsets of big cells (or more properly by finitely many translates of a compact subset of the big cell by elements of the orthogonal group). This allows us to reduce many questions about $\G(d,k)$ and $\A(d,k)$ to questions about $\mathbf{G}_{d,k}$, which is a vector space and hence more convenient to work with. In particular, as mentioned, $\mathfrak{H}_{d,k}$ is a subset of the affine hyperplanes on $\mathbf{G}_{d,k}$, which facilitates the definition and use of the notion of spread-outness introduced in the next subsection.

\subsection{Spread-out sets and measures}

We will consider subsets of $\mathbf{G}_{d,k}$ which, at almost all points and almost all scales, are not too concentrated near planes $H\in \mathfrak{H}_{d,k}$. Related local non-concentration conditions have appeared in Diophantine approximation, notably the absolute decay condition of Kleinbock, Lindenstrauss and Weiss \cite[\S 2]{KLW04}. Here we test only $H\in\mathfrak{H}_{d,k}$ and control the mass of subsets according to their number of singular scales.

\begin{definition}[$\beta$-singular scale]
    Let $\beta\in (0,1]$, let $r \in (0,1]$, let $\Theta \subset \mathbf{G}_{d,k}$, and let $\theta \in \Theta$. We say that $r$ is a \emph{$\beta$-singular scale} for $(\Theta,\theta)$ if there exists $H\in\mathfrak{H}_{d,k}$ such that $\Theta\cap B(\theta,r)\subset H(\beta r)$. We think of this as saying that $\Theta\cap B(\theta,r)$ has generalized diameter at most $\beta r$ in the directions of the hyperplanes $H\in\mathfrak{H}_{d,k}$.

    For $\Theta \subset \R^{k(d-k)}$ and $\delta,\Delta \in 2^{-\N}$ with $\delta \le \Delta$, we denote
    \begin{displaymath}
        \mathfrak{s}_{d,k}(\theta \mid \Theta,\beta,\delta \to \Delta)
        =
        \bigl|\{r \in 2^{-\N} \cap [\delta,\Delta] : r \text{ is a $\beta$-singular scale for } (\Theta,\theta)\}\bigr|.
    \end{displaymath}
    We abbreviate $\mathfrak{s}_{d,k}(\theta \mid \Theta,\beta,\delta) := \mathfrak{s}_{d,k}(\theta \mid \Theta,\beta,\delta \to 1)$ and, when $\delta$ is clear, $\mathfrak{s}_{d,k}(\theta \mid \Theta,\beta) := \mathfrak{s}_{d,k}(\theta \mid \Theta,\beta,\delta)$. We also omit the subscripts $d,k$ when they are clear from context.

    If $\mathcal{J} \subset \mathcal{D}_{\rho}(\mathbf{G}_{d,k})$ is a family of dyadic cubes for some dyadic scale $\rho$, and $\theta \in \bigcup \mathcal{J}$, we write $\mathfrak{s}_{d,k}(\theta \mid \mathcal{J},\beta,\delta \to \Delta):=\mathfrak{s}_{d,k}(\theta \mid \cup \mathcal{J},\beta,\delta \to \Delta)$.
    If $\mathbf{J} \in \mathcal{J}$ is itself a cube, we write
    \begin{displaymath}
        \mathfrak{s}_{d,k}(\mathbf{J} \mid \mathcal{J},\beta,\delta \to \Delta)
        :=
        \inf_{\theta \in \mathbf{J}}\mathfrak{s}_{d,k}(\theta \mid \mathcal{J},\beta,\delta \to \Delta).
    \end{displaymath}
    Thus, when no confusion is likely, we silently identify a family of dyadic cubes with its union in singular-scale counts.
\end{definition}

\begin{definition}[$(\delta,\beta,C)_k$-spread-out measure]\label{def:spread-out-measure-k}
    Let $\delta \in 2^{-\N}$ and $C>0$, $\beta \in (0,1]$. A finite nonzero Borel measure $\nu$ on $\R^k$, $k \geq 1$, is called \emph{$(\delta,\beta,C)_k$-spread-out} if the following holds. For a Borel set $E \subset \R^k$, a point $x \in E$, and dyadic scales $\delta \leq \Delta \leq 1$, write
    \begin{displaymath}
        \mathfrak{s}_k(x \mid E,\beta,\delta \to \Delta)
        :=
        \bigl|\{r \in 2^{-\N} \cap [\delta,\Delta] : \exists H \in \A(k,k-1) \text{ such that } E \cap B(x,r) \subset H(\beta r)\}\bigr|.
    \end{displaymath}
    Let $\mathfrak{s} \in \N_0$, and let $\Theta \subset \spt \nu$ be a Borel set such that
    \begin{displaymath}
        \mathfrak{s}_k(x \mid \Theta,\beta,\delta \to 1) \geq \mathfrak{s} \text{ for all } x \in \Theta.
    \end{displaymath}
    Then,
    \begin{displaymath}
        \nu(\Theta) \leq C \cdot 2^{-\beta \mathfrak{s}}.
    \end{displaymath}
    We say that a nonempty bounded set $P \subset \R^k$ is \emph{$(\delta,\beta,C)_k$-spread-out} if $\mathcal D_\delta(P)$ satisfies Definition \ref{def:spread-out-cube-family}, with all affine hyperplanes in $\R^k$ in place of the model hyperplanes.
\end{definition}

Next, we extend this definition to measures on $\mathbf{G}_{d,k}$.
\begin{definition}[$(\delta,\beta,C)_{d,k}$-spread-out measure]\label{def:perfectSet}
    Let $\delta \in 2^{-\N}$, $C>0$, and $\beta\in(0,1]$. A finite nonzero Borel measure $\mu$ on $\mathbf{G}_{d,k}$ is called \emph{$(\delta,\beta,C)_{d,k}$-spread-out} if the following holds. Let $\mathfrak{s} \in \N_0$, and let $\Theta \subset \spt \mu$ be a Borel set such that
    \begin{displaymath}
        \mathfrak{s}_{d,k}(\theta \mid \Theta,\beta,\delta \to 1) \geq \mathfrak{s} \text{ for all } \theta \in \Theta.
    \end{displaymath}
    Then,
    \begin{displaymath}
        \mu(\Theta) \leq C \cdot 2^{-\beta \mathfrak{s}}.
    \end{displaymath}
    We say that a nonempty bounded set $\Theta \subset \mathbf{G}_{d,k}$ is \emph{$(\delta,\beta,C)_{d,k}$-spread-out} if $\mathcal D_\delta(\Theta)$ is $(\delta,\beta,C)_{d,k}$-spread-out in the sense of Definition \ref{def:spread-out-cube-family}.

    We omit the subscripts $d,k$ when they are clear from context.
\end{definition}

Recall that spread-out families of dyadic cubes were defined in Definition \ref{def:spread-out-cube-family}. In the singular-scale notation, its hypothesis is $\mathfrak{s}_{d,k}(\mathbf J\mid\mathcal J',\beta,\delta\to1)\ge\mathfrak s$ for every $\mathbf J\in\mathcal J'$.

In Section \ref{s:uniformly-perfect-are-spread-out}, we prove that uniformly perfect measures on $\mathbf{G}_{2,1}\cong\R$ are spread-out. Self-similar measures on $\mathbf{G}_{d,k}$ not supported on hyperplanes are spread-out by \cite[Corollary~4.4]{SpreadOutFurstGeneral}.

\begin{remark}\label{rem:spread-out-k-vs-kplus1k}
    Under the natural identification $\mathbf{G}_{k+1,k}=\R^{k\times 1}\cong\R^k$, the notions of $(\delta,\beta,C)_k$-spread-out and $(\delta,\beta,C)_{k+1,k}$-spread-out coincide. Indeed, for $(d,k)=(k+1,k)$ the parameter $y$ in $\X_{k,1}$ is a nonzero scalar, so the model hyperplanes are given by $H_{[x,y,t]}=\{a\in\R^k:a y\cdot x=t\}=\{a\in\R^k:a\cdot(yx)=t\}$, which are precisely the affine hyperplanes in $\R^k$.
\end{remark}

\begin{remark}
    Note that if $\mu$ is $(\delta,\beta,C)_{d,k}$-spread-out, then it is also $(\Delta,\beta',C')_{d,k}$-spread-out for all $\Delta \in 2^{-\N} \cap [\delta,1]$, all $0<\beta'\le\beta$ and all $C'\ge C$. We will often use this without explicit mention.
\end{remark}

The next lemma captures a key property of the spread-out property: it is preserved under projections. In fact, it is this crucial property that motivates the definition of $(\delta,\beta,C)_{d,k}$-spread-out measures in the first place.
\begin{lemma} \label{lem:proj-spread-out}
    Let $\mu$ be a $(\delta,\beta,C)_{d,k}$-spread-out measure on $\mathbf{G}_{d,k}$. For $b\in\R^{d-k}$ with $|b|=1$, define $L_b(\theta):=\theta\cdot b$, and let $\nu_b:= L_b\mu$. Then $\nu_b$ is $(\delta,\beta,C)_k$-spread-out.
\end{lemma}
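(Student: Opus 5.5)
The plan is to pull back: given a Borel set $E\subset\spt\nu_b$ all of whose points have at least $\mathfrak s$ singular scales (for affine hyperplanes in $\R^k$), take $\Theta:=L_b^{-1}(E)\cap\spt\mu$. This set is Borel, satisfies $\nu_b(E)=\mu(\Theta)$, and lies in $\spt\mu$. So it suffices to show
\begin{displaymath}
\mathfrak{s}_{d,k}(\theta\mid\Theta,\beta,\delta\to1)\ge\mathfrak{s}_k(L_b\theta\mid E,\beta,\delta\to1)\quad\text{for every }\theta\in\Theta.
\end{displaymath}
Once this holds, the spread-out property of $\mu$ gives $\nu_b(E)=\mu(\Theta)\le C2^{-\beta\mathfrak s}$, as required. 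A minor point is that $E$ need not lie in $L_b(\spt\mu)$ exactly; but $\spt\nu_b\subset\overline{L_b(\spt\mu)}$, and points of $E$ outside $L_b(\spt\mu)$ are not hit by $\Theta$ anyway. So only $\nu_b(E)=\mu(L_b^{-1}(E))=\mu(\Theta)$ is used, and this holds because $\mu$ is concentrated on its support.

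The key step is scale-by-scale. Fix $\theta\in\Theta$, write $x=L_b\theta=\theta b$, and let $r$ be a $\beta$-singular scale for $(E,x)$. Then there is an affine hyperplane $H=\{a\in\R^k: a\cdot v=u\}$, with $|v|=1$, such that $E\cap B(x,r)\subset H(\beta r)$. We show $r$ is $\beta$-singular for $(\Theta,\theta)$ using the model hyperplane $H'=H_{[v,b,u]}=\{\omega: v\cdot\omega\cdot b=u\}$. This is the reason model hyperplanes are defined the way they are: $v\cdot\omega\cdot b=(\omega b)\cdot v=L_b(\omega)\cdot v$, so $H'=L_b^{-1}(H)$.

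Two facts are needed. First, $L_b$ is $1$-Lipschitz, because the operator norm of $\omega\mapsto\omega b$ is at most $|b|=1$ in the Euclidean (Frobenius) metric. Hence $\omega\in\Theta\cap B(\theta,r)$ implies $L_b\omega\in E\cap B(x,r)\subset H(\beta r)$, i.e.\ $|L_b(\omega)\cdot v-u|<\beta r$. Second, we must convert this into a distance bound: $\dist(\omega,H')=|v\cdot\omega\cdot b-u|/\|v b^{*}\|_{F}$, since $H'$ is the level set of the linear functional $\omega\mapsto\langle\omega, vb^*\rangle$. Moreover $\|vb^*\|_F=|v||b|=1$. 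Therefore $\dist(\omega,H')<\beta r$, so $\Theta\cap B(\theta,r)\subset H'(\beta r)$, and $r$ is singular for $(\Theta,\theta)$. Summing over dyadic $r\in[\delta,1]$ gives the inequality above.

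The only point needing care is the normalization in that distance computation. One must check that the Euclidean metric on $\mathbf G_{d,k}$ is the Frobenius norm and that $|b|=1$, $|v|=1$ give exactly unit gradient, so that the factor $\beta$ is not degraded. Everything else is bookkeeping: measurability of $\Theta$, and that balls are open or closed consistently in both definitions.
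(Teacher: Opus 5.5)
Your proposal is correct and follows the paper's proof essentially step for step. Both arguments pull back to $L_b^{-1}(E)\cap\spt\mu$, use that $L_b$ is $1$-Lipschitz, and identify $L_b^{-1}(H)$ with the model hyperplane $H_{[v,b,u]}$, whose defining functional $\omega\mapsto\langle\omega,vb^*\rangle$ has unit norm. The only cosmetic difference is that you compute $\dist(\omega,H_{[v,b,u]})$ exactly, whereas the paper bounds it by exhibiting the point $\omega-((\omega b)\cdot v-u)\,v\otimes b$ in the hyperplane.
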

\begin{proof}
    Let $\Theta \subset \spt \nu_b$ be a Borel set such that $\mathfrak{s}_k(x \mid \Theta,\beta,\delta \to 1) \geq \mathfrak{s}$ for all $x \in \Theta$. We need to show that $\nu_b(\Theta) \leq C 2^{-\beta \mathfrak{s}}$. Write $F:=L_b^{-1}(\Theta)\cap \spt \mu$. Then $\nu_b(\Theta)=\mu(F)$ by definition of push-forward measure. We claim that $\mathfrak{s}_{d,k}(\theta \mid F,\beta,\delta \to 1) \geq \mathfrak{s}$ for all $\theta \in F$.
    To see this, fix $\theta \in F$, let $x:=L_b(\theta)\in \Theta$, and let $r \in 2^{-\N}\cap[\delta,1]$ be a scale which is $\beta$-singular for $(\Theta,x)$ in the sense of Definition \ref{def:spread-out-measure-k}. Thus there exists an affine hyperplane $H\in\A(k,k-1)$ such that
    \begin{displaymath}
        \Theta\cap B(x,r)\subset H(\beta r).
    \end{displaymath}
    Now $L_b$ is $1$-Lipschitz, so
    \begin{displaymath}
        L_b(F\cap B(\theta,r))\subset \Theta\cap B(x,r)\subset H(\beta r).
    \end{displaymath}
    Write $H=\{z\in\R^k:z\cdot e=t\}$ with $|e|=1$. Then 
    \begin{displaymath}  
        L_b^{-1}(H)=\{\omega\in\R^{k \times (d-k)}:(\omega \cdot b) \cdot e=t\}=H_{[e,b,t]},
    \end{displaymath} 
    which is an element of $\mathfrak{H}_{d,k}$. If $\omega\in F\cap B(\theta,r)$, then $L_b(\omega)\in H(\beta r)$, so $|(\omega \cdot b)\cdot e-t|<\beta r$. Since the point $\omega-((\omega \cdot b)\cdot e-t)\,e\otimes b$ lies in $L_b^{-1}(H)$, we get
    \begin{displaymath}
        \dist(\omega,L_b^{-1}(H))\le |(\omega \cdot b)\cdot e-t|\,\|e\otimes b\|<\beta r.
    \end{displaymath}
    Thus $\omega\in \bigl(L_b^{-1}(H)\bigr)(\beta r)$, and hence
    \begin{displaymath}
        F\cap B(\theta,r)\subset \bigl(L_b^{-1}(H)\bigr)(\beta r).
    \end{displaymath}
    This proves the claim.

    Since $\mu$ is $(\delta,\beta,C)_{d,k}$-spread-out, it follows that $\mu(F)\le C 2^{-\beta \mathfrak{s}}$. Recalling that $\mu(F)=\nu_b(\Theta)$, this is the desired conclusion.
\end{proof}

\subsection{From local to global and global to local non-concentration}

Spread-outness is a notion of local non-concentration near model hyperplanes. The next lemma shows that this local information implies a global Frostman type bound for neighborhoods of $H\in\mathfrak{H}_{d,k}$.
\begin{lemma} \label{lem:spread-out-global-non-concentration}
    Let $\delta \in 2^{-\N}$, let $\beta \in (0,1]$, and let $\mu$ be a finite nonzero Borel measure on $\mathbf{G}_{d,k}$ which is $(\delta,\beta,C)_{d,k}$-spread-out. Then for every $[x,y,t] \in \X_{k,d-k}$ and every dyadic scale $r \in 2^{-\N} \cap [\delta,1]$,
    \begin{displaymath}
        \mu\bigl(H_{[x,y,t]}(r)\bigr) \lesssim_{\beta} C \cdot r^{\beta}.
    \end{displaymath}
\end{lemma}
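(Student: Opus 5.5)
The plan is to apply the spread-out hypothesis to the single set $\Theta:=H(r)\cap\spt\mu$, where $H:=H_{[x,y,t]}$. We will show that every $\theta\in\Theta$ has at least $\mathfrak s\approx\log(1/r)-O_\beta(1)$ scales that are $\beta$-singular for $(\Theta,\theta)$. The definition of spread-outness then gives
\begin{displaymath}
\mu(H(r))=\mu(\Theta)\le C2^{-\beta\mathfrak s}\lesssim_\beta C r^{\beta}.
\end{displaymath}
If $\Theta$ fails to be Borel in some degenerate situation, we can replace the open neighborhood by a closed one of comparable width. Since $\Theta$ is contained in $\spt\mu$, the definition applies directly.

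The key geometric step is the following. Fix $\theta\in\Theta$ and a dyadic scale $\rho\in[\delta,1]$ with $\rho\ge r/\beta$; in fact we will take $\rho\ge 2r/\beta$ for safety. Then
\begin{displaymath}
\Theta\cap B(\theta,\rho)\subset H(r)\subset H(\beta\rho).
\end{displaymath}
Here $H$ itself is an element of $\mathfrak H_{d,k}$, so the same hyperplane works as the witness at every scale. Every such $\rho$ is therefore $\beta$-singular for $(\Theta,\theta)$. No rescaling subtlety arises, because the neighborhood condition only requires Euclidean distance less than $\beta\rho$, and $\dist(\omega,H)<r\le\beta\rho$.

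It remains to count the scales. The dyadic scales in $[\max\{\delta, r/\beta\},1]$ number about $\log(1/r)-\log(1/\beta)-1$, using $r\ge\delta$. Hence $\mathfrak s\ge\log(1/r)-\log(1/\beta)-2$, with $\mathfrak s:=0$ if this is negative. This gives
\begin{displaymath}
2^{-\beta\mathfrak s}\le 2^{2\beta}\beta^{-\beta}r^{\beta}\lesssim_\beta r^\beta.
\end{displaymath}
In the regime $r\gtrsim\beta$ the bound is trivial, since it only needs to hold up to constants and reduces to $\mu(\spt\mu)\le C$, which is the case $\mathfrak s=0$ of the definition.

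I do not expect a real obstacle. The only points needing care are the case analysis when $r/\beta>1$, where no scales are available and we use the $\mathfrak s=0$ bound, and ensuring the strict inequality in $H(\beta\rho)$, handled by the factor $2$ margin above.
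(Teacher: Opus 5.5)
Your proof is correct and is essentially the paper's argument: apply the spread-out condition to $\Theta=\spt\mu\cap H_{[x,y,t]}(r)$, observe that every dyadic $\rho\in[r/\beta,1]$ is $\beta$-singular with $H_{[x,y,t]}$ itself as the witness, and handle $r>\beta$ via the case $\mathfrak s=0$. Your two hedges are unnecessary: $\Theta$ is always Borel, being an open set intersected with a closed one, and $\dist(\omega,H)<r\le\beta\rho$ already gives the strict inclusion without the factor-$2$ margin.
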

\begin{proof}
    Fix $[x,y,t] \in \X_{k,d-k}$ and $r \in 2^{-\N} \cap [\delta,1]$, and write $\Theta := \spt \mu \cap H_{[x,y,t]}(r)$. The spread-out condition with $\mathfrak{s}=0$ gives $\mu(\mathbf{G}_{d,k})\le C$. If $r>\beta$, then $\mu(\Theta)\le \mu(\mathbf{G}_{d,k}) \le \beta^{-\beta}r^\beta C$, so the claim is immediate. Thus we may assume that $r\le \beta$.

    Let $\theta \in \Theta$, and let $\rho \in 2^{-\N} \cap [r/\beta,1]$. Then
    \begin{displaymath}
        \Theta \cap B(\theta,\rho) \subset H_{[x,y,t]}(r) \subset H_{[x,y,t]}(\beta \rho),
    \end{displaymath}
    so $\rho$ is a $\beta$-singular scale for $(\Theta,\theta)$. Applying the $(\delta,\beta,C)_{d,k}$-spread-out property of $\mu$, we obtain
    \begin{displaymath}
        \mu(\Theta) \lesssim_{\beta} C 2^{-\beta\lfloor-\log(r/\beta)\rfloor}
        \lesssim_{\beta} C r^{\beta}.
    \end{displaymath}
    
    
\end{proof}

Next, we show that if a measure $\mu$ is spread-out at scale $\delta$, then for all intermediate scales $\Delta$ one can find a $\Delta$-cube such that the renormalized restriction of $\mu$ to the cube is spread-out at scale $\delta/\Delta$.
\begin{lemma}\label{lem:spread-out-sub-cube}
Let $\mu$ be a $(\delta,\beta,C)_{d,k}$-spread-out probability measure on $\mathbf G_{d,k}$, and let $\Delta\in2^{-\N}\cap[\delta,1]$. There is a family $\mathcal Q\subset\mathcal D_\Delta(\mathbf G_{d,k})$ with $\mu(\bigcup\mathcal Q)\ge1/2$ and $\mu(Q)>0$ for $Q\in\mathcal Q$ such that $S_Q\,\frac{\mu|_Q}{\mu(Q)}$ is $(\delta/\Delta,\beta,C')_{d,k}$-spread-out for every $Q\in\mathcal Q$, where $C'\lesssim_{d,k}C(1+\log(\Delta/\delta))$ and $S_Q$ is the homothety mapping $Q$ onto $[0,1]^{k\times(d-k)}$.

If $\Theta\subset\mathbf G_{d,k}$ is a bounded $(\delta,\beta,C)_{d,k}$-spread-out set, there is a family $\mathcal Q\subset\mathcal D_\Delta(\Theta)$ with $\sum_{Q\in\mathcal Q}|\Theta\cap Q|_\delta\ge|\Theta|_\delta/2$ such that $S_Q(\Theta\cap Q)$ is $(\delta/\Delta,\beta,C')_{d,k}$-spread-out for every $Q\in\mathcal Q$.
\end{lemma}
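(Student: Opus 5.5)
The idea is to bound, for each point $\theta$, the number of singular scales of a set relative to the whole measure by the number relative to its localization, and then discard the cubes where the localized measure fails to be spread-out. Scales $r\le\Delta$ are where localization to a $\Delta$-cube $Q$ interacts with the singular-scale count. Scales in $(\Delta,1]$ number at most $\log(1/\Delta)$, and they are the obstruction to passing information upward. So the argument goes by contradiction via a union bound over "bad" cubes.

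\emph{Rescaling.} Fix $Q\in\mathcal D_\Delta$ and $\Theta\subset \spt\mu\cap Q$. The homothety $S_Q$ maps model hyperplanes to model hyperplanes, since $\theta\mapsto \lambda\theta+c$ sends $H_{[x,y,t]}$ to $H_{[x,y,\lambda t+x\cdot c\cdot y]}$, and it maps $r$-balls to $r/\Delta$-balls. Hence $\rho\in[\delta/\Delta,1]$ is $\beta$-singular for $(S_Q\Theta,S_Q\theta)$ iff $\Delta\rho$ is $\beta$-singular for $(\Theta,\theta)$. The localized count therefore equals $\mathfrak s(\theta\mid\Theta,\beta,\delta\to\Delta)$, up to an $O(1)$ boundary effect. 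The issue is that $B(\theta,r)$ may stick out of $Q$, while $\Theta\subset Q$ anyway, so this is harmless. Note that $\mathfrak s(\theta\mid\Theta,\beta,\delta\to 1)\ge \mathfrak s(\theta\mid\Theta,\beta,\delta\to\Delta)$, so no upper scales are needed.

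\emph{Bad cubes.} Call $Q$ bad if $\mu_Q:=S_Q\frac{\mu|_Q}{\mu(Q)}$ is not $(\delta/\Delta,\beta,C')$-spread-out. Then for some $\mathfrak s_Q\in\N_0$ there is a Borel $\Theta_Q\subset\spt\mu\cap Q$ with every point having at least $\mathfrak s_Q$ singular scales in $[\delta,\Delta]$ relative to $\Theta_Q$, and
\begin{displaymath}
\mu(\Theta_Q)> C'2^{-\beta\mathfrak s_Q}\mu(Q).
\end{displaymath}
We have $\mathfrak s_Q\le \log(\Delta/\delta)+1=:N$, so we group the bad cubes by the value $\mathfrak s\in\{0,\dots,N\}$. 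For fixed $\mathfrak s$, let $\Theta^{\mathfrak s}:=\bigcup_{Q \text{ bad},\,\mathfrak s_Q=\mathfrak s}\Theta_Q$.

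\emph{Main obstacle.} The key step is to check that points of $\Theta^{\mathfrak s}$ still have $\ge\mathfrak s$ singular scales relative to the union, since balls $B(\theta,r)$ with $r\le\Delta$ may meet neighbouring cubes. I would handle this by passing first to a sub-lattice: split $\mathcal D_\Delta$ into $O_{d,k}(1)$ classes of cubes at mutual distance $\ge\Delta$, for example by coordinates mod 2 after replacing $\Delta$ by $\Delta/2$-dilations. One can also simply use $B(\theta,r)$ with $r\le\Delta/2$, losing one scale, which is absorbed in constants via $2^{\beta}$. Within a class, $B(\theta,r)\cap\Theta^{\mathfrak s}=B(\theta,r)\cap\Theta_Q$, so the singular scales persist. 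The spread-out property of $\mu$ then gives
\begin{displaymath}
\mu(\Theta^{\mathfrak s}_{\text{class}})\le C2^{-\beta\mathfrak s}.
\end{displaymath}
On the other hand,
\begin{displaymath}
\mu(\Theta^{\mathfrak s}_{\text{class}})> C'2^{-\beta\mathfrak s}\sum\mu(Q),
\end{displaymath}
so the bad cubes with that $\mathfrak s$ in that class have total mass $<C/C'$.

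\emph{Conclusion.} Summing over the $O(1)$ classes and the $N+1$ values of $\mathfrak s$, the bad mass is $\lesssim C(N+1)/C'$. Choosing $C'\sim_{d,k}C(1+\log(\Delta/\delta))$ makes this at most $1/2$. Taking $\mathcal Q$ to be the good cubes of positive mass then gives the first statement. The set version is identical: apply the same argument to the counting measure on $\mathcal D_\delta(\Theta)$, normalized, or repeat it verbatim with cardinalities in place of $\mu$.
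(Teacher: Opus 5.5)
Your proposal is correct and is essentially the paper's argument: pull back counterexamples on the bad cubes, sort them by the value of $\mathfrak s$ and by a bounded coloring of $\mathcal D_\Delta$ into well-separated classes, take unions within a class (where singular scales $r\le\Delta$ persist), and test against the spread-outness of $\mu$; the paper phrases this as a contradiction after pigeonholing a single $\mathfrak s$ and a single class, while you sum the same bound over all $O_{d,k}(1+\log(\Delta/\delta))$ pairs, which is equivalent. Two small remarks: the correspondence of singular scales under $S_Q$ is exact, with no boundary loss, and it is the separation of the color classes, not restricting to $r\le\Delta/2$ on its own, that stops neighbouring cubes from destroying singular scales.
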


\begin{proof}
Suppose that the cubes $Q$ with $\mu(Q)>0$ for which $S_Q\,\frac{\mu|_Q}{\mu(Q)}$ is not $(\delta/\Delta,\beta,C')_{d,k}$-spread-out have total $\mu$-mass greater than $1/2$. For each such $Q$, pull a counterexample back under $S_Q$ to obtain a Borel set $E_Q\subset Q\cap\spt\mu$ and an integer $\mathfrak s_Q\in[1,\log(\Delta/\delta)+1]$ such that
\begin{equation}\label{eq:spread-out-sub-cube-counterexamples}
\begin{aligned}
    \mathfrak s_{d,k}(x\mid E_Q,\beta,\delta\to\Delta)&\ge\mathfrak s_Q,
        &&x\in E_Q,\\
    \mu(E_Q)&>C'2^{-\beta\mathfrak s_Q}\mu(Q).
\end{aligned}
\end{equation}
Pigeonholing the values of $\mathfrak s_Q$ with weights $\mu(Q)$, followed by the choice of one class from a bounded coloring of the $\Delta$-cubes, yields an integer $\mathfrak s\ge1$ and a family $\mathcal Q\subset\mathcal D_\Delta(\mathbf G_{d,k})$ such that
\begin{equation}\label{form19}
    \mathfrak s_Q=\mathfrak s\quad(Q\in\mathcal Q),
    \qquad
    \sum_{Q\in\mathcal Q}\mu(Q)\gtrsim_{d,k}(1+\log(\Delta/\delta))^{-1},
\end{equation}
and distinct cubes in $\mathcal Q$ are $2\Delta$-separated. Set $E:=\bigcup_{Q\in\mathcal Q}E_Q$. By the $2\Delta$-separation,
 \eqref{eq:spread-out-sub-cube-counterexamples} yields
\begin{displaymath}
    \mathfrak s_{d,k}(x\mid E,\beta,\delta\to1)\ge\mathfrak s,
    \qquad x\in E.
\end{displaymath}
The spread-outness of $\mu$ and \eqref{form19} now imply
\begin{displaymath}
    C2^{-\beta\mathfrak s}
    \ge\mu(E)
    =\sum_{Q\in\mathcal Q}\mu(E_Q)
    >C'2^{-\beta\mathfrak s}\sum_{Q\in\mathcal Q}\mu(Q)
    \gtrsim_{d,k}\frac{C'2^{-\beta\mathfrak s}}{1+\log(\Delta/\delta)}.
\end{displaymath}
Choosing $C'\lesssim_{d,k}C(1+\log(\Delta/\delta))$ appropriately yields a contradiction. For the set conclusion, use subfamilies of $\mathcal D_\delta(\Theta\cap Q)$ in place of $E_Q$ and normalized cube counts in place of $\mu$. The map $S_Q$ sends these $\delta$-cubes to dyadic $(\delta/\Delta)$-cubes, so the argument applies without further changes.
\end{proof}

\subsection{Stability of Spread-Out Measures}

We conclude this section with several technical lemmas of the type ``spread-out measures are stable under elementary operations''.

\begin{lemma}[Subset stability]\label{lem:spread-out-subset}
    If $\mu$ is $(\delta,\beta,C)_{d,k}$-spread-out and $A \subset \mathbf{G}_{d,k}$ is a Borel set with $\mu(A)>0$, then $\mu|_A$ is $(\delta,\beta,C)_{d,k}$-spread-out. Consequently, the normalized restriction $\mu(A)^{-1}\mu|_A$ is $(\delta,\beta,C/\mu(A))_{d,k}$-spread-out.

    If $\Theta$ is a bounded $(\delta,\beta,C)_{d,k}$-spread-out set and $\varnothing\ne\Theta'\subset\Theta$, then $\Theta'$ is $(\delta,\beta,C|\Theta|_\delta/|\Theta'|_\delta)_{d,k}$-spread-out. The same holds for subfamilies of dyadic $\delta$-cubes, with cardinalities in place of covering numbers.
\end{lemma}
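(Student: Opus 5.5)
The argument is a direct check against the definitions. The key observation is that being a $\beta$-singular scale for $(\Theta,\theta)$ is determined entirely by the set $\Theta$ and the point $\theta$. The measure enters only through its support, and only in the hypothesis $\Theta\subset\spt$.

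For the measure statement, let $\nu:=\mu|_A$. Let $\Theta\subset\spt\nu$ be Borel with $\mathfrak{s}_{d,k}(\theta\mid\Theta,\beta,\delta\to1)\ge\mathfrak s$ for all $\theta\in\Theta$. We have $\spt\nu\subset\spt\mu$, since $\mu|_A\le\mu$ and so $\mu|_A$ gives zero mass to any open set that $\mu$ gives zero mass to. Hence $\Theta$ is an admissible test set for $\mu$, and the singular-scale counts are identical because they depend only on $\Theta$. Therefore
\[\nu(\Theta)=\mu(\Theta\cap A)\le\mu(\Theta)\le C2^{-\beta\mathfrak s}.\]
The normalized restriction is handled the same way, except that the final bound is divided by $\mu(A)$. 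This gives the constant $C/\mu(A)$.

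For sets and cube families, recall the criterion in Definition \ref{def:spread-out-cube-family}. Let $\mathcal J'\subset\mathcal D_\delta(\Theta')$ be a subfamily in which every $\theta\in\bigcup\mathcal J'$ has at least $\mathfrak s$ singular scales for $(\mathcal J',\theta)$. Since $\mathcal D_\delta(\Theta')\subset\mathcal D_\delta(\Theta)$, the family $\mathcal J'$ is also a subfamily of $\mathcal D_\delta(\Theta)$. The singular-scale condition refers only to $\mathcal J'$, so the spread-outness of $\Theta$ gives
\[|\mathcal J'|\le C2^{-\beta\mathfrak s}|\Theta|_\delta=\bigl(C|\Theta|_\delta/|\Theta'|_\delta\bigr)2^{-\beta\mathfrak s}|\Theta'|_\delta.\]
This is exactly the required bound for $\Theta'$. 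The same computation, with $|\mathcal J|$ in place of $|\Theta|_\delta$, applies to subfamilies $\mathcal J_0\subset\mathcal J$ of dyadic cubes.

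The only point needing care is the support inclusion $\spt(\mu|_A)\subset\spt\mu$, together with the fact that singular scales are intrinsic to the test set. No step is a genuine obstacle.
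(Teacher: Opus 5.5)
Your proposal is correct and is the same direct verification from the definitions that the paper invokes. You correctly identify the two points that make it work: $\spt(\mu|_A)\subset\spt\mu$, and singular-scale counts depend only on the test set, so a test set for $\mu|_A$ (or a subfamily of $\mathcal D_\delta(\Theta')$) is already a valid test set for $\mu$ (or for $\mathcal D_\delta(\Theta)$).
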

\begin{proof}
    This follows directly from the definitions.
\end{proof}

\begin{lemma}[Discretization stability]\label{lem:spread-out-discretization}
    Let $\delta \in 2^{-\N}$.
    \begin{enumerate}[label=\textup{(\roman*)}, ref=(\roman*)]
        \item \label{it:spread-out-discretization-measures} If $\mu$ is $(\delta,\beta,C)_{d,k}$-spread-out, then $\mu^{(\delta)}$ is $(\delta,\beta/4,C')_{d,k}$-spread-out with $C' \lesssim_{d,k,\beta} C$.
        \item \label{it:spread-out-discretization-cubes} If $\Theta\subset\mathbf G_{d,k}$ is a bounded $\delta$-separated $(\delta,\beta,C)_{d,k}$-spread-out set, then $m_\Theta$ is $(\delta,\beta/4,C')_{d,k}$-spread-out with $C'\lesssim_{d,k,\beta}C$. Conversely, if $\Theta\subset\delta\Z^{k(d-k)}$ is finite and nonempty and $m_\Theta$ is $(\delta,\beta,C)_{d,k}$-spread-out, then $\Theta$ is $(\delta,\beta,C)_{d,k}$-spread-out.
    \end{enumerate}
\end{lemma}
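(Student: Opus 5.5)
The plan is to compare singular scales of a set $E\subset\spt\mu^{(\delta)}$, which is a finite subset of $\delta\Z^{k(d-k)}$, with singular scales of the corresponding union of $\delta$-cubes intersected with $\spt\mu$, at the cost of losing a bounded number of scales and passing from $\beta$ to $\beta/4$. For \ref{it:spread-out-discretization-measures}, fix $\mathfrak s$ and $E\subset\spt\mu^{(\delta)}$ with $\mathfrak s(z\mid E,\beta/4,\delta\to1)\ge\mathfrak s$ for all $z\in E$. Put $F:=\spt\mu\cap\bigcup_{z\in E}(z+[0,\delta)^{k(d-k)})$, so that $\mu^{(\delta)}(E)=\mu(F)$. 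It then suffices to show that every $\theta\in F$ satisfies $\mathfrak s(\theta\mid F,\beta,\delta\to1)\ge\mathfrak s-O_{\beta}(1)$. Indeed, the spread-out property of $\mu$ then gives $\mu(F)\le C2^{-\beta(\mathfrak s-O(1))}\lesssim_\beta C2^{-(\beta/4)\mathfrak s}$.

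The key geometric step is a scale comparison. Let $\theta\in F$ lie in the cube with corner $z\in E$, and let $r$ be a $(\beta/4)$-singular scale for $(E,z)$ with $r\ge A\delta$, where $A=A(\beta,d,k)$ is a large constant. Take $H\in\mathfrak H_{d,k}$ with $E\cap B(z,r)\subset H(\beta r/4)$. Consider the scale $r'$, the largest dyadic scale with $r'\le r/2$.
\begin{itemize}
\item Any $\omega\in F\cap B(\theta,r')$ lies in a $\delta$-cube with corner $z'\in E$. Then $|z'-z|\le r'+2\sqrt{k(d-k)}\,\delta<r$, so $z'\in H(\beta r/4)$.
\item Hence $\omega\in H(\beta r/4+\sqrt{k(d-k)}\,\delta)\subset H(\beta r')$, provided $A\gg_{\beta} 1$. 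Here we use $\beta r/4\le\beta r'/2+$ (a factor-2 slack), which is where the loss from $\beta$ to $\beta/4$ is absorbed.
\end{itemize}
Thus $r\mapsto r'$ is an injective map from the singular scales of $(E,z)$ in $[A\delta,1]$ to the $\beta$-singular scales of $(F,\theta)$ in $[\delta,1]$. Only $O(\log A)$ scales are lost below $A\delta$, plus possibly one at the top. This gives the claim.

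For the first half of \ref{it:spread-out-discretization-cubes}, I would apply the same argument with $\mu=m$, the normalized counting measure on the cube family $\mathcal D_\delta(\Theta)$. The correspondence between subsets of $\Theta$ and subfamilies of cubes loses only a multiplicative constant, by $\delta$-separation: each $\delta$-cube contains $O(1)$ points of $\Theta$ and vice versa. The same shift-of-scale argument then yields the claim. For the converse, given $\mathcal J'\subset\mathcal D_\delta(\Theta)$, we have $\Theta\subset\delta\Z^{k(d-k)}$, so the cubes are exactly $z+[0,\delta)^{k(d-k)}$ with $z\in\Theta$. The corner set $E=z\mathcal J'$ satisfies $E\cap B(z,r)\subset(\bigcup\mathcal J')\cap B(z,r)$ for $z\in E$. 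So every singular scale of $(\mathcal J',z)$ is singular for $(E,z)$ with the same $\beta$. Applying spread-outness of $m_\Theta$ gives $|\mathcal J'|/|\Theta|=m_\Theta(E)\le C2^{-\beta\mathfrak s}$, as required.

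The main obstacle is the bookkeeping in the scale comparison. One has to choose $A$ and the passage $r\to r/2$ so that the neighborhood parameter fits under $\beta r'$ while distinct scales remain distinct. One also has to be careful that singular scales for the cube family are defined via infimum over points of the cube. Everything else follows directly from the definitions.
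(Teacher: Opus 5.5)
Your proposal is correct and follows the paper's proof essentially step for step. The paper likewise shows that every $(\beta/4)$-singular scale $r\ge 4\sqrt{n}\,\delta/\beta$ for $(E,\eta_x)$ gives the $\beta$-singular scale $r/2$ for $(F,x)$, losing at most $K\approx\log(4\sqrt{n}/\beta)$ scales, and then derives (i), the first half of (ii) via $\delta$-separation, and the converse via $E=\Theta\cap\bigcup\mathcal J'$ exactly as you do. Two harmless remarks: no scale is actually lost at the top, and $\spt\mu^{(\delta)}$ is countable but need not be finite.
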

\begin{proof}
    Let $n:=k(d-k)$ and $K:=\max\left\{1,\left\lceil \log\frac{4\sqrt{n}}{\beta}\right\rceil\right\}$.

    We first record the common geometric input. Let $E \subset \R^n$, and let $F \subset \bigcup\mathcal D_\delta(E)$. Assume that
    \begin{displaymath}
        \mathfrak{s}_{d,k}(\eta \mid E,\beta/4,\delta \to 1) \geq \mathfrak{s}, \qquad \eta \in E.
    \end{displaymath}
    Then every $x \in F$ satisfies
    \begin{equation}\label{tuomas1}
        \mathfrak{s}_{d,k}(x \mid F,\beta,\delta \to 1) \geq \mathfrak{s}-K.
    \end{equation}
    Indeed, fix $x \in F$, and choose $\eta_x \in E\cap\mathcal D_\delta(x)$. Let $r \in 2^{-\N}\cap[\delta,1]$ be a $(\beta/4)$-singular scale for $(E,\eta_x)$ with $r\ge 4\sqrt{n}\delta\beta^{-1}$. Then there exists $[u,v,t]\in \X_{k,d-k}$ such that
    \begin{displaymath}
        E \cap B(\eta_x,r)\subset H_{[u,v,t]}((\beta/4)r).
    \end{displaymath}
    We claim that $r/2$ is a $\beta$-singular scale for $(F,x)$. Indeed, let $y \in F \cap B(x,r/2)$, and choose $\eta_y \in E\cap\mathcal D_\delta(y)$. We have
    \begin{displaymath}
        |\eta_y-\eta_x| \leq |\eta_y-y| + |y-x| + |x-\eta_x|
        \leq 2\sqrt{n}\delta + r/2
        \leq r.
    \end{displaymath}
    Hence $\eta_y \in E \cap B(\eta_x,r)$, so $\eta_y \in H_{[u,v,t]}((\beta/4)r)$. Since $|\eta_y-y|\leq \sqrt{n}\delta \leq (\beta/4)r$, it follows that
    \begin{displaymath}
        y \in H_{[u,v,t]}\bigl((\beta/2)r\bigr)=H_{[u,v,t]}\bigl(\beta(r/2)\bigr).
    \end{displaymath}
    Therefore
    \begin{displaymath}
        F \cap B(x,r/2)\subset H_{[u,v,t]}\bigl(\beta(r/2)\bigr),
    \end{displaymath}
    which proves the claim.

    The only $(\beta/4)$-singular scales for $(E,\eta_x)$ which may fail to produce dyadic $\beta$-singular scales in $[\delta,1]$ for $(F,x)$ are those below $(4\sqrt{n}/\beta)\delta$, and there are at most $K$ such scales. This proves the lower bound \eqref{tuomas1}.

    To prove \ref{it:spread-out-discretization-measures}, let $E \subset \spt \mu^{(\delta)}$ be a Borel set such that $\mathfrak{s}_{d,k}(\eta \mid E,\beta/4,\delta \to 1) \geq \mathfrak{s}$ for all $\eta \in E$. Write $F := \Bigl(\bigcup_{\eta \in E}(\eta+[0,\delta)^n)\Bigr) \cap \spt \mu$. By construction, $\mu^{(\delta)}(E)=\mu(F)$. Now \eqref{tuomas1} shows that every $x \in F$ satisfies $\mathfrak{s}_{d,k}(x \mid F,\beta,\delta \to 1) \geq \mathfrak{s}-K$. Set $C' := C\max\{2^{\beta K},2^{(\beta/4)K}\}\lesssim_{d,k}\beta^{-1}C$.
    Applying the spread-out estimate at level $\mathfrak{s}-K$ when $\mathfrak{s}>K$, and at level $0$ otherwise, we get
    \begin{displaymath}
        \mu^{(\delta)}(E)=\mu(F)\leq C'2^{-(\beta/4)\mathfrak{s}}.
    \end{displaymath}
    This proves \ref{it:spread-out-discretization-measures}.

    To prove \ref{it:spread-out-discretization-cubes}, let $E\subset\Theta$ satisfy $\mathfrak s_{d,k}(\eta\mid E,\beta/4,\delta\to1)\ge\mathfrak s$ for all $\eta\in E$. Apply \eqref{tuomas1} with $F=\bigcup\mathcal D_\delta(E)$. Since $\delta$-separation bounds the number of points of $\Theta$ in each $\delta$-cube by $O_n(1)$, the spread-outness of $\Theta$ implies
    \begin{displaymath}
        m_\Theta(E)=\frac{|E|}{|\Theta|}
        \lesssim_n\frac{|E|_\delta}{|\Theta|_\delta}
        \le C2^{-\beta\max\{\mathfrak s-K,0\}}
        \lesssim_{d,k,\beta}C2^{-(\beta/4)\mathfrak s}.
    \end{displaymath}
    For the converse, let $\mathcal J'\subset\mathcal D_\delta(\Theta)$ have at least $\mathfrak s$ $\beta$-singular scales at every point of its union. The same is true of $\Theta\cap\bigcup\mathcal J'$. Since every occupied cube contains exactly one point of $\Theta$, the bound $m_\Theta(\Theta\cap\bigcup\mathcal J')\le C2^{-\beta\mathfrak s}$ yields the assertion.
\end{proof}

We use the following comparison for weighted finite sets and cube families. If $w>0$ on a nonempty finite family $\mathcal J$, and its values differ by a factor of at most $K$, then
\begin{equation}\label{eq:comparable-weights-counting}
    \frac{|\mathcal J'|}{|\mathcal J|}
    \le K\frac{\sum_{Q\in\mathcal J'}w(Q)}{\sum_{Q\in\mathcal J}w(Q)},
    \qquad \mathcal J'\subset\mathcal J.
\end{equation}

\begin{lemma}[Uniform discretization]\label{lem:uniform-spread-out-discretization}
Let $n:=k(d-k)$, $R\ge1$, $\beta\in(0,1]$, $C\ge1$, and $\delta\in2^{-\N}$ with $\delta\ll_R1$. Let $\mu$ be a probability measure on $[-R,R]^n$, and let $E$ be a Borel set with $\mu(E)>0$. If $\mu$ is $(\delta,\beta,C)_{d,k}$-spread-out, then there is a nonempty family $\mathcal J\subset\mathcal D_\delta(E)$ such that
\begin{enumerate}[label=\textup{(\roman*)}]
    \item the quantities $\mu(E\cap Q)$, $Q\in\mathcal J$, differ by a factor of at most $2$, and $\mu(E\cap\bigcup\mathcal J)\gtrapprox_\delta\mu(E)$;
    \item $\mathcal J$ is $(\delta,\beta,C')_{d,k}$-spread-out with $C'\lessapprox_\delta C/\mu(E)$, and the uniform measure on $z\mathcal J:=\{z(Q):Q\in\mathcal J\}$ is $(\delta,\beta/4,C'')_{d,k}$-spread-out with $C''\lessapprox_\delta C/\mu(E)$.
\end{enumerate}
If, in addition, $s>0$, $C_F\ge1$, and $\mu(B(x,r))\le C_F r^s$ for $x\in\R^n$ and $\delta\le r\le1$, then
\begin{displaymath}
    |\mathcal J|\gtrapprox_\delta \mu(E)C_F^{-1}\delta^{-s}.
\end{displaymath}
\end{lemma}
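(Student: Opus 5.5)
The plan is a dyadic pigeonholing on the masses $\mu(E\cap Q)$, followed by the subset and discretization stability lemmas. Let $\mathcal D:=\{Q\in\mathcal D_\delta(E):\mu(E\cap Q)>0\}$. Since $\mu$ lives on $[-R,R]^n$, we have $|\mathcal D|\lesssim_R\delta^{-n}$. Cubes with $\mu(E\cap Q)\le\delta^{2n}\mu(E)$ carry total mass at most $\lesssim_R\delta^{n}\mu(E)\le\mu(E)/2$ once $\delta\ll_R1$. The remaining cubes split into $O(\log(1/\delta))$ classes $\mathcal D_j:=\{Q:2^{-j-1}<\mu(E\cap Q)\le2^{-j}\}$. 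Choose $j$ maximizing $\mu(E\cap\bigcup\mathcal D_j)$ and set $\mathcal J:=\mathcal D_j$. Then the masses are comparable within a factor $2$, and $\mu(E\cap\bigcup\mathcal J)\gtrsim\mu(E)/\log(1/\delta)$, which gives (i).

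For (ii), let $A:=E\cap\bigcup\mathcal J$. By Lemma \ref{lem:spread-out-subset}, $\mu|_A$ is $(\delta,\beta,C)$-spread-out. Take $\mathcal J'\subset\mathcal J$ such that every $\theta\in\bigcup\mathcal J'$ has at least $\mathfrak s$ $\beta$-singular scales for $(\bigcup\mathcal J',\theta)$. Put $\Theta:=A\cap\bigcup\mathcal J'\cap\spt\mu$.

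1. Singular scales are monotone under passing to subsets: if $\Theta\cap B(\theta,r)\subset H(\beta r)$ holds for the larger set, it holds for the smaller one. So every $\theta\in\Theta$ still has at least $\mathfrak s$ singular scales relative to $\Theta$.
2. The spread-out property then gives $\mu(\Theta)\le C2^{-\beta\mathfrak s}$.
3. Apply \eqref{eq:comparable-weights-counting} with weights $w(Q)=\mu(E\cap Q)$ and $K=2$:
\begin{displaymath}
\frac{|\mathcal J'|}{|\mathcal J|}\le 2\frac{\mu(\Theta)}{\mu(A)}\lessapprox_\delta \frac{C2^{-\beta\mathfrak s}}{\mu(E)}.
\end{displaymath}
This proves the cube-family claim.

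One point needs care: $\spt\mu\cap Q$ may be a proper subset of $Q$, so we must check that $\spt\mu$ meets each cube of $\mathcal J'$. It does, because $\mu(E\cap Q)>0$, and the mass identity $\mu(\Theta)=\sum_{Q\in\mathcal J'}\mu(E\cap Q)$ holds modulo null sets. The statement for the uniform measure on $z\mathcal J$ follows from Lemma \ref{lem:spread-out-discretization}\ref{it:spread-out-discretization-cubes}. The set $z\mathcal J$ is $\delta$-separated. Its $\delta$-cubes are exactly those of $\mathcal J$, since corners lie in their own cubes and no other cube contains them. So the family is $(\delta,\beta,C')$-spread-out, and we get $(\delta,\beta/4,C'')$ with $C''\lesssim C'$.

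For the final cardinality bound, use the Frostman hypothesis at $r\approx\delta$: each $\delta$-cube lies in a ball of radius $\sqrt n\,\delta$, so $\mu(E\cap Q)\lesssim C_F\delta^s$. Hence
\begin{displaymath}
|\mathcal J|\gtrsim \frac{\mu(A)}{C_F\delta^{s}}\gtrapprox_\delta \mu(E)C_F^{-1}\delta^{-s}.
\end{displaymath}
Here $\delta^s$ should be read as $\delta^s$ times a constant $n^{s/2}$ from the radius $\sqrt n\,\delta$, which is absorbed. The main obstacle is the bookkeeping in step (ii): the singular-scale hypothesis on cube unions has to transfer to a hypothesis on Borel subsets of $\spt\mu$. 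I expect this to go through via the monotonicity observation in step 1.
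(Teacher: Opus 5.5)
Your proposal is correct and follows essentially the same route as the paper. The steps match: dyadic pigeonholing of the masses $\mu(E\cap Q)$ after discarding negligible cubes, then monotonicity of singular scales to transfer the cube-family hypothesis to the Borel set $E\cap\bigcup\mathcal J'$ (your extra intersection with $\spt\mu$ is harmless since $\mu$ is concentrated on its support), then the weighted comparison \eqref{eq:comparable-weights-counting} with $K=2$, then Lemma \ref{lem:spread-out-discretization}\ref{it:spread-out-discretization-cubes} for $m_{z\mathcal J}$, and finally the Frostman bound $\mu(E\cap Q)\lesssim C_F\delta^s$ combined with (i). The only differences are cosmetic: your threshold is $\delta^{2n}\mu(E)$ instead of $\delta^{n+1}\mu(E)$, and your appeal to Lemma \ref{lem:spread-out-subset} for $\mu|_A$ is not needed.
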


\begin{proof}
Discard the cubes $Q\in\mathcal D_\delta(E)$ with $\mu(E\cap Q)<\delta^{n+1}\mu(E)$. Since $\mu$ is supported on $[-R,R]^n$, the discarded mass is $O_R(\delta)\mu(E)$. The remaining nonzero masses lie between $\delta^{n+1}\mu(E)$ and $\mu(E)$, so dyadic pigeonholing yields a family $\mathcal J$ satisfying \textup{(i)}.

Let $\mathcal J'\subset\mathcal J$, and assume that every $Q\in\mathcal J'$ has at least $\mathfrak s$ $\beta$-singular scales relative to $\bigcup\mathcal J'$. Every point of $E\cap\bigcup\mathcal J'$ then has at least $\mathfrak s$ $\beta$-singular scales relative to $E\cap\bigcup\mathcal J'$. Hence
\begin{displaymath}
    \mu\left(E\cap\bigcup\mathcal J'\right)\le C2^{-\beta\mathfrak s}.
\end{displaymath}
Apply \eqref{eq:comparable-weights-counting} with $w(Q)=\mu(E\cap Q)$ and $K=2$. By \textup{(i)},
\begin{displaymath}
    \frac{|\mathcal J'|}{|\mathcal J|}
    \le \frac{2\mu(E\cap\bigcup\mathcal J')}
    {\mu(E\cap\bigcup\mathcal J)}
    \lessapprox_\delta \frac{C}{\mu(E)}2^{-\beta\mathfrak s}.
\end{displaymath}
This proves the first assertion in \textup{(ii)}, and the assertion concerning $m_{z\mathcal J}$ follows from Lemma \ref{lem:spread-out-discretization}\textup{\ref{it:spread-out-discretization-cubes}}. Finally, the Frostman hypothesis implies $\mu(E\cap Q)\lesssim_{d,k,s}C_F\delta^s$ for $Q\in\mathcal J$. Combining this with \textup{(i)} proves the cardinality bound.
\end{proof}

\begin{lemma}[Similarity stability]
\label{lem:spread-out-scaling}
For $\lambda>0$ and $a\in\mathbf G_{d,k}$, let $T_{\lambda,a}(\theta):=\lambda^{-1}(\theta-a)$. If $\mu$ is a $(\delta,\beta,C)_{d,k}$-spread-out measure, then the following hold.
\begin{enumerate}[label=\textup{(\roman*)}]
    \item If $\lambda\in2^{-\N}\cap[\delta,1]$, then $T_{\lambda,a}\mu$ is $(\delta/\lambda,\beta,C)_{d,k}$-spread-out.
    \item If $\lambda\in[\delta,\delta^{-1}]$, then $T_{\lambda,a}\mu$ is $(\delta,\beta/2,C')_{d,k}$-spread-out with $C'\lesssim_\beta C\max\{\lambda,\lambda^{-1}\}^{\beta}$.
\end{enumerate}
\end{lemma}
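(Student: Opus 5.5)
The plan is to transport singular scales along $T:=T_{\lambda,a}$ and then apply the spread-out property of $\mu$. The key observation is that $T$ maps model hyperplanes to model hyperplanes. Indeed,
\begin{displaymath}
T^{-1}(H_{[x,y,t]})=\{\theta: x\cdot\theta\cdot y=\lambda t+x\cdot a\cdot y\}=H_{[x,y,\lambda t+x\cdot a\cdot y]}.
\end{displaymath}
Moreover, $T$ scales distances by exactly $\lambda^{-1}$, so $T^{-1}(B(\omega,r))=B(T^{-1}\omega,\lambda r)$ and $T^{-1}(H(\beta r))=(T^{-1}H)(\beta\lambda r)$. Consequently, for any Borel set $\Theta\subset\spt T\mu$ and $\omega\in\Theta$, the scale $r$ is $\beta$-singular for $(\Theta,\omega)$ if and only if $\lambda r$ is $\beta$-singular for $(T^{-1}\Theta,T^{-1}\omega)$. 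Since $\spt T\mu=T(\spt\mu)$ and $T\mu(\Theta)=\mu(T^{-1}\Theta)$, everything reduces to counting dyadic scales.

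\emph{Part (i).} Here $\lambda$ is dyadic, so dyadic $r\in[\delta/\lambda,1]$ correspond bijectively to dyadic $\lambda r\in[\delta,\lambda]\subset[\delta,1]$. Hence if every $\omega\in\Theta$ has at least $\mathfrak s$ singular scales in $[\delta/\lambda,1]$, then every point of $T^{-1}\Theta$ has at least $\mathfrak s$ singular scales in $[\delta,1]$. The spread-out property of $\mu$ then gives $T\mu(\Theta)=\mu(T^{-1}\Theta)\le C2^{-\beta\mathfrak s}$.

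\emph{Part (ii).} Now $\lambda$ is arbitrary in $[\delta,\delta^{-1}]$. Let $\lambda'$ be the dyadic number with $\lambda'\le\lambda<2\lambda'$. Suppose $r$ is $\beta/2$-singular for $(\Theta,\omega)$. Then $T^{-1}\Theta\cap B(T^{-1}\omega,\lambda r)\subset H'((\beta/2)\lambda r)$ for some $H'\in\mathfrak H_{d,k}$. Restricting to the smaller dyadic radius $\lambda' r\in(\lambda r/2,\lambda r]$, this set lies in $H'(\beta\lambda' r)$, because $(\beta/2)\lambda r<\beta\lambda' r$. So the dyadic scale $\lambda' r$ is $\beta$-singular for $(T^{-1}\Theta, T^{-1}\omega)$; halving $\beta$ is exactly what absorbs the non-dyadic factor. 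The map $r\mapsto\lambda' r$ is injective on dyadic scales.

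The only remaining issue, and the main bookkeeping point, is that $\lambda' r$ may fall outside $[\delta,1]$. If $\lambda\ge1$, the scales $r\in[\delta,1]$ with $\lambda' r>1$ number at most $\log\lambda'+1$. If $\lambda<1$, all images $\lambda' r\le 1$, but those with $\lambda' r<\delta$ number at most $\log(1/\lambda')+1\lesssim\log(1/\lambda)+2$. In either case we lose $m\le|\log\lambda|+2$ scales. So each point of $T^{-1}\Theta$ has at least $\max\{\mathfrak s-m,0\}$ $\beta$-singular scales in $[\delta,1]$, and the spread-out property of $\mu$ yields
\begin{displaymath}
T\mu(\Theta)\le C2^{-\beta\max\{\mathfrak s-m,0\}}\le C2^{2\beta}\max\{\lambda,\lambda^{-1}\}^{\beta}\,2^{-\beta\mathfrak s}\le C'2^{-(\beta/2)\mathfrak s},
\end{displaymath}
with $C'\lesssim_\beta C\max\{\lambda,\lambda^{-1}\}^\beta$, as claimed.
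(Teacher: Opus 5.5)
Your proof is correct and follows the paper's argument. You pull back the witnessing model hyperplanes via $T_{\lambda,a}^{-1}(H_{[x,y,t]})=H_{[x,y,\lambda t+x\cdot a\cdot y]}$ and use the exact dyadic scale correspondence for (i); for (ii) you round $\lambda r$ down to the dyadic $\lambda' r$, absorb the factor $2$ by halving $\beta$, and lose at most $|\log\lambda|+2$ scales at the ends of $[\delta,1]$, which gives $C'\le 4^{\beta}C\max\{\lambda,\lambda^{-1}\}^{\beta}$ and fills in the bookkeeping the paper leaves implicit.
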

\begin{proof}
The first claim follows from $T_{\lambda,a}^{-1}(H_{[x,y,t]})
    =H_{[x,y,\lambda t+(ay)\cdot x]}$ and tracking the definitions.

For \textup{(ii)}, let $E\subset\spt(T_{\lambda,a}\mu)$ have at least $\mathfrak s$ $(\beta/2)$-singular scales at each point. Pulling back the witnessing hyperplanes and rounding the rescaled radii dyadically shows that every point of $T_{\lambda,a}^{-1}(E)\cap\spt\mu$ has at least $\max\{\mathfrak s-|\log\lambda|-2,0\}$ $\beta$-singular scales. Hence
\begin{displaymath}
    (T_{\lambda,a}\mu)(E)
    \le C2^{-\beta\max\{\mathfrak s-|\log\lambda|-2,0\}}
    \lesssim_\beta C\max\{\lambda,\lambda^{-1}\}^{\beta}2^{-(\beta/2)\mathfrak s}.
\end{displaymath}
This proves the second conclusion.
\end{proof}


            \section{ABC sum-product for spread-out projections}

            \subsection{Main result}

            Recall that Frostman sets were defined in \S\ref{s:discretized-formulations}. Up to a constant depending only on $d$, the dyadic-cube condition there is equivalent to $|A\cap B(x,r)|_{\delta}\leq C r^{\alpha}|A|_{\delta}$ for all $x\in\R^d$ and $r\geq\delta$.

            The main result of this section is the following asymmetric (``ABC'') sum-product estimate.
            \begin{proposition}\label{prop:ABC}
                Let $\alpha \in [0,k)$, $\beta\in (0,d-k]$, and $\gamma\in (0,1]$. Then there exists $\chi>0$, depending only on $\alpha,\beta,\gamma$, such that the following holds for every $\delta\in2^{-\N}$ with $\delta\ll_{\alpha,\beta,\gamma}1$. Let $A\subset \delta\Z^{k}\cap [-1,1]^k$, $B\subset \delta\Z^{d-k}\cap [-1,1]^{d-k}$, and $\Theta \subset \delta\Z^{k\times(d-k)} \cap [-1,1]^{k\times(d-k)}$ be sets satisfying the following properties:
                \begin{itemize}
                    \item[\textup{(A)}] $|A| \leq \delta^{-\alpha}$,
                    \item[\textup{(B)}] $B$ is $(\delta,\beta,\delta^{-\chi})$-Frostman,
                    \item[\textup{(C)}] $\Theta$ is $(\delta,\gamma,\delta^{-\chi})$-spread-out.
                \end{itemize}
                Then there exists $\theta \in \Theta$ such that
                \begin{equation}\label{form1}
                    |\{a + \theta\cdot b : (a,b) \in G\}|_{\delta} \geq \delta^{-\chi}|A|, \qquad\text{for all } G \subset A \times B, \, |G| \geq \delta^{\chi}|A||B|.
                \end{equation}
                Moreover, the constant $\chi > 0$ stays bounded away from zero as long as $(\alpha,\beta,\gamma)$ ranges in a compact subset of $[0,k) \times (0,d-k] \times (0,1]$.
            \end{proposition}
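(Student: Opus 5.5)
We argue by contradiction. Suppose that for every $\theta\in\Theta$ there is $G_\theta\subset A\times B$ with $|G_\theta|\ge\delta^{\chi}|A||B|$ and $|\{a+\theta\cdot b:(a,b)\in G_\theta\}|_\delta<\delta^{-\chi}|A|$. The plan is to turn these into a single $L^q$ statement about the averaged convolution $m_A*\sigma$, where
\begin{displaymath}
\sigma:=\int \theta_\#\mu_B\,dm_\Theta(\theta),
\end{displaymath}
$\theta_\#$ is the push-forward under $b\mapsto\theta\cdot b$, and $\mu_B$ is the uniform measure on $B$. We then contradict this statement with the inverse theorem of \cite{Shmerkin19,Shmerkin25Orbita}, which applies because $\sigma$ is spread-out in $\R^k$.

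\emph{Step 1 (additive structure for each $\theta$).} Fix $\theta$. By Cauchy--Schwarz, the small sumset along $G_\theta$ gives additive energy
\begin{displaymath}
E(A,\theta\cdot B)\ge\delta^{O(\chi)}|A|^2|B|\cdot\min\{|A|,|B|\},
\end{displaymath}
at the level of $\delta$-discretized measures. In $L^2$ language this says
\begin{displaymath}
\|m_A*\theta_\#\mu_B\|_{L^2}^2\ge\delta^{O(\chi)}\|m_A\|_{L^2}^2 .
\end{displaymath}
Asymmetric Balog--Szemer\'edi--Gowers and Pl\"unnecke--Ruzsa upgrade this to a large $A_\theta\subset A$ and $B_\theta\subset B$ with $|A_\theta+\theta\cdot B_\theta|\le\delta^{-O(\chi)}|A|$.

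Next, pigeonhole $\theta$ with weights so that a set $\Theta_1\subset\Theta$ of proportion $\ge\delta^{O(\chi)}$ works simultaneously. By Lemma \ref{lem:spread-out-subset}, $\Theta_1$ is still $(\delta,\gamma,\delta^{-O(\chi)})$-spread-out. Averaging over $\theta\in\Theta_1$ and using $L^2$ convexity gives
\begin{displaymath}
\|m_A*\sigma_1\|_{L^2}\ge\delta^{O(\chi)}\|m_A\|_{L^2},\qquad \sigma_1:=\int\theta_\#\mu_B\,dm_{\Theta_1}(\theta).
\end{displaymath}
So $\sigma_1$ fails to flatten $m_A$.

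\emph{Step 2 ($\sigma_1$ is spread-out in $\R^k$).} For fixed $b$, writing $b=|b|\hat b$ with $|\hat b|=1$, we have $\theta\cdot b=|b|L_{\hat b}(\theta)$. Lemma \ref{lem:proj-spread-out}, together with the scaling Lemma \ref{lem:spread-out-scaling} to absorb the factor $|b|$, shows that $\theta\mapsto\theta\cdot b$ pushes $m_{\Theta_1}$ to a $(\delta,\gamma/2,\delta^{-O(\chi)})_k$-spread-out measure on $\R^k$. The $\beta$-Frostman property of $B$, with $\beta>0$, lets us discard the $b$ with $|b|<\delta^{\chi}$ at negligible cost. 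Averaging over $b$, the spread-out property survives: a set with many singular scales for $\sigma_1$ has, for most $b$, small mass under the corresponding fibre measure. The output is that $\sigma_1$ is $(\delta,\gamma',\delta^{-O(\chi)})_k$-spread-out, with $\gamma'\gtrsim\gamma$. Lemma \ref{lem:spread-out-global-non-concentration} then gives
\begin{displaymath}
\sigma_1(W(r))\lesssim\delta^{-O(\chi)}r^{\gamma'}
\end{displaymath}
for every affine hyperplane $W$, with the analogous local statement inside small balls at most scales.

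\emph{Step 3 (inverse theorem).} Apply the inverse theorem for $\|\mu*\nu\|_q$ in $\R^k$. If $\|m_A*\sigma_1\|_q\ge\delta^{\epsilon}\|m_A\|_q$, there are a multiscale decomposition, regular subsets $A'\subset A$ and $S\subset\spt\sigma_1$, and subspaces $V_j$ such that at each scale either $A'$ is locally full, or $S$ is locally contained in a thin neighbourhood of a translate of $V_j$ and $A'$ is locally a union of $V_j$-slabs. Since $|A|\le\delta^{-\alpha}$ with $\alpha<k$, a proportion $\gtrsim(k-\alpha)/k$ of the scales are not full for $A'$. At those scales $S$ is locally concentrated near a hyperplane, so $S$ has $\gtrsim_{\alpha}\log(1/\delta)$ singular scales at every point. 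Spread-outness then gives $\sigma_1(S)\le\delta^{c(\alpha,\gamma)}$, contradicting the inverse theorem's guarantee that $\sigma_1(S)\ge\delta^{O(\epsilon)}$. Choosing $\epsilon$ and then $\chi\ll\epsilon$ closes the contradiction. Compactness of the parameter range gives uniformity of $\chi$, since all losses depend continuously on $(\alpha,\beta,\gamma)$.

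The main obstacle is Step 3. The inverse theorem's concentration is relative to scale-dependent subspaces and only on a regular piece $S$ of $\sigma_1$. One must check that "contained in $V_j$-neighbourhoods at most non-full scales" really produces $\beta$-singular scales in the exact sense of the definition: the witnessing sets must sit in balls $B(x,r)$ contained in $H(\gamma' r)$. The main point to verify is that the pass from $\Theta$ to $\sigma_1$ in Step 2 preserves this local, multiscale non-concentration, and not merely the global bound of Lemma \ref{lem:spread-out-global-non-concentration}. If the direct averaging in Step 2 is too lossy, the fallback is the paper's $L^2$-flattening property for spread-out measures (Theorem \ref{thm:spread-out-implies-flattening}), applied directly to get the flattening of $m_A*\sigma_1$.
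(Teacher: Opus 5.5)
There is a genuine gap in Step 1, at the sentence ``averaging over $\theta\in\Theta_1$ and using $L^2$ convexity gives $\|m_A*\sigma_1\|_{L^2}\ge\delta^{O(\chi)}\|m_A\|_{L^2}$.''

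The map $\nu\mapsto\|m_A*\nu\|_{L^2}$ is a seminorm. Convexity therefore gives $\|m_A*\sigma_1\|_{L^2}\le\int\|m_A*\theta_\#\mu_B\|_{L^2}\,dm_{\Theta_1}(\theta)$, which is the opposite inequality. Lower bounds for the individual terms say nothing about the average. The quantity $\|m_A*\sigma_1\|_{L^2}^2$ is governed by the cross terms $\langle m_A*\theta_\#\mu_B,\,m_A*\theta'_\#\mu_B\rangle$ with $\theta\neq\theta'$, and your hypothesis gives no information about these. For a simple illustration, let $A$ be a single point and $\nu_\theta=\delta_{x_\theta}$ with the $x_\theta$ in distinct $\delta$-cubes. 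Then every $\|m_A*\nu_\theta\|_{L^2}$ equals $\|m_A\|_{L^2}$, while the average has norm $|\Theta_1|^{-1/2}\|m_A\|_{L^2}$.

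Pigeonholing so that the structures ``work simultaneously'' does not repair this. Each $A+\theta\cdot B_\theta$ can be small while their union over $\theta$ is large. Note also that the sets $A_\theta,B_\theta$ you obtain from BSG are never used afterwards.

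In fact, Steps 2--3 are essentially sound. Spread-outness passes to convex combinations because singular scales pass to subsets, and the inverse-theorem argument is the proof of Theorem \ref{thm:spread-out-implies-flattening}. But they show, without using the contradiction hypothesis, that $\sigma_1$ \emph{does} flatten $m_A$. So the hypothesis enters only through the invalid inequality. Your proposed fallback, using Theorem \ref{thm:spread-out-implies-flattening} to get flattening of $m_A*\sigma_1$, addresses Steps 2--3 and not the actual problem.

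What is missing is a way to turn the $\theta$-dependent additive structures into non-flattening of a \emph{single} projected measure $L_bm_\Theta$. The paper does this by differencing in $\theta$ and then swapping the roles of $\theta$ and $b$:
\begin{enumerate}
\item After Lemma \ref{lem:frostman-decomposition} reduces to $|B|\le\delta^{-\tau}$, the Kaufman-type Lemma \ref{lem:projection-fibres} makes $b\mapsto\theta\cdot b$ essentially injective at scale $\delta$ on most of $B$. This is what lets the set version of BSG apply to $A$ and $(\theta\cdot B)^{(\delta)}$; your Step 1 glosses over this multiplicity issue.
\item Corollary \ref{cor:two-sided-BSG} gives $A_\theta,B_\theta$ with $|A_\theta\pm\theta\cdot B_\theta|_\delta\le\delta^{-\eta}|A|$.
\item Cauchy--Schwarz over $\theta$ gives $\ge\delta^{O(\eta)}|\Theta|^2$ pairs $(\theta_1,\theta_2)$ whose products $A_{\theta_i}\times B_{\theta_i}$ overlap substantially.
\item Lemma \ref{PRIneq}, applied on $A_{\theta_1}\cap A_{\theta_2}$ and $B_{\theta_1}\cap B_{\theta_2}$, then shows that $A$ has large indexed energy with $((\theta_1-\theta_2)\cdot b)_{b\in B}$.
\item The identity \eqref{eq:ABC-six-tuple} converts this into $\sum_{b_1,b_2}\mathcal E_{\sqrt{k}\delta}(A;\Theta,b_1-b_2)\ge\delta^{O(\eta)}|A||B|^2|\Theta|^2$.
\item The Frostman property of $B$ is used exactly here, to find $b_1,b_2$ with $|b_1-b_2|\ge\delta^{\e}$ and large energy. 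This contradicts Theorem \ref{thm:spread-out-implies-flattening} in the single direction $b_1-b_2$.
\end{enumerate}

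A minor related point: discarding $|b|<\delta^{\chi}$ does not follow from a $(\delta,\beta,\delta^{-\chi})$-Frostman bound when $\beta\le1$. You need a threshold of the form $\delta^{C\chi/\beta}$.
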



\subsection{Spread-out sets are \texorpdfstring{$L^2$}{L2}-flattening}

A key step in the proof of Proposition \ref{prop:ABC} is to show that spread-out sets are $L^2$-flattening, in the sense of Theorem \ref{thm:spread-out-implies-flattening} below. This strategy is similar to that used in \cite{OS23}, but with some key differences; in particular, the results of \cite{OS23} relied on Bourgain's discretized projection theorem \cite{Bo2}, while here we rely on an inverse theorem of the second author \cite{Shmerkin25Orbita}, recalled below.

We first introduce some notation. For finite sets $A,B \subset \R^d$, we denote by $\mathcal{E}(A,B)$ the \emph{additive energy}
\begin{displaymath}
    \mathcal{E}(A,B) := |\{(a_{1},a_{2},b_{1},b_{2}) \in A^{2} \times B^{2} : a_{1} + b_{1} = a_{2} + b_{2} \}|.
\end{displaymath}
For approximate coincidences, we use the following indexed energy. For a finite set $A\subset\R^k$ and a finite indexed family $\mathbf x=(x_i)_{i\in I}\subset\R^k$, write
\begin{displaymath}
    \mathcal E_\delta(A;\mathbf x)
    :=\bigl|\{(a_1,a_2,i_1,i_2)\in A^2\times I^2:
    a_1+x_{i_1}\in a_2+x_{i_2}+B(0,\delta)\}\bigr|.
\end{displaymath}
This is a hybrid rather than a fully discretized energy: we still count quadruples of points and indices and merely allow a $\delta$-fudge term in the equality.
For a finite set $\Theta\subset\mathbf G_{d,k}$ and $b\in\R^{d-k}$, we abbreviate $\mathcal E_\delta(A;\Theta,b):=\mathcal E_\delta(A;(\theta\cdot b)_{\theta\in\Theta})$.

The goal of this subsection is to prove that spread-out sets are $L^2$-flattening, in the following sense:
\begin{theorem}\label{thm:spread-out-implies-flattening}
    Let $1\le k<d$, let $\alpha\in[0,k)$ and $s\in(0,1]$. Then there exists $\e>0$ such that for every $\delta\in 2^{-\N}$ with $\delta\ll_{\alpha,s}1$ the following holds. If
    \begin{displaymath}
        A\subset \delta\Z^{k}\cap[-1,1]^{k},\qquad
        \Theta\subset \delta\Z^{k\times(d-k)}\cap[-1,1]^{k\times(d-k)}\subset \mathbf{G}_{d,k},
    \end{displaymath}
    where $A$ and $\Theta$ are nonempty, $|A|\le \delta^{-\alpha}$, and $\Theta$ is $(\delta,s,\delta^{-\e})_{d,k}$-spread-out, then
    \begin{equation}\label{eq:L2-flattening-measures}
        \|m_A*(L_bm_\Theta)^{(\delta)}\|_{L^{2}}
        \leq \delta^{\e}\|m_A\|_{L^{2}}
        \qquad \text{for all } b\in[-2,2]^{d-k},\ |b|\geq\delta^\e.
    \end{equation}
    Consequently, for every fixed $C\geq1$,
    \begin{equation}\label{eq:indexed-energy-bound}
        \mathcal E_{C\delta}(A;\Theta,b)
        \lesssim_{C,k}\delta^{2\e}|A||\Theta|^2
        \qquad \text{for all }b\in[-2,2]^{d-k},\ |b|\geq\delta^\e.
    \end{equation}
    Moreover, $\e$ stays bounded away from zero when $(\alpha,s)$ ranges in a compact subset of $[0,k)\times(0,1]$.
\end{theorem}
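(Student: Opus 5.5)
First reduce to a statement about spread-out sets in $\R^k$. Fix $b$ with $\delta^\e\le|b|\le 2\sqrt{d-k}$ and set $\nu:=L_b m_\Theta$. We have $L_b=|b|\,L_{b/|b|}$. Lemma \ref{lem:spread-out-discretization}(ii) shows $m_\Theta$ is $(\delta,s/4,\delta^{-O(\e)})_{d,k}$-spread-out. Lemma \ref{lem:proj-spread-out} then shows $L_{b/|b|}m_\Theta$ is $(\delta,s/4,\delta^{-O(\e)})_k$-spread-out. Finally, Lemma \ref{lem:spread-out-scaling}(ii), with $\lambda=|b|^{-1}\in[\delta^{1-\e'},\delta^{-\e}]$, shows $\nu$ is $(\delta,s/8,\delta^{-O(\e)})_k$-spread-out; this uses Remark \ref{rem:spread-out-k-vs-kplus1k} to view the $k$-dimensional notion as the $(k+1,k)$ case. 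Discretizing at scale $\delta$ costs only a further constant factor in $\beta$ (Lemma \ref{lem:spread-out-discretization}(i)). So \eqref{eq:L2-flattening-measures} reduces to the following claim: if $\nu$ is a probability measure on $[-C,C]^k$ that is $(\delta,s',\delta^{-\e})_k$-spread-out, and $|A|\le\delta^{-\alpha}$ with $\alpha<k$, then $\|m_A*\nu^{(\delta)}\|_2\le\delta^{\e}\|m_A\|_2$. Since $\|m_A\|_2^2=|A|^{-1}$ in the counting normalization, this says that convolving with $\nu$ decreases the $\ell^2$ norm of $m_A$ by a power of $\delta$.

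The energy bound \eqref{eq:indexed-energy-bound} is then routine. We have $\mathcal E_{C\delta}(A;\Theta,b)\lesssim_{C,k}|A|^2|\Theta|^2\sum_z (m_A*\nu^{(\delta)})(\tilde z)^2$, where $\tilde z$ ranges over $O_C(1)$-fattened $\delta$-cells. By the flattening bound this is $\lesssim\delta^{2\e}|A|^2|\Theta|^2\|m_A\|_2^2=\delta^{2\e}|A||\Theta|^2$.

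For the flattening claim we argue by contradiction and invoke the inverse theorem of \cite{Shmerkin25Orbita} (see also \cite{Shmerkin19}). Suppose $\|m_A*\nu^{(\delta)}\|_2>\delta^{\e}\|m_A\|_2$. The inverse theorem then supplies a multiscale decomposition. There are a subset $A'\subset A$ and a set of $\nu$-mass $\ge\delta^{O(\e)}$, and a sequence of scales $\delta=r_m<\dots<r_0=1$ with $r_{j+1}/r_j\approx\delta^{\tau}$, with $\tau=\tau(\e)$ small. At each scale, after localizing to $r_j$-cubes and rescaling, either $A'$ is essentially full (dimension $k$) on the intervening layer, or $\nu$ is essentially a point mass there. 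More generally, in dimension $k$, each of $A'$ and $\nu$ locally looks like a subspace piece, and the subspace seen by $\nu$ is contained in the subspace complementary to the one seen by $A'$ at that scale. Since $|A|\le\delta^{-\alpha}$ with $\alpha<k$, $A'$ cannot be full at more than a $(\alpha/k+o(1))$-fraction of the scales. Hence at a proportion $\gtrsim 1-\alpha/k$ of the scales, at $\nu$-most points, $\nu$ restricted to $B(x,r_j)$ is concentrated in the $\delta^{\tau}r_j$-neighbourhood of a proper affine subspace, and hence of an affine hyperplane.

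Each such block of scales produces $\gtrsim\tau\log(1/\delta)$ $s'$-singular dyadic scales at those points, as long as $\delta^\tau\le s'$. So the set $\Theta_0$ of these points has $\mathfrak s\gtrsim(1-\alpha/k)\log(1/\delta)$ singular scales relative to itself. To get ``relative to itself'', refine $\Theta_0$ to the points where this holds and use that singularity passes to subsets. Spread-outness then gives $\nu(\Theta_0)\le\delta^{-\e}\delta^{c s'(1-\alpha/k)}$, which contradicts $\nu(\Theta_0)\ge\delta^{O(\e)}$ once $\e$ is small relative to $s',\alpha,k$, and also relative to $\tau$, which depends only on the target gain. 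Uniformity of $\e$ over compact parameter ranges follows because all the constants depend continuously on $(\alpha,s)$.

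The main obstacle is to extract from the inverse theorem a structure that is genuinely local at $\nu$-typical points. The spread-out condition counts singular scales pointwise with respect to a fixed subset, whereas inverse theorems usually give a regular tree structure after several pigeonholing steps. I would first pass to the regular subtrees produced by the inverse theorem, for which concentration near a subspace in each cube is uniform across cubes of a given level. I would then check carefully that ``$\nu$ concentrated near a proper subspace in a $\delta^\tau$-tube sense inside $B(x,r_j)$'' indeed yields the hyperplane containment $\Theta_0\cap B(x,r)\subset H(s'r)$ for all dyadic $r$ in the block. This requires a small loss at the block endpoints, which is absorbed by taking $\tau\log(1/\delta)\gg\log(1/s')$.
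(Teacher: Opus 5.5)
Your proposal follows the paper's proof essentially step for step. It uses the same chain of Lemmas \ref{lem:spread-out-discretization}, \ref{lem:proj-spread-out} and \ref{lem:spread-out-scaling} to make $(L_bm_\Theta)^{(\delta)}$ spread-out in $\R^k$, applies Theorem \ref{thm:inverse} by contradiction, and uses $|A|\le\delta^{-\alpha}$ to force alternative (S2) at a $(1-\alpha/k-\zeta)$-proportion of the levels. It then converts each such level into $\gtrsim T$ singular dyadic scales, and gets \eqref{eq:indexed-energy-bound} by Cauchy--Schwarz.

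Three corrections to your description of the black box are needed, though none changes the route.
\begin{enumerate}
\item $\nu$ is concentrated near the \emph{same} subspace along which $A'$ is saturated, not a complementary one. The complementary version would not give the dichotomy you actually use.
\item The mass bound is $\nu(Y)\ge\delta^{\zeta}$, not $\delta^{O(\e)}$. So $\zeta$ must be chosen first, small relative to $s(1-\alpha/k)$, and $\e$ chosen afterwards.
\item (S2) is a containment of the set $Y'\cap I$ itself, so singularity relative to $Y$ needs no further refinement. The cube-boundary issue behind your locality concern is resolved by the centrality clause $y'\in\tfrac12 I$ of Theorem \ref{thm:inverse}, which gives $B(y',r)\subset I$ for $r\le\Delta^j/4$.
\end{enumerate}
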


The proof is based on an ``inverse theorem'' of the second author \cite[Theorem 1.2]{Shmerkin25Orbita}; we restate the part of an immediate corollary that we need. 

We recall the notion of a uniform set and its branching numbers.
\begin{definition}[Uniform sets and branching numbers] \label{def:uniform-set}
Let $\Delta\in 2^{-\N}$ and $m\in\N$, and let $P\subset [0,1)^d$. We say that $P$ is \emph{$(\Delta,m)$-uniform with branching numbers $(N_j)_{j=0}^{m-1}$} if every cube $Q\in\mathcal D_{\Delta^j}(P)$ contains exactly $N_j$ cubes from $\mathcal D_{\Delta^{j+1}}(P)$, for each $0\le j\le m-1$.
\end{definition}

The $L^{2}$-norm of a finitely supported measure $\nu$ is defined by $\|\nu\|_{L^{2}}^{2} := \sum_{z \in \spt(\nu)} \nu(z)^{2}$.

            \begin{theorem}[$L^2$ inverse theorem]\label{thm:inverse}
                Fix a dimension $d \ge 1$. For each $\zeta \in (0,1]$ and $T_{0} \in \N$, there exist $T \geq T_{0}$, $\e = \e(\zeta,T_{0}) > 0$, and $m_0=m_0(\zeta,T_0)\in\N$ such that the following holds for $m \geq m_0$.

                Let $\Delta:=2^{-T}$ and $\delta := \Delta^m$, and let $\mu,\nu$ be measures on $\delta \Z^{d} \cap [\tfrac{1}{3},\tfrac{2}{3}]^{d}$ with $\mu(\R^d) = 1 = \nu(\R^d)$, and
                \begin{displaymath}
                    \|\mu \ast \nu\|_{L^{2}} \geq \delta^{\e}\|\mu\|_{L^{2}}.
                \end{displaymath}
                Then there exist nonempty sets $A \subset \spt \mu$, $B \subset \spt \nu$ and vectors $x_A,x_B\in\delta\Z^d$ such that $A':=A+x_A\subset[0,1)^d$ is $(\Delta,m)$-uniform with branching numbers $(N_j^A)_{j=0}^{m-1}$, $B':=B+x_B\subset[0,1)^d$, and $\nu(B)\ge\delta^\zeta$. Moreover, if $x\in B'$, $0\le j\le m-1$, and $I\in\mathcal D_{\Delta^j}$ is the cube containing $x$, then $x\in\tfrac12 I$. For each $j \in \{0,\ldots,m-1\}$, at least one of the following alternatives holds:
                \begin{itemize}
                    \item[\textup{(S1)}] $N_{j}^{A} \geq \Delta^{-(1 - \zeta)d}$.
                    \item[\textup{(S2)}] For each cube $I \in \mathcal{D}_{\Delta^j}$ there exists $H_{I} \in \A(d,d - 1)$ such that $B'\cap I \subset \bigcup\mathcal{D}_{\Delta^{j+1}}(H_{I})$.
                \end{itemize}
            \end{theorem}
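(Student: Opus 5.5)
The plan is to deduce Theorem~\ref{thm:inverse} directly from the inverse theorem \cite[Theorem~1.2]{Shmerkin25Orbita}, with no new ideas, by translating its conclusion into the form stated here. That theorem (for $\mu,\nu$ probability measures on $\delta\Z^d\cap[\tfrac13,\tfrac23]^d$, $\delta=\Delta^m$, and $\|\mu*\nu\|_{L^2}\ge\delta^{\e}\|\mu\|_{L^2}$) provides the following. First, a set $A\subset\spt\mu$ which is $(\Delta,m)$-uniform and carries a non-negligible proportion of the $L^2$ mass of $\mu$. Second, a set $B\subset\spt\nu$ with $\nu(B)\ge\delta^{\zeta}$, also $(\Delta,m)$-uniform. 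Third, for every $j$ a dichotomy: either $A$ has almost full branching $N_j^A\ge\Delta^{-(1-\zeta)d}$, or the $\Delta^{j+1}$-children of every $\Delta^j$-cube of $B$ lie in the $\Delta^{j+1}$-neighbourhood of an affine subspace of dimension at most $d-1$ (the subspace $V_j$ there depends on $j$ but is the same for all cubes; this is the ``saturation'' alternative). We only keep the part of this information needed here: we forget the lower bound on $\mu(A)$ and the uniformity of $B$, and we enlarge a proper subspace to a hyperplane. This gives (S1) or (S2) with $H_I:=$ a hyperplane containing a translate of $V_j$ through the relevant cube. The parameter dependence $T\ge T_0$, $\e(\zeta,T_0)$ and $m_0(\zeta,T_0)$ is inherited verbatim.

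The genuinely new requirements are the translations $x_A,x_B$ and the condition that every $x\in B'$ lies in $\tfrac12 I$ for all $j$, where $I\in\mathcal D_{\Delta^j}$ contains $x$. Translating $A$ is harmless: set $A':=A+x_A$ with $x_A\in\delta\Z^d$ chosen so that $A'\subset[0,1)^d$. The dyadic structure is preserved if $x_A\in\Delta\Z^d$, or alternatively if uniformity was established relative to a translated dyadic grid, as in \cite{Shmerkin25Orbita}. In either case $A'$ is uniform with the same branching numbers.

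For $B$ I will use a pigeonholing over translates. For a fixed scale $j$, a uniformly random shift $x\in\delta\Z^d\cap[0,1)^d$ puts a given point in the half-cube $\tfrac12 I$ with probability $2^{-d}$. Demanding this simultaneously at all $m$ scales costs a factor $2^{-dm}=\delta^{d/T}$, which is at most $\delta^{\zeta/2}$ once $T\ge T_0$ is large in terms of $d,\zeta$. The scales $\Delta^j$ are nested, and the events for different $j$ concern different binary digits of the coordinates, so these events are essentially independent. Hence I can choose $x_B$ and a subset $B_1\subset B$ with $\nu(B_1)\ge\delta^{\zeta/2}\nu(B)$ such that every point of $B_1+x_B$ satisfies the half-cube condition at all scales. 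Replacing $\zeta$ by $\zeta/3$ in the application of \cite[Theorem~1.2]{Shmerkin25Orbita} then restores $\nu(B_1)\ge\delta^{\zeta}$.

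The main point to check, and the step I expect to be the most delicate, is that (S2) survives both the passage to $B_1$ and the shift. Passing to a subset is harmless, since (S2) is a containment statement. Under the shift, the saturation alternative is a statement about $\Delta^{j+1}$-neighbourhoods of hyperplanes inside cubes of the original grid. In the shifted grid, each cube $I\in\mathcal D_{\Delta^j}$ meets boundedly many original cubes, and all of them have children near translates of the same $V_j$. Therefore $B'\cap I$ lies in the $O(\Delta^{j+1})$-neighbourhood of a single hyperplane $H_I$ containing a translate of $V_j$. The $O(1)$ loss from neighbourhood to the union of $\Delta^{j+1}$-cubes meeting $H_I$ is absorbed by applying the original theorem with a slightly larger $T$, i.e.\ by passing to a coarser block of $O(1)$ dyadic generations. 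Alternatively, it is absorbed by using the half-cube condition, which keeps points of $B'$ away from cube boundaries and so prevents boundary effects from splitting a neighbourhood across cubes.
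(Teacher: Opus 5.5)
Your overall route, reading the statement off \cite[Theorem~1.2]{Shmerkin25Orbita}, is the paper's. The paper presents Theorem~\ref{thm:inverse} as a restatement of part of an immediate corollary of that theorem, with no further argument. In particular, the shifts $x_A,x_B$ and the centrality condition $x\in\tfrac12 I$ are imported with the citation rather than built afterwards.

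Because you instead build centrality by hand, the proof depends on your transfer of \textup{(S2)} to the shifted grid, and that step fails as written. A cube $I\in\mathcal D_{\Delta^j}$ of the new grid meets up to $2^d$ cubes $I_0$ of the original grid. What you know is that $B\cap I_0$ lies near a hyperplane, or a translate $V_j+c_{I_0}$, that depends on $I_0$. The offsets differ from cube to cube by amounts of order $\Delta^j$, so the union is not within $O(\Delta^{j+1})$ of a single hyperplane.

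This already fails for $d=1$. Suppose the occupied $\Delta^{j+1}$-cell of one original interval sits just left of the grid point $(k+1)\Delta^j$, and that of the next interval sits at $(k+1)\Delta^j+\Delta^j/4$. After a shift congruent to $\Delta^j/2$ modulo $\Delta^j$, both cells lie in the central half of one new interval. So centrality holds there, but \textup{(S2)} fails, since $\bigcup\mathcal D_{\Delta^{j+1}}(H_I)$ is a single cell.

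Hence neither the bounded-overlap argument nor the centrality condition rescues \textup{(S2)}; centrality is measured in the new grid and says nothing about which original cubes a new cube meets. Nor can you absorb constants by taking a slightly larger $T$, because $T$ fixes every scale $\Delta^j$ through $\delta=\Delta^m$.

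The step can be repaired by the kind of pigeonholing you already use.
\begin{enumerate}
\item Let $x_B$ range over $\delta\Z^d\cap[0,\Delta)^d$, not $[0,1)^d$. Then $B'\subset[\tfrac14,\tfrac34]^d$, which is the case $j=0$ of centrality. The residues modulo $\Delta^j$ for $j\ge1$ remain uniform and independent, so your $2^{-d(m-1)}$ count still holds.
\item At each scale $j$ where the hyperplane alternative holds, and inside each new $\Delta^j$-cube, keep only the points coming from the heaviest of the at most $2^d$ original $\Delta^j$-cubes.
\item These points lie in new $\Delta^{j+1}$-cubes contained in an $O_d(\Delta^{j+1})$-slab around one shifted hyperplane with unit normal $n$. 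Each such cube has image under $z\mapsto z\cdot n$ an interval of length at least $\Delta^{j+1}$. That interval therefore contains one of $O_d(1)$ fixed values $c_i$ spaced $\Delta^{j+1}/2$ apart.
\item Pigeonhole one $c_i$. By convexity, all the retained cubes meet $H_I:=\{z\cdot n=c_i\}$, which gives \textup{(S2)} exactly as stated.
\end{enumerate}
Each restriction keeps a $\gtrsim_d1$ fraction of the current $\nu$-mass, and both \textup{(S2)} and centrality pass to subsets. The total loss is $C_d^{-m}=\delta^{O_d(1)/T}$, which is absorbed by taking $T_0\gg_{d,\zeta}1$, just like your $2^{-dm}$ loss.
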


            We are then equipped to prove Theorem \ref{thm:spread-out-implies-flattening}.
            \begin{proof}[Proof of Theorem \ref{thm:spread-out-implies-flattening}]
                Let $\sigma:=s/32$. Choose $0<\zeta\ll_{\alpha,k,s}1$ so that
                \begin{equation}\label{form100}
                    \frac{\alpha}{k(1-\zeta)}\leq\frac{\alpha}{k}+\zeta
                    \qquad\text{and}\qquad
                    \sigma(1-\alpha/k-\zeta)/4\geq\zeta.
                \end{equation}
                Next, choose $T_0\gg_{\alpha,k,s,\zeta}1$ so that
                \begin{equation}\label{choiceT}
                    \sqrt{k}\,2^{-T_0/2}<\sigma, \qquad T_0\geq12,
                    \qquad 4(d-k)2^{-T_0}<\tfrac{1}{100}.
                \end{equation}
                Let $T\geq T_0$ and $\e_{\mathrm{inv}}>0$ be given by Theorem \ref{thm:inverse} in dimension $k$, and write $\Delta:=2^{-T}$. Take $0<\e\ll_{\e_{\mathrm{inv}},\zeta}1$. From \eqref{form100}, we deduce that
                \begin{equation}\label{form101}
                    \sigma(1-\alpha/k-\zeta)/3-2\e
                    >\sigma(1-\alpha/k-\zeta)/4\geq\zeta.
                \end{equation}

                Fix $b \in [-2,2]^{d-k}$ with $|b| \geq \delta^{\e}$, and write $\nu_{\Theta,b}:=(L_bm_\Theta)^{(\delta)}$. We first prove \eqref{eq:L2-flattening-measures}.

                In short, the idea of the proof is as follows: if \eqref{eq:L2-flattening-measures} fails, then we can apply Theorem \ref{thm:inverse} to the measures $m_A$ and $\nu_{\Theta,b}$. The upper bound $|A|\leq \delta^{-\alpha}$ is easily seen to imply that alternative (S2) in Theorem \ref{thm:inverse} holds for a positive proportion of the scales, and this will contradict the spread-outness of $L_b m_{\Theta}$, which in turn follows from the spread-outness of $\Theta$ via Lemmas \ref{lem:spread-out-discretization} and \ref{lem:proj-spread-out}.

                By Lemma \ref{lem:spread-out-discretization}\textup{\ref{it:spread-out-discretization-cubes}}, $m_\Theta$ is $(\delta,s/4,O_{d,k,s}(\delta^{-\e}))_{d,k}$-spread-out. Lemmas \ref{lem:proj-spread-out} and \ref{lem:spread-out-scaling}\textup{(ii)} applied to $L_{b/|b|}m_\Theta$ and dilation by $|b|$ show that $L_bm_\Theta$ is $(\delta,s/8,O_{d,k,s}(\delta^{-2\e}))_k$-spread-out. Lemma \ref{lem:spread-out-discretization}\textup{\ref{it:spread-out-discretization-measures}} therefore implies that $\nu_{\Theta,b}$ is $(\delta,\sigma,O_{d,k,s}(\delta^{-2\e}))_k$-spread-out.

                Assume that \eqref{eq:L2-flattening-measures} fails. After a change of coordinates, perturbing the scale, and some calculations, we may apply Theorem \ref{thm:inverse}. We get sets $A_0\subset A$ and $Y\subset\spt\nu_{\Theta,b}$ and vectors $k_A,k_Y\in\delta\Z^k$ such that $\widetilde A:=A_0+k_A$ is $(\Delta,m)$-uniform, $Y':=Y+k_Y\subset[0,1)^k$ satisfies the centrality conclusion of that theorem, and
                \begin{equation}\label{form102}
                    \nu_{\Theta,b}(Y) \geq \delta^{\zeta}.
                \end{equation}
                Since $|\widetilde A|\leq|A|\leq\delta^{-\alpha}=\Delta^{-\alpha m}$, (S1) can hold at no more than $\alpha m/[k(1-\zeta)]\leq(\alpha/k+\zeta)m$ indices. Thus (S2) holds at least $(1-\alpha/k-\zeta)m$ times.

                Fix such an index $j$ and $y\in Y$, write $y':=y+k_Y$, and take $r\in2^{-\N}\cap[2^{-T/2}\Delta^j,\Delta^j/4]$. If $I\in\mathcal D_{\Delta^j}$ contains $y'$, then Theorem \ref{thm:inverse} yields $y'\in\tfrac12 I$, so $B(y',r)\subset I$. Moreover, (S2) and \eqref{choiceT} imply
                \begin{displaymath}
                    Y'\cap B(y',r)\subset Y'\cap I
                    \subset H_I(\sqrt{k}\Delta^{j+1})
                    \subset H_I(\sigma r).
                \end{displaymath}
                Hence $r$ is $\sigma$-singular for $(Y,y)$. The intervals $[2^{-T/2}\Delta^j,\Delta^j/4]$ corresponding to distinct indices $j$ are disjoint, and each contains $\geq T/3$ dyadic scales. The spread-outness of $\nu_{\Theta,b}$ now yields
                \begin{displaymath}
                    \nu_{\Theta,b}(Y)
                    \lesssim_{d,k,s,T}\delta^{-2\e}
                    2^{-\sigma(T/3)(1-\alpha/k-\zeta)m}
                    =\delta^{\sigma(1-\alpha/k-\zeta)/3-2\e}
                    <\delta^\zeta,
                \end{displaymath}
                for $\delta\ll_{d,k,s,T,\e,\zeta}1$, by \eqref{form101}. This contradicts \eqref{form102}, proving \eqref{eq:L2-flattening-measures}.

                Returning to the original measures and scale, let $x_\theta\in\delta\Z^k$ be the lower-left endpoint of the dyadic $\delta$-cube containing $\theta\cdot b$. If a tuple is counted by $\mathcal E_{C\delta}(A;\Theta,b)$, then
                \begin{displaymath}
                    (a_1+x_{\theta_1})-(a_2+x_{\theta_2})\in\delta\Z^k
                    \cap B\bigl(0,(C+2\sqrt{k})\delta\bigr).
                \end{displaymath}
                For each $r\in\delta\Z^k\cap B(0,(C+2\sqrt{k})\delta)$, Cauchy--Schwarz bounds the number of tuples with $(a_1+x_{\theta_1})-(a_2+x_{\theta_2})=r$ by $|A|^2|\Theta|^2\|m_A*\nu_{\Theta,b}\|_{L^2}^2$. Summing over the $O_{C,k}(1)$ possible values of $r$, we get
                \begin{displaymath}
                    \mathcal E_{C\delta}(A;\Theta,b)
                    \lesssim_{C,k}|A|^2|\Theta|^2
                    \|m_A*\nu_{\Theta,b}\|_{L^2}^2
                    \leq\delta^{2\e}|A||\Theta|^2.
                \end{displaymath}
                This proves \eqref{eq:indexed-energy-bound}.

                The parameters $\zeta,T_0,T,$ and $\e$ can be chosen uniformly when $(\alpha,s)$ ranges in a compact subset of $[0,k)\times(0,1]$. This completes the proof of the theorem.
            \end{proof}

            \subsection{Preliminaries on additive energy} We proceed toward the proof of Proposition \ref{prop:ABC}. We need the asymmetric Balog-Szemer\'edi-Gowers theorem, see the book of Tao and Vu, \cite[Theorem 2.35]{MR2289012}. We state the result in the following slightly weaker form (following \cite[Theorem 3.2]{Shmerkin19}):
        \begin{theorem}[Asymmetric Balog-Szemer\'edi-Gowers theorem]\label{thm:BSG}
            Given $\eta>0$, there exists $\zeta>0$ such that the following holds for $\delta\in2^{-\N}$ with $\delta\ll_\eta1$. Let $A,B \subset (\delta \cdot \Z^k) \cap [0,1]^k$, and assume that $\mathcal{E}(A,B) \geq \delta^{\zeta}|A||B|^{2}$.

            Then, there exist subsets $A' \subset A$ and $B' \subset B$ with the properties
            \begin{displaymath} 
                |A'||B'| \geq \delta^{\eta}|A||B| \quad \text{and} \quad |A' + B'|_{\delta} \leq \delta^{-\eta}|A|.
            \end{displaymath}
        \end{theorem}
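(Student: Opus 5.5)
This is a standard result and does not depend on the rest of the paper. My plan is to deduce it from the asymmetric Balog--Szemer\'edi--Gowers theorem for finite subsets of an abelian group, \cite[Theorem 2.35]{MR2289012}, applied in $\delta\Z^k\cong\Z^k$, following the route of \cite[Theorem 3.2]{Shmerkin19}. Two preliminary remarks make the translation harmless. First, since $A,B\subset\delta\Z^k$, the sumset $A'+B'$ lies in $\delta\Z^k$, so $|A'+B'|_\delta\sim_k|A'+B'|$. Second, $\mathcal E(A,B)\le |A||B|^2$ always, so the hypothesis $\mathcal E(A,B)\ge\delta^{\zeta}|A||B|^2$ says that the energy is within a factor $K:=\delta^{-\zeta}$ of its maximum. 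Hence it suffices to prove the combinatorial statement with all losses polynomial in $K$: then choosing $\zeta\le\eta/C_0$ for a suitable absolute $C_0$, and $\delta$ small to absorb constants, gives the conclusion.

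I would prove the combinatorial statement in three steps.

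\emph{Step 1 (popular sums).} Let $r(x):=|\{(a,b)\in A\times B: a+b=x\}|$, so that $\mathcal E(A,B)=\sum_x r(x)^2$. Since $r(x)\le|B|$ for every $x$, the sums with $r(x)<|B|/(2K)$ contribute at most $|A||B|^2/(2K)$ to the energy. Therefore the set $S$ of \emph{popular sums}, those with $r(x)\ge|B|/(2K)$, satisfies
\begin{displaymath}
|S|\le \frac{|A||B|}{|B|/(2K)}=2K|A|.
\end{displaymath}
The bipartite graph $G:=\{(a,b)\in A\times B: a+b\in S\}$ then has $|G|\gtrsim K^{-1}|A||B|$ edges.

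\emph{Step 2 (paths of length three).} Apply the asymmetric paths-of-length-three lemma (the graph-theoretic core of \cite[Theorem 2.35]{MR2289012}, proved by dependent random choice) to $G$. It produces $A'\subset A$ and $B'\subset B$ with $|A'|\ge K^{-O(1)}|A|$ and $|B'|\ge K^{-O(1)}|B|$, such that every pair $(a,b)\in A'\times B'$ is joined in $G$ by at least $K^{-O(1)}|A||B|$ paths $a\sim b_1\sim a_1\sim b$. This already gives $|A'||B'|\ge K^{-O(1)}|A||B|$.

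\emph{Step 3 (counting).} Each path yields the identity
\begin{displaymath}
a+b=(a+b_1)-(a_1+b_1)+(a_1+b)=s_1-s_2+s_3,\qquad s_i\in S.
\end{displaymath}
For a fixed $x\in A'+B'$, choose one pair $(a,b)\in A'\times B'$ with $a+b=x$. Distinct paths for this fixed pair give distinct triples $(s_1,s_2,s_3)$, since the path can be recovered from $(a,b,s_1,s_2)$. Hence each $x$ has at least $K^{-O(1)}|A||B|$ representations $x=s_1-s_2+s_3$.

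This is the step I expect to be the main obstacle. The naive count bounds the number of triples by $|S|^3\lesssim K^3|A|^3$, which only gives $|A'+B'|\lesssim K^{O(1)}|A|^2/|B|$. That estimate is too weak when $|A|\gg|B|$, and the theorem is genuinely asymmetric, so the denominator must really be $|A|$. What I would try first is to work with $b$ fixed rather than with the triple $(s_1,s_2,s_3)$ in isolation. For $(a,b)$ with $b$ fixed, the data $(s_1,b_1,a_1)$ is already determined by $(s_1,s_2)$ together with the path. Moreover, popularity of $s_3=a_1+b$ gives about $|B|/K$ choices of representation, and these can be re-expanded to gain a factor $|B|$. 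The count then becomes
\begin{displaymath}
|A'+B'|\cdot K^{-O(1)}|A||B|\cdot\frac{|B|}{K}\ \lesssim\ |S|^2\,|A||B|,
\end{displaymath}
which yields $|A'+B'|\lesssim K^{O(1)}|A|$. If this bookkeeping does not close directly, I would instead quote \cite[Theorem 2.35]{MR2289012} as stated. That result gives $X\subset A$ and an approximate group $H$ with $|H|\le K^{O(1)}|B|$ and $|X+H|\le K^{O(1)}|H|$, such that $A$ is covered by $K^{O(1)}|A|/|B|$ translates of $H$. From this, taking $A'$ to be the union of the heavy translates of $H$ intersected with $A$, and $B'=B\cap(y+H)$ for a popular translate $y$, gives the claimed sets after one Pl\"unnecke--Ruzsa step.
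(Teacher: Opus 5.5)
Your reduction is the right one, and it is all the paper does: Theorem~\ref{thm:BSG} is not proved there but quoted from \cite[Theorem 2.35]{MR2289012} in the form of \cite[Theorem 3.2]{Shmerkin19}. Your preliminary remarks are fine: $|A'+B'|_\delta=|A'+B'|$ for subsets of $\delta\Z^k$, and losses polynomial in $K=\delta^{-\zeta}$ suffice.

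The self-contained argument, however, has a genuine gap at Step 3, and your proposed fix does not close it. The inequality you display,
\[
|A'+B'|\cdot K^{-O(1)}|A||B|\cdot\frac{|B|}{K}\lesssim|S|^2|A||B|,
\]
yields $|A'+B'|\lesssim K^{O(1)}|S|^2/|B|\lesssim K^{O(1)}|A|^2/|B|$, not $K^{O(1)}|A|$. Re-expanding $s_3$ gains $|B|/K$ on the left, but on the right it replaces the factor $|S|$ by $|A||B|$, which costs $|A||B|/|S|\gtrsim|B|/(2K)$. Re-expanding further $s_i$ trades factors in the same way. So Steps 1--3 only reproduce the symmetric Balog--Szemer\'edi--Gowers bound, which loses the ratio $|A|/|B|$. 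The energy hypothesis only forces $|B|\le\delta^{-\zeta}|A|$, so this ratio can be a power of $\delta^{-1}$, and the loss is fatal.

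What is missing is a structural step. One needs a set $H$ with $|H+H|\lesssim K^{O(1)}|H|$ at an intermediate scale, such that:
\begin{itemize}
\item some translate $y+H$ contains $\gtrsim K^{-O(1)}|B|$ points of $B$;
\item some set $X$ satisfies $|X||H|\lesssim K^{O(1)}|A|$ and $|A\cap(X+H)|\gtrsim K^{-O(1)}|A|$.
\end{itemize}
Then $A':=A\cap(X+H)$ and $B':=B\cap(y+H)$ satisfy $|A'+B'|\le|X||H+H|\lesssim K^{O(1)}|A|$. Supplying such an $H$ is the content of \cite[Theorem 2.35]{MR2289012}, so your fallback is the correct route, and it is the paper's.

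Be careful with your paraphrase of that theorem, though: $H$ cannot in general be taken with $|H|\le K^{O(1)}|B|$. Take $A=\delta\Z\cap[0,1]$ and $B\subset A$ a random subset of density $\delta^{1/4}$.
\begin{itemize}
\item With high probability $\mathcal E(A,B)\approx\tfrac23|A||B|^2$, so $K=O(1)$.
\item With high probability $\mathcal E(B,B)\lesssim\delta^{-2}$, so every $B'\subset B$ with $|B'|\ge c|B|$ has $|B'-B'|\ge|B'|^4/\mathcal E(B',B')\gtrsim c^4\delta^{-1/4}|B'|$.
\end{itemize}
Hence no set of size $\lesssim|B|$ and bounded doubling can capture a fixed proportion of $B$ in a single translate. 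Here the right choice is $H=A$, $X=\{0\}$, $y=0$. Quote the theorem in a form that only asserts $|X|\,|H+H|\lesssim K^{O(1)}|A|$ together with the two concentration properties above, and your deduction goes through.
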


        By applying Theorem \ref{thm:BSG} twice, we get simultaneous control on $|A'\pm B'|$.
        \begin{corollary}\label{cor:two-sided-BSG}
            Given $\eta>0$, there exists $\zeta=\zeta(\eta)>0$ such that the following holds for $\delta\in2^{-\N}$ with $\delta\ll_\eta1$. Let $A,B\subset(\delta\cdot\Z^k)\cap[0,1]^k$, and assume that $\mathcal E(A,B)\geq\delta^{\zeta}|A||B|^2$.
            
            Then there are $A'\subset A$ and $B'\subset B$ such that
            \begin{displaymath}
                |A'||B'|\geq\delta^\eta|A||B|,
                \qquad
                |A'\pm B'|_\delta \leq \delta^{-\eta}|A|.
            \end{displaymath}
        \end{corollary}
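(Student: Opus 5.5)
The plan is to apply Theorem \ref{thm:BSG} once to get the sum bound, then apply it a second time to the resulting sets with $B$ replaced by $-B$ to get the difference bound. Sum bounds pass to subsets, so the final sets control both $A'+B'$ and $A'-B'$.

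Fix $\eta>0$ and set $\eta_2:=\eta/2$. Let $\zeta_2:=\zeta(\eta_2)$ be the threshold given by Theorem \ref{thm:BSG} for $\eta_2$. Choose $\eta_1:=\min\{\eta/2,\zeta_2/4\}$, and let $\zeta:=\zeta(\eta_1)$ be given by Theorem \ref{thm:BSG} for $\eta_1$. This $\zeta$ is the one claimed in the corollary.

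\emph{First application.} Assume $\mathcal E(A,B)\ge\delta^{\zeta}|A||B|^2$. Theorem \ref{thm:BSG} yields $A_1\subset A$ and $B_1\subset B$ with
\[
|A_1||B_1|\ge\delta^{\eta_1}|A||B|,\qquad |A_1+B_1|_\delta\le\delta^{-\eta_1}|A|.
\]
Since $|A_1|\le|A|$ and $|B_1|\le|B|$, each factor is large on its own: $|A_1|\ge\delta^{\eta_1}|A|$ and $|B_1|\ge\delta^{\eta_1}|B|$.

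\emph{Energy of the refined sets.} Since $A_1+B_1\subset\delta\Z^k$, its cardinality satisfies $|A_1+B_1|\lesssim_k|A_1+B_1|_\delta$. Cauchy--Schwarz applied to the representation function of $A_1+B_1$ then gives
\[
\mathcal E(A_1,B_1)\ge\frac{|A_1|^2|B_1|^2}{|A_1+B_1|}\gtrsim_k\frac{|A_1|^2|B_1|^2}{\delta^{-\eta_1}|A|}\ge\delta^{2\eta_1}|A_1||B_1|^2,
\]
where the last step uses $|A_1|\ge\delta^{\eta_1}|A|$.

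\emph{Passing to differences.} The relation $a_1+b_1=a_2+b_2$ is equivalent to $a_1+(-b_2)=a_2+(-b_1)$, so $\mathcal E(A_1,-B_1)=\mathcal E(A_1,B_1)$. The set $-B_1$ lies in $[-1,0]^k$. To meet the hypothesis $B\subset[0,1]^k$ of Theorem \ref{thm:BSG}, I would translate it by a lattice vector, or rescale by a harmless constant; energy and covering numbers are unchanged up to constants. For $\delta\ll_\eta1$ the constant loss is absorbed, giving $\mathcal E(A_1,-B_1)\ge\delta^{3\eta_1}|A_1||B_1|^2\ge\delta^{\zeta_2}|A_1||B_1|^2$. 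This is where the choice $\eta_1\le\zeta_2/4$ is used.

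\emph{Second application.} Theorem \ref{thm:BSG} with parameter $\eta_2$ gives $A'\subset A_1$ and $B'\subset B_1$ with
\[
|A'||B'|\ge\delta^{\eta/2}|A_1||B_1|\ge\delta^{\eta/2+\eta_1}|A||B|\ge\delta^\eta|A||B|,
\]
and
\[
|A'-B'|_\delta\le\delta^{-\eta/2}|A_1|\le\delta^{-\eta}|A|.
\]
Finally, $A'+B'\subset A_1+B_1$, so $|A'+B'|_\delta\le\delta^{-\eta_1}|A|\le\delta^{-\eta}|A|$. This gives both bounds $|A'\pm B'|_\delta\le\delta^{-\eta}|A|$.

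The only delicate step is the energy lower bound for $(A_1,B_1)$. It relies on both factors $|A_1|$ and $|B_1|$ being individually large, which follows from $|A_1||B_1|\ge\delta^{\eta_1}|A||B|$ together with $A_1\subset A$ and $B_1\subset B$. Without this, the energy could degrade below the threshold needed for the second application.
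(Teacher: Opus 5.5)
Your proof is correct and follows the paper's argument essentially verbatim. You apply Theorem \ref{thm:BSG} with a small parameter $\eta_1$, use Cauchy--Schwarz together with $|A_1|\geq\delta^{\eta_1}|A|$ to get $\mathcal E(A_1,-B_1)=\mathcal E(A_1,B_1)\gtrsim_k\delta^{2\eta_1}|A_1||B_1|^2$, and apply Theorem \ref{thm:BSG} again with parameter $\eta/2$; the only difference is cosmetic ($\eta_1\leq\zeta_2/4$ versus the paper's $\eta_1\leq\xi(\eta/2)/3$). Your step of translating $-B_1$ back into $[0,1]^k$ makes explicit a point the paper leaves unstated, and choosing the translation vector in $\delta\Z^k$ preserves both the energy and the dyadic covering numbers exactly.
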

        \begin{proof}
            Let $\xi(t)>0$ be an exponent supplied by Theorem \ref{thm:BSG} with parameter $t$, set
            \begin{displaymath}
                \eta_1:=\min\{\eta/3,\xi(\eta/2)/3\},
                \qquad
                \zeta:=\xi(\eta_1),
            \end{displaymath}
            and apply Theorem \ref{thm:BSG} to $A,B$ with parameter $\eta_1$. This gives $A_1\subset A$ and $B_1\subset B$ such that
            \begin{displaymath}
                |A_1||B_1|\geq\delta^{\eta_1}|A||B|,
                \qquad
                |A_1+B_1|_\delta\leq\delta^{-\eta_1}|A|.
            \end{displaymath}
            Cauchy--Schwarz and the bound $|A_1|\geq\delta^{\eta_1}|A|$ yield
            \begin{displaymath}
                \mathcal E(A_1,-B_1)=\mathcal E(A_1,B_1)
                \gtrsim_k\frac{|A_1|^2|B_1|^2}{|A_1+B_1|_\delta}
                \geq\delta^{2\eta_1}|A_1||B_1|^2.
            \end{displaymath}
            Since $2\eta_1<\xi(\eta/2)$, the implicit constant is absorbed when $\delta\ll_{k,\eta}1$. Apply Theorem \ref{thm:BSG} to $A_1,-B_1$ with parameter $\eta/2$. We obtain $A'\subset A_1$ and $B'\subset B_1$ satisfying the claim.
        \end{proof}

        We also need the following version of the Pl\"unnecke-Ruzsa inequality:
        \begin{lemma}[Pl\"unnecke-Ruzsa inequality]\label{PRIneq}
            Let $\delta \in 2^{-\N}$, let $A,B_{1},B_{2} \subset \R^k$ be sets, and assume that $|A + B_{i}|_{\delta} \leq K_{i}|A|_{\delta}$ for $i \in \{1,2\}$, for some constants $K_{i} \geq 1$. Then there exists a subset $A' \subset A$ with $|A'|_{\delta} \geq \tfrac{1}{2}|A|_{\delta}$ such that
            \begin{displaymath}
                |A' + B_{1} + B_{2}|_{\delta} \lesssim K_{1}K_{2} |A|_{\delta}.
            \end{displaymath}
        \end{lemma}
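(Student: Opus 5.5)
The plan is to reduce to the classical Plünnecke--Ruzsa inequality for finite sets in an abelian group by discretizing everything to the lattice $\delta\Z^k$. For each $\delta$-cube met by $B_i$ we pick one point, say its lower-left corner. This replaces $B_i$ by $B_i^{(\delta)}\subset\delta\Z^k$, and similarly $A$ by $A^{(\delta)}$. The basic observation is that for arbitrary sets $X,Y\subset\R^k$,
\begin{displaymath}
|X^{(\delta)}+Y^{(\delta)}|\lesssim_k |X+Y|_\delta
\qquad\text{and}\qquad
|X+Y|_\delta\lesssim_k |X^{(\delta)}+Y^{(\delta)}|.
\end{displaymath}
Both bounds hold because $x\in X$ and $z(\mathcal D_\delta(x))\in X^{(\delta)}$ differ by a vector in $[0,\delta)^k$. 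Consequently a sum of a point of $X$ and a point of $Y$ is within $2\sqrt{k}\delta$ of the corresponding lattice sum, and each $\delta$-cube is met by $O_k(1)$ such neighbourhoods. The same comparison holds for threefold sums. The hypothesis therefore gives $|A^{(\delta)}+B_i^{(\delta)}|\le C_kK_i|A^{(\delta)}|$, with $|A^{(\delta)}|=|A|_\delta$.

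Next, I will apply the standard ``large subset'' form of Plünnecke--Ruzsa in $\delta\Z^k\cong\Z^k$. This is Ruzsa's triangle-type result: if $|X+Y_1|\le K_1|X|$ and $|X+Y_2|\le K_2|X|$, then there is $X'\subset X$ with $|X'|\ge|X|/2$ and $|X'+Y_1+Y_2|\lesssim K_1K_2|X|$.

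If I need to justify this rather than cite it, I will use Petridis' lemma, iterated. Petridis gives a nonempty $X_1\subset X$ minimizing $|X_1+Y_1|/|X_1|=:K'\le K_1$, and then $|X_1+Y_1+Z|\le K'|X_1+Z|$ for every finite $Z$. Taking $Z=Y_2$ yields $|X_1+Y_1+Y_2|\le K_1|X_1+Y_2|\le K_1K_2|X|$.

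The one difficulty is that $X_1$ may be small, and this is where the halving is needed. I will remove $X_1$ and repeat the procedure on $X\setminus X_1$, collecting the sets $X_1,X_2,\dots$ until their union has size at least $|X|/2$. At each stage the remainder $R$ still satisfies $|R+Y_i|\le|X+Y_i|\le 2K_i|R|$, because $|R|\ge|X|/2$. The Petridis ratio of each piece is therefore at most $2K_1$.

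Summing the pieces, I then get
\begin{displaymath}
|X'+Y_1+Y_2|\le\sum_j|X_j+Y_1+Y_2|\le \sum_j K_j'|X_j+Y_2|,
\end{displaymath}
with $K_j'\le 2K_1$. The main obstacle is that the sum $\sum_j|X_j+Y_2|$ is not obviously bounded by $|X+Y_2|$, since the sets $X_j+Y_2$ may overlap. To handle this I would use the stronger Petridis inequality $|X_j+Y_1+Z|\le K'_j|X_j+Z|$ in a different way. Writing $R_j$ for the remainder at stage $j$, I will use the standard Ruzsa covering/greedy bookkeeping, as in Tao--Vu, showing that the total is at most $O(K_1)\,|X+Y_2|$. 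Should this bookkeeping become messy, I would simply cite the finite-set version directly from Tao--Vu (the ``Plünnecke with a large subset'' corollary, e.g. \cite{MR2289012}) and the version stated in \cite{Shmerkin19}.

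Finally, I will transfer back to the original sets. I take $A'$ to be the union of $A\cap Q$ over the cubes $Q$ whose corner lies in $X'$. Then $|A'|_\delta=|X'|\ge|A|_\delta/2$, and by the comparison in the first step,
\begin{displaymath}
|A'+B_1+B_2|_\delta\lesssim_k|X'+B_1^{(\delta)}+B_2^{(\delta)}|\lesssim K_1K_2|A|_\delta.
\end{displaymath}
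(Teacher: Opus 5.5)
This is essentially the paper's route. The paper's proof is only a citation of Ruzsa's finite-set inequality \cite{MR2314377}, together with a pointer to \cite[Corollary 3.4]{MR4283564} for the passage to $\delta$-covering numbers. That passage is exactly your lattice discretization: the bounds $|X^{(\delta)}+Y^{(\delta)}|\le 3^k|X+Y|_\delta$ and $|X+Y|_\delta\le 2^k|X^{(\delta)}+Y^{(\delta)}|$, their threefold analogues, and the pull-back $A'=\bigcup_{z(Q)\in X'}A\cap Q$ with $|A'|_\delta=|X'|$ are all correct.

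One caution: keep the citation, because your iterated-Petridis justification does not close as sketched. The per-piece bound $|X_j+Y_1+Y_2|\le K_j'|X_j+Y_2|$ cannot simply be summed. For example, if every $Z\subset X$ has the same ratio $|Z+Y_1|/|Z|$, Petridis may return singletons. Then $\sum_j|X_j+Y_2|=|X'||Y_2|$, which can exceed $|X+Y_2|$ by a factor comparable to $|X|$. Moreover, the bound $O(K_1)|X+Y_2|$ you promise for the union is essentially the statement you are trying to prove, so no bookkeeping supplies it for free.

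What the iteration actually needs is a per-piece bound in terms of $|X_j|$, namely Ruzsa's Pl\"unnecke inequality for different summands. Since $|R_j+Y_i|\le 2K_i|R_j|$, that inequality yields a nonempty $X_j\subset R_j$ with $|X_j+Y_1+Y_2|\le 4K_1K_2|X_j|$. The disjointness of the pieces then gives $|X'+Y_1+Y_2|\le 4K_1K_2|X|$ immediately. This is a genuinely stronger input than Petridis' single-summand lemma; Ruzsa's proof goes through Pl\"unnecke graphs. So citing it, or citing the large-subset corollary directly as the paper does, is the right move.
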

        This form of the inequality is due to Ruzsa \cite{MR2314377}. To be accurate, these statements are not formulated in terms of $\delta$-covering numbers, but one may consult \cite[Corollary 3.4]{MR4283564} to see how to handle the simple reduction.

        \subsection{A Kaufman-type projection lemma}

        We will require a quantitative estimate for projections of Frostman sets under spread-out measures, inspired by Kaufman's projection theorem \cite{Ka}.

        For a finite set $B\subset\R^{d-k}$, $\theta\in\mathbf G_{d,k}$, $\delta\in2^{-\N}$, and $b\in B$, write
        \begin{displaymath}
            [b]_{\theta}^B := \{b'\in B: \theta \cdot b' \in\mathcal{D}_{\delta}(\theta\cdot b)\}.
        \end{displaymath}
        \begin{lemma}\label{lem:projection-fibres}
            Let $C,D\geq1$, $0<\beta<\gamma\leq1$, $\rho\in(0,1]$, and $\delta\in2^{-\N}$. Let $B\subset\delta\Z^{d-k}\cap[-1,1]^{d-k}$ be $(\delta,\beta,C)$-Frostman with $|B|\leq\delta^{-\beta}$, and let $\mu$ be a $(\delta,\gamma,D)_{d,k}$-spread-out probability measure. Then there exists $\Theta'\subset\mathbf G_{d,k}$ with $\mu(\Theta')\geq1-\rho$ such that
            \begin{displaymath}
                \bigl|\{b\in B:|[b]^B_\theta|>C'\rho^{-1}CDL\}\bigr|
                \leq L^{-1}|B|,
                \qquad \theta\in\Theta',\ L\geq1,
            \end{displaymath}
            where $C'=C'(d,k,\beta,\gamma)$.
        \end{lemma}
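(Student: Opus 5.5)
Our plan is a Kaufman-type energy argument. For $\theta\in\mathbf G_{d,k}$ put
\begin{displaymath}
N(\theta):=\sum_{b\in B}|[b]^B_\theta|=\bigl|\{(b,b')\in B^2:\theta\cdot b'\in\mathcal D_\delta(\theta\cdot b)\}\bigr|.
\end{displaymath}
Since $\mathcal D_\delta(\theta\cdot b)$ is a single $\delta$-cube in $\R^k$, the pair $(b,b')$ can only be counted when $|\theta\cdot(b-b')|\le\sqrt k\,\delta$. The main step is the averaged bound
\begin{equation}\label{eq:proposal-kaufman}
\int N(\theta)\,d\mu(\theta)\lesssim_{d,k,\beta,\gamma}CD|B|.
\end{equation}
Granting \eqref{eq:proposal-kaufman}, Chebyshev gives a set $\Theta'$ with $\mu(\Theta')\ge1-\rho$ on which $N(\theta)\le\rho^{-1}C'CD|B|$. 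On $\Theta'$, Markov's inequality in $b$ (with respect to counting measure on $B$) yields
\begin{displaymath}
\bigl|\{b:|[b]^B_\theta|>C'\rho^{-1}CDL\}\bigr|\le L^{-1}|B|
\end{displaymath}
simultaneously for all $L\ge1$, which is exactly the claim.

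To prove \eqref{eq:proposal-kaufman}, fix $b\ne b'$ and write $w:=b'-b$, $r:=|w|\ge\delta$ (recall $B\subset\delta\Z^{d-k}$). We estimate the $\mu$-measure of the set of $\theta$ with $|\theta\cdot w|\le\sqrt k\,\delta$. Pick any unit $e\in\R^k$. Then
\begin{displaymath}
|\theta\cdot w|\le\sqrt k\,\delta\ \Longrightarrow\ |e\cdot\theta\cdot(w/r)|\le\sqrt k\,\delta/r,
\end{displaymath}
so $\theta$ lies in the $(\sqrt k\,\delta/r)$-neighbourhood of the model hyperplane $H_{[e,w/r,0]}$. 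The key point is that only one model hyperplane is needed, not all $k$ coordinates. Lemma \ref{lem:spread-out-global-non-concentration}, after rounding $\sqrt k\,\delta/r$ up to a dyadic scale (for $r$ comparable to $\delta$ the trivial bound $\mu\le1$ suffices), gives
\begin{displaymath}
\mu\{\theta:|\theta\cdot w|\le\sqrt k\,\delta\}\lesssim_{k,\gamma}D(\delta/r)^{\gamma}.
\end{displaymath}
The diagonal terms $b=b'$ contribute exactly $|B|$.

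It remains to sum over pairs. Decompose dyadically by $r\sim2^{-j}$ with $\delta\le 2^{-j}\lesssim1$. The Frostman hypothesis on $B$ gives, for each $b$, at most $\lesssim C2^{-j\beta}|B|$ points $b'$ at distance $\sim2^{-j}$. Hence
\begin{displaymath}
\int N\,d\mu\lesssim|B|+D|B|\cdot C|B|\sum_{\delta\le2^{-j}\le1}2^{-j\beta}(\delta2^{j})^{\gamma}.
\end{displaymath}
Because $\gamma>\beta$, the sum is geometric and dominated by its smallest scale $2^{-j}=\delta$, so it is $\lesssim_{\beta,\gamma}\delta^{\beta}$. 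Finally $|B|\delta^\beta\le1$ by the hypothesis $|B|\le\delta^{-\beta}$, and \eqref{eq:proposal-kaufman} follows.

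The main obstacle, where care is needed, is the reduction of the near-kernel condition $|\theta\cdot w|\lesssim\delta$ to a neighbourhood of a single model hyperplane, so that Lemma \ref{lem:spread-out-global-non-concentration} applies with exponent $\gamma$ and loss only $D$. The remaining bookkeeping is routine: the dyadic rounding, the range $r\in[\delta,2\sqrt{d-k}]$ (handled by trivial bounds at large $r$), and absorbing $\sqrt k$ factors.
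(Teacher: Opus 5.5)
Your proposal is correct and is essentially the paper's proof. Both bound $\int\sum_{b\in B}|[b]^B_\theta|\,d\mu(\theta)\lesssim CD|B|$ via Lemma~\ref{lem:spread-out-global-non-concentration}, an annular decomposition with the Frostman bound (using $\gamma>\beta$ and $|B|\le\delta^{-\beta}$), and then Markov's inequality first in $\theta$ and then in $b$; your explicit reduction of $|\theta\cdot(b'-b)|\lesssim\delta$ to a neighbourhood of the single model hyperplane $H_{[e,(b'-b)/|b'-b|,0]}$ spells out a step the paper leaves implicit.
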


        \begin{proof}
          Lemma \ref{lem:spread-out-global-non-concentration}, followed by an annular decomposition and the Frostman bound for $B$, gives
            \begin{displaymath}
                \int \frac{1}{|B|^2}\sum_{b\in B}|[b]^B_\theta|\,d\mu(\theta)
                \lesssim_{\beta,\gamma}
                D\delta^\gamma\sum_{j=0}^{\log(1/\delta)}2^{j\gamma}\,C2^{-j\beta}
                +|B|^{-1} \lesssim_{\beta,\gamma}CD\delta^\beta.
            \end{displaymath}
            Since $|B|\leq\delta^{-\beta}$, Markov's inequality provides a set $\Theta'$ of $\mu$-measure at least $1-\rho$ such that
            \begin{displaymath}
                \sum_{b\in B}|[b]^B_\theta|
                \leq C'\rho^{-1}CD|B|,
                \qquad \theta\in\Theta'.
            \end{displaymath}
            The conclusion follows from Markov's inequality on $B$.
        \end{proof}

        We use the following standard decomposition lemma.
        \begin{lemma} \label{lem:frostman-decomposition}
            Let $n\ge 1$, $\delta \in 2^{-\N}$, $\beta>0$, and $C,K\ge 1$. If $B\subset \R^{n}$ is a bounded $(\delta,\beta,C)$-Frostman set, then there exist sets $B_1,\ldots,B_M\subset B$ such that the families $\mathcal{D}_{\delta}(B_{j})$ are disjoint, and:
            \begin{enumerate}[label=\textup{(\roman*)}]
                \item Each $B_j$ is $(\delta,\beta,2^n CK)$-Frostman.
                \item $|B_j|_{\delta}\le \delta^{-\beta}$ for all $j\in\{1,\ldots,M\}$.
                \item $\bigl|B\setminus (B_1\cup\ldots\cup B_M)\bigr|_{\delta} \le K^{-1}|B|_{\delta}$.
            \end{enumerate}
            In particular $M \lesssim_{n} CK|B|_{\delta}\delta^{\beta}$.
        \end{lemma}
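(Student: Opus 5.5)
The plan is a greedy extraction on dyadic cubes. Work with the dyadic family $\mathcal B:=\mathcal D_\delta(B)$ and regard the Frostman condition as a statement about subfamilies of $\mathcal B$. Call a subfamily $\mathcal B'\subset\mathcal B$ \emph{good} if it is $(\delta,\beta,2^nCK)$-Frostman and $|\mathcal B'|\le\delta^{-\beta}$. We build the sets $B_1,B_2,\ldots$ one at a time. Suppose $B_1,\ldots,B_{j-1}$ are already chosen, with $\mathcal D_\delta(B_i)$ pairwise disjoint. Let $\mathcal R_j$ be the family of cubes of $\mathcal B$ that do not appear in any $\mathcal D_\delta(B_i)$ with $i<j$. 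If $|\mathcal R_j|\le K^{-1}|B|_\delta$, stop; property (iii) then holds.

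Otherwise, produce a good subfamily of $\mathcal R_j$ of size roughly $\delta^{-\beta}$. The natural choice is a random (or greedy) subfamily of cardinality $N:=\lfloor\delta^{-\beta}\rfloor$, provided $|\mathcal R_j|\ge N$. Set $\mathcal R:=\mathcal R_j$ and note that $|\mathcal R|\ge K^{-1}|B|_\delta$. The Frostman hypothesis on $B$ then gives, for every dyadic $r$-cube $Q$,
\begin{displaymath}
|\mathcal R\cap Q|\le Cr^\beta|B|_\delta\le CKr^\beta|\mathcal R|.
\end{displaymath}
So $\mathcal R$ is itself $(\delta,\beta,CK)$-Frostman. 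This is the step where $K$ enters.

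To keep the Frostman constant under control while reducing the size to $\delta^{-\beta}$, do not sample randomly. Instead, take a deterministic "quota" selection. Inside each dyadic $r$-cube, a good family of size $N$ is allowed about $2^nCK r^\beta N$ cubes, and this quota is at least $1$ exactly when $r\gtrsim(CKN)^{-1/\beta}$. Construct $\mathcal B_j\subset\mathcal R$ top-down through the dyadic tree. At each parent cube $Q\in\mathcal D_{r}$ carrying quota $q(Q)$, split the quota among the children $Q'\in\mathcal D_{r/2}$ with $\mathcal R\cap Q'\ne\emptyset$, proportionally to $|\mathcal R\cap Q'|$, rounding up. The rounding is what costs the factor $2^n$, since there are $2^n$ children. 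At scale $\delta$, the quota of a cube is either $0$ or $1$.

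Because $\mathcal R$ is $(\delta,\beta,CK)$-Frostman, the quota of every cube is at most $\min\{|\mathcal R\cap Q|,\,2^nCKr^\beta N\}$, so the selection is feasible. The resulting family satisfies $|\mathcal B_j\cap Q|\le 2^nCKr^\beta N$. This gives (i) with constant $2^nCK$ once we also check $|\mathcal B_j|\gtrsim N$, and it gives (ii). Set $B_j:=B\cap\bigcup\mathcal B_j$.

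There is one edge case. If $|\mathcal R|<N$, take $\mathcal B_j:=\mathcal R$. It is already $(\delta,\beta,CK)$-Frostman and has at most $\delta^{-\beta}$ cubes, so it is good. After this step the remainder is empty and the process terminates.

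Each step other than the last removes at least $\gtrsim N\approx\delta^{-\beta}$ cubes. Hence the number of steps satisfies $M\lesssim|B|_\delta\delta^{\beta}+1$, which is better than the stated bound $M\lesssim_n CK|B|_\delta\delta^\beta$ (note that $C\ge1$, $K\ge1$, and $|B|_\delta\delta^\beta\gtrsim C^{-1}$ by the Frostman condition applied with $r=1$). The remaining loose end is to confirm that each round actually extracts $\gtrsim N$ cubes rather than too few; see the last paragraph.

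The main obstacle is the rounding in the quota construction. Two things must be verified. First, rounding up does not violate the total budget by more than a factor $2^n$ at each cube. This works because the budget is compared directly with the parent's allowance $2^nCKr^\beta N$, and not accumulated along chains of ancestors; the plan is to define the quotas as
\begin{displaymath}
q(Q):=\min\bigl\{|\mathcal R\cap Q|,\ \lceil CKr^\beta N\rceil\bigr\}
\end{displaymath}
and then select greedily leaf-by-leaf while respecting every ancestor's cap. Second, the greedy selection yields at least $N/2$ cubes. To check this I would argue by contradiction. If the greedy selection stops early, every remaining cube has some saturated ancestor. Summing the caps of the maximal saturated ancestors then forces the selected total to be at least comparable to $\min\{N,|\mathcal R|\}$, because the unsaturated part still has room. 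If this accounting becomes messy, the fallback is a random subset of density $N/|\mathcal R|$, followed by discarding the cubes lying in overfull $r$-cubes via a union bound over scales. This costs only a $\log$ factor, which can be absorbed into the $2^n$ factor by adjusting $K$.
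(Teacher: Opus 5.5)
Your final argument is correct, but your extraction step differs from the paper's. The outer loop is the same as in the paper. While the remainder $\mathcal R$ has more than $K^{-1}|B|_\delta$ cubes, it is $(\delta,\beta,CK)$-Frostman, and you peel off a piece of size about $\delta^{-\beta}$. The paper treats $|B|_\delta\le K\delta^{-\beta}$ separately by an arbitrary partition; your edge case $|\mathcal R|<N$ handles this uniformly.

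\textbf{How the extraction differs.} The paper assigns to every dyadic cube $J$ either the floor or the ceiling of $N|A\cap J|_\delta/|A|_\delta$. It chooses the children's values to sum \emph{exactly} to the parent's. This yields exactly $N$ leaves, two-sided control of every cube count, and $|A'\cap J|_\delta\le 2C'\ell(J)^\beta N$. Your greedy selection under the caps $\min\{|\mathcal R\cap Q|,\lceil CKr^\beta N\rceil\}$ is also valid, and your saturation argument closes cleanly:
\begin{itemize}
\item Suppose fewer than $N$ leaves are selected. Let $\mathcal S$ be the maximal saturated cubes containing an unselected leaf.
\item Each $Q\in\mathcal S$ of side $r$ must have cap $\lceil CKr^\beta N\rceil<|\mathcal R\cap Q|$.
\item So $Q$ holds at least $CKr^\beta N\ge N|\mathcal R\cap Q|/|\mathcal R|$ selected leaves, by the Frostman bound for $\mathcal R$.
\item Every leaf outside $\bigcup\mathcal S$ is selected. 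Since $N\le|\mathcal R|$, the total is at least $N$.
\end{itemize}
Thus the greedy reaches exactly $N$, not merely $N/2$. This matters: with only $|\mathcal B_j|\ge N/2$ you would get Frostman constant $4CK$, which exceeds $2^nCK$ when $n=1$. With exactly $N$, both methods give $2CK\le 2^nCK$. The $2^n$ is slack, not a cost of rounding over $2^n$ children. The greedy only controls counts from above, but that is all the lemma needs.

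\textbf{Points to fix in the write-up.}
\begin{enumerate}
\item Drop the first top-down scheme, which splits quotas proportionally and rounds \emph{up}. Every occupied child of a cube with positive quota gets quota $\ge1$, so at scale $\delta$ you would select all of $\mathcal R$ and lose (ii). You need exact floor/ceiling rounding, as in the paper, or your capped greedy.
\item Impose the global stop at $N$ explicitly. The caps alone do not bound the total: the unit-scale caps $\lceil CKN\rceil$, summed over the unit cubes meeting $B$, can far exceed $\delta^{-\beta}$.
\item Drop the random-sampling fallback. A $\log(1/\delta)$ loss cannot be absorbed into the fixed factor $2^n$. Nor can you rescale $K$, since $K$ also appears in (iii).
\item The bound $|B|_\delta\delta^\beta\ge C^{-1}$ comes from the Frostman condition at $r=\delta$, not at $r=1$.
\end{enumerate}
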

        
        \begin{proof}
            Suppose first that $|B|_\delta\leq K\delta^{-\beta}$. Partition $\mathcal D_\delta(B)$ into families $\mathcal P_j$ of cardinality between $(2K)^{-1}|B|_\delta$ and $\delta^{-\beta}$, discarding fewer than $(2K)^{-1}|B|_\delta$ cubes, and set $B_j:=B\cap\bigcup\mathcal P_j$. Then (ii)--(iii) hold, while
            \begin{displaymath}
                |B_j\cap I|_\delta\leq Cr^\beta|B|_\delta
                \leq 2CKr^\beta|B_j|_\delta,\qquad I\in\mathcal D_r,\ r\geq\delta.
            \end{displaymath}
            Thus (i) holds, and $M\leq2K\lesssim_n CK|B|_\delta\delta^\beta$.

            Assume now that $|B|_\delta>K\delta^{-\beta}$. We first claim (cf.~\cite[Lemma~2.7]{OS23}) that every $(\delta,\beta,C')$-Frostman set $A$ with $C'\geq1$ and $|A|_\delta>\delta^{-\beta}$ contains a subset $A'=A\cap\bigcup\mathcal P$, where $\mathcal P\subset\mathcal D_\delta$, such that
            \begin{displaymath}
                2^{-n}\delta^{-\beta}\leq|A'|_\delta\leq\delta^{-\beta}
            \end{displaymath}
            and $A'$ is $(\delta,\beta,2^nC')$-Frostman. To prove the claim, set $N:=\lfloor\delta^{-\beta}\rfloor$ and assign each dyadic cube $J$ meeting $A$, with $\delta\leq\ell(J)\leq1$, either the floor or the ceiling of $N|A\cap J|_\delta/|A|_\delta$. First choose the unit-cube weights to sum to $N$, and then proceed down the dyadic tree, choosing the child weights to sum to the assigned parent weight. At scale $\delta$, the assigned integers are $0$ or $1$, since $N/|A|_\delta<1$. Let $\mathcal P$ consist of the cubes assigned $1$, and set $A':=A\cap\bigcup\mathcal P$. Additivity yields
            \begin{displaymath}
                |A'|_\delta=N\in[\tfrac12\delta^{-\beta},\delta^{-\beta}],
            \end{displaymath}
            and, for every dyadic $J$ with $\delta\leq\ell(J)\leq1$,
            \begin{displaymath}
                |A'\cap J|_\delta
                \leq\left\lceil\frac{N|A\cap J|_\delta}{|A|_\delta}\right\rceil
                \leq\lceil C'\ell(J)^\beta N\rceil
                \leq2C'\ell(J)^\beta N.
            \end{displaymath}
            Since $n\geq1$, this proves the claim.

            Starting with $R_0=B$, apply this extraction to $R_j$ while $|R_j|_\delta>K^{-1}|B|_\delta$. At every such step, $R_j$ is $(\delta,\beta,CK)$-Frostman because it is a subset of $B$. The extracted sets have disjoint dyadic $\delta$-cubes, the terminal remainder satisfies (iii), and each extracted set satisfies (i)--(ii). Their lower cardinality bound also yields $M\leq2^n|B|_\delta\delta^\beta$, completing the proof.
        \end{proof}

        \subsection{Proof of Proposition \ref{prop:ABC}}

        \begin{proof}
            If $A=\varnothing$, the conclusion is immediate. Let $\tau:=\min\{\beta,\gamma/8\}$. Let $\e=\e(\alpha,\gamma)>0$ be given by Theorem \ref{thm:spread-out-implies-flattening}, decreased so that $\e\leq1/2$. For $t>0$, let $\zeta(t)>0$ be such that Corollary \ref{cor:two-sided-BSG} holds with exponent $t$. More precisely, we will apply it for sets in some fixed bounded domain that may differ from the unit cube; this is achieved by an appropriate change of coordinates. Choose $\eta>0$ so that $100\eta/\tau<\e$, and then set
            \begin{displaymath}
                \chi:=\min\{\e/10,\eta/100,\zeta(\eta/2)/40\}.
            \end{displaymath}
            \begin{samepage}
            The following diagram summarizes the parameter dependencies:
            \begin{center}
            \begin{tikzpicture}[x=0.65cm,y=0.65cm,every node/.style={font=\small,inner sep=2pt},>=stealth]
                \node (ag) at (0,1.4) {$(\alpha,\gamma)$};
                \node (bg) at (0,0) {$(\beta,\gamma)$};
                \node (eps) at (2.4,1.4) {$\e$};
                \node (tau) at (2.4,0) {$\tau$};
                \node (eta) at (4.8,0) {$\eta$};
                \node (zeta) at (7.2,0) {$\zeta(\eta/2)$};
                \node (chi) at (9.6,0) {$\chi$};
                \draw[->] (ag) -- (eps);
                \draw[->] (bg) -- (tau);
                \draw[->] (eps) -- (eta);
                \draw[->] (tau) -- (eta);
                \draw[->] (eta) -- (zeta);
                \draw[->] (zeta) -- (chi);
                \draw[->] (eps.east) -- (9.6,1.4) -- (chi.north);
                \draw[->] (eta.south) .. controls +(0,-0.8) and +(0,-0.8) .. (chi.south);
            \end{tikzpicture}
            \end{center}
            \end{samepage}

            Suppose that \eqref{form1} fails for every $\theta\in\Theta$. Choose $G_\theta\subset A\times B$ such that
            \begin{displaymath}
                |G_\theta|\geq\delta^\chi|A||B|,
                \qquad
                |\{a+\theta\cdot b:(a,b)\in G_\theta\}|_\delta
                <\delta^{-\chi}|A|.
            \end{displaymath}
            Since $\tau\leq\beta$, the set $B$ is $(\delta,\tau,\delta^{-\chi})$-Frostman. If $|B|>\delta^{-\tau}$, apply Lemma \ref{lem:frostman-decomposition} with exponent $\tau$ and $K=\delta^{-2\chi}$. Its pieces satisfy
            \begin{displaymath}
                |B_j|\leq\delta^{-\tau},\qquad
                B_j\text{ is }(\delta,\tau,\delta^{-4\chi})\text{-Frostman},
            \end{displaymath}
            since $\delta\ll_{\tau,\chi}1$. The discarded set has cardinality at most $\delta^{2\chi}|B|$. For each $\theta$, the remaining part of $G_\theta$ has cardinality at least $\tfrac12\delta^\chi|A||B|$. Hence
            \begin{displaymath}
                \sum_j|B_j|
                \mathbf 1_{\{|G_\theta\cap(A\times B_j)|
                \geq\delta^{2\chi}|A||B_j|\}}
                \geq\tfrac14\delta^\chi|B|.
            \end{displaymath}
            Averaging this inequality over $\theta$, we obtain an index $j$ and a set $\Theta'\subset\Theta$ such that
            \begin{displaymath}
                |\Theta'|\geq\delta^{2\chi}|\Theta|,
                \qquad
                |G_\theta\cap(A\times B_j)|
                \geq\delta^{2\chi}|A||B_j|,
                \quad \theta\in\Theta'.
            \end{displaymath}
            Replace $B,\Theta,G_\theta$ by $B_j,\Theta',G_\theta\cap(A\times B_j)$, respectively. Lemma \ref{lem:spread-out-subset} shows that the new $\Theta$ is $(\delta,\gamma,\delta^{-4\chi})_{d,k}$-spread-out. If $|B|\leq\delta^{-\tau}$, no replacement is needed. Thus in both cases we may assume that
            \begin{displaymath}
                |B|\leq\delta^{-\tau},\qquad
                B\text{ is }(\delta,\tau,\delta^{-4\chi})\text{-Frostman},
                \qquad
                \Theta\text{ is }(\delta,\gamma,\delta^{-4\chi})_{d,k}
                \text{-spread-out},
            \end{displaymath}
            while, for every $\theta\in\Theta$,
            \begin{displaymath}
                |G_\theta|\geq\delta^{2\chi}|A||B|,
                \qquad
                |\{a+\theta\cdot b:(a,b)\in G_\theta\}|_\delta
                <\delta^{-\chi}|A|.
            \end{displaymath}

            Lemma \ref{lem:spread-out-discretization}\textup{\ref{it:spread-out-discretization-cubes}} shows that $m_\Theta$ is $(\delta,\gamma/4,O_{d,k,\gamma}(\delta^{-4\chi}))_{d,k}$-spread-out. Apply Lemma \ref{lem:projection-fibres} to $m_\Theta$ with exponents $\tau,\gamma/4$ and
            \begin{displaymath}
                C=\delta^{-4\chi},\qquad D\lesssim_{d,k,\gamma}\delta^{-4\chi},\qquad
                \rho=\delta^\chi,\qquad L=\delta^{-3\chi}.
            \end{displaymath}
            After restricting $\Theta$ to a subset of relative cardinality at least $1-\delta^\chi$, Lemma \ref{lem:spread-out-subset} allows us to assume that $\Theta$ is $(\delta,\gamma,\delta^{-5\chi})_{d,k}$-spread-out and
            \begin{displaymath}
                \bigl|\{b\in B:|[b]^B_\theta|>\delta^{-13\chi}\}\bigr|
                \leq\delta^{3\chi}|B|,
                \qquad \theta\in\Theta.
            \end{displaymath}
            Replace $G_\theta$ by its intersection with
            \begin{displaymath}
                A\times\{b\in B:|[b]^B_\theta|\leq\delta^{-13\chi}\}.
            \end{displaymath}
            For $\delta\ll_\chi1$, this intersection has cardinality at least $\delta^{3\chi}|A||B|$.

            Write $X_\theta:=(\theta\cdot B)^{(\delta)}$, and let $\widetilde G_\theta\subset A\times X_\theta$ be the image of $G_\theta$ under
            \begin{displaymath}
                (a,b)\mapsto\bigl(a,z(\mathcal D_\delta(\theta\cdot b))\bigr).
            \end{displaymath}
            Then
            \begin{displaymath}
                |\widetilde G_\theta|\geq\delta^{17\chi}|A||X_\theta|,
                \qquad
                |X_\theta|\geq\delta^{14\chi}|B|.
            \end{displaymath}
            Since $|\theta\cdot b-z(\mathcal D_\delta(\theta\cdot b))|\leq\sqrt{k}\delta$, the restricted sumset satisfies
            \begin{displaymath}
                |\{a+x:(a,x)\in\widetilde G_\theta\}|
                \lesssim_k\delta^{-\chi}|A|.
            \end{displaymath}
            Cauchy--Schwarz applied to the sums indexed by $\widetilde G_\theta\subset A\times X_\theta$ yields (see \cite[Remark 3.17]{OrponenShmerkin26})
            \begin{displaymath}
                \mathcal E(A,X_\theta)
                \geq\frac{|\widetilde G_\theta|^2}{|\{a+x:(a,x)\in\widetilde G_\theta\}|}
                \gtrsim_{d,k}\delta^{35\chi}|A||X_\theta|^2.
            \end{displaymath}
            Since $35\chi<\zeta(\eta/2)$, Corollary \ref{cor:two-sided-BSG} gives $A_\theta\subset A$ and $Z_\theta\subset X_\theta$ such that
            \begin{displaymath}
                |A_\theta||Z_\theta|
                \geq\delta^{\eta/2}|A||X_\theta|,
                \qquad
                |A_\theta\pm Z_\theta|_\delta
                \leq \delta^{-\eta/2}|A|.
            \end{displaymath}
            For each $z\in Z_\theta$, choose $b_z\in B$ such that $\theta\cdot b_z\in z+[0,\delta)^k$, and denote $\{b_z:z\in Z_\theta\}$ by $B_\theta$. For $\delta\ll_{\eta,\chi}1$,
            \begin{displaymath}
                |A_\theta||B_\theta|
                \geq\delta^{\eta/2}|A||X_\theta|
                \geq\delta^{\eta/2+14\chi}|A||B|
                \geq\delta^{5\eta}|A||B|.
            \end{displaymath}
            By the definition of $B_\theta$,
            \begin{equation}\label{form2}
                |A_\theta\pm\theta\cdot B_\theta|_\delta
                \leq\delta^{-\eta}|A|,
                \qquad \theta\in\Theta.
            \end{equation}

            Set $F_\theta:=A_\theta\times B_\theta$. By Cauchy--Schwarz, the set
            \begin{displaymath}
                \Gamma:=\{(\theta_1,\theta_2)\in\Theta^2:
                |F_{\theta_1}\cap F_{\theta_2}|
                \geq\tfrac12\delta^{10\eta}|A||B|\}
            \end{displaymath}
            satisfies, for $\delta\ll_\eta1$,
            \begin{displaymath}
                |\Gamma|\geq\delta^{11\eta}|\Theta|^2,
                \qquad
                |F_{\theta_1}\cap F_{\theta_2}|
                \geq\delta^{11\eta}|A||B|,
                \quad(\theta_1,\theta_2)\in\Gamma.
            \end{displaymath}

            For $(\theta_1,\theta_2)\in\Gamma$, apply Lemma \ref{PRIneq} and Cauchy--Schwarz to $A_{\theta_1}\cap A_{\theta_2}$ and $B_{\theta_1}\cap B_{\theta_2}$, using \eqref{form2}. We obtain
            \begin{displaymath}
                \mathcal E_{\sqrt{k}\delta}
                \bigl(A;((\theta_1-\theta_2)\cdot b)_{b\in B}\bigr)
                \geq\delta^{49\eta}|A||B|^2,
                \qquad(\theta_1,\theta_2)\in\Gamma.
            \end{displaymath}
            The definition of indexed energy yields the identity
            \begin{equation}\label{eq:ABC-six-tuple}
                \sum_{b_1,b_2\in B}
                \mathcal E_{\sqrt{k}\delta}(A;\Theta,b_1-b_2)
                =
                \sum_{\theta_1,\theta_2\in\Theta}
                \mathcal E_{\sqrt{k}\delta}
                \bigl(A;((\theta_1-\theta_2)\cdot b)_{b\in B}\bigr) \geq\delta^{60\eta}|A||B|^2|\Theta|^2.
            \end{equation}
            For fixed $b_1,b_2$, the indexed energy in this sum is $O_k(|A||\Theta|^2)$. Since $B$ is $(\delta,\tau,\delta^{-4\chi})$-Frostman,
            \begin{displaymath}
                \bigl|\{(b_1,b_2)\in B^2:
                |b_1-b_2|<\delta^{(65\eta+4\chi)/\tau}\}\bigr|
                \lesssim_{d,k,\tau}\delta^{65\eta}|B|^2.
            \end{displaymath}
            Their contribution to the left-hand side of \eqref{eq:ABC-six-tuple} is negligible. Hence there are $b_1,b_2\in B$ such that
            \begin{displaymath}
                |b_1-b_2|\geq\delta^{(65\eta+4\chi)/\tau}
                \geq\delta^\e,
                \qquad
                \mathcal E_{\sqrt{k}\delta}(A;\Theta,b_1-b_2)
                \geq\delta^{61\eta}|A||\Theta|^2.
            \end{displaymath}
            On the other hand, it follows from $5\chi\leq\e$ and \eqref{eq:indexed-energy-bound} that
            \begin{displaymath}
                \mathcal E_{\sqrt{k}\delta}(A;\Theta,b_1-b_2)
                \lesssim_k\delta^{2\e}|A||\Theta|^2.
            \end{displaymath}
            This is impossible for $\delta\ll_{k,\eta,\e}1$, since $61\eta<2\e$.

            If $(\alpha,\beta,\gamma)$ ranges in a compact subset of the parameter domain, then $\tau$ stays bounded away from zero and $\e$ can be chosen uniformly by Theorem \ref{thm:spread-out-implies-flattening}. The parameters $\eta$ and $\chi$ can therefore also be chosen uniformly.
        \end{proof}

    \section{Spread-out projections of sticky sets}

    \subsection{Main result and initial setup}

    \subsubsection{Main result}

    Recall the Frostman and regularity conditions from Definition \ref{def:Ahlfors-regular} and the paragraph preceding it.

    In this section, we identify
    \begin{displaymath}
        \R^d = \R^k \times \R^{d-k} = V \oplus W.
    \end{displaymath}
    For $\theta\in \mathbf{G}_{d,k}$, we let
    \begin{displaymath}
        \pi_{\theta}: V \oplus W \to V \cong \R^k, \qquad \pi_{\theta}(x,y) = x + \theta\cdot y.
    \end{displaymath}
    Thus,
    \begin{displaymath}
        \ker \pi_{\theta} = \{(-\theta\cdot w,w) : w \in W\},
    \end{displaymath}
    and so the fibers of $\pi_{\theta}$ are affine $(d-k)$-planes parallel to $\ker \pi_{\theta}$. This provides the identification, discussed in the introduction, between $\mathbf{G}_{d,k}$ and a Zariski-open and dense subset of the Grassmannian of $k$-planes in $\R^d$.

    \begin{theorem}\label{thm:AhlforsProjection}
        Let $1 \leq k < d$, let $s \in (0,1]$, $t \in (0,d]$, and $u \in [0,\min\{t,k\})$. Then there exists $\e=\e(k,d,s,t,u)>0$ such that the following holds for every $\delta\in2^{-\N}$ with $\delta\ll_{k,d,s,t,u}1$.

        Let $P\subset[-1,1]^d$ be $(\delta,t,\delta^{-\e})_d$-regular, and let $\Theta \subset [-1,1]^{k\times (d-k)}\subset\mathbf{G}_{d,k}$ be a $\delta$-separated $(\delta,s,\delta^{-\e})_{d,k}$-spread-out set. Then there exists $\Theta' \subset \Theta$ with $|\Theta'|_{\delta} \geq (1-\delta^{\e})|\Theta|_{\delta}$ such that for all $\theta \in \Theta'$,
        \begin{displaymath}
            |\pi_{\theta}(P')|_{\delta} \geq \delta^{-u} \qquad\textup{for all } P' \subset P,\ |P'|_{\delta} \geq \delta^{\e}|P|_{\delta}.
        \end{displaymath}
    \end{theorem}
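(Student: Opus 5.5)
We argue by contradiction together with a multiscale bootstrapping in the exponent $u$, in the spirit of \cite{OS23,OrponenShmerkin26}. For fixed $(k,d,s,t)$, let $u^*$ be the supremum of those $u\in[0,\min\{t,k\})$ for which the theorem holds with some $\e(u)>0$. The trivial bound shows $u^*\ge0$: every $u$ sufficiently close to $0$ works, since $|\pi_\theta(P')|_\delta\ge1$ and a tiny $\delta^{-u}$ can be absorbed using the Frostman property of $P$. Assume $u^*<\min\{t,k\}$. We aim to show that some $u>u^*$ also works, which is a contradiction. Thus the core statement to prove is an \emph{$\epsilon$-improvement}: if the conclusion holds at exponent $u$ at all scales, then it holds at exponent $u+\kappa$, with $\kappa=\kappa(u,s,t,k,d)>0$.

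\textbf{Step 1: multiscale reduction.} Fix an intermediate ratio $\Delta=\delta^{\lambda}$ with $\lambda$ small. The stability properties make the hypotheses self-similar. Regularity of $P$ descends to rescaled pieces $S_Q(P\cap Q)$ for $Q\in\mathcal D_r(P)$; each piece is $(\Delta,t,\delta^{-O(\e)})$-regular at relative scale $\Delta$. Spread-outness of $\Theta$ descends to rescaled subcubes by Lemma \ref{lem:spread-out-sub-cube} and to coarser scales by the monotonicity remark. Writing $\delta=\Delta^{m}$ and using the standard multiscale lower bound for covering numbers of projections along the tree of $P'$, we get
\begin{displaymath}
|\pi_\theta(P')|_\delta\gtrsim\prod_j(\text{projection of a typical rescaled block at scale }\Delta^j\to\Delta^{j+1}).
\end{displaymath}
This bound is valid because, by regularity, all blocks have the same branching $\approx\Delta^{-t}$, and the fibre structure of $\pi_\theta$ commutes with rescaling. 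Applying the inductive hypothesis (exponent $u$) in each block gives $\delta^{-u}$. Therefore failure at exponent $u+\kappa$ for a non-negligible set of $\theta$ forces near-equality at most blocks. In other words, for most $\theta$ and most scales, a typical rescaled block $\mathbf P$ of $P'$ satisfies $|\pi_\theta(\mathbf P)|_\Delta\approx\Delta^{-u}$, and this projection is also essentially minimal at the two sub-scales $\Delta\to\Delta^{1/2}\to$ inside.

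\textbf{Step 2: product structure.} Near-extremality at consecutive scales is then used to show the following. Inside a typical $\Delta^{1/2}$-slab $\pi_\theta^{-1}(\text{ball})\cap \mathbf P$, the set looks (after rescaling) like a product $A\times B$ in the coordinates $V\oplus W$. Here $A\subset\R^k$ has $|A|\lessapprox\Delta^{-u'}$ with $u'<k$, and $B\subset\R^{d-k}$ is Frostman with a positive exponent. The positivity of that exponent follows from regularity and from $u<t$, because fibres carry dimension $\ge t-u>0$. For nearby $\theta'$, ranging over a spread-out local family $\Theta_{\mathrm{loc}}$ (via Lemma \ref{lem:spread-out-sub-cube} and Lemma \ref{lem:spread-out-scaling}), the projections $a+\theta'\cdot b$ of a large portion $G\subset A\times B$ still have size $\lessapprox|A|$. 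This is exactly where stickiness is used: regularity guarantees that the fibre structure at scale $\Delta^{1/2}$ is the same for all nearby directions.

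\textbf{Step 3: contradiction.} This configuration contradicts Proposition \ref{prop:ABC}, applied with $\alpha=u'<k$, $\beta>0$, and $\gamma=s$. That proposition yields $\theta'\in\Theta_{\mathrm{loc}}$ with $|\{a+\theta'\cdot b\}|_\delta\ge\Delta^{-\chi}|A|$ for every large $G$. Summing this gain over a positive proportion of scales gives the improvement $\kappa\approx\chi\lambda$, and the exceptional set $\Theta\setminus\Theta'$ has size $\le\delta^{\e}|\Theta|$ after pigeonholing.

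The main obstacle is Step 2: extracting a genuine product structure with uniform constants. This requires careful uniformization of $P'$, which is only a $\delta^{\e}$-dense subset and hence not itself regular, into a uniform tree. The plan is to do this with Theorem \ref{thm:inverse}-style uniform subsets and to keep regularity of the ambient $P$ to control the fibres.
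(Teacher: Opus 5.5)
Your architecture matches the paper's. You bootstrap an exponent (the paper inducts downward on the multiplicity exponent $\sigma$, which plays the role of $t-u$, via Proposition \ref{prop:AhlforsProjections-induction}), reduce to a two-scale product structure inside a slab, and contradict Proposition \ref{prop:ABC}. But Step 2 carries the whole proof, and the justifications you give for it fail.

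\textbf{First gap: the Frostman property of $B$.} ``Fibres carry dimension $\ge t-u$'' is a lower bound on fibres, whereas Frostman is an upper bound on concentration. Regularity alone gives $|B\cap B(y,r)|\lesssim(r/\Delta^{1/2})^{t}$ against $|B|\approx\Delta^{-(t-u)/2}$. So regularity does not prevent all cubes of the slab from lying in one ball of radius $r=(\Delta^{1/2})^{u/t}$. At exponent $\beta$ this forces a Frostman constant $(\Delta^{1/2})^{-\beta u/t}$. Proposition \ref{prop:ABC} tolerates only $(\Delta^{1/2})^{-\chi}$, and $\chi<\beta/10^{4}$ in its proof, so this works only for tiny $u$.

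What you need is an upper bound on local fibre multiplicities at every intermediate pair of scales. The only source is the inductive hypothesis itself, applied at smaller scales in multiplicity form. This is what the paper does:
\begin{itemize}
\item Theorem \ref{thm:AhlforsProjection} is reformulated as Proposition \ref{prop:AhlforsProjection-multiplicity}.
\item Lemma \ref{lem:local-slice} (``small slices imply sparse slices'') turns $\mathsf{Projection}(s,t;\sigma)$ into the statement that, for most $\theta$, most points avoid $\mathbf H_{\theta,\mathrm{loc}}(P,\sigma,\delta,\delta^{\sqrt\e})$.
\item After excising those points one gets Lemma \ref{lemma8}, hence $Y$ is $(\Delta,\sigma,\Delta^{-7\zeta})$-Frostman with $\zeta\le\chi/100$ fixed after $\chi$.
\item The same excision gives the small slab projections of Lemma \ref{lemma6}.
\end{itemize}
Your ``for all $P'$'' formulation must be converted into this sparse-slices statement before any product structure is usable. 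Uniformizing $P'$ as in Theorem \ref{thm:inverse} does not supply it.

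\textbf{Second gap: common structure across directions.} The failing sets $P'_\theta$ depend on $\theta$, so a product $A\times B$ extracted from $P'_\theta$ says nothing about $\pi_{\theta'}$ for nearby $\theta'$. Regularity does not make ``the fibre structure the same''. The only geometric fact is that directions within $\Delta^{1/2}$ share $\Delta^{1/2}$-slabs; which parts of a slab have small $\pi_{\theta'}$-image still depends on $P'_{\theta'}$. The paper handles this as follows:
\begin{itemize}
\item It uses canonical bad sets $P_\theta$ (high global multiplicity, low local multiplicity).
\item It fixes a $\sqrt\delta$-cube $I_0$ of directions on which the rescaled measure is spread-out (Lemma \ref{lem:spread-out-sub-cube}).
\item Cauchy--Schwarz then gives $\theta_0$ and a set $E$ of directions with $\nu(E)\ge\delta^{6\e}\nu(I_0)$ and $\mu(P_{\theta_0}\cap P_\theta)\ge\delta^{5\e}$.
\item $A$ and $Y$ are built from $P_{\theta_0}$ inside a heavy slab $\mathbf T_0$, and $G_\theta$ from $P_{\theta_0}\cap P_\theta$, so $\pi_\theta(G_\theta)$ is controlled by $\pi_\theta(P_\theta\cap 2\mathbf T_0)$.
\end{itemize}
You would need this same link to transfer an ABC gain back to $|\pi_{\theta'}(P'_{\theta'})|_\delta$ in Step 3.

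With the inductive hypothesis in sparse-slices form, the paper needs only the single split $\delta\to\sqrt\delta\to 1$ and a direct contradiction, not your multi-block product with summed gains. Your supremum argument also needs $\kappa$ bounded below uniformly near $u^*$; that comes from the uniformity clause of Proposition \ref{prop:ABC}.
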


    \subsubsection{High multiplicity sets}\label{s:highMultiplicity}

    We begin by defining the notion of ``multiplicity''. This terminology is taken from \cite{MR4388762}; here we extend it to higher dimensions. We denote
    \begin{displaymath}
        [x]_{\theta} = \pi_{\theta}^{-1}\{\pi_{\theta}(x)\} = x + \{ (-\theta\cdot w, w) : w \in W \} \quad\text{for } x \in \R^{d},\, \theta \in \mathbf{G}_{d,k}.
    \end{displaymath}

    \begin{definition}\label{def:mult} 
        Let $P \subset \R^{d}$, let $r\in2^{-\Z}$ and $R\in2^{-\Z}\cup\{\infty\}$ satisfy $r\leq R$, and let $x \in P$. For $\theta \in \mathbf{G}_{d,k}$, we define the following \emph{multiplicity number}:
        \begin{displaymath}
            \m_{P,\theta}(x \mid [r,R]) := \bigl|B(x,R) \cap P(r)\cap [x]_{\theta}\bigr|_{r}.
        \end{displaymath} 
        Thus, $\m_{P,\theta}(x \mid [r,R])$ is the smallest number of dyadic $r$-cubes needed to cover $B(x,R) \cap P(r)\cap [x]_{\theta}$.
    \end{definition}

    \begin{definition}[High multiplicity sets]\label{def:highMult}
        Let $r\in2^{-\Z}$ and $R\in2^{-\Z}\cup\{\infty\}$ satisfy $r\leq R$, let $M>0$, and let $\theta \in \mathbf{G}_{d,k}$. For $P \subset \R^{d}$, we define the \emph{high multiplicity set}
        \begin{displaymath}  
            \mathbf{H}_{\theta}(P,M, [r,R]) = \{x \in P : \m_{P,\theta}(x \mid [r,R]) \geq M\}.
        \end{displaymath}
    \end{definition}

    We now reformulate Theorem \ref{thm:AhlforsProjection} in the language of high-multiplicity sets:
    \begin{proposition}\label{prop:AhlforsProjection-multiplicity}
        Let $t \in (0,d]$, $s \in (0,1]$. For every $\sigma > \max\{t - k,0\}$ there exists $\e>0$ such that the following holds for every $\delta\in2^{-\N}$ with $\delta\ll_{s,t,\sigma}1$. Let $\mu$ be a $(\delta,t,\delta^{-\e})_d$-regular measure, and let $\nu$ be a finitely supported $(\delta,s,\delta^{-\e})_{d,k}$-spread-out probability measure on $[-1,1]^{k\times(d-k)}$. Then there exists $\Theta'\subset\spt\nu$ with $\nu(\Theta')\geq1-\delta^\e$ such that
        \begin{equation}\label{form7}
            \mu\bigl(B(0,1) \cap \mathbf{H}_{\theta}(\spt \mu,\delta^{-\sigma},[\delta,1])\bigr) \leq \delta^{\e}, \qquad \theta \in \Theta'.
        \end{equation}
    \end{proposition}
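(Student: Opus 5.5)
The plan is to argue by contradiction, using a bootstrapping scheme on the exponent $\sigma$ in the style of \cite{OS23,OrponenShmerkin26}. Fix $t,s$, and let $\sigma_0$ be the infimum of those $\sigma$ for which the conclusion \eqref{form7} holds with some $\e=\e(\sigma)>0$.

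\textbf{Step 1: the statement holds for $\sigma>t$.} By regularity, $|B(x,1)\cap\spt\mu|_\delta\le\delta^{-t-\e}$. Hence $\mathbf H_\theta(\spt\mu,\delta^{-\sigma},[\delta,1])=\varnothing$ for every $\sigma>t$ once $\e<\sigma-t$, so $\sigma_0\le t$. We then assume $\sigma_0>\max\{t-k,0\}$ and aim to show that \eqref{form7} holds for $\sigma=\sigma_0-\kappa$, with $\kappa>0$ depending only on $s,t,\sigma_0$. This contradicts the definition of $\sigma_0$.

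\textbf{Step 2: multiscale reduction.} Split $[\delta,1]$ into blocks $[\Delta_{j+1},\Delta_j]$ with $\Delta_j=\delta^{j\eta}$. The basic fact is that multiplicity is essentially submultiplicative along scales:
\begin{displaymath}
\m_{P,\theta}(x\mid[\delta,1])\lessapprox\prod_j\m_{P,\theta}(x\mid[\Delta_{j+1},\Delta_j]),
\end{displaymath}
at least after passing to a large-measure subset of $x$. Each block multiplicity is a multiplicity of a rescaled piece $S_Q(\mu|_Q)$ with $Q\in\mathcal D_{\Delta_j}$. Such a piece is $(\Delta_{j+1}/\Delta_j,t,\delta^{-O(\e)})$-regular, because regularity is scale-invariant. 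The corresponding measure on directions is the rescaled localization of $\nu$ from Lemma \ref{lem:spread-out-sub-cube} together with Lemma \ref{lem:spread-out-scaling}; it remains spread-out with only logarithmic losses. Applying the inductive statement at exponent $\sigma_0+\kappa$ in every block therefore keeps the total multiplicity below $\delta^{-\sigma_0-\kappa}$ for $\nu$-most $\theta$. To reach $\sigma_0-\kappa$, it suffices to gain a definite amount $\kappa'\eta$ in a positive proportion of the blocks. So the core claim is a one-block improvement.

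\textbf{Step 3: one-block improvement.} Let $\Delta=\delta^{1/2}$. Suppose that for a non-negligible set of $\theta$, a large part of $\mu$ has multiplicity $\approx\delta^{-\sigma_0}$ on both $[\delta,\Delta]$ and $[\Delta,1]$, with the exponents near-extremal. Then inside a typical $\Delta$-tube (a fibre of $\pi_\theta$ thickened to width $\Delta$), the set has a product structure after rescaling:
\begin{itemize}
\item it is approximately $A\times B$ in the coordinates $V\oplus W$;
\item $A\subset\R^k$ has size $\lessapprox\delta^{-(t-\sigma_0)/2}$, where $t-\sigma_0<k$;
\item $B$ is a $(\sigma_0/2)$-Frostman set in the fibre direction, coming from the coarse multiplicity at scale $\Delta$.
\end{itemize}
Extremality at scale $\delta$ says that $\pi_{\theta'}(a,b)=a+\theta'\cdot b$ fails to expand $|A|$ on a dense subset $G\subset A\times B$, for most $\theta'$ in a localized, rescaled copy of $\nu$ around $\theta$. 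That copy is $\delta^{-O(\e)}$-spread-out by Lemmas \ref{lem:spread-out-sub-cube} and \ref{lem:spread-out-scaling}, and its atoms can be taken uniform by Lemma \ref{lem:uniform-spread-out-discretization}. Proposition \ref{prop:ABC}, with $\alpha=t-\sigma_0<k$, $\beta=\sigma_0>0$ and $\gamma=s$, then gives a $\theta'$ for which the sumset expands by $\delta^{-\chi}$. This contradicts extremality and yields $\kappa\gtrsim\chi$.

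\textbf{Main obstacle.} I expect the hardest step to be the rigorous extraction of the product structure in Step 3. This requires:
\begin{itemize}
\item pigeonholing a single scale $\Delta$ and a uniform configuration, so that the fibres at scale $\Delta$ and the fibres at scale $\delta$ interact through the same $\theta$;
\item ensuring that the set of bad $\theta$ keeps its spread-out property after the localization;
\item making sure the $\e$-losses from regularity and from Lemma \ref{lem:spread-out-sub-cube} stay below $\chi$.
\end{itemize}
I would first try the two-scale argument of \cite[Section 5]{OS23}, with Bourgain's projection theorem replaced by Proposition \ref{prop:ABC}. The compactness clause in Proposition \ref{prop:ABC} is what makes $\kappa$ uniform as $\sigma$ varies near $\sigma_0$.
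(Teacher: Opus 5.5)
Your architecture is the paper's. It consists of an infimum argument in $\sigma$ (Proposition~\ref{prop:AhlforsProjections-induction}), uniform slice control from the inductive hypothesis at all pairs of scales, a two-scale analysis at $\delta$ and $\Delta=\delta^{1/2}$ inside a $\Delta$-slab, and Proposition~\ref{prop:ABC} after localizing directions to a $\Delta$-cube. (You should also record the trivial bound $\m_{P,\theta}\lesssim\delta^{-(d-k)}$, since Proposition~\ref{prop:ABC} needs $\beta\le d-k$.) There are two concrete problems, though.

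\textbf{Step~2 uses the wrong direction measure.} Since $\pi_\theta$ is linear, the isotropic rescaling $S_Q$ maps fibres of $\pi_\theta$ to fibres of $\pi_\theta$. So block multiplicities in direction $\theta$ are multiplicities of $S_Q(\mu|_Q)$ in the \emph{same} direction $\theta$. The inductive hypothesis must therefore be applied with $\nu$ itself, restricted to the bad directions; it remains spread-out at every coarser scale. This is then combined with Fubini over the pieces and a union over all dyadic pairs $(r,R)$, which is exactly Lemma~\ref{lem:local-slice}. A localized, rescaled copy of $\nu$ would instead control the unrelated projections $\pi_{\Delta_j^{-1}(\theta-c)}$. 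Directions are localized and rescaled only together with the anisotropic slab rescaling $(x,y)\mapsto(\Delta^{-1}x,y)$. Your reduction to a ``gain in a positive proportion of blocks'' is also unnecessary: the two-scale contradiction directly gives $\mathsf{Projection}(s,t;\sigma)\Rightarrow\mathsf{Projection}(s,t;\sigma-\zeta)$.

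\textbf{Step~3 does not produce a common product set for many directions.} The high-multiplicity sets $P_\theta$ and $P_{\theta'}$ need not overlap. So a dense $G\subset A\times B$ with small $\pi_{\theta'}$-image for ``most $\theta'$'' is unjustified. The missing step is Cauchy--Schwarz applied to $\iint_{I_0\times I_0}\mu(P_\theta\cap P_{\theta'})\,d\nu\,d\nu$. This yields a base direction $\theta_0$ and a set $E$ with $\nu(E)\ge\delta^{6\e}\nu(I_0)$ and $\mu(P_{\theta_0}\cap P_\theta)\ge\delta^{5\e}$ for $\theta\in E$. Heavy $\Delta$-balls and a slab $\mathbf{T}_0$ are then chosen by counting, and only a $\Delta^{5\zeta}$-fraction $\overline\Theta$ of directions survives. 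That is enough, because Lemma~\ref{lem:spread-out-subset} preserves spread-outness with a loss absorbed by $25\zeta\le\chi$.

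Likewise, the Frostman property of $B$ does not come from the coarse multiplicity at scale $\Delta$, which only bounds $|B|$ from above. It needs the slice bounds at every $R\in[\delta^{-\sqrt\e}\Delta,1]$ (Lemma~\ref{lemma8}) and the lower bound $|B|\gtrsim\Delta^{-\sigma+6\zeta}$ coming from the heavy incidences. The resulting exponent at scale $\Delta$ is the inductive exponent $\sigma$, not $\sigma_0/2$. Finally, the bound on $|A|$ needs the slab projection estimate of Lemma~\ref{lemma6}, which again rests on the slice bounds.
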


    \begin{remark}  \label{rem:single-projection-to-nearly-all}
        Proposition \ref{prop:AhlforsProjection-multiplicity} is formally equivalent to a version asserting that \eqref{form7} holds for a single $\theta\in\spt\nu$. Indeed, suppose this version holds with exponent $\e$, and consider the full conclusion with exponent $\e/2$. If it fails, the set $B$ of directions for which the left-hand side of \eqref{form7} exceeds $\delta^{\e/2}$ has $\nu(B)>\delta^{\e/2}$. By Lemma \ref{lem:spread-out-subset}, $\nu|_B/\nu(B)$ is $(\delta,s,\delta^{-\e})_{d,k}$-spread-out. The single direction claim applied to this measure contradicts the definition of $B$.
    \end{remark}

    \begin{remark}
        Proposition \ref{prop:AhlforsProjection-multiplicity} is analogous to \cite[Theorem 4.7]{OrponenShmerkin26}, and the proof follows the same general strategy: here we reduce Proposition \ref{prop:AhlforsProjection-multiplicity} to Proposition \ref{prop:ABC}, whereas \cite[Theorem 4.7]{OrponenShmerkin26} was reduced to another ABC sum-product theorem, namely \cite[Theorem 1.7]{OrponenShmerkin26}. The spread-out property of the direction measure $\nu$ simplifies one step: the branching-scale analysis needed in \cite[Theorem 4.7]{OrponenShmerkin26} is unnecessary here, because spread-outness is strongly scale invariant. On the other hand, we work with projections between arbitrary dimensions rather than from $\R^{2}$ to $\R$, which requires a number of technical adaptations. We have tried to keep the paper self-contained, but we still refer to \cite{OrponenShmerkin26} when the proofs differ only notationally.
    \end{remark}

    \subsubsection{Proof of Theorem \ref{thm:AhlforsProjection}}
    We conclude this section by proving Theorem \ref{thm:AhlforsProjection}, assuming Proposition \ref{prop:AhlforsProjection-multiplicity}.

    \begin{proof}[Proof of Theorem \ref{thm:AhlforsProjection} assuming Proposition \ref{prop:AhlforsProjection-multiplicity}]

        Dilating $\R^d$ by $1/(4\sqrt d)$, we may assume that $P\subset B(0,1/3)$. There is no loss of generality in assuming that $P$ has at most $1$ point in each $\delta$-cube.

        By the argument in Remark \ref{rem:single-projection-to-nearly-all}, it suffices to find a single point $\theta \in \Theta$ such that the conclusion of the theorem holds.

        Fix $u \in [0,\min\{t,k\})$, and set $\sigma=[\max\{t-k,0\}+(t-u)]/2$. By Lemma \ref{lem:spread-out-discretization}\textup{\ref{it:spread-out-discretization-cubes}}, we may apply Proposition \ref{prop:AhlforsProjection-multiplicity} to $m_P$ and $m_\Theta$ at scale $2\delta$, with $s/4,2\e$ in place of $s,\e$. For $\e\ll_{d,k,s,t,u}1$ and $\delta\ll_{d,k,s,t,u,\e}1$, its hypotheses hold, so there is $\theta\in\Theta$ such that
        \begin{equation} \label{eq:application-proj-multiplicity}
            m_P\bigl(\mathbf{H}_{\theta}(P,\delta^{-\sigma},[2\delta,1])\bigr) \leq (2\delta)^{2\e}.
        \end{equation}
        Let $P' \subset P$ be a subset with $|P'| \geq \delta^{\e}|P|$. Since $(2\delta)^{2\e} \leq \tfrac{1}{2}\delta^{\e}$ for $\delta\ll_\e1$, \eqref{eq:application-proj-multiplicity} yields a further subset $P'' \subset P'$ with $|P''| \geq \tfrac{1}{2}\delta^{\e}|P| \gtrsim \delta^{2\e - t}$ such that $\mathbf{H}_{\theta}(P,\delta^{-\sigma},[2\delta,1])\cap P'' = \varnothing$. This means that, for every $p\in P''$, we have
        \begin{displaymath}
            |P(2\delta) \cap [p]_{\theta}|_{2\delta} < \delta^{-\sigma},
        \end{displaymath}
        where we omitted $B(p,1)$ since $P\subset B(0,1/3)$ and hence $P(2\delta)\subset B(p,1)$ for all $p\in P$ when $\delta\ll_d1$. Hence,
        \begin{displaymath}
            \bigl|P(\delta)\cap [p]_{\theta}(\delta)\bigr|_\delta \lesssim \delta^{-\sigma} \qquad \text{for all } p \in P''.
        \end{displaymath}
        This implies that
        \begin{displaymath}
            |\pi_{\theta}(P'')|_{\delta} \gtrsim \delta^{\sigma}|P''| \gtrsim \delta^{\sigma + 2\e-t} \geq \delta^{-u},
        \end{displaymath}
        where the last inequality holds if $\e\ll_{s,t,u}1$. Since $P'' \subset P'$, the proof is complete.
    \end{proof}

    \subsubsection{An inductive scheme to prove Proposition \ref{prop:AhlforsProjection-multiplicity}}\label{s:induction}

    We use the following notation to organize the iteration in $\sigma$.
    \begin{terminology}[$\mathsf{Projection}(s,t;\sigma)$]
        We write $\mathsf{Projection}(s,t;\sigma)$ for the assertion of Proposition \ref{prop:AhlforsProjection-multiplicity} with parameters $s,t,\sigma$.
    \end{terminology}

        The proof of Proposition \ref{prop:AhlforsProjection-multiplicity} will be based on iterating the following key proposition:
        \begin{proposition}\label{prop:AhlforsProjections-induction} 
            Let $t \in (0,d)$, $s \in (0,1]$, and $\max\{t - k,0\} < \sigma \le d-k$. Then there exists $\zeta = \zeta(s,t,\sigma) > 0$, which stays bounded away from $0$ as long as $\sigma$ stays bounded away from $\max\{t - k,0\}$, such that $\mathsf{Projection}(s,t;\sigma) \Longrightarrow \mathsf{Projection}(s,t;\sigma - \zeta)$. If $\sigma=d-k$, then $\mathsf{Projection}(s,t;d-k-\zeta)$ holds.
        \end{proposition}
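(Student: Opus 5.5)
We argue by contradiction, following the scheme of \cite[Theorem 4.7]{OrponenShmerkin26}. Suppose $\mathsf{Projection}(s,t;\sigma)$ holds but $\mathsf{Projection}(s,t;\sigma-\zeta)$ fails for a small $\zeta$ to be chosen. By Remark \ref{rem:single-projection-to-nearly-all} we may then take a regular $\mu$ and a spread-out $\nu$ such that, for every $\theta$ in a set of $\nu$-measure $\gtrsim\delta^{\e}$ (which is again spread-out by Lemma \ref{lem:spread-out-subset}),
\begin{displaymath}
\mu\bigl(\mathbf H_\theta(\spt\mu,\delta^{-\sigma+\zeta},[\delta,1])\bigr)\ge\delta^{\e}.
\end{displaymath}
We fix an intermediate scale $\Delta=\delta^{1/2}$, or more generally a scale chosen by pigeonholing. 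Regularity of $\mu$ at all scales lets us compare the multiplicity on $[\delta,1]$ with the product of the multiplicities on $[\delta,\Delta]$ and on $[\Delta,1]$. We apply $\mathsf{Projection}(s,t;\sigma)$ at both scale ranges. On $[\Delta,1]$ it applies directly to $\mu^{(\Delta)}$, using Lemma \ref{lem:spread-out-scaling}(i) and Lemma \ref{lem:spread-out-discretization}. On $[\delta,\Delta]$ it applies to rescaled pieces $S_Q\mu|_Q$, with $\nu$ localized via Lemma \ref{lem:spread-out-sub-cube}. Together these show that the multiplicity at both scale ranges is at most about $\Delta^{-\sigma}$ for most points. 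The only way to reach multiplicity $\delta^{-\sigma+\zeta}$ is therefore that both factors are nearly extremal, at least $\Delta^{-\sigma+O(\zeta)}$, and that the fibres at the two scales line up coherently.

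Next we extract product structure. Inside a typical $\Delta$-tube $[x]_\theta(\Delta)$, rescaled to a slab, the high-multiplicity points form a set which, after rescaling, is essentially a product $A\times B\subset V\oplus W$. Here $B\subset\R^{d-k}$ records the fibre directions, and its size $\approx\Delta^{-\sigma}$ comes from regularity together with extremality at the coarse scale. The set $A\subset\R^k$ records the transverse structure at the fine scale, with $|A|\lesssim\Delta^{-\alpha}$ and $\alpha<k$ because $\sigma>t-k$. This step uses that $\mu$ is regular at all scales: the Frostman property of the rescaled pieces inherits the exponent $t$ and yields the $(\Delta,\beta,\Delta^{-O(\e)})$-Frostman property of $B$ with $\beta=\sigma-O(\zeta)>0$. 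Meanwhile the directions $\theta'$ near $\theta$, rescaled by $\Delta$, form a set $\Theta$ which is still spread-out, by Lemmas \ref{lem:spread-out-sub-cube} and \ref{lem:spread-out-scaling}. High multiplicity for many such $\theta'$ then says that $|\{a+\theta'\cdot b:(a,b)\in G\}|_\Delta\lesssim\Delta^{-O(\zeta)}|A|$ for large $G\subset A\times B$ and for most $\theta'\in\Theta$. This contradicts Proposition \ref{prop:ABC} once $\zeta,\e\ll\chi(\alpha,\beta,s)$. The quantity $\chi$ is uniform because $(\alpha,\beta,s)$ stays in a compact set whenever $\sigma$ is bounded away from $\max\{t-k,0\}$, and this uniformity gives the claimed behaviour of $\zeta$. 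For the endpoint $\sigma=d-k$, the same argument runs with the trivial bound $\m\lesssim\delta^{-(d-k)}$ replacing the inductive hypothesis.

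The main obstacle is the second step: turning "both scale multiplicities nearly extremal" into a genuine product set $A\times B$, together with a large family of spread-out directions $\theta'$ that all see small sumsets. This requires delicate pigeonholing of the fibres so that the structure is uniform across the tube. It also requires checking that the perturbations $\theta'-\theta$ at scale $\Delta$ act on the slab exactly as $a+(\theta'-\theta)\cdot b$. I would first try to copy the slab argument of \cite{OrponenShmerkin26}, replacing lines by the fibres $[x]_\theta$ and using the linear identity $\pi_{\theta'}(x,y)=\pi_\theta(x,y)+(\theta'-\theta)\cdot y$.
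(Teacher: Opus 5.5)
Your overall architecture matches the paper's: contradiction, $\Delta=\sqrt\delta$, localizing $\nu$ to a good $\Delta$-cube, a dense graph inside a $\Delta$-slab, and a contradiction with Proposition~\ref{prop:ABC}. \textbf{The gap is hypothesis~(B) of that proposition.} You derive the Frostman property of $B$ (the $\Delta$-positions along the slab, $|B|\approx\Delta^{-\sigma}$) from the regularity of $\mu$. Regularity only gives $|B\cap B(y,r)|\lesssim\Delta^{-O(\e)}(r/\Delta)^{t}$, whereas you need roughly $(r/\Delta)^{\sigma}$, and the relevant regime is $\sigma<t$. Concretely, suppose $\sigma<\min\{t,d-k\}$ and set $c:=1-\sigma/\min\{t,d-k\}>0$. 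Then nothing in regularity, or in your multiplicity bounds on $[\delta,\Delta]$ and $[\Delta,1]$, prevents $B$ from lying in a single ball of radius $\Delta^{c}$. Such a $B$ is not Frostman in the sense Proposition~\ref{prop:ABC} requires.

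The missing idea is to use $\mathsf{Projection}(s,t;\sigma)$ at \emph{every} pair of scales, not just two. This is Lemma~\ref{lem:local-slice}, the ``small slices imply sparse slices'' argument of \cite[Theorem 4.21]{OrponenShmerkin26}. Apply the hypothesis to rescaled restrictions of $\mu$ to balls $B(z,R)$, for all dyadic $\delta\le r\le R$ with $r/R\le\delta^{\sqrt\e}$; this ratio condition makes the $\delta^{-O(\e)}$ losses admissible at scale $r/R$. The outcome is that for most $\theta$, $\mu$-most points satisfy $\m_{P,\theta}(x\mid[r,R])<C_{\mathrm{loc}}(R/r)^{\sigma}$ simultaneously for all such pairs. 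The paper removes the exceptional points when defining $P_\theta$, which yields Lemma~\ref{lemma8}: at most $\lesssim(R/\Delta)^{\sigma}$ slab balls meet any $B(x,R)$ with $R\ge\delta^{-\sqrt\e}\Delta$. Combined with $|Y|\gtrsim\Delta^{-\sigma+6\zeta}$ and the lattice bound below $\delta^{-\sqrt\e}\Delta$, this makes $Y$ a $(\Delta,\sigma,\Delta^{-7\zeta})$-Frostman set. Your two-range bounds do suffice for the upper bound on $|A|$ (Lemma~\ref{lemma6} with $\rho\sim\Delta$) and for the fine-scale mass bound \eqref{form65}, but not for (B). At the endpoint $\sigma=d-k$ the all-scales bound is automatic from the choice of $C_{\mathrm{loc}}$, so your endpoint remark is fine once applied at all scales.

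Two further points. First, the step you flag as the main obstacle is resolved by Cauchy--Schwarz rather than fibrewise pigeonholing:
\begin{itemize}
\item Choose $\theta_0\in I_0$ and a set of directions of relative mass $\ge\delta^{6\e}$ with $\mu(P_{\theta_0}\cap P_\theta)\ge\delta^{5\e}$, then shear $\theta_0$ to $0$.
\item Heavy $\Delta$-balls and the slab count $|\mathcal T_\Delta|\le\Delta^{\sigma-t-3\zeta}$ then select one slab $\mathbf T_0$ serving a set $\overline\Theta$ of mass $\ge\Delta^{5\zeta}$.
\item In $\mathbf T_0$, $A$ is the $\pi_{\theta_0}$-projection, small because of high multiplicity in direction $\theta_0$.
\item The smallness of $\pi_\theta(G_\theta)$ comes from high multiplicity in direction $\theta$, with $G_\theta$ built from $P_{\theta_0}\cap P_\theta$.
\end{itemize}
Second, on the range $[\delta,\Delta]$ you should not localize $\nu$. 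Isotropic rescaling of a $\Delta$-ball preserves fibre directions, so the full direction measure at resolution $\Delta$ is the right input there. Localization to a $\Delta$-cube, paired with anisotropic rescaling of the slab, belongs only to the ABC step.
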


        We then complete the proof of Proposition \ref{prop:AhlforsProjection-multiplicity} assuming Proposition \ref{prop:AhlforsProjections-induction}.

        \begin{proof}[Proof of Proposition \ref{prop:AhlforsProjection-multiplicity} assuming Proposition \ref{prop:AhlforsProjections-induction}]

            Fix $t \in (0,d]$ and $s \in (0,1]$. Proposition \ref{prop:AhlforsProjection-multiplicity} is trivial for $t = d$, since for every $\sigma > d-k = \max\{t - k,0\}$, the high-multiplicity sets in question are empty for $0<\delta\ll_\sigma1$. So, we may assume that $t \in (0,d)$.

            Let $\Sigma(s,t) \geq \max\{t - k,0\}$ be the infimum over $\sigma > \max\{t - k,0\}$ for which $\mathsf{Projection}(s,t;\sigma)$ holds. Since $\mathbf{H}_{\theta}(P,\delta^{-\sigma'},[\delta,1]) \subset \mathbf{H}_{\theta}(P,\delta^{-\sigma},[\delta,1])$ for $\sigma'\geq\sigma$, this set of parameters is upward closed, and the geometric bound $\m_{P,\theta}(x\mid[\delta,1])\lesssim_d\delta^{-(d-k)}$ gives $\Sigma(s,t)\leq d-k$.

            Suppose that $\Sigma(s,t)>\max\{t-k,0\}$. If $\Sigma(s,t)=d-k$, the endpoint assertion of Proposition \ref{prop:AhlforsProjections-induction} yields $\mathsf{Projection}(s,t;d-k-\zeta)$ for some $\zeta>0$, a contradiction. If $\Sigma(s,t)<d-k$, choose $\sigma\in(\Sigma(s,t),d-k)$ with $0<\sigma-\Sigma(s,t)\ll_{s,t}1$. Then $\mathsf{Projection}(s,t;\sigma)$ holds, while the uniform lower bound for $\zeta(s,t,\sigma)$ near $\Sigma(s,t)$ ensures that $\sigma-\zeta(s,t,\sigma)<\Sigma(s,t)$. Proposition \ref{prop:AhlforsProjections-induction} again contradicts the definition of $\Sigma(s,t)$.
        \end{proof}

        \subsection{Proof of Proposition \ref{prop:AhlforsProjections-induction}}
    
        \subsubsection{Small slices imply sparse slices}

            We use the following local variant of the high multiplicity sets. Fix $C_{\mathrm{loc}}\geq 4^{d-k}$ so that every affine $(d-k)$-plane intersected with a ball of radius $R$ meets fewer than $C_{\mathrm{loc}}(R/r)^{d-k}$ dyadic $r$-cubes for $0<r\leq R$.
            \begin{definition}[Local high multiplicity sets]\label{def:local-high-multiplicity}
                Let $\delta \in (0,\tfrac{1}{2}]$, $\rho \in (0,1]$, $\sigma \in (0,d-k]$, and $\theta \in \mathbf{G}_{d,k}$. For $P \subset \R^{d}$, define
                \begin{equation*}
                    \mathbf{H}_{\theta,\mathrm{loc}}(P,\sigma,\delta,\rho) := \bigcup_{\delta \leq r \leq R \leq 8} \mathbf{H}_{\theta}(P,C_{\mathrm{loc}}(R/r)^{\sigma},[r,R]),
                \end{equation*}
                where the union is over dyadic $r,R \in [\delta,8]$ satisfying $r/R \leq \rho$.
            \end{definition}

            The following is a higher dimensional, measure version of \cite[Theorem 4.21]{OrponenShmerkin26}.
            \begin{lemma}\label{lem:local-slice}
                There is a constant $C_{\mathrm{slice}}\geq12$ with the following property.
                
                Suppose $\mathsf{Projection}(s,t;\sigma)$ holds with constants $\Delta_{\mathrm{proj}},\e_{\mathrm{proj}}>0$. Let $0<\eta\leq1$, $0<a\leq\eta\e_{\mathrm{proj}}/C_{\mathrm{slice}}$, and $b>0$ satisfy $2b+6a<\eta\e_{\mathrm{proj}}$.
                
                If $\delta\in 2^{-\N}$ satisfies $\delta\ll_{s,t,\sigma,\Delta_{\mathrm{proj}},\e_{\mathrm{proj}},\eta,a,b}1$, $\mu$ is $(\delta,t,\delta^{-b})_d$-regular, and $\nu$ is a finitely supported $(\delta,s,\delta^{-b})_{d,k}$-spread-out probability measure on $[-1,1]^{k\times(d-k)}$, then there is $G\subset\spt\nu$ with $\nu(G)\geq1-\delta^a$ such that
                \begin{displaymath}
                    \mu\bigl(B(0,1)\cap\mathbf H_{\theta,\mathrm{loc}}(\spt\mu,\sigma,\delta,\delta^\eta)\bigr)\leq\delta^a,
                    \qquad \theta\in G.
                \end{displaymath}
            \end{lemma}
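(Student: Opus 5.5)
The plan is to reduce the local statement to $\mathsf{Projection}(s,t;\sigma)$ applied at rescaled configurations, one pair of scales $(r,R)$ at a time, and then sum over the $O(\log^2(1/\delta))$ dyadic pairs. First we fix dyadic $\delta\le r\le R\le 8$ with $r/R\le\delta^\eta$. Since there are only polylogarithmically many such pairs, it suffices to find, for each pair, a set $G_{r,R}$ with $\nu(G_{r,R})\ge 1-\delta^{2a}$ such that
\begin{displaymath}
\mu\bigl(B(0,1)\cap\mathbf H_\theta(\spt\mu,C_{\mathrm{loc}}(R/r)^\sigma,[r,R])\bigr)\le\delta^{2a},\qquad \theta\in G_{r,R}.
\end{displaymath}
We then take $G=\bigcap G_{r,R}$ and absorb the logarithmic factors into $\delta^{a}$ for $\delta$ small.

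For a fixed pair, we cover $B(0,1)$ by boundedly overlapping balls $B(x_i,2R)$ with $\mu(B(x_i,R))>0$ and rescale each ball by $T_i(y)=(y-x_i)/(4R)$. Regularity of $\mu$ at all scales between $\delta$ and $1$ makes the normalized push-forward $\mu_i:=T_i(\mu|_{B(x_i,2R)})/\mu(B(x_i,2R))$ a $(r/R,t,\delta^{-O(b)})$-regular measure. Because $r/R\le\delta^\eta$, we get $\delta^{-O(b)}\le (r/R)^{-O(b)/\eta}$, which is at most $(r/R)^{-\e_{\mathrm{proj}}}$ by the hypothesis $2b+6a<\eta\e_{\mathrm{proj}}$ (after adjusting constants, which is the role of $C_{\mathrm{slice}}$). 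The projection direction is unchanged by translation and dilation of $\R^d$, so no rescaling of $\nu$ is needed. We only need $\nu$ to be spread-out at the coarser scale $r/R$: by the coarsening remark after Definition \ref{def:perfectSet}, $\nu$ is $(r/R,s,\delta^{-b})$-spread-out, and $\delta^{-b}\le (r/R)^{-b/\eta}$. If necessary we pass to $\nu^{(r/R)}$ via Lemma \ref{lem:spread-out-discretization}, which accounts for the loss of $s$ to $s/4$, harmless since $\mathsf{Projection}(s,t;\sigma)$ is applied with whatever spread-out exponent the hypotheses require.

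Multiplicity rescales correctly: $\m_{\spt\mu,\theta}(x\mid[r,R])$ equals the corresponding multiplicity for $\spt\mu_i$ at $[r/(4R),1/4]$, up to constant-factor changes in the scales. The threshold $C_{\mathrm{loc}}(R/r)^\sigma$ becomes $\gtrsim (r/R)^{-\sigma}$, and the choice of $C_{\mathrm{loc}}$ makes the scale adjustments harmless. Applying $\mathsf{Projection}(s,t;\sigma)$ at scale $r/R\le\Delta_{\mathrm{proj}}$ gives, for each $i$, a set $\Theta_i'$ with $\nu(\Theta_i')\ge 1-(r/R)^{\e_{\mathrm{proj}}}$ on which $\mu_i(\mathbf H)\le (r/R)^{\e_{\mathrm{proj}}}\le\delta^{\eta\e_{\mathrm{proj}}}$.

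The main obstacle is that $\Theta'_i$ depends on $i$, so we cannot simply intersect these sets. To handle this, we use Fubini and Markov instead of intersecting. Integrating over $\theta\sim\nu$ and summing with weights $\mu(B(x_i,2R))$ gives
\begin{displaymath}
\int \mu(B(0,1)\cap\mathbf H_\theta)\,d\nu(\theta)\lesssim\delta^{\eta\e_{\mathrm{proj}}}.
\end{displaymath}
Markov's inequality then gives $G_{r,R}$ with $\nu$-measure at least $1-\delta^{\eta\e_{\mathrm{proj}}/2}$ on which the bound is $\delta^{\eta\e_{\mathrm{proj}}/2}$. This is at most $\delta^{2a}$ because $a\le\eta\e_{\mathrm{proj}}/C_{\mathrm{slice}}$ with $C_{\mathrm{slice}}\ge 12$. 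The remaining care is in the bookkeeping that the regularity constants survive the restriction to balls of radius $R$, which may have small mass only in a negligible set, and in the rounding of non-dyadic rescaled scales.
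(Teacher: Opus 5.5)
Your proof is correct, but it follows a different route from the paper's.

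\emph{The paper's route.} The paper adapts the contradiction argument of \cite[Theorem 4.21]{OrponenShmerkin26}. It restricts and renormalizes $\nu$ to the bad directions. It then pigeonholes the radii $(r,R)$, and after that a single ball $B(z,R)$ that is bad for many of these directions. Finally it applies $\mathsf{Projection}(s,t;\sigma)$ once, to $(4R)^{-t}\bigl(x\mapsto (x-z)/(4R)\bigr)\mu$ together with the restricted plane measure. That restriction is what inflates the spread-out constant to $\delta^{-(b+6a)}$ and dictates the hypothesis $2b+6a<\eta\e_{\mathrm{proj}}$.

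\emph{Your route.} You take a union bound over the $O(\log^2(1/\delta))$ radius pairs. In every $R$-ball you apply $\mathsf{Projection}(s,t;\sigma)$ with the \emph{full} $\nu$, using that its conclusion holds on a set of directions of nearly full measure. You then average over $\theta$ and over balls. This never restricts $\nu$, so its spread-out constant stays $\delta^{-b}$ and no single-ball pigeonholing is needed. The only cost is the sum over the $\lesssim\delta^{-b}R^{-t}$ balls, which loses at most a factor $\delta^{-2b}$ with the unnormalized rescaling suggested below. You dropped this factor, harmlessly. One gets $\int\mu\bigl(B(0,1)\cap\mathbf H_\theta(\spt\mu,C_{\mathrm{loc}}(R/r)^\sigma,[r,R])\bigr)\,d\nu(\theta)\lesssim\delta^{\eta\e_{\mathrm{proj}}-2b}\leq\delta^{6a}$, and Markov at level $\delta^{3a}$ finishes.

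\emph{Comparison.} The paper's route reuses \cite{OrponenShmerkin26} nearly verbatim and needs only one good direction per application of $\mathsf{Projection}$. Yours is self-contained and slightly more economical in the exponents.

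A few details need repair, none serious.
\begin{itemize}
\item Normalizing by $\mu(B(x_i,2R))$ requires a lower mass bound that Definition \ref{def:Ahlfors-regular} does not provide. Either discard light balls as you suggest, or, more simply, use the unnormalized rescaling $(4R)^{-t}T_i\mu$, which is $(\delta/(4R),t,\delta^{-b})_d$-regular outright.
\item Restricting to $B(x_i,2R)$ is too small. For $x\in B(x_i,2R)$, $\m_{P,\theta}(x\mid[r,R])$ sees points of $P$ up to distance about $3R+r$ from $x_i$, so restrict to $B(x_i,4R)$ or not at all.
\item The projection scale is $r/(4R)$, which lies below $\delta$ when $r<4R\delta$; this can happen for $R\geq 1/2$. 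Lowering the base scale of $\nu$ then costs only $O(1)$ singular scales.
\item The rescaled threshold must be exactly $(r/(4R))^{-\sigma}$. A constant multiple below it is not covered by $\mathsf{Projection}(s,t;\sigma)$. To get it, take the centers $x_i$ in $r\Z^d$, so that the rescaling maps dyadic $r$-cubes onto dyadic $r/(4R)$-cubes; then $C_{\mathrm{loc}}\geq 4^{\sigma}$ absorbs the factor $4^{\sigma}$.
\item Drop the option of passing to $\nu^{(r/R)}$. The loss $s\to s/4$ would \emph{not} be harmless, because only $\mathsf{Projection}(s,t;\sigma)$ for this fixed $s$ is available. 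It is also unnecessary: $\nu$ is already finitely supported, and spread-outness at a coarser base scale is immediate.
\end{itemize}
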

            We indicate the modifications required to adapt the proof of \cite[Theorem 4.21]{OrponenShmerkin26}. Replace lines and tubes by affine $(d-k)$-planes and $k$-codimensional slabs, and replace each cardinality pigeonhole in the plane set by a $\nu$-mass pigeonhole. After restricting to the bad directions and fixing the dyadic radii in Definition \ref{def:local-high-multiplicity}, the normalized plane measure has spread-out constant at most $\delta^{-(b+6a)}$. For the ball $B=B(z,R)$ selected in that proof, the measure
            \begin{displaymath}
                (4R)^{-t}\bigl(x\mapsto(x-z)/(4R)\bigr)\mu
            \end{displaymath}
            is $(\delta/(4R),t,\delta^{-b})_d$-regular. At the projection scale $\Delta=r/(4R)\in[\delta/32,\delta^\eta/4]$, it is therefore $(\Delta,t,\Delta^{-b/\eta})_d$-regular, while the normalized plane measure is $(\Delta,s,\Delta^{-(b+6a)/\eta})_{d,k}$-spread-out. The inequality $b+6a<\eta\e_{\mathrm{proj}}$ yields both hypotheses in $\mathsf{Projection}(s,t;\sigma)$. If $\Delta<\delta$, lowering the base scale adds at most five singular scales. Finally, $C_{\mathrm{loc}}\geq4^{d-k}\geq4^\sigma$ absorbs the geometric covering constant. The remainder of the proof of \cite[Theorem 4.21]{OrponenShmerkin26} is unchanged.

            \subsubsection{Fixing parameters}\label{s:parameters}

            Fix $(s,t,\sigma)$ as in Proposition \ref{prop:AhlforsProjections-induction}; in particular, $t\in(0,d)$, $s\in(0,1]$, and $\max\{t-k,0\}<\sigma\leq d-k$. Set
            \begin{align*}
                \sigma_{0} & = \tfrac{1}{2}(\max\{t-k,0\} + \min\{\sigma,t\}), \\
                \alpha     & = t-\sigma_{0}.
            \end{align*}
            Then $\alpha \in [0,k)$ and $\sigma \in (0,d-k]$. Apply Proposition \ref{prop:ABC} with parameters $\alpha,\sigma,s/8$, and let $\chi=\chi(\alpha,\sigma,s/8)>0$ and $\Delta_{\mathrm{ABC}}>0$ be the constants provided by Proposition \ref{prop:ABC}.

As long as $\sigma$ stays bounded away from $\max\{t-k,0\}$, the triple $(t-\sigma_0,\sigma,s/8)$ ranges in a compact subset of the domain of Proposition \ref{prop:ABC}. Its uniformity assertion therefore shows that $\chi$ stays bounded away from zero.

Set $\zeta := \tfrac{1}{100}\min\{\sigma-\sigma_0,\chi\}$. This stays bounded away from $0$ as long as $\sigma$ stays bounded away from $\max\{t-k,0\}$. Moreover,
\begin{equation}\label{eq:zeta-def-consequences}
    4\zeta \leq \sigma-\sigma_0 \qquad\text{and}\qquad 25\zeta \leq \chi.
\end{equation}
If $\sigma<d-k$, assume that $\mathsf{Projection}(s,t;\sigma)$ holds with constants $\Delta_{\mathrm{proj}},\e_{\mathrm{proj}}>0$. If $\sigma=d-k$, set $\Delta_{\mathrm{proj}}=\e_{\mathrm{proj}}=1$. In either case, suppose that $\mathsf{Projection}(s,t;\sigma-\zeta)$ fails. Choose $\e\in(0,\tfrac12]$ so that
\begin{equation}\label{eq:epsilon-def}
    \e \leq \min\left\{\frac{\e_{\mathrm{proj}}^2}{100C_{\mathrm{slice}}^2},\frac{\zeta}{100},\frac{\zeta^2}{16d^2}\right\}.
\end{equation}
Then $(d-k)\sqrt\e\leq\zeta/4$, and
\begin{equation}\label{eq:spread-out-loss}
    24\e+5\zeta\leq\chi/10.
\end{equation}
The failure of $\mathsf{Projection}(s,t;\sigma-\zeta)$ gives some $\delta\in2^{-\N}$ with $\delta\ll_{d,k,s,t,\sigma,\Delta_{\mathrm{ABC}},\Delta_{\mathrm{proj}},\e_{\mathrm{proj}},C_{\mathrm{slice}},\e,\zeta}1$, a $(\delta,t,\delta^{-\e})_d$-regular measure $\mu$ with support $P$, and a finitely supported $(\delta,s,\delta^{-\e})_{d,k}$-spread-out probability measure on $[-1,1]^{k\times(d-k)}$ for which the set of directions violating the conclusion has measure greater than $\delta^\e$. Restrict and normalize this measure to that set, call the resulting measure $\nu$, and set $\Theta:=\spt\nu$. By Lemma \ref{lem:spread-out-subset}, $\nu$ is $(\delta,s,\delta^{-2\e})_{d,k}$-spread-out and
\begin{equation}\label{form9}
    \mu\big(B(0,1) \cap \mathbf{H}_{\theta}(P,\delta^{-\sigma + \zeta},[\delta,1])\big) \geq \delta^{\e}, \qquad \theta \in \Theta.
\end{equation}
Thus the parameters are fixed in the order
\begin{displaymath}
    (d,k,C_{\mathrm{slice}}),\quad(s,t,\sigma),\quad(\sigma_0,\alpha),\quad
    (\chi,\Delta_{\mathrm{ABC}}),\quad\zeta,\quad(\e_{\mathrm{proj}},\Delta_{\mathrm{proj}}),
    \quad\e,\quad\delta.
\end{displaymath}

\subsubsection{\texorpdfstring{The sets $P_{\theta}$}{The sets P theta}}

If $\sigma=d-k$, the choice of $C_{\mathrm{loc}}$ implies that $\mathbf H_{\theta,\mathrm{loc}}(P,\sigma,\delta,\delta^{\sqrt\e})$ is empty, so set $G:=\Theta$. If $\sigma<d-k$, apply Lemma \ref{lem:local-slice} with $\eta=\sqrt\e$, $a=5\e$, and $b=2\e$; its hypotheses follow from \eqref{eq:epsilon-def}. In either case, there is $G\subset\Theta$ with $\nu(G)\geq1-\delta^{5\e}$ and
\begin{equation} \label{form51}
    \mu\bigl(B(0,1) \cap \mathbf{H}_{\theta,\mathrm{loc}}(P,\sigma,\delta,\delta^{\sqrt{\e}})\bigr) \leq \delta^{5\e}, \qquad \theta\in G.
\end{equation}
Replace $\nu$ by $\nu|_G/\nu(G)$ and $\Theta$ by its support. Lemma \ref{lem:spread-out-subset} shows that $\nu$ is $(\delta,s,\delta^{-3\e})$-spread-out. For $\theta \in \Theta$, define
\begin{displaymath}
    P_{\theta} := B(0,1) \cap \mathbf{H}_{\theta}(P,\delta^{-\sigma + \zeta},[\delta,1]) \, \setminus \, \mathbf{H}_{\theta,\mathrm{loc}}(P,\sigma,\delta,\delta^{\sqrt{\e}}).
\end{displaymath}
For $\delta\ll_\e1$, equations \eqref{form9} and \eqref{form51} imply $\mu(P_{\theta}) \geq \tfrac{1}{2}\delta^{\e}$ for $\theta \in \Theta$.

\subsubsection{\texorpdfstring{Fixing a good $\sqrt{\delta}$-cube $I_{0}$}{Fixing a good cube}}\label{s:I0}

Set $\Delta:=\sqrt\delta$ and apply Lemma \ref{lem:spread-out-sub-cube} to $\nu$ with $\beta=s$. We obtain $I_0\in\mathcal D_\Delta(\Theta)$ such that, for $\delta\ll_{s,\e}1$, the measure  $S_{I_0}\,\frac{\nu|_{I_0}}{\nu(I_0)}$ is $(\Delta,s/2,\delta^{-4\e})_{d,k}$-spread-out.

\subsubsection{\texorpdfstring{Projecting the sets $P_{\theta}$}{Projecting the sets P theta}}

The inclusion $P_{\theta} \subset \mathbf{H}_{\theta}(P,\delta^{-\sigma + \zeta},[\delta,1])$ yields an upper bound on the $\delta$-covering number of $\pi_{\theta}(P_{\theta})$.
\begin{lemma}\label{lemma6}
    Let $\theta \in \Theta$, let $\rho \in [\delta^{1 - \sqrt{\e}},\tfrac12\delta^{\sqrt{\e}}]$, and let $\mathbf{T}=\pi_{\theta}^{-1}\{x\} + B(0,\rho)$ for some $x\in\R^k$. Then,
    \begin{equation}\label{eq:projection-slab-covering}
        |\pi_{\theta}(P_{\theta} \cap \mathbf{T})|_{\delta} \lesssim \delta^{-\e - \zeta} \cdot \left(\tfrac{\rho}{\delta} \right)^{t - \sigma}.
    \end{equation}
    Moreover, $|\pi_{\theta}(P_{\theta})|_{\delta} \lesssim \delta^{-\e - \zeta} \cdot \delta^{\sigma - t}$.
\end{lemma}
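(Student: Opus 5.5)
The proof is a double-counting argument: compare the $\mu$-mass of $P_\theta\cap\mathbf T$ with the mass carried by each projected $\delta$-cube.

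\emph{Upper bound on the mass.} Since $\mu$ is $(\delta,t,\delta^{-\e})$-regular, and the slab $\mathbf T$ is essentially a $\rho$-neighbourhood of an affine $(d-k)$-plane, the set $\mathbf T\cap B(0,1)$ is covered by $\lesssim \rho^{-(d-k)}$ balls of radius $\rho$. I would not use this crude bound alone. Instead I would count the number $N$ of $\delta$-cubes of $\R^k$ meeting $\pi_\theta(P_\theta\cap\mathbf T)$ and pick one point $p_Q\in P_\theta\cap\mathbf T$ per such cube $Q$.

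\emph{Lower bound on the fibre size.} Each $p\in P_\theta$ lies in $\mathbf H_\theta(P,\delta^{-\sigma+\zeta},[\delta,1])$ but outside $\mathbf H_{\theta,\mathrm{loc}}(P,\sigma,\delta,\delta^{\sqrt\e})$. I would apply the local condition with $r=\delta$ and $R=\rho$ (admissible because $\delta/\rho\le\delta^{\sqrt\e}$). This gives
\[
\m_{P,\theta}(p\mid[\delta,\rho])<C_{\mathrm{loc}}(\rho/\delta)^{\sigma}.
\]
The same condition at scales $r=\rho$, $R=1$ (admissible since $\rho\le\tfrac12\delta^{\sqrt\e}$, after a dyadic adjustment) gives
\[
\m_{P,\theta}(p\mid[\rho,1])\lesssim\rho^{-\sigma}.
\]
Since the total $\delta$-multiplicity is at least $\delta^{-\sigma+\zeta}$, submultiplicativity
\[
\m(\cdot\mid[\delta,1])\lesssim \m(\cdot\mid[\rho,1])\cdot\max\m(\cdot\mid[\delta,\rho])
\]
forces the fibre $[p]_\theta\cap P(\delta)\cap B(p,\rho)$ to contain at least $\gtrsim\delta^{\zeta}(\rho/\delta)^{\sigma}$ $\delta$-cubes of $P$. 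The cubes are counted in a ball of radius $\rho$ around $p$, and these lie in $2\mathbf T$. The bound comes from $\delta^{-\sigma+\zeta}\le \rho^{-\sigma}\cdot X$, which gives $X\ge \delta^{\zeta}(\rho/\delta)^{\sigma}$ up to constants.

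\emph{Double counting.} The fibres above distinct projected cubes (taken $2\delta$-separated after a bounded colouring) are disjoint. So $2\mathbf T$ contains at least
\[
N\cdot\delta^{\zeta}(\rho/\delta)^{\sigma}
\]
distinct $\delta$-cubes of $P$, each of $\mu$-mass $\gtrsim\delta^{\e}\delta^{t}$ by the lower regularity. It remains to bound the number of $\delta$-cubes of $P$ inside $2\mathbf T\cap B(p_0,2\rho)$, where $p_0$ is fixed. Here the upper covering bound of Definition \ref{def:Ahlfors-regular} gives at most $\delta^{-\e}(\rho/\delta)^{t}$. The key point is that all the fibres constructed lie within distance $\rho$ of a single $(d-k)$-plane and within distance $\rho$ of their base points, but the base points themselves spread along $\mathbf T$.

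\emph{The main obstacle.} To get the stated bound, I must restrict $\mathbf T$ to a ball of radius comparable to $\rho$. I expect this to be where the argument needs care: $\pi_\theta(\mathbf T)$ is a $\rho$-ball in $\R^k$ (up to constants), yet $\mathbf T$ is long in the $(d-k)$ fibre directions. So I would first use the upper multiplicity condition at scales $[\rho,1]$ again. Covering $P(\rho)\cap\mathbf T$ by $\rho$-balls, the number of $\rho$-balls needed is bounded by $\m_{P,\theta}(\cdot\mid[\rho,1])$-type quantities, giving $\lesssim \delta^{-\e}\cdot$ a controlled count. Combining then yields
\[
N\lesssim\delta^{-\e-\zeta}(\rho/\delta)^{t-\sigma}.
\]
If the covering count for $\rho$-balls along the slab turns out to be an obstacle, I would instead apply the estimate to the single $\rho$-ball and note that $\pi_\theta(P_\theta\cap\mathbf T)$ projects into a set of diameter $\lesssim\rho$. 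Every projected cube then has a representative in one of $O(1)$ $\rho$-balls of $\R^k$, and the fibre argument runs ball-by-ball in the base.

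\emph{The "moreover" part.} Take $\rho\approx\tfrac12\delta^{\sqrt\e}$ and cover $B(0,1)$ by $\lesssim\rho^{-k}$ slabs, or more simply rerun the double counting globally: $\mu(P_\theta)\le1$ and each projected cube carries mass $\gtrsim\delta^{\e+\zeta}\delta^{t-\sigma}$. This gives $|\pi_\theta(P_\theta)|_\delta\lesssim\delta^{-\e-\zeta}\delta^{\sigma-t}$.
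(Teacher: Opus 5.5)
Your ingredients are right, but the way you assemble them loses a factor, and the repair you propose does not work.

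\textbf{Where the argument breaks.} The submultiplicativity step only gives \emph{some} $\rho$-ball on the fibre $[p]_\theta$ carrying $\gtrsim\delta^{\zeta}(\rho/\delta)^{\sigma}$ cubes; this ball need not be $B(p,\rho)$. For different base points these heavy pieces are scattered along the slab, whose extent in the fibre directions is of order $1$, so they cannot be placed in a single ball $B(p_0,2\rho)$. Your fallback does not localize anything either: a $\rho$-ball in the base $\R^k$ pulls back under $\pi_\theta$ to the whole slab, not to a $\rho$-ball of $\R^d$. Suppose instead you bound the number of $\delta$-cubes of $P(\delta)$ in the slab by $O(\rho^{-\sigma})$ balls of radius $O(\rho)$ times $\delta^{-\e}(\rho/\delta)^{t}$, and compare with $N$ disjoint heavy pieces. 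You then get $N\lesssim\delta^{-\e-\zeta}\rho^{-\sigma}(\rho/\delta)^{t-\sigma}$. This is off by exactly the factor $\rho^{-\sigma}$ you discarded when passing from the full fibre to its heaviest $\rho$-piece. So the claimed ``combining then yields'' does not follow.

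\textbf{How to fix the slab estimate.} Skip the $[\delta,\rho]$ step and use the full fibre count.
\begin{itemize}
\item A fibre $[p]_\theta$ with $p\in\mathbf T$ is parallel to the core of $\mathbf T$, hence contained in $\mathbf T$. After passing to an $O(\delta)$-separated set of projected points, the $N$ fibres therefore contribute disjoint families of at least $\delta^{-\sigma+\zeta}$ cubes meeting $B(0,2)\cap P(\delta)\cap\mathbf T$.
\item For the upper bound, fix one $p\in P_\theta\cap\mathbf T$ and use $p\notin\mathbf H_{\theta,\mathrm{loc}}(P,\sigma,\delta,\delta^{\sqrt\e})$ at scales $[8\rho,8]$. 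You need $R\approx 8$ rather than $1$ to reach all of $B(0,2)$, and $8\rho$ rather than $\rho$ to absorb the $2\rho$ displacement when projecting the slab orthogonally onto $[p]_\theta$.
\item This covers $B(0,2)\cap P(\delta)\cap\mathbf T$ by $O(\rho^{-\sigma})$ balls of radius $O(\rho)$. By regularity, each meets $\lesssim\delta^{-\e}(\rho/\delta)^{t}$ cubes.
\end{itemize}
Hence $N\delta^{-\sigma+\zeta}\lesssim\rho^{-\sigma}\delta^{-\e}(\rho/\delta)^{t}$, which is the slab estimate.

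\textbf{The global bound.} Your mass argument needs a lower bound on the $\mu$-mass of $\delta$-cubes, and $(\delta,t,\delta^{-\e})$-regularity does not give one. It provides only upper Frostman and covering bounds, and the cubes counted by $\m_{P,\theta}$ merely meet $P(\delta)$. Also, $\mu(P_\theta)\le1$ is not available; only $\mu(B(0,1))\le\delta^{-\e}$ is. Count cubes instead: the same fibre count against $|B(0,2)\cap P(\delta)|_\delta\lesssim\delta^{-\e-t}$ gives the ``moreover'' bound. Covering by $\rho^{-k}$ slabs would lose an extra factor $\rho^{t-\sigma-k}=\delta^{-O(\sqrt\e)}$.
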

\begin{proof}
Choose a maximal $(2d^2\delta)$-separated subset of $\pi_\theta(P_\theta\cap\mathbf T)$, and choose a point of $P_\theta\cap\mathbf T$ above each element. By the definition of $P_\theta$, each corresponding fiber meets $B(0,2)\cap P(\delta)\cap\mathbf T$ in at least $\delta^{-\sigma+\zeta}$ dyadic $\delta$-cubes. The separation ensures that these cube families are disjoint, so
\begin{equation}\label{eq:projection-slab-incidence}
    |\pi_\theta(P_\theta\cap\mathbf T)|_\delta\,\delta^{-\sigma+\zeta}
    \lesssim |B(0,2)\cap P(\delta)\cap\mathbf T|_\delta.
\end{equation}
This inequality also holds with $\mathbf T=\R^d$.

To prove \eqref{eq:projection-slab-covering}, assume that $\rho\leq\delta^{\sqrt\e}$ is dyadic. If $P_\theta\cap\mathbf T=\varnothing$, there is nothing to prove. Otherwise, fix $p\in P_\theta\cap\mathbf T$. Since $p\notin\mathbf H_{\theta,\mathrm{loc}}(P,\sigma,\delta,\delta^{\sqrt\e})$, the scales $[8\rho,8]$ yield
\begin{displaymath}
    |B(p,8)\cap P(8\rho)\cap[p]_\theta|_{8\rho}
    <C_{\mathrm{loc}}\rho^{-\sigma}.
\end{displaymath}
Orthogonal projection onto $[p]_\theta$ moves every point of $B(0,2)\cap P(\delta)\cap\mathbf T$ by at most $2\rho$ into this set. Thus $B(0,2)\cap P(\delta)\cap\mathbf T$ is covered by $O_d(\rho^{-\sigma})$ balls of radius $O_d(\rho)$. Regularity and \eqref{eq:projection-slab-incidence} yield
\begin{displaymath}
    |\pi_\theta(P_\theta\cap\mathbf T)|_\delta\,\delta^{-\sigma+\zeta}
    \lesssim\delta^{-\e}\rho^{-\sigma}(\rho/\delta)^t,
\end{displaymath}
which proves \eqref{eq:projection-slab-covering}. Taking $\mathbf T=\R^d$ in \eqref{eq:projection-slab-incidence} and using $|B(0,2)\cap P(\delta)|_\delta\lesssim\delta^{-\e-t}$ proves the global bound.
\end{proof}

\subsubsection{\texorpdfstring{Finding a good projection $\pi_{\theta_{0}}$}{Finding a good projection}}

Since $\mu(B(0,1))\leq\delta^{-\e}$ and $\mu(P_\theta)\geq\tfrac12\delta^\e$, Cauchy--Schwarz gives
\begin{displaymath}
    \int_{I_0}\int_{I_0}\mu(P_\theta\cap P_{\theta'})\,d\nu(\theta)\,d\nu(\theta')
    \geq\delta^{4\e}\nu(I_0)^2
\end{displaymath}
for $\delta\ll_\e1$. Hence there are $\theta_0\in\Theta\cap I_0$ and $E\subset\Theta\cap I_0$ such that $\nu(E)\geq\delta^{6\e}\nu(I_0)$ and
\begin{equation}\label{form37}
    \mu(P_{\theta_{0}} \cap P_{\theta}) \geq \delta^{5\e}, \qquad \theta \in E.
\end{equation}
Replace $\nu$ by $\nu|_E/\nu(E)$ and $\Theta$ by its support. Since $\nu(E)\geq\delta^{6\e}\nu(I_0)$, Lemma \ref{lem:spread-out-subset} shows that the pushforward of $\nu$ under $S_{I_0}$ is $(\Delta,s/2,\delta^{-10\e})_{d,k}$-spread-out.

Let $T(x,y):=(x+\theta_0\cdot y,y)$. The identity $\pi_{\theta-\theta_0}(T(x,y))=\pi_\theta(x,y)$ shows that $T$ preserves the fibers after replacing $\theta$ by $\theta-\theta_0$. Since $|\theta_0|=O_d(1)$, we have that $T\mu$ is $(\delta,t,\delta^{-2\e})_d$-regular for $\delta\ll_{d,k,\e}1$.
Push $P$ and $\mu$ forward by $T$, push $\nu$ forward under $\theta\mapsto\theta-\theta_0$, and define the transformed family by $P_{\theta-\theta_0}:=T(P_\theta)$. We keep the notation $\nu,\Theta,P,\mu$, and $P_\theta$. Thus $\theta_0=0$, the pushforward of $\nu$ under $\theta\mapsto\Delta^{-1}\theta$ is $(\Delta,s/2,\delta^{-10\e})_{d,k}$-spread-out, and \eqref{form37} holds for every $\theta\in\Theta$. We write $P_0:=P_{\theta_0}$ and $\pi:=\pi_0$. This change of coordinates introduces $O_d(1)$ constants in several estimates below, which can be absorbed into powers of $\delta$.

Recall that $[x]_{\theta} := \pi_{\theta}^{-1}\{\pi_{\theta}(x)\}$ and $\R^d = V \oplus W$, where $\dim V=k$ and $\dim W=d-k$. Thus, $[x]_0=x+W$. Moreover, $\|\theta\|\lesssim_{d,k}\Delta$ for $\theta\in\Theta$, so $[x]_{\theta}$ forms an angle $O(\Delta)$ with $W$.

\subsubsection{\texorpdfstring{Finding a good $\sqrt{\delta}$-slab $\mathbf{T}_{0}$}{Finding a good slab}}

We define a $\rho$-slab to be a set of the form $\mathbf{T}_{\theta,x}(\rho)=\pi_{\theta}^{-1}\{x\} + B(0,\rho)$, where $x\in\R^k$ and $\theta\in \mathbf{G}_{d,k}$. Given a slab $\mathbf{T}=\mathbf{T}_{\theta,x}(\rho)$, we define $C\mathbf{T}=\mathbf{T}_{\theta,x}(C\rho)$.

Fix a dimensional constant $C_d\gg_d1$ such that $P_\theta\subset B(0,C_d)$ for every $\theta\in\Theta$. Let $\mathcal{B}_{\Delta}$ be the cover of $B(0,C_d) \cap P$ by balls of radius $\Delta$ whose centers form a maximal $\Delta$-separated subset of $B(0,C_d) \cap P$. Then $\mathcal{B}_{\Delta}$ has bounded overlap, and regularity gives $|\mathcal{B}_{\Delta}| \lesssim \delta^{-2\e}\Delta^{-t}=\Delta^{-t-4\e}$.

We note that
\begin{equation}\label{form38}
    \int_{\Theta} \sum_{B \in \mathcal{B}_{\Delta}} \mu(P_{0} \cap P_{\theta} \cap B) \, d\nu(\theta) \geq \int_{\Theta} \mu(P_{0} \cap P_{\theta}) \, d\nu(\theta) \stackrel{\eqref{form37}}{\geq} \delta^{5\e}.
\end{equation}
Let $\mathcal{B}_{\Delta}^{\mathrm{heavy}}$ consist of the balls $B\in\mathcal B_\Delta$ such that
\begin{displaymath}
    \int_{\Theta}\mu(P_{0}\cap P_{\theta}\cap B)\,d\nu(\theta)>\Delta^{t+20\e}.
\end{displaymath}
The contribution of the remaining balls to the left-hand side of \eqref{form38} is at most $|\mathcal B_\Delta|\Delta^{t+20\e}\lesssim \Delta^{16\e}$. Hence,
\begin{equation}\label{form43}
    \int_{\Theta} \sum_{B \in \mathcal{B}_{\Delta}^{\mathrm{heavy}}} \mu(P_{0} \cap P_{\theta} \cap B) \, d\nu(\theta) \geq \delta^{6\e}.
\end{equation}
For $B\in\mathcal B_\Delta^{\mathrm{heavy}}$, the higher-dimensional form of \cite[Lemma 4.55]{OrponenShmerkin26}, which follows with the same proof, yields
\begin{displaymath}
    |\pi(B\cap P_0)|_\delta\gtrsim \delta^{2\e-t}\left(\frac{\delta}{\Delta}\right)^\sigma\mu(B\cap P_0).
\end{displaymath}
The definition of $\mathcal B_\Delta^{\mathrm{heavy}}$ implies $\mu(B\cap P_0)>\Delta^{t+20\e}$; hence, for $\delta\ll_{d,k,t,\sigma,\e}1$,
\begin{equation}\label{form40} |\pi(B \cap P_{0})|_{\delta} \geq \Delta^{\sigma - t+25\e}, \qquad B \in \mathcal{B}_{\Delta}^{\mathrm{heavy}}. \end{equation}
Let $\mathcal{Q}_{\Delta} \subset \mathcal{D}_{\Delta}(\R^{k})$ be the collection of dyadic $\Delta$-cubes $Q$ such that $Q \cap \pi(B) \neq \varnothing$ for some $B \in \mathcal{B}_{\Delta}^{\mathrm{heavy}}$, and define $\mathbf T_Q:=\pi^{-1}\{\operatorname{center}(Q)\}+B(0,C_d\Delta)$. Since $\pi(B)$ has diameter $O(\Delta)$ for every ball $B$ of radius $\Delta$, every heavy ball meets at least one slab $\mathbf{T}_{Q}$ with $Q \in \mathcal{Q}_{\Delta}$, and at most $O(1)$ such slabs. We write
    $\mathcal{T}_{\Delta}:=\{\mathbf{T}_{Q} : Q \in \mathcal{Q}_{\Delta}\}$.
We claim that
\begin{equation}\label{form42}
    |\mathcal{T}_{\Delta}| \leq \Delta^{\sigma - t - 3\zeta}.
\end{equation}
Indeed, for each $Q \in \mathcal{Q}_{\Delta}$, choose one heavy ball $B_Q \in \mathcal{B}_{\Delta}^{\mathrm{heavy}}$ with $Q \cap \pi(B_Q) \neq \varnothing$. Taking $C_d\gg_d1$, the set $\pi(B_Q \cap P_0)$ is contained in $C_dQ$, so by \eqref{form40},
\begin{displaymath}
    |\pi(P_0) \cap C_dQ|_{\delta} \geq |\pi(B_Q \cap P_0)|_{\delta} \geq \Delta^{\sigma - t+25\e}.
\end{displaymath}
Since the enlarged cubes $C_dQ$, $Q \in \mathcal{Q}_{\Delta}$, have bounded overlap, it follows that
\begin{displaymath}
    |\pi(P_{0})|_{\delta} \gtrsim |\mathcal{T}_{\Delta}|\Delta^{\sigma - t+25\e}.
\end{displaymath}
On the other hand, Lemma \ref{lemma6}, with $2\e$ in place of $\e$, gives $|\pi(P_0)|_\delta\lesssim\Delta^{2\sigma-2t-4\e-2\zeta}$. Combining the last two estimates yields $|\mathcal T_\Delta|\lesssim\Delta^{\sigma-t-29\e-2\zeta}$, which implies \eqref{form42} by \eqref{eq:epsilon-def}.

\begin{figure}[htbp]
\centering
\begin{tikzpicture}[scale=1.05,transform shape,every node/.style={font=\small}]
    \begin{scope}[shift={(0.15,0.45)},rotate=8]
        \foreach \y/\lab in {0/{\mathbf T_1},1.02/{\mathbf T_2},2.42/{\mathbf T_3},4.00/{\mathbf T_4}} {
            \fill[black!2] (0,\y) rectangle (4.2,\y+0.7);
            \draw[thick] (0,\y) rectangle (4.2,\y+0.7);
            \foreach \x in {0.7,1.4,2.1,2.8,3.5} {
                \draw[black!25] (\x,\y) -- (\x,\y+0.7);
            }
            \node[anchor=west] at (4.4,\y+0.35) {$\lab$};
        }

        \draw[<->,black!60] (0,0.83) -- (4.2,0.83)
            node[midway,below,fill=white,inner sep=1pt,font=\scriptsize] {$1$};
        \draw[<->,black!60] (-0.18,0) -- (-0.18,0.7)
            node[midway,left,fill=white,inner sep=1pt,font=\scriptsize] {$\Delta$};

        \foreach \x/\y in {0/0,1.4/0,2.8/0,
                0.7/1.02,2.1/1.02,3.5/1.02,
                0/2.42,0.7/2.42,3.5/2.42,
                1.4/4.00,2.1/4.00,2.8/4.00} {
            \foreach \dy in {0.16,0.35,0.55} {
                \fill[purple!16]
                    (\x+0.08,\y+\dy-0.045) rectangle (\x+0.62,\y+\dy+0.045);
                \draw[purple!32,line width=0.35pt]
                    (\x+0.08,\y+\dy-0.045) rectangle (\x+0.62,\y+\dy+0.045);
            }
        }

        \foreach \x/\y/\a/\b/\c in {
                0/0/0.16/0.43/0.55,
                1.4/0/0.49/0.20/0.34,
                2.8/0/0.30/0.57/0.23,
                0.7/1.02/0.54/0.24/0.42,
                2.1/1.02/0.18/0.48/0.57,
                3.5/1.02/0.45/0.58/0.21,
                0/2.42/0.37/0.19/0.58,
                0.7/2.42/0.21/0.53/0.35,
                3.5/2.42/0.60/0.30/0.48,
                1.4/4.00/0.24/0.55/0.38,
                2.1/4.00/0.50/0.28/0.60,
                2.8/4.00/0.35/0.61/0.22} {
            \fill[purple!75] (\x+\a,\y+0.16) circle (1.6pt);
            \fill[purple!75] (\x+\b,\y+0.35) circle (1.6pt);
            \fill[purple!75] (\x+\c,\y+0.55) circle (1.6pt);
        }

        \draw[->,thick] (0,-0.42) -- (1.55,-0.42)
            node[midway,below] {$W$};

        \draw[thick] (5.45,-0.05) -- (5.45,4.75);
        \node[anchor=south west] at (5.55,4.55) {$\pi(P_0)$};
        \foreach \y in {0.16,0.35,0.55,
                1.18,1.37,1.57,
                2.58,2.77,2.97,
                4.16,4.35,4.55} {
            \fill[red!75] (5.45,\y) circle (1.8pt);
        }
    \end{scope}
\end{tikzpicture}
\caption{The $\Delta$-slabs and their heavy $\Delta$-balls. Each heavy ball has a large $\delta$-scale projection by \eqref{form40}; comparison with the upper bound for $|\pi(P_0)|_\delta$ yields \eqref{form42}.}
\end{figure}

Next, for every $\mathbf{T} \in \mathcal{T}_{\Delta}$, write $\mathcal{B}(\mathbf{T}) := \{B \in \mathcal{B}_{\Delta}^{\mathrm{heavy}} : B \cap \mathbf{T} \neq \varnothing\}$. Since the union of the slabs in $\mathcal{T}_{\Delta}$ covers all the heavy balls,
\begin{equation}\label{form44}
    \sum_{\mathbf{T} \in \mathcal{T}_{\Delta}} \int_{\Theta} \sum_{B \in \mathcal{B}(\mathbf{T})} \mu(P_{0} \cap P_{\theta} \cap B) \, d\nu(\theta) \stackrel{\eqref{form43}}{\geq} \delta^{6\e}.
\end{equation}
By \eqref{form42}, \eqref{form44}, and $12\e\leq\zeta$, there exists $\mathbf T_0\in\mathcal T_\Delta$ such that
\begin{equation}\label{form64}
    \int_{\Theta} \sum_{B \in \mathcal{B}(\mathbf{T}_0)} \mu(P_{0} \cap P_{\theta} \cap B) \, d\nu(\theta) \geq \Delta^{t - \sigma + 4\zeta}.
\end{equation}
For $\theta\in\Theta$ and $C_d\gg_{d,k}1$, the set $P_\theta\cap2\mathbf T_0$ is contained in $\mathbf T_{\theta,x}(C_d\Delta)$ for a suitable $x\in\R^k$, since $P_\theta\subset B(0,C_d)$ and $\|\theta\|\lesssim_{d,k}\Delta$. We deduce from Lemma \ref{lemma6} that
\begin{equation}\label{eq:sparse-projection-upper}
    |\pi_{\theta}(P_{\theta} \cap 2\mathbf{T}_{0})|_{\delta} \lesssim \Delta^{\sigma-t-4\e-2\zeta} \leq \Delta^{\sigma - t - 4\zeta}, \qquad \theta \in \Theta.
\end{equation}

\begin{lemma}\label{lemma8}
    We have
    \begin{displaymath}
        |\{B\in\mathcal{B}(\mathbf{T}_{0}):B\cap B(x,R)\neq\varnothing\}| \lesssim \left(\frac{R}{\Delta} \right)^{\sigma}, \qquad x \in \R^{d}, \, R \in [\delta^{-\sqrt{\e}}\Delta,1].
    \end{displaymath}
    In particular, $|\mathcal{B}(\mathbf{T}_{0})| \lesssim \Delta^{-\sigma}$.
\end{lemma}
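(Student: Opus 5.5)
The plan is to use the fact that $\mathbf T_0$ was built from a point of $P_0$ that is not in the local high multiplicity set. Recall $\mathbf T_0=\mathbf T_Q=\pi^{-1}\{\operatorname{center}(Q)\}+B(0,C_d\Delta)$ for some $Q\in\mathcal Q_\Delta$, and that $Q$ meets $\pi(B_Q)$ for a heavy ball $B_Q$. Heavy balls satisfy $\mu(B_Q\cap P_0)>0$, so we may fix a point $p\in P_0\cap B_Q$. Since $\pi=\pi_0$ and $[p]_0=p+W$, every point of $\mathbf T_0$ lies within distance $O_d(\Delta)$ of the affine plane $[p]_0$.

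Since $p\in P_0$, we have $p\notin\mathbf H_{0,\mathrm{loc}}(P,\sigma,\delta,\delta^{\sqrt\e})$. Given $x\in\R^d$ and $R\in[\delta^{-\sqrt\e}\Delta,1]$, we will apply this with the dyadic scales $r\approx C\Delta$ and $R'\approx 8$. Here $C$ is a dimensional constant, and $r/R'\le\delta^{\sqrt\e}$ holds as long as $R\ge\delta^{-\sqrt\e}\Delta$ (using $\Delta\le\tfrac18\delta^{\sqrt\e}$ for small $\delta$). To get a bound that localizes to $B(x,R)$, it is better to use the pair of scales $(r,R'')$ with $R''\approx R$, with the ball centred at a point of $[p]_0$ near $B(x,R)$. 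The catch is that the definition of $\m_{P,\theta}(p\mid[r,R''])$ centres the ball at $p$ itself, not at an arbitrary point of the fibre.

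I would resolve this as follows. If some heavy ball $B'\in\mathcal B(\mathbf T_0)$ meets $B(x,R)$, then $B'$ also meets $\mathbf T_0$, and each heavy ball contains points of $P_0$. Pick such a point $p'\in P_0\cap B'$. Then $p'$ lies within $O(\Delta)$ of $[p]_0$, so $[p']_0$ and $[p]_0$ are parallel planes at distance $O(\Delta)$ apart. Use $p'$ in place of $p$: it is also outside the local high multiplicity set. With the scales $r\in[C\Delta,2C\Delta]$ and $R''\in[4R,8R]$, we have $r/R''\le\delta^{\sqrt\e}$ after adjusting constants, and therefore
\begin{displaymath}
|B(p',R'')\cap P(r)\cap[p']_0|_r<C_{\mathrm{loc}}(R''/r)^\sigma\lesssim (R/\Delta)^\sigma.
\end{displaymath}

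Now count the balls. Each $B\in\mathcal B(\mathbf T_0)$ meeting $B(x,R)$ has its centre in $P$, lies within $O(\Delta)$ of $[p']_0$, and lies inside $B(p',3R)$. Projecting its centre orthogonally onto $[p']_0$ moves it by $O(\Delta)\le r$, so the projected point lies in $B(p',R'')\cap P(r)\cap[p']_0$. The centres are $\Delta$-separated, so each $r$-cube receives $O_d(1)$ of them. This gives the claimed bound $\lesssim(R/\Delta)^\sigma$.

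For the final assertion, cover $B(0,C_d)$ by $O_d(1)$ balls of radius $1$ and apply the bound with $R=1$.

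The main obstacle is bookkeeping. The constants $C_d$ and $C$ must be chosen so that $r\ge$ the slab width, the scales are dyadic and lie in $[\delta,8]$, and the ratio condition holds. The threshold $R\ge\delta^{-\sqrt\e}\Delta$ is exactly what is needed for the last of these.
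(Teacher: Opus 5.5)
Your proposal is correct and follows the same route as the argument the paper imports from \cite[Lemma 4.75]{OrponenShmerkin26}. That argument picks a point of $P_0$ in a heavy ball of $\mathcal B(\mathbf T_0)$ meeting $B(x,R)$, uses that this point lies outside $\mathbf H_{0,\mathrm{loc}}(P,\sigma,\delta,\delta^{\sqrt\e})$, and projects the $\Delta$-separated centres onto its fibre. The one thing to tidy is the ratio condition: since $r\sim C\Delta$ with $C\gtrsim C_d\gg 4$, take the outer dyadic scale $R''\ge (r/\Delta)R$, i.e.\ comparable to $CR$ rather than $4R$, and cap it at $8$ when $CR$ exceeds $8$. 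Then $r/R''\le\Delta/R\le\delta^{\sqrt\e}$ holds all the way down to the threshold $R=\delta^{-\sqrt\e}\Delta$, and the bound $(R''/r)^\sigma\lesssim(R/\Delta)^\sigma$ is unchanged.
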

The lemma follows from the counting argument in \cite[Lemma 4.75]{OrponenShmerkin26}, with $\Delta$-balls in a slab replacing disks in a tube; the argument is the same.

By Lemma \ref{lemma8} and the bound $\mu(B)\lesssim\Delta^{t-4\e}$ for $B\in\mathcal B(\mathbf T_0)$,
\begin{displaymath}
    \sum_{B\in\mathcal B(\mathbf T_0)}\mu(B\cap P_0\cap P_\theta)\lesssim\Delta^{t-\sigma-4\e},\qquad \theta\in\Theta.
\end{displaymath}
Let $\overline\Theta\subset\Theta$ consist of the parameters for which the sum on the left is at least $\Delta^{t-\sigma+5\zeta}$. Comparing this pointwise bound with \eqref{form64} gives
\begin{equation}\label{form11}
    \nu(\overline\Theta)\geq\Delta^{5\zeta}.
\end{equation}
By definition,
\begin{equation}\label{form67}
    \sum_{B\in\mathcal B(\mathbf T_0)}\mu(B\cap P_0\cap P_\theta)\geq\Delta^{t-\sigma+5\zeta},\qquad \theta\in\overline\Theta.
\end{equation}

\subsubsection{\texorpdfstring{The sets $A$ and $A_{\theta}$}{The sets A and Atheta}}

Let $A$ be the set of lower-left endpoints of the cubes in $\mathcal{D}_{\delta}(\pi(P_{0}\cap 2\mathbf{T}_{0}))$, and write $I_x$ for the cube with lower-left endpoint $x\in A$. Then
\begin{equation}\label{form71}
    |A|=|\pi(P_{0}\cap 2\mathbf{T}_{0})|_{\delta}
    \stackrel{\eqref{eq:sparse-projection-upper}}{\leq}\Delta^{\sigma-t-4\zeta}.
\end{equation}
For $\theta\in\overline{\Theta}$, define
\begin{equation}\label{form75}
    A_{\theta}:=\left\{x\in A:
    |\{B\in\mathcal{B}(\mathbf{T}_{0}):\pi^{-1}(I_x)\cap B\cap P_{0}\cap P_{\theta}\neq\varnothing\}|
    \geq\Delta^{-\sigma+10\zeta}\right\}.
\end{equation}

\begin{lemma}\label{lem:Atheta-lower-bound}
    For $\theta\in\overline{\Theta}$,
    \begin{equation}\label{form70}
        |A_{\theta}|\geq\Delta^{\sigma-t+6\zeta}.
    \end{equation}
\end{lemma}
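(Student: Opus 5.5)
The bound will come from a double count of pairs $(x,B)$ with $x\in A$ and $B\in\mathcal B(\mathbf T_0)$ such that $\pi^{-1}(I_x)\cap B\cap P_0\cap P_\theta\neq\varnothing$, combined with \eqref{form67}. Fix $\theta\in\overline\Theta$. For each ball $B\in\mathcal B(\mathbf T_0)$, let $N(B)$ be the number of $x\in A$ for which $\pi^{-1}(I_x)$ meets $B\cap P_0\cap P_\theta$, so $N(B)\geq|\pi(B\cap P_0\cap P_\theta)|_\delta$ up to constants. Let $M(x)$ be the number of $B\in\mathcal B(\mathbf T_0)$ with $\pi^{-1}(I_x)\cap B\cap P_0\cap P_\theta\ne\varnothing$. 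Then $\sum_{x\in A}M(x)=\sum_B N(B)$.

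First I bound $\sum_B N(B)$ from below. The balls $B$ meet $\mathbf T_0$ and have radius $\Delta$, so $B\cap P_0\subset 2\mathbf T_0$ and each counted $x$ does lie in $A$. As in the derivation of \eqref{form40} (the higher-dimensional form of \cite[Lemma 4.55]{OrponenShmerkin26}, applied now to $B\cap P_0\cap P_\theta\subset P_0$), one has
\[
|\pi(B\cap P_0\cap P_\theta)|_\delta\gtrsim\delta^{2\e-t}(\delta/\Delta)^{\sigma}\mu(B\cap P_0\cap P_\theta)=\delta^{2\e}\Delta^{\sigma-2t}\ldots
\]
Concretely, $\delta^{-t}(\delta/\Delta)^\sigma=\Delta^{\sigma-2t}$. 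The reason is that points of $P_0$ have fibre multiplicity under $\pi$ at most about $(\Delta/\delta)^\sigma$ inside a $\Delta$-ball, because they avoid the local high multiplicity set, and each $\delta$-cube carries mass about $\delta^t$. Summing and using \eqref{form67} gives
\[
\sum_{x\in A}M(x)\gtrsim\delta^{2\e}\Delta^{\sigma-2t}\Delta^{t-\sigma+5\zeta}=\Delta^{-t+5\zeta+4\e}.
\]

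Next I use the trivial upper bound $M(x)\leq|\mathcal B(\mathbf T_0)|\lesssim\Delta^{-\sigma}$ from Lemma \ref{lemma8}. Splitting the sum over $A_\theta$ and $A\setminus A_\theta$, and using \eqref{form71}, the complement contributes at most
\[
|A|\Delta^{-\sigma+10\zeta}\leq\Delta^{\sigma-t-4\zeta}\Delta^{-\sigma+10\zeta}=\Delta^{-t+6\zeta}.
\]
This is smaller than half of $\Delta^{-t+5\zeta+4\e}$ because $4\e<\zeta$ and $\delta$ is small. Hence
\[
|A_\theta|\,\Delta^{-\sigma}\gtrsim\Delta^{-t+5\zeta+4\e},
\]
so $|A_\theta|\gtrsim\Delta^{\sigma-t+5\zeta+4\e}\geq\Delta^{\sigma-t+6\zeta}$.

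The main obstacle is the first step: justifying the per-ball projection lower bound for $B\cap P_0\cap P_\theta$ rather than for $B\cap P_0$. I would rerun the proof of \cite[Lemma 4.55]{OrponenShmerkin26}. For $p\in P_0$, the definition $P_0\cap\mathbf H_{0,\mathrm{loc}}=\varnothing$ at scales $[\delta,\Delta]$ gives
\[
|B(p,\Delta)\cap P(\delta)\cap[p]_0|_\delta\lesssim(\Delta/\delta)^\sigma,
\]
since $\Delta/\delta\cdot$ ratio $\delta/\Delta=\delta^{1/2}\leq\delta^{\sqrt\e}$. Combined with the Frostman bound $\mu(Q)\lesssim\delta^{t-\e}$ for $\delta$-cubes, this lower-bounds the number of $\delta$-cubes in the projection of any subset of $B\cap P_0$ by its mass times $\delta^{-t+\e}(\delta/\Delta)^\sigma$. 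That argument is insensitive to passing to the subset $P_\theta$.
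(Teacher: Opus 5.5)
Your proof is correct and is essentially the paper's argument. The paper makes the same split of $A$ into $A_\theta$ and $A\setminus A_\theta$, but on the $\mu$-weighted sum $\sum_{x\in A}\sum_{B\in\mathcal B(\mathbf T_0)}\mu(\pi^{-1}(I_x)\cap B\cap P_0\cap P_\theta)$, bounding each term by the fibre-mass estimate $\mu(\pi^{-1}(I_x)\cap B\cap P_0)\lesssim\Delta^{-\sigma}\delta^{t-2\e}$ (exactly what you derive from the local multiplicity bound and the Frostman condition); you divide by that bound at the outset and count pairs $(x,B)$ instead, which gives the same exponents. The one slip is cosmetic: after the change of coordinates $T$, the Frostman bound for a $\delta$-cube is $\delta^{t-2\e}$, not $\delta^{t-\e}$, and your first display already uses the correct $\delta^{t-2\e}$.
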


\begin{proof}[Proof of Lemma \ref{lem:Atheta-lower-bound}]
    \cite[Lemma 4.82]{OrponenShmerkin26} extends in a straightforward way to higher dimensions, with affine $(d-k)$-planes in place of lines. This gives
    \begin{equation}\label{form65}
        \mu\bigl(\pi^{-1}(I_x)\cap B\cap P_{0}\bigr)
        \lesssim\Delta^{-\sigma}\delta^{t-2\e},
        \qquad x\in A,\quad B\in\mathcal{B}(\mathbf{T}_{0}).
    \end{equation}
    If $x\in A\setminus A_{\theta}$, then fewer than $\Delta^{-\sigma+10\zeta}$ balls contribute to the double sum over $A\setminus A_\theta$ and $\mathcal B(\mathbf T_0)$. Hence, by \eqref{form65}, \eqref{form71}, and $\delta=\Delta^2$,
    \begin{align*}
        \sum_{x\in A\setminus A_{\theta}}\sum_{B\in\mathcal{B}(\mathbf{T}_{0})}
        \mu\bigl(\pi^{-1}(I_x)\cap B\cap P_{0}\cap P_{\theta}\bigr)
        &\lesssim |A|\Delta^{-2\sigma+10\zeta}\delta^{t-2\e}\\
        &\lesssim\Delta^{t-\sigma+6\zeta-4\e}
        =o\bigl(\Delta^{t-\sigma+5\zeta}\bigr).
    \end{align*}
    Since the cubes $I_x$, $x\in A$, cover $\pi(P_{0}\cap 2\mathbf{T}_{0})$, \eqref{form67} gives
    \begin{displaymath}
        \sum_{x\in A}\sum_{B\in\mathcal{B}(\mathbf{T}_{0})}
        \mu\bigl(\pi^{-1}(I_x)\cap B\cap P_{0}\cap P_{\theta}\bigr)
        \geq\Delta^{t-\sigma+5\zeta}.
    \end{displaymath}
    For $\delta\ll_{d,k,t,\sigma,\e,\zeta}1$, the part with $x\in A_{\theta}$ is therefore at least $\tfrac12\Delta^{t-\sigma+5\zeta}$. On the other hand, \eqref{form65} and Lemma \ref{lemma8} yield
    \begin{displaymath}
        \sum_{x\in A_{\theta}}\sum_{B\in\mathcal{B}(\mathbf{T}_{0})}
        \mu\bigl(\pi^{-1}(I_x)\cap B\cap P_{0}\cap P_{\theta}\bigr)
        \lesssim |A_{\theta}|\Delta^{-2\sigma}\delta^{t-2\e}.
    \end{displaymath}
    Comparing the two bounds and using $4\e\leq\zeta$ proves \eqref{form70}.
\end{proof}

\subsubsection{\texorpdfstring{The set $Y$}{The set Y}}

For $B\in\mathcal{B}(\mathbf{T}_{0})$, let $y(B)$ be the lower-left endpoint of the dyadic $\Delta$-cube containing the $W$-coordinate of the center of $B$, and set
\begin{displaymath}
    Y:=\{y(B):B\in\mathcal{B}(\mathbf{T}_{0})\}\subset\Delta\Z^{d-k}\cap[-2,2]^{d-k}.
\end{displaymath}
The centers lie in one $O(\Delta)$-slab parallel to $W$, so the bounded overlap of $\mathcal{B}_{\Delta}$ implies that $B\mapsto y(B)$ has fibers of cardinality $O(1)$. By \eqref{form67}, the Frostman bound $\mu(B)\leq\Delta^{t-4\e}$, and Lemma \ref{lemma8},
\begin{equation}\label{form73}
    |Y|\sim|\mathcal{B}(\mathbf{T}_{0})|,
    \qquad |Y|\gtrsim\Delta^{-\sigma+6\zeta},
    \qquad |Y|\lesssim\Delta^{-\sigma}.
\end{equation}
For $y\in W$ and $r\in[\delta^{-\sqrt{\e}}\Delta,1]$, Lemma \ref{lemma8} and \eqref{form73} yield
\begin{displaymath}
    |Y\cap B(y,r)|\lesssim(r/\Delta)^{\sigma}
    \lesssim\Delta^{-7\zeta}r^{\sigma}|Y|.
\end{displaymath}
For $y\in W$ and $r\in[\Delta,\delta^{-\sqrt{\e}}\Delta]$, we deduce from the lattice bound and \eqref{eq:epsilon-def} that
\begin{displaymath}
    |Y\cap B(y,r)|\lesssim(r/\Delta)^{d-k}
    \leq\delta^{-(d-k)\sqrt{\e}}\leq\Delta^{-\zeta}
    \lesssim\Delta^{-7\zeta}r^{\sigma}|Y|.
\end{displaymath}
Thus, $Y$ is $(\Delta,\sigma,\Delta^{-7\zeta})_{d-k}$-Frostman.

For $C_d\gg 1$, the set $A$ is contained in $\delta\Z^k\cap(x_A+[-C_d\Delta,C_d\Delta]^k)$ for some $x_A\in\delta\Z^k$. Since $A_{\theta}\subset A$, Equations \eqref{form71}, \eqref{form70}, and \eqref{form73} yield, for $\delta\ll_{t,\sigma,\e,\zeta}1$,
\begin{equation}\label{form74}
    \Delta^{\sigma-t+6\zeta}\leq|A|\leq\Delta^{\sigma-t-4\zeta},
    \qquad |Y|\leq\Delta^{-\sigma-\zeta}.
\end{equation}

\subsubsection{Finding a contradiction to Proposition \ref{prop:ABC}}

For $\theta\in\overline{\Theta}$ and $x\in A_{\theta}$, set
\begin{displaymath}
    Y_{x,\theta}:=\{y(B):B\in\mathcal{B}(\mathbf{T}_{0}),\ 
    \pi^{-1}(I_x)\cap B\cap P_{0}\cap P_{\theta}\neq\varnothing\},
\end{displaymath}
and let $G_{\theta}:=\{(x,y):x\in A_{\theta},\ y\in Y_{x,\theta}\}$. The bounded cardinality of the fibers of $B\mapsto y(B)$ and \eqref{form75} imply that $|Y_{x,\theta}|\gtrsim\Delta^{-\sigma+10\zeta}$. Consequently,
\begin{equation}\label{form88}
    |G_{\theta}|\gtrsim\Delta^{-t+16\zeta}
    \stackrel{\eqref{form74}}{\gtrsim}\Delta^{21\zeta}|A||Y|,
    \qquad \theta\in\overline{\Theta}.
\end{equation}

If $(x,y)\in G_{\theta}$, then there are $B\in\mathcal{B}(\mathbf{T}_{0})$ with $y=y(B)$ and $(x',y')\in\pi^{-1}(I_x)\cap B\cap P_{0}\cap P_{\theta}$. Hence $|x-x'|\lesssim\delta$ and $|y-y'|\lesssim\Delta$. Since $\|\theta\|\lesssim_{d,k}\Delta$ for $\theta\in\overline\Theta$, $|\pi_{\theta}(x,y)-\pi_{\theta}(x',y')|\lesssim\delta+\Delta^2\lesssim\delta$. Therefore,
\begin{equation}\label{form76}
    |\pi_{\theta}(G_{\theta})|_{\delta}
    \lesssim|\pi_{\theta}(P_{\theta}\cap2\mathbf{T}_{0})|_{\delta}
    \stackrel{\eqref{eq:sparse-projection-upper}}{\leq}\Delta^{\sigma-t-4\zeta}
    \stackrel{\eqref{form74}}{\leq}\Delta^{-10\zeta}|A|,
    \qquad \theta\in\overline{\Theta}.
\end{equation}

Let $D(x,y):=(\Delta^{-1}(x-x_A),y)$ and $A':=\Delta^{-1}(A-x_A)$. Then
\begin{displaymath}
    A'\times Y=D(A\times Y)
    \subset(\Delta\Z^k\cap[-C_d,C_d]^k)\times(\Delta\Z^{d-k}\cap[-2,2]^{d-k}),
\end{displaymath}
and $|A'|=|A|\leq\Delta^{\sigma_0-t}=\Delta^{-\alpha}$ by \eqref{form74} and \eqref{eq:zeta-def-consequences}. Since $7\zeta\leq\chi$, the $(\Delta,\sigma,\Delta^{-7\zeta})_{d-k}$-Frostman bound shows that $Y$ is $(\Delta,\sigma,\Delta^{-\chi})$-Frostman.

\begin{figure}[htbp]
\centering
\begin{tikzpicture}[scale=0.82,transform shape,every node/.style={font=\small}]
    \begin{scope}[rotate=9]
        \fill[black!2] (0,0) rectangle (7.2,1.2);
        \draw[thick] (0,0) rectangle (7.2,1.2);

        \foreach \x in {1.2,2.4,3.6,4.8,6.0} {
            \draw[black!25] (\x,0) -- (\x,1.2);
        }

        \foreach \x/\lab in {0.6/B_1,1.8/B_2,3.0/B_3,4.2/B_4,5.4/B_5,6.6/B_6} {
            \node[black!55] at (\x,1.43) {$\lab$};
        }

        \foreach \x in {1.8,5.4} {
            \node[black!30] at (\x,0.6) {$\emptyset$};
        }

        \foreach \x in {0,2.4,3.6,6.0} {
            \foreach \y in {0.26,0.53,0.85} {
                \fill[purple!16]
                    (\x+0.12,\y-0.07) rectangle (\x+1.08,\y+0.07);
                \draw[purple!40,line width=0.45pt]
                    (\x+0.12,\y-0.07) rectangle (\x+1.08,\y+0.07);
            }
        }

        \foreach \x/\y in {
                0.22/0.26,0.74/0.53,0.48/0.85,
                3.23/0.26,3.03/0.85,
                4.55/0.53,3.86/0.85,
                6.68/0.26,6.43/0.53,6.92/0.85} {
            \fill[purple!75] (\x,\y) circle (1.8pt);
        }

        \draw[->,thick] (0,-0.45) -- (2.1,-0.45)
            node[midway,below] {$W$};
        \draw[->,thick] (7.75,0.1) -- (7.75,1.1)
            node[midway,right] {$V$};
        \draw[<->] (0,-0.2) -- (7.2,-0.2)
            node[midway,fill=white,inner sep=1pt] {$1$};
        \draw[<->] (7.35,0) -- (7.35,1.2)
            node[midway,left,fill=white,inner sep=1pt] {$\Delta$};

        \foreach \x/\y in {
                0.08/0.26,0.08/0.53,0.08/0.85,
                2.48/0.26,2.48/0.85,
                3.68/0.53,3.68/0.85,
                6.08/0.26,6.08/0.53,6.08/0.85} {
            \draw[purple!85,line width=2pt] (\x,\y-0.06) -- (\x,\y+0.06);
        }
    \end{scope}

    \draw[->,thick] (7.75,1.28) -- (8.50,1.28);
    \node[align=center,font=\scriptsize] (rescale) at (7.45,3.70)
        {$V$-coordinate rescaled\\by $\Delta^{-1}$};
    \draw[->,black!45] (rescale.south) .. controls (7.55,2.75) and (7.85,2.0) .. (8.08,1.4);

    \begin{scope}[shift={(8.55,0.10)}]
        \draw[thick] (0,0) rectangle (2.55,2.55);
        \foreach \x in {0.425,0.85,1.275,1.7,2.125} {
            \draw[black!25] (\x,0) -- (\x,2.55);
        }
        \foreach \y in {0.425,0.85,1.275,1.7,2.125} {
            \draw[black!25] (0,\y) -- (2.55,\y);
        }
        \foreach \x/\y in {
                0.213/0.638,0.213/1.488,0.213/1.913,
                1.063/0.638,1.063/1.913,
                1.488/1.488,1.488/1.913,
                2.338/0.638,2.338/1.488,2.338/1.913} {
            \fill[purple!75] (\x,\y) circle (2.3pt);
        }
        \foreach \y in {0.638,1.488,1.913} {
            \draw[purple!70,line width=1.4pt] (2.72,\y) -- (2.93,\y);
        }
        \draw[black!45] (2.825,0.51) -- (2.825,2.04);
        \node at (1.275,-0.36) {$Y$};
        \node at (3.08,1.275) {$A'$};
        \node at (1.275,2.93) {$D(G_\theta)\subset A'\times Y$};
    \end{scope}
\end{tikzpicture}
\caption{The incidences inside $\mathbf T_0$ become a large graph $D(G_\theta)\subset A'\times Y$ after rescaling the $V$-coordinate. Its cardinality is bounded from below in \eqref{form88}, while \eqref{form76} bounds its projection.}
\end{figure}

Let $\lambda$ be the pushforward of $\nu|_{\overline\Theta}/\nu(\overline\Theta)$ under $\theta\mapsto\Delta^{-1}\theta$. By \eqref{form11} and Lemma \ref{lem:spread-out-subset}, $\lambda$ is $(\Delta,s/2,\delta^{-10\e}\Delta^{-5\zeta})_{d,k}$-spread-out. Apply Lemma \ref{lem:uniform-spread-out-discretization} with $E=\spt\lambda$, and let $\widetilde\Theta$ be the set of lower-left endpoints of the resulting family. Then it follows from \eqref{eq:spread-out-loss} that, for $\delta\ll_{\chi,\e,\zeta}1$, the set
$\widetilde\Theta$ and the measure $m_{\widetilde\Theta}$ are $(\Delta,s/8,\Delta^{-\chi})_{d,k}$-spread-out. 

For each $\vartheta\in\widetilde{\Theta}$, choose $\theta_{\vartheta}\in\overline{\Theta}$ such that $\Delta^{-1}\theta_{\vartheta}$ belongs to the half-open $\Delta$-cube with lower-left endpoint $\vartheta$, and set $G'_{\vartheta}:=D(G_{\theta_{\vartheta}})$. Since $25\zeta\leq\chi$, \eqref{form88} yields, for $\delta\ll_{d,k,\chi,\zeta}1$,
\begin{equation}\label{form86}
    |G'_{\vartheta}|\gtrsim\Delta^{21\zeta}|A'||Y|
    \geq\Delta^{25\zeta}|A'||Y|
    \geq\Delta^{\chi}|A'||Y|.
\end{equation}
The identity $\pi_{\Delta^{-1}\theta}(D(G_{\theta}))=\Delta^{-1}(\pi_{\theta}(G_{\theta})-x_A)$ and the bound $\|\vartheta-\Delta^{-1}\theta_{\vartheta}\|\lesssim\Delta$ imply, since $Y\subset[-2,2]^{d-k}$, that
\begin{equation}\label{form89}
    |\pi_{\vartheta}(G'_{\vartheta})|_{\Delta}
    \lesssim|\pi_{\theta_{\vartheta}}(G_{\theta_{\vartheta}})|_{\delta}
    \stackrel{\eqref{form76}}{\lesssim}\Delta^{-10\zeta}|A'|
    <\Delta^{-\chi}|A'|.
\end{equation}

Fix $q\in\N$ such that $2^{-q}\max\{C_d,2\}\leq1$, and set
\begin{displaymath}
    \widehat\Delta:=2^{-q}\Delta,
    \qquad \widehat A:=2^{-q}A',
    \qquad \widehat Y:=2^{-q}Y,
    \qquad
    \widehat G_\vartheta
    :=\{(2^{-q}x,2^{-q}y):(x,y)\in G'_\vartheta\}.
\end{displaymath}
Then
\begin{displaymath}
    \widehat A\subset\widehat\Delta\Z^k\cap[-1,1]^k,
    \qquad
    \widehat Y\subset\widehat\Delta\Z^{d-k}\cap[-1,1]^{d-k},
    \qquad
    \widetilde\Theta\subset\widehat\Delta\Z^{k\times(d-k)}
    \cap[-1,1]^{k\times(d-k)},
\end{displaymath}
and $|\widehat A|=|A'|\leq\Delta^{-\alpha}\leq\widehat\Delta^{-\alpha}$.
Passing from $(\Delta,Y)$ to $(\widehat\Delta,\widehat Y)$, and lowering the base scale of $m_{\widetilde\Theta}$ from $\Delta$ to $\widehat\Delta$, change the Frostman and spread-out constants only by factors depending on $q$. We have
\begin{displaymath}
    Y\text{ is }(\Delta,\sigma,\Delta^{-7\zeta})\text{-Frostman},
    \qquad
    m_{\widetilde\Theta}\text{ is }
    (\Delta,s/8,\Delta^{-24\e-5\zeta})_{d,k}\text{-spread-out}.
\end{displaymath}
Since $7\zeta<\chi$ and $24\e+5\zeta\leq\chi/10$, for $\delta\ll_{d,k,s,\chi,\e,\zeta}1$ the set $\widehat Y$ is $(\widehat\Delta,\sigma,\widehat\Delta^{-\chi})$-Frostman and, by Lemma \ref{lem:spread-out-discretization}\textup{\ref{it:spread-out-discretization-cubes}}, $\widetilde\Theta$ is $(\widehat\Delta,s/8,\widehat\Delta^{-\chi})_{d,k}$-spread-out. Finally, from \eqref{form86} and \eqref{form89}, we deduce that
\begin{displaymath}
    |\widehat G_\vartheta|
    \geq\widehat\Delta^\chi|\widehat A||\widehat Y|,
    \qquad
    |\pi_\vartheta(\widehat G_\vartheta)|_{\widehat\Delta}
    =|\pi_\vartheta(G'_\vartheta)|_{\Delta}
    <\widehat\Delta^{-\chi}|\widehat A|.
\end{displaymath}
For $\delta\ll_{q,\Delta_{\mathrm{ABC}}}1$, we have $\widehat\Delta\leq\Delta_{\mathrm{ABC}}$, so Proposition \ref{prop:ABC} applied to $\widehat A,\widehat Y,\widetilde\Theta$ contradicts the two displayed bounds for $\widehat G_\vartheta$. This completes the proof of Proposition \ref{prop:AhlforsProjections-induction}.


\section{Multiscale and sticky Furstenberg estimates}\label{sec:multiscaleFurstenberg}

\subsection{Plane and spread-out configurations}

A key tool in \cite{OrponenShmerkin26} was the multiscale incidence estimate \cite[Proposition 5.2]{OS23}. Our goal is to derive an analogous estimate for spread-out configurations. We start by defining the relevant notation.

For the rest of this section, we fix the dimensions $1\le k<d$, and use the notation $A \lessapprox_{\delta} B$ to mean $A \leq B(\log(1/\delta))^{C}$, where the implicit exponent $C$ may depend on the parameters in Theorem \ref{thm:AhlforsFurstenberg}, namely $d,k,s,t,\eta$. We define $\gtrapprox_{\delta}$ and $\approx_{\delta}$ similarly.

It will sometimes be convenient to identify dyadic cubes with their lower-left endpoints, using the notation $z(Q)$ from Section \ref{s:discretized-formulations}. Given a point $x \in \delta\Z^D$ and a dyadic scale $\Delta \ge\delta$, we write $x^{(\Delta)} := z(\mathcal{D}_{\Delta}(x))$. We will also silently identify a family of dyadic cubes with its union whenever there is no risk of confusion. 

Recall the plane configurations $(P,\mathbf\Theta)$ and their Furstenberg sets $\mathcal F(P,\mathbf\Theta)$ from Definition \ref{def:planeConfigurations}. We emphasize that this definition does not require that $z\mathbf{\Theta}(p) = \{\theta \in \mathbf{G}_{d,k} : V_{\theta}+p \in \mathcal{F}\}$. That is, we allow for ``stray'' affine planes that pass through $p$ but belong to $\mathcal{F}$ by virtue of having the form $V_{\theta}+q$ for some $\theta\in z\mathbf{\Theta}(q)$, with $q\neq p$.

We will be mostly interested in spread-out configurations, as defined in Definition \ref{def:intro-spreadout}. Spread-out configurations are similar to \emph{nice configurations} introduced in \cite[Definition 5.1]{OS23}, and employed in the statement of the Furstenberg set theorem in \cite[Theorem 5.2]{OrponenShmerkin26}. The difference is that we have replaced Frostman sets in $S^1$ by spread-out sets in $\mathbf{G}_{d,k}$.

\subsection{Multiscale Furstenberg estimate}

We can now state the main result of this section, which is a multiscale estimate for spread-out configurations. It is the analog of \cite[Proposition 5.2]{OS23} in the setting of spread-out configurations, and the proof follows a similar outline. Because our current setting differs from that of \cite{OS23} in some important ways, we give a full proof of Proposition \ref{prop:induction-on-scales} below, even if it involves some arguments that are very similar to those in \cite[Proposition 5.2]{OS23}.

\begin{proposition}\label{prop:induction-on-scales}
    Let $C>0$, $\beta\in(0,1]$, $\delta,\Delta \in 2^{-\N}$, and $\overline{\Delta}\in2^{-\N_0}$ with $\delta \leq \Delta\leq \overline{\Delta}$. Let $(P_0,\mathbf{\Theta}_0)$ be a $(\delta,\beta,C,M_0)$-spread-out configuration. Then the following hold:
    \begin{enumerate}[label=\textup{(\roman*)}, ref=(\roman*)]
        \item \label{it-induction-i} There are $M\in\N$, a set $P\subset P_0$, and families $\mathbf\Theta(p)\subset\mathbf\Theta_0(p)$, $p\in P$, such that $(P,\mathbf\Theta)$ is a $(\delta,\beta,C_1,M)$-spread-out configuration, where $C_1\approx_\delta C$ and $M\approx_\delta M_0$. Moreover, $|P|_{\Delta}\approx_{\delta} |P_0|_{\Delta}$ and $|P\cap Q| \approx_{\delta}|P_0\cap Q|$ for each $Q\in\mathcal{D}_{\Delta}(P)$.
        \item \label{it-induction-ii} There exist $M_{\Delta} \in \N$, a set $P_{\Delta} \subset \Delta \Z^{d} \cap [0,1]^{d}$, and families $\mathbf{\Theta}_{\Delta}(q) \subset \mathcal{D}_{\Delta}([-1,1]^{k \times (d-k)})$, $q\in P_{\Delta}$, such that $(P_{\Delta},\mathbf{\Theta}_{\Delta})$ is a $(\Delta,\beta,C',M_{\Delta})$-spread-out configuration, with $C' \approx_{\delta} C$. Moreover,
              \begin{enumerate}[label=\textup{(C\alph*)}]
                  \item $P_{\Delta} = \{z(Q) : Q \in \mathcal{D}_{\Delta}(P)\}$,
                  \item For each $q=z(Q) \in P_{\Delta}$, we have $\bigcup_{p \in P\cap Q} \mathbf{\Theta}(p) \subset \mathbf{\Theta}_{\Delta}(q)$.
              \end{enumerate}
              Further, there is $L_{\overline{\Delta}} \in \N$ such that, for each $q\in P_{\Delta}$ and each $\mathbf{I}\in \mathcal{D}_{\overline{\Delta}}(\mathbf{\Theta}_{\Delta}(q))$, we have
             \begin{displaymath} |\{\mathbf{J} \in \mathbf{\Theta}_{\Delta}(q) : \mathbf{J} \subset \mathbf{I}\}| = L_{\overline{\Delta}}. \end{displaymath}
        \item \label{it-induction-iii} For each $Q\in \mathcal{D}_{\Delta}(P)$, there exist $M_Q \in \N$, a set $P^{Q} \subset (\delta/\Delta)\Z^{d} \cap [0,1]^{d}$, and families
              $\mathbf{\Theta}^{Q}(p) \subset \mathcal{D}_{\delta/\Delta}([-1,1]^{k \times (d-k)})$, $p\in P^{Q}$,
              such that $(P^{Q},\mathbf{\Theta}^{Q})$ is a $(\delta/\Delta,\beta,C_Q,M_Q)$-spread-out configuration, with $C_Q \approx_{\delta} C$. Moreover,
               \begin{enumerate}[label=\textup{(Z\alph*)}]
                   \item $P^{Q} = S_Q(P\cap Q)$,
                   \item For each $p=S_Q(p_0) \in P^{Q}$, we have $\mathbf{\Theta}^{Q}(p) \subset \mathcal{D}_{\delta/\Delta}\bigl(\mathbf{\Theta}(p_0)\bigr)$.
                   \item\label{it:induction-zoomed-direction-multiplicity} There is $m_Q\in\N$ such that, for $p=S_Q(p_0)\in P^Q$ and $\mathbf J\in\mathbf\Theta^Q(p)$,
                   \begin{displaymath}
                       m_Q\le
                       |\{\mathbf I\in\mathbf\Theta(p_0):\mathbf I\subset\mathbf J\}|
                       <4m_Q,
                       \qquad
                       m_QM_Q\approx_\delta|\mathbf\Theta(p_0)|.
                   \end{displaymath}
               \end{enumerate}
        \item \label{it-induction-iv} Let $\mathcal{F}_0$, $\mathcal{F}_{\Delta},\mathcal{F}^{Q} \subset \mathbf{A}_{d,k} \cong \R^{(d-k)(k+1)}$ denote the Furstenberg sets, respectively, for the original configuration and the configurations in \ref{it-induction-ii} and \ref{it-induction-iii}. Then
        \begin{equation} \label{eq:lower-bound-Furstenberg}
            \frac{|\mathcal{F}_0|_{\delta}}{M_0} \gtrapprox_{\delta}
            \frac{|\mathcal{F}_{\Delta}|_{\Delta}}{M_{\Delta}} \cdot \max_{Q \in \mathcal{D}_{\Delta}(P)} \frac{|\mathcal{F}^{Q}|_{\delta/\Delta}}{M_Q}.
        \end{equation}
        Here $|\mathcal{F}_0|_{\delta}$ etc. refer to covering numbers relative to the Euclidean metric on $\R^{(d-k)(k+1)}$.
    \end{enumerate}

\end{proposition}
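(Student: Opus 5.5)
The plan follows the outline of \cite[Proposition 5.2]{OS23}, with Frostman directions replaced by spread-out families and the tube/ball duality replaced by the affine coordinates \eqref{eq:affine-incidence-coordinates}. We first perform a sequence of dyadic pigeonholing refinements to produce $(P,\mathbf\Theta)$ in \ref{it-induction-i}; this costs only $\lessapprox_\delta 1$ losses. The spread-out property survives with $C_1\approx_\delta C$ by Lemma \ref{lem:spread-out-subset}, since each refinement keeps a $\gtrapprox_\delta 1$ fraction of every $\mathbf\Theta_0(p)$. The refinements are as follows.
\begin{itemize}
\item Fix $p$. Pigeonhole, at the scales $\overline\Delta$ and $\Delta$ in $\mathbf G_{d,k}$ and at the scale $\delta/\Delta$ relative to $\Delta$-cubes, the branching numbers of $\mathbf\Theta_0(p)$. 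After this, every $\Delta$-cube of $\mathbf\Theta(p)$ contains a number of $\delta$-cubes comparable to a fixed value $m_p$.
\item Pigeonhole the values $m_p$ and $|\mathbf D_\Delta(\mathbf\Theta(p))|$ over $p$.
\item Pigeonhole $|P\cap Q|$ over $Q\in\mathcal D_\Delta(P_0)$, keeping heavy cubes so that $|P|_\Delta\approx|P_0|_\Delta$.
\end{itemize}

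For \ref{it-induction-ii}, set $P_\Delta:=z\mathcal D_\Delta(P)$ and first take $\mathbf\Theta'_\Delta(q):=\bigcup_{p\in P\cap Q}\mathcal D_\Delta(\mathbf\Theta(p))$, so that (Cb) holds automatically. The issue is that a union of spread-out families need not be spread-out. My plan is to go the other way: apply Lemma \ref{lem:spread-out-subset} to $\mathcal D_\Delta(\mathbf\Theta(p))$ for one fixed $p$. By the uniform branching, $\mathcal D_\Delta(\mathbf\Theta(p))$ inherits $(\Delta,\beta,O(C))$-spread-outness from $\mathbf\Theta(p)$. This works because singular scales in $[\Delta,1]$ for a subfamily of $\Delta$-cubes transfer to its $\delta$-children with $O(1)$ loss, as in Lemma \ref{lem:spread-out-discretization}, and equal branching converts cardinality ratios.

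To get the union to satisfy (Cb) exactly, I would instead redefine $\mathbf\Theta(p)$ for $p\in P\cap Q$ to be the restriction of $\mathbf\Theta(p)$ to a common family $\mathbf\Theta_\Delta(q)$. This family is chosen by pigeonholing the $\Delta$-cubes that are popular among the $p\in P\cap Q$ (a double-counting/Cauchy--Schwarz step). After this, I re-pigeonhole the $\overline\Delta$-branching $L_{\overline\Delta}$ of $\mathbf\Theta_\Delta(q)$ and the cardinalities $M_\Delta$. I expect this step to be the main obstacle: one must ensure simultaneously that the $\mathbf\Theta(p)$ stay spread-out at scale $\delta$ and that the $\mathbf\Theta_\Delta(q)$ are spread-out at scale $\Delta$, both with $\approx_\delta C$. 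The fallback is to keep (Cb) as a union and prove the union is spread-out from the bound $|\mathbf\Theta_\Delta(q)|\lessapprox |\mathcal D_\Delta(\mathbf\Theta(p))|$ given by the popularity pigeonholing.

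For \ref{it-induction-iii}, define $P^Q:=S_Q(P\cap Q)$. For $p_0\in P\cap Q$, choose a $\Delta$-cube $\mathbf I_{p_0}\in\mathcal D_\Delta(\mathbf\Theta(p_0))$ using Lemma \ref{lem:spread-out-sub-cube}, so that the rescaled set $S_{\mathbf I}(\mathbf\Theta(p_0)\cap\mathbf I)$ is $(\delta/\Delta,\beta,\approx C)$-spread-out. Then take $\mathbf\Theta^Q(p)$ to be the $(\delta/\Delta)$-cubes obtained after translating by $z(\mathbf I)$ and dilating by $\Delta^{-1}$. Because we consider $V_\theta+p$ in coordinates $(\theta,\rho_\theta(p))$, we must also apply the shear $(x,y)\mapsto(x,y-\theta_{\mathbf I}^*x)$ to the points, as done for $T$ in the previous section. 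This forces pigeonholing one reference cube $\mathbf I$ per $Q$ common to many $p$, with a further $\approx_\delta$ loss. I would then rewrite (Za) as holding after this affine normalization, noting that the shear is bi-Lipschitz and that translation and scaling preserve spread-outness by Lemma \ref{lem:spread-out-scaling}. The multiplicity $m_Q$ in (Zc) comes from the uniform branching arranged in step \ref{it-induction-i}.

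Finally, \ref{it-induction-iv} is the usual counting argument. Cover $\mathcal F_0$ by $\delta$-cubes in $\mathbf A_{d,k}$ and group them by the $\Delta$-cube of $\mathcal F_\Delta$ containing them. Each $\Delta$-plane-cube $V_{\theta'}+q$, with $\theta'\in z\mathbf\Theta_\Delta(q)$, lifts to the rescaled picture, where the $\delta$-planes through $P\cap Q$ with directions in $\mathbf I$ correspond, via the affine map above, to $\mathcal F^Q$ at scale $\delta/\Delta$. Hence each heavy $\Delta$-cell contains $\gtrsim |\mathcal F^Q|_{\delta/\Delta}$ cells of $\mathcal F_0$. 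Normalizing the count of how many $\Delta$-cells are used by $M_\Delta$, $M_Q$, and $m_Q M_Q\approx M_0/|\mathcal D_\Delta|$ yields \eqref{eq:lower-bound-Furstenberg}. Stray planes are harmless, since they only enlarge $\mathcal F_0$, and the maximum over $Q$ follows because after pigeonholing all $Q$ are comparable.
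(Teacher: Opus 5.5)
There are genuine gaps. The most serious is in \ref{it-induction-iii}, and it carries over into \ref{it-induction-iv}.

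\textbf{The zoomed-in configuration.} The isotropic homothety $S_Q$ does not change plane directions: $S_Q(V_\theta+p)=V_\theta+S_Q(p)$. A $\delta$-neighbourhood inside $Q$ becomes a $(\delta/\Delta)$-neighbourhood, so directions within $\delta/\Delta$ of each other merge.
\begin{itemize}
\item The zoomed shading must therefore keep essentially all of $\mathbf\Theta(p_0)$, coarsened to $(\delta/\Delta)$-cubes. This is exactly (Zb), and $m_Q$ in (Zc) counts the $\delta$-cubes of $\mathbf\Theta(p_0)$ per coarse cube, so that $m_QM_Q\approx_\delta|\mathbf\Theta(p_0)|$.
\item The paper obtains this from Lemma~\ref{lem:spread-out-single-set-zoomed-in}, which is an almost cover of $\mathbf\Theta(p_0)$ at scale $\delta/\Delta$. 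It then pigeonholes $m(p_0)$ and the cardinalities over $p_0\in P\cap Q$.
\item Your construction takes one direction cube $\mathbf I$ from Lemma~\ref{lem:spread-out-sub-cube}, blows it up by $\Delta^{-1}$, and shears the points. That is the renormalization from the proof of Proposition~\ref{prop:AhlforsProjections-induction}.
\item A $\Delta^{-1}$ blow-up of directions matches an anisotropic blow-up of a $\Delta$-neighbourhood of a $k$-plane, not $S_Q$. So the planes $V_\theta+p$ with $p\in Q$ and $\theta\in\mathbf I$ are not carried to the planes of your $\mathcal F^Q$.
\item It also discards all but one $\Delta$-cube of directions and violates (Za)--(Zc). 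Rewriting (Za), as you suggest, changes the statement.
\end{itemize}

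\textbf{The counting in \ref{it-induction-iv}.} Your sketch lacks the quantity that links the scales.
\begin{itemize}
\item The paper pigeonholes the number $N_\Delta$ of $\delta$-cells of the refined Furstenberg set lying $O(\Delta)$-close to each $\Delta$-cell. It discards coarse directions outside this class \emph{before} fixing $\mathbf\Theta_\Delta(q)$, and only then fixes $\mathbf\Theta(p)$.
\item This gives $|\mathcal F_0|_\delta\gtrsim N_\Delta|\mathcal F_\Delta|_\Delta$.
\item It also gives $m_Q|\mathcal F^Q|_{\delta/\Delta}\lessapprox_\delta|\mathcal F(Q)|\lesssim M_\Delta N_\Delta$ for \emph{every} $Q$, where $\mathcal F(Q)$ denotes the $\delta$-cells coming from $P\cap Q$.
\item The maximum over $Q$ is then automatic; no comparability of the $Q$'s is needed, and none is produced by pigeonholing.
\item The correct per-cell lower bound is $N_\Delta\gtrapprox_\delta m_Q|\mathcal F^Q|/M_\Delta$, not $|\mathcal F^Q|$.
\end{itemize}

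\textbf{The coarse families in \ref{it-induction-ii}.} You identify the right obstacle but do not resolve it, and your fallback is insufficient. The missing idea is Lemma~\ref{lem:spread-out-almost-cover-families}.
\begin{itemize}
\item Let $\mathbf\Theta_\Delta(q)$ consist of $\Delta$-cubes each containing at least $\mathfrak m$ incidences $(p,\mathbf I)$, with $p\in P_0\cap Q$ and $\mathbf I\in\mathbf\Theta_0(p)$. Here $\mathfrak m|\mathbf\Theta_\Delta(q)|\gtrapprox_\delta\sum_p|\mathbf\Theta_0(p)|$, as supplied by Proposition~\ref{prop:almost-cover}.
\item If $\mathcal J'\subset\mathbf\Theta_\Delta(q)$ has $\mathfrak s$ singular scales at every point, then by monotonicity so do the families $\{\mathbf I\in\mathbf\Theta_0(p):\mathbf I\subset\bigcup\mathcal J'\}$. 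Hence
\[
\mathfrak m|\mathcal J'|\le C2^{-\beta\mathfrak s}\sum_p|\mathbf\Theta_0(p)|\lessapprox_\delta C2^{-\beta\mathfrak s}\,\mathfrak m|\mathbf\Theta_\Delta(q)|.
\]
\item The per-cube lower bound $\mathfrak m$ is essential. A bound $|\mathbf\Theta_\Delta(q)|\lessapprox|\mathcal D_\Delta(\mathbf\Theta(p))|$ alone does not suffice. Suppose each $p$ adds a tiny private set of cubes near one fixed model hyperplane. Each family stays spread-out, but the union contains a fixed fraction of cubes near that hyperplane.
\item With this lemma you need neither uniform branching of the $\mathbf\Theta(p)$ nor a reference point. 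Build the coarse families first, then restrict the fine families to them.
\item (A1)--(A2) ensure that most $p\in P_0\cap Q$ keep $\gtrapprox_\delta M_0$ directions after this restriction, so Lemma~\ref{lem:spread-out-subset} preserves spread-outness at scale $\delta$.
\end{itemize}
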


Note that no comparison between $|P|$ and $|P_0|$ is asserted. However, if the cardinalities $|P_0\cap Q|$, $Q\in\mathcal D_\Delta(P_0)$, are roughly comparable, then \ref{it-induction-i} implies that the same holds for  $|P|$ and $|P_0|$.

\subsection{Almost covering}

Before proving Proposition \ref{prop:induction-on-scales}, we need to introduce the notion of an almost cover and prove that spread-out families are robust under almost covering.

\begin{definition}[$\alpha$-almost cover]\label{def:almostCover}
    Let $\delta,\Delta \in 2^{-\N}$ with $\delta \leq \Delta$, and let $\alpha > 0$. Let $J$ be a finite index set, and let $\{\mathcal{P}_{j}\}_{j \in J}$ be a family of subsets of $\mathcal{D}_{\delta}(\R^D)$. A family $\mathcal{Q} \subset \mathcal{D}_{\Delta}(\R^D)$ is an \emph{$\alpha$-almost cover of $\{\mathcal{P}_{j}\}_{j \in J}$} if the following properties hold for some $\mathfrak{m} \in 2^{\N_0}$:
    \begin{enumerate}[label=\textup{$(\alpha_{\arabic*})$}]
        \item $|\{(j,p) \in J \times \mathcal{D}_{\delta}(\R^D) : p \in \mathcal{P}_{j}, \, p \subset Q\}| \geq \mathfrak{m}$ for all $Q \in \mathcal{Q}$;
        \item $\mathfrak{m}|\mathcal{Q}| \geq \alpha \sum_{j \in J} |\mathcal{P}_{j}|$.
    \end{enumerate}
    We will use this terminology both for $D = d$ and $D = k(d - k)$.
\end{definition}

The definition of spread-outness has been tailored so that it is robust under almost covering:
\begin{lemma}\label{lem:spread-out-almost-cover-families}
    Let $\delta \leq \Delta$, let $\mathbf{\Theta}_{j} \subset \mathcal{D}_{\delta}(\mathbf{G}_{d,k})$, $j \in J$, be $(\delta,\beta,C)_{d,k}$-spread-out families, and let $\mathbf{\Theta}_{\Delta} \subset \mathcal{D}_{\Delta}(\mathbf{G}_{d,k})$ be an $\alpha$-almost cover of $\{\mathbf{\Theta}_{j}\}_{j \in J}$. Then $\mathbf{\Theta}_{\Delta}$ is $(\Delta,\beta,\alpha^{-1}C)_{d,k}$-spread-out.
\end{lemma}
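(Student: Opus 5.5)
The argument is a direct transfer of singular scales from $\Delta$-cubes down to the $\delta$-cubes they contain, followed by a double count using $(\alpha_1)$ and $(\alpha_2)$. Fix $\mathcal{J}'\subset\mathbf{\Theta}_\Delta$ and $\mathfrak s\in\N_0$. Assume every $\theta\in\bigcup\mathcal{J}'$ has at least $\mathfrak s$ scales in $2^{-\N}\cap[\Delta,1]$ that are $\beta$-singular for $(\bigcup\mathcal J',\theta)$. The goal is
\[
|\mathcal J'|\le \alpha^{-1}C2^{-\beta\mathfrak s}|\mathbf\Theta_\Delta| .
\]

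For each $j\in J$, set $\mathbf\Theta_j':=\{p\in\mathbf\Theta_j: p\subset Q \text{ for some } Q\in\mathcal J'\}$. The key observation is that being singular is monotone under shrinking the set. Take $\theta\in\bigcup\mathbf\Theta_j'$ and a scale $r\in[\Delta,1]$ that is $\beta$-singular for $(\bigcup\mathcal J',\theta)$, witnessed by some $H\in\mathfrak H_{d,k}$. Since $\bigcup\mathbf\Theta_j'\subset\bigcup\mathcal J'$, we get
\[
\textstyle\bigcup\mathbf\Theta_j'\cap B(\theta,r)\subset \bigcup\mathcal J'\cap B(\theta,r)\subset H(\beta r).
\]
So $r$ is also $\beta$-singular for $(\mathbf\Theta_j',\theta)$. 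Because $[\Delta,1]\subset[\delta,1]$, every $\theta\in\bigcup\mathbf\Theta'_j$ therefore has at least $\mathfrak s$ $\beta$-singular scales relative to $\mathbf\Theta'_j$. Since $\mathbf\Theta_j$ is $(\delta,\beta,C)_{d,k}$-spread-out, this gives
\[
|\mathbf\Theta'_j|\le C2^{-\beta\mathfrak s}|\mathbf\Theta_j|\qquad\text{for every } j\in J.
\]
There is no loss from changing scales here, because singular scales above $\Delta$ are scales at level $\delta$ as well. The same holds if singular scales are counted via the infimum over a cube: each $\theta\in p\subset Q$ inherits the count from $\theta\in Q$.

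Now double count. By $(\alpha_1)$, each $Q\in\mathcal J'$ contains at least $\mathfrak m$ pairs $(j,p)$ with $p\in\mathbf\Theta_j$ and $p\subset Q$. The cubes $Q\in\mathcal J'$ are disjoint dyadic cubes, so every such pair lies in exactly one $Q$, and the pair has $p\in\mathbf\Theta_j'$. Hence
\[
\mathfrak m|\mathcal J'|\le\sum_{j}|\mathbf\Theta'_j|\le C2^{-\beta\mathfrak s}\sum_j|\mathbf\Theta_j|\le \alpha^{-1}C2^{-\beta\mathfrak s}\,\mathfrak m|\mathbf\Theta_\Delta|,
\]
where the last step is $(\alpha_2)$. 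Dividing by $\mathfrak m$ gives the claim. Note that $\mathbf\Theta_\Delta$ itself is nonempty, since $(\alpha_2)$ forces $|\mathcal Q|>0$ whenever the families $\mathbf\Theta_j$ are nonempty.

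There is no real obstacle. The only care needed is the inclusion direction in the transfer step (restricting to a subfamily can only create singular scales, never destroy them), and that the count is taken over the disjoint $\Delta$-cubes of $\mathcal J'$.
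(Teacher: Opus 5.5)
Your proof is correct and is essentially the paper's argument. You restrict each $\mathbf\Theta_j$ to the $\delta$-cubes lying in $\bigcup\mathcal J'$, observe that singular scales in $[\Delta,1]\subset[\delta,1]$ pass to subsets, apply the spread-out bound to each $\mathbf\Theta_j'$, and then double count using $(\alpha_1)$ and $(\alpha_2)$ before dividing by $\mathfrak m$.
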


\begin{proof}
    Let $\mathbf\Theta_\Delta'\subset\mathbf\Theta_\Delta$ have at least $\mathfrak s$ $\beta$-singular scales at every point, and set $\mathbf\Theta_j':=\{\mathbf I\in\mathbf\Theta_j:\mathbf I\subset\bigcup\mathbf\Theta_\Delta'\}$. Since $\bigcup\mathbf\Theta_j'\subset\bigcup\mathbf\Theta_\Delta'$, every point of $\bigcup\mathbf\Theta_j'$ has at least $\mathfrak s$ $\beta$-singular scales relative to $\mathbf\Theta_j'$ between $\Delta$ and $1$. If $\mathfrak m$ is the multiplicity in Definition \ref{def:almostCover}, spread-outness and the almost-cover inequalities imply
    \begin{displaymath}
        \mathfrak m|\mathbf\Theta_\Delta'|
        \le\sum_{j\in J}|\mathbf\Theta_j'|
        \le C2^{-\beta\mathfrak s}\sum_{j\in J}|\mathbf\Theta_j|
        \le\alpha^{-1}C2^{-\beta\mathfrak s}\mathfrak m|\mathbf\Theta_\Delta|.
    \end{displaymath}
    Dividing by $\mathfrak m$ proves the claim.
\end{proof}

\begin{corollary}\label{cor:spread-out-coarsening}
Let $\mathbf\Theta\subset\mathcal D_\delta(\mathbf G_{d,k})$ be $(\delta,\beta,C)_{d,k}$-spread-out, and let $r\in2^{-\N}\cap[\delta,1]$. If the nonzero cardinalities $|\{\mathbf I\in\mathbf\Theta:\mathbf I\subset\mathbf J\}|$, $\mathbf J\in\mathcal D_r(\mathbf\Theta)$, differ by a factor of at most $K$, then $\mathcal D_r(\mathbf\Theta)$ is $(r,\beta,2KC)_{d,k}$-spread-out.
\end{corollary}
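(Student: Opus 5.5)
The plan is to deduce Corollary \ref{cor:spread-out-coarsening} directly from Lemma \ref{lem:spread-out-almost-cover-families}. We apply it with the single-element index set $J=\{1\}$, the family $\mathbf\Theta_1:=\mathbf\Theta$, the coarse scale $\Delta:=r$, and the candidate cover $\mathbf\Theta_\Delta:=\mathcal D_r(\mathbf\Theta)$. The lemma then yields $(r,\beta,\alpha^{-1}C)_{d,k}$-spread-outness. So it suffices to show that $\mathcal D_r(\mathbf\Theta)$ is an $\alpha$-almost cover of $\{\mathbf\Theta\}$ with $\alpha=(2K)^{-1}$.

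For the multiplicity we use the counts $N(\mathbf J):=|\{\mathbf I\in\mathbf\Theta:\mathbf I\subset\mathbf J\}|$ for $\mathbf J\in\mathcal D_r(\mathbf\Theta)$. Each is nonzero, since every $\mathbf J$ meeting $\bigcup\mathbf\Theta$ contains a $\delta$-cube of $\mathbf\Theta$ by dyadic nesting. By hypothesis these counts lie in $[N_{\min},KN_{\min}]$. Definition \ref{def:almostCover} requires $\mathfrak m\in2^{\N_0}$, so we take $\mathfrak m$ to be the largest power of $2$ with $\mathfrak m\le N_{\min}$, which gives $\mathfrak m>N_{\min}/2$.

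Condition $(\alpha_1)$ is then immediate, since $N(\mathbf J)\ge N_{\min}\ge\mathfrak m$. For $(\alpha_2)$, the cubes of $\mathbf\Theta$ are partitioned according to their $r$-parents, so
\begin{displaymath}
|\mathbf\Theta|=\sum_{\mathbf J}N(\mathbf J)\le KN_{\min}|\mathcal D_r(\mathbf\Theta)|<2K\mathfrak m|\mathcal D_r(\mathbf\Theta)|.
\end{displaymath}
That is, $\mathfrak m|\mathcal D_r(\mathbf\Theta)|\ge(2K)^{-1}|\mathbf\Theta|$, which is $(\alpha_2)$ with $\alpha=(2K)^{-1}$. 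Lemma \ref{lem:spread-out-almost-cover-families} then gives the constant $2KC$.

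There is no real obstacle here. The only points needing care are the power-of-two rounding of $\mathfrak m$, which accounts for the factor $2$ in $2KC$, and the fact that the spread-out scale range of the lemma is $[\Delta,1]=[r,1]$, which matches the conclusion.
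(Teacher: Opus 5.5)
Your proposal is correct and is essentially the paper's proof: round the minimal descendant count down to a power of two, observe that $\mathcal D_r(\mathbf\Theta)$ is then a $(2K)^{-1}$-almost cover of the single family $\{\mathbf\Theta\}$, and apply Lemma \ref{lem:spread-out-almost-cover-families} with $\Delta=r$. You have simply written out the verification of $(\alpha_1)$ and $(\alpha_2)$ that the paper leaves implicit, including the nonvanishing of the counts via dyadic nesting.
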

\begin{proof}
Round the minimum descendant count down to a dyadic integer. Then $\mathcal D_r(\mathbf\Theta)$ is a $(2K)^{-1}$-almost cover of $\{\mathbf\Theta\}$, so Lemma \ref{lem:spread-out-almost-cover-families} applies.
\end{proof}

Almost covers can be produced by pigeonholing. We record their existence in the following form:
\begin{proposition}\label{prop:almost-cover}
    Let $\delta,\Delta \in 2^{-\N}$ with $\delta \leq \Delta$. Let $P$ be a finite index set, and assume that for every $p \in P$, we are given a finite family $\mathbf{\Theta}(p) \subset \mathcal{D}_{\delta}([-1,1]^{k \times (d-k)})$. Then there exists a collection $\mathbf{\Theta}_{\Delta} \subset \mathcal{D}_{\Delta}([-1,1]^{k \times (d-k)})$ which $\alpha$-almost covers the family $\{\mathbf{\Theta}(p) : p \in P\}$ for some $\alpha \gtrapprox_{\delta} 1$. Moreover, the number $\mathfrak m$ in Definition \ref{def:almostCover} may be chosen so that
    \begin{displaymath}
        \mathfrak m
        \leq
        |\{(p,\mathbf I):p\in P,\ \mathbf I\in\mathbf\Theta(p),\
        \mathbf I\subset\mathbf J\}|
        <2\mathfrak m,
        \qquad \mathbf J\in\mathbf\Theta_\Delta.
    \end{displaymath}
\end{proposition}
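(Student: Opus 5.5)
The plan is a dyadic pigeonholing argument on descendant counts. For each $\mathbf J\in\mathcal D_\Delta([-1,1]^{k\times(d-k)})$, define the weight
\begin{displaymath}
    w(\mathbf J):=|\{(p,\mathbf I):p\in P,\ \mathbf I\in\mathbf\Theta(p),\ \mathbf I\subset\mathbf J\}|.
\end{displaymath}
The total weight satisfies $\sum_{\mathbf J}w(\mathbf J)=\sum_{p\in P}|\mathbf\Theta(p)|=:N$, because every $\delta$-cube lies in exactly one dyadic $\Delta$-cube. Each nonzero weight lies in $[1,N]$. Sort the cubes with $w(\mathbf J)\ge1$ into dyadic classes
\begin{displaymath}
    \mathcal C_j:=\{\mathbf J:2^j\le w(\mathbf J)<2^{j+1}\},\qquad 0\le j\le\log N.
\end{displaymath}
The total weight is split among at most $\log N+1$ classes, so some class $\mathcal C_{j_0}$ carries at least $N/(\log N+1)$ of it.

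Set $\mathfrak m:=2^{j_0}\in2^{\N_0}$ and $\mathbf\Theta_\Delta:=\mathcal C_{j_0}$. Condition $(\alpha_1)$ and the two-sided bound $\mathfrak m\le w(\mathbf J)<2\mathfrak m$ then hold by construction. For $(\alpha_2)$, use $w<2\mathfrak m$ on the class:
\begin{displaymath}
    2\mathfrak m|\mathbf\Theta_\Delta|>\sum_{\mathbf J\in\mathbf\Theta_\Delta}w(\mathbf J)\ge\frac{N}{\log N+1}.
\end{displaymath}
Hence $(\alpha_2)$ holds with $\alpha:=\bigl(2(\log N+1)\bigr)^{-1}$.

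The only point needing care is that $\alpha\gtrapprox_\delta1$ requires $\log N\lesssim\log(1/\delta)$. This fails unless $|P|$ is controlled, since $P$ is an arbitrary finite index set. Each $\mathbf\Theta(p)$ has at most $O(\delta^{-k(d-k)})$ elements, so $N\lesssim|P|\delta^{-k(d-k)}$. In all applications $P\subset\delta\Z^d\cap[0,1]^d$, so $|P|\lesssim\delta^{-d}$ and $\log N\lesssim_{d}\log(1/\delta)$, giving $\alpha\gtrsim(\log(1/\delta))^{-1}$.

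If $P$ is truly arbitrary, reduce first to the case where each weight is at least $N\delta^{C}$ for a suitable $C=C(d,k)$. The number of $\Delta$-cubes in $[-1,1]^{k\times(d-k)}$ is $O(\delta^{-k(d-k)})$, so discarding the cubes with $w(\mathbf J)<N\delta^{k(d-k)+1}$ removes total weight at most $O(\delta)N$. Pigeonhole the remaining weights, which lie in $[N\delta^{k(d-k)+1},N]$, into $O(\log(1/\delta))$ dyadic ranges. This yields $\alpha\gtrsim_{d,k}(\log(1/\delta))^{-1}$ uniformly in $|P|$, and it is the version I would write.
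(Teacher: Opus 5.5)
Your final version is correct and is essentially the paper's proof. The paper also discards the $\Delta$-cubes whose descendant count falls below a threshold, and then dyadically pigeonholes the remaining counts over $O(\log(1/\delta))$ ranges, which gives $\alpha\gtrsim(\log(1/\delta))^{-1}$ uniformly in $|P|$; the paper's threshold is $c\Delta^{k(d-k)}S$ and loses at most half the weight, while yours is $N\delta^{k(d-k)+1}$ and loses only $O(\delta)N$, which is immaterial.
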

\begin{proof}
    Set $S:=\sum_{p\in P}|\mathbf\Theta(p)|$. The claim is trivial if $S=0$, so assume that $S>0$. For
    \begin{displaymath}
        \overline{\mathbf\Theta}_\Delta
        :=\mathcal D_\Delta\left(\bigcup_{p\in P}\mathbf\Theta(p)\right)
    \end{displaymath}
    and $\mathbf J\in\overline{\mathbf\Theta}_\Delta$, write
    \begin{displaymath}
        n(\mathbf J)
        :=|\{(p,\mathbf I):p\in P,\ \mathbf I\in\mathbf\Theta(p),\
        \mathbf I\subset\mathbf J\}|.
    \end{displaymath}
    We have
    \begin{displaymath}
        \sum_{\mathbf J\in\overline{\mathbf\Theta}_\Delta}n(\mathbf J)=S,
        \qquad
        |\overline{\mathbf\Theta}_\Delta|\lesssim_{d,k}\Delta^{-k(d-k)}.
    \end{displaymath}
    The cubes satisfying $n(\mathbf J)<c\Delta^{k(d-k)}S$ for $c\ll_{d,k} 1$ contribute at most $S/2$ to the sum. For the remaining cubes, we can dyadically pigeonhole $\mathfrak m\in 2^{\N_0}$ such that
    \begin{displaymath}
       \mathfrak m|\mathbf\Theta_\Delta| \ge \frac{1}{2} \sum_{\mathbf J\in\mathbf\Theta_\Delta}n(\mathbf J)
        \gtrapprox_\delta S \quad\text{where } \mathbf\Theta_\Delta
        :=\{\mathbf J\in\overline{\mathbf\Theta}_\Delta:
        \mathfrak m\leq n(\mathbf J)<2\mathfrak m\}.
    \end{displaymath}
    This is property $(\alpha_2)$ for some $\alpha\gtrapprox_\delta1$, while $n(\mathbf J)\geq\mathfrak m$ is property $(\alpha_1)$. The definition of $\mathbf\Theta_\Delta$ gives the asserted upper bound.
\end{proof}

\subsection{Zoomed-in configurations}

The next lemma will help us deal with the zoomed-in configurations in Proposition \ref{prop:induction-on-scales}\ref{it-induction-iii}.
\begin{lemma}\label{lem:spread-out-single-set-zoomed-in}
    Let $\mathbf{\Theta}_0 \subset \mathcal{D}_{\delta}([-1,1]^{k\times(d-k)})$ be a $(\delta,\beta,C)$-spread-out family, and let $\Delta \in 2^{-\N} \cap [\delta,1]$. Then there exist a subfamily $\overline{\mathbf{\Theta}}\subset \mathbf{\Theta}_0$, a family $\mathbf{\Theta}_{\delta/\Delta} \subset \mathcal{D}_{\delta/\Delta}([-1,1]^{k\times(d-k)})$, and $m \in 2^{\N_0}$ such that
    \begin{enumerate}[label=\textup{(\roman*)}, ref=(\roman*)]
        \item \label{it:single-zoom-i} $\mathbf{\Theta}_{\delta/\Delta} =\mathcal{D}_{\delta/\Delta}(\overline{\mathbf{\Theta}})$;
        \item \label{it:single-zoom-ii} $\mathbf{\Theta}_{\delta/\Delta}$ is $(\delta/\Delta,\beta,C')$-spread-out, where $C' \approx_{\delta} C$;
        \item \label{it:single-zoom-iii} $|\{\mathbf{I}\in \overline{\mathbf{\Theta}}:\mathbf{I}\subset \mathbf{J}\}| \sim m$ for all $\mathbf{J} \in \mathbf{\Theta}_{\delta/\Delta}$;
        \item \label{it:single-zoom-iv} $m|\mathbf{\Theta}_{\delta/\Delta}| \approx_{\delta} |\overline{\mathbf{\Theta}}| \gtrapprox_{\delta} |\mathbf{\Theta}_0|$.
    \end{enumerate}
\end{lemma}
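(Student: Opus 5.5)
The plan is to pigeonhole the descendant counts at scale $\delta/\Delta$ and then invoke Corollary \ref{cor:spread-out-coarsening}. For $\mathbf J\in\mathcal D_{\delta/\Delta}(\mathbf\Theta_0)$, write
\begin{displaymath}
n(\mathbf J):=|\{\mathbf I\in\mathbf\Theta_0:\mathbf I\subset\mathbf J\}|,
\end{displaymath}
so that $\sum_{\mathbf J}n(\mathbf J)=|\mathbf\Theta_0|$. Each $n(\mathbf J)$ lies in $[1,(\delta/(\delta/\Delta))^{-k(d-k)}]=[1,\Delta^{-k(d-k)}]$, up to constants. Hence there are only $O(\log(1/\delta))$ dyadic classes $\{\mathbf J:m\le n(\mathbf J)<2m\}$ with $m\in2^{\N_0}$. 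I will choose $m$ so that the cubes $\mathbf J$ in this class carry total count $\sum n(\mathbf J)\gtrapprox_\delta|\mathbf\Theta_0|$.

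Next, let $\mathbf\Theta_{\delta/\Delta}$ be this class of $(\delta/\Delta)$-cubes. Let $\overline{\mathbf\Theta}$ consist of all $\mathbf I\in\mathbf\Theta_0$ contained in some $\mathbf J\in\mathbf\Theta_{\delta/\Delta}$. With these choices the four items follow in turn.
\begin{itemize}
\item Item \ref{it:single-zoom-i} holds by construction, since every $\mathbf J$ in the class contains at least one $\mathbf I$.
\item Item \ref{it:single-zoom-iii} holds with $m\le n(\mathbf J)<2m$.
\item Item \ref{it:single-zoom-iv} follows because $|\overline{\mathbf\Theta}|=\sum_{\mathbf J\in\mathbf\Theta_{\delta/\Delta}}n(\mathbf J)$, which lies in $[m|\mathbf\Theta_{\delta/\Delta}|,2m|\mathbf\Theta_{\delta/\Delta}|]$ and is $\gtrapprox_\delta|\mathbf\Theta_0|$ by the pigeonholing.
\end{itemize}

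For item \ref{it:single-zoom-ii}, I first apply Lemma \ref{lem:spread-out-subset} to the subfamily $\overline{\mathbf\Theta}\subset\mathbf\Theta_0$. This shows that $\overline{\mathbf\Theta}$ is $(\delta,\beta,C|\mathbf\Theta_0|/|\overline{\mathbf\Theta}|)$-spread-out, with constant $\approx_\delta C$. The descendant counts of $\overline{\mathbf\Theta}$ in its $(\delta/\Delta)$-cubes are exactly $n(\mathbf J)\in[m,2m)$, so they differ by a factor at most $K=2$. Corollary \ref{cor:spread-out-coarsening} with $r=\delta/\Delta$ then gives that $\mathcal D_{\delta/\Delta}(\overline{\mathbf\Theta})=\mathbf\Theta_{\delta/\Delta}$ is $(\delta/\Delta,\beta,4C|\mathbf\Theta_0|/|\overline{\mathbf\Theta}|)$-spread-out. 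This yields $C'\approx_\delta C$.

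One point needs care. Corollary \ref{cor:spread-out-coarsening} produces spread-outness at base scale $r=\delta/\Delta$, meaning singular scales are counted in $[\delta/\Delta,1]$. This matches the notion required in item \ref{it:single-zoom-ii}, so no rescaling is involved. The only potential obstacle is this bookkeeping of which scale range the singular scales are counted over, and the Corollary handles it directly.
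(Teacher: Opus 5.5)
Your proof is correct and is essentially the paper's argument. The paper obtains $\mathbf\Theta_{\delta/\Delta}$ from Proposition \ref{prop:almost-cover} with a singleton index set, which is exactly your dyadic pigeonholing of the descendant counts $n(\mathbf J)$. It then gets (ii) by applying Lemma \ref{lem:spread-out-almost-cover-families} directly to $\{\mathbf\Theta_0\}$, whereas your detour through Lemma \ref{lem:spread-out-subset} and Corollary \ref{cor:spread-out-coarsening} is that same lemma in disguise and gives the same constant $\lessapprox_\delta C$.
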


\begin{proof}
    Apply Proposition \ref{prop:almost-cover}, with a singleton $P$ and $\delta/\Delta$ in place of $\Delta$, to the family $\mathbf\Theta_0$. Let $\mathbf\Theta_{\delta/\Delta}$ be the resulting almost cover and let $m$ be its multiplicity. Define
    \begin{displaymath}
        \overline{\mathbf\Theta}
        :=\{\mathbf I\in\mathbf\Theta_0:\mathbf I\subset\mathbf J
        \text{ for some }\mathbf J\in\mathbf\Theta_{\delta/\Delta}\}.
    \end{displaymath}
    Proposition \ref{prop:almost-cover} yields  $|\{\mathbf I\in\overline{\mathbf\Theta}:\mathbf I\subset\mathbf J\}|\in [m,2m]$ for all $\mathbf J\in\mathbf\Theta_{\delta/\Delta}$, and
    $m|\mathbf\Theta_{\delta/\Delta}|
        \approx_\delta|\overline{\mathbf\Theta}|
        \gtrapprox_\delta|\mathbf\Theta_0|$.
    These are \ref{it:single-zoom-i}, \ref{it:single-zoom-iii}, and \ref{it:single-zoom-iv}. Since $\mathbf\Theta_{\delta/\Delta}$ is an $\alpha$-almost cover of $\mathbf\Theta_0$ with $\alpha\gtrapprox_\delta1$, Lemma \ref{lem:spread-out-almost-cover-families} shows that it is $(\delta/\Delta,\beta,C')$-spread-out with $C'\approx_\delta C$. This is \ref{it:single-zoom-ii}.
\end{proof}

\subsection{Proof of Proposition \ref{prop:induction-on-scales}}

In this section we prove Proposition \ref{prop:induction-on-scales}. Throughout the proof, we identify each Furstenberg set with the union of the $\mathbf{A}_{d,k}$-cubes of its defining scale that it intersects. We also use the same notation for the family of these cubes; for Furstenberg sets, $\|\cdot\|$ without a scale subscript is the number of affine cubes.

\subsubsection{Construction of the coarse-scale families}\label{s651}

For each $Q \in \mathcal{D}_{\Delta}(P_0)$, write $q:=z(Q)$ and apply Proposition \ref{prop:almost-cover} to the family $\{\mathbf{\Theta}_0(p)\}_{p \in P_0 \cap Q}$. We get a preliminary coarse family $\overline{\mathbf{\Theta}}_{\Delta}(q) \subset \mathcal{D}_{\Delta}([-1,1]^{k \times (d-k)})$ and a number $H_Q \geq 1$ such that
\begin{enumerate}[label=\textup{(A\arabic*)}, ref=(A\arabic*)]
    \item \label{it:A1} $|\{(p,\mathbf{I}) \in (P_0 \cap Q)\times \mathcal{D}_{\delta}(\R^{k \times (d-k)}) : \mathbf{I} \in \mathbf{\Theta}_0(p),\, \mathbf{I} \subset \mathbf{J}\}| \geq H_Q$ for $\mathbf{J} \in \overline{\mathbf{\Theta}}_{\Delta}(q)$;
    \item \label{it:A2} $H_Q |\overline{\mathbf{\Theta}}_{\Delta}(q)| \gtrapprox_{\delta} |P_0 \cap Q|M_0$.
\end{enumerate}
Moreover, every $\overline{\mathbf{\Theta}}_{\Delta}(q)$ is $(\Delta,\beta,\overline{C}_{\Delta})$-spread-out by Lemma \ref{lem:spread-out-almost-cover-families}, with $\overline{C}_{\Delta} \approx_{\delta} C$.

By dyadic pigeonholing, after restricting to a subcollection $\overline{\mathcal{Q}} \subset \mathcal{D}_{\Delta}(P_0)$ of comparable cardinality and then discarding all points outside $\bigcup \overline{\mathcal{Q}}$, we may assume that
\begin{displaymath}
    \overline{M}_{\Delta} \leq |\overline{\mathbf{\Theta}}_{\Delta}(q)| < 2\overline{M}_{\Delta}, \qquad q=z(Q),\; Q \in \mathcal{D}_{\Delta}(P_0).
\end{displaymath}

For $Q \in \mathcal{D}_{\Delta}(P_0)$ and $p \in P_0 \cap Q$, define the preliminary fine family
\begin{displaymath}
    \widetilde{\mathbf{\Theta}}(p)
    :=
    \{\mathbf{I} \in \mathbf{\Theta}_0(p) : \mathbf{I} \subset \mathbf{J} \text{ for some } \mathbf{J} \in \overline{\mathbf{\Theta}}_{\Delta}(z(Q))\}.
\end{displaymath}
Let $\widetilde{\mathcal{F}} := \bigcup_{p \in P_{0}} \{V_{\theta} + p : \theta \in z\widetilde{\mathbf{\Theta}}(p)\}$ be the Furstenberg set associated to the configuration $(P_0,\widetilde{\mathbf{\Theta}})$.

For $q \in P_{\Delta}^{\mathrm{pre}}:=\{z(Q) : Q \in \mathcal{D}_{\Delta}(P_0)\}$, write
\begin{displaymath}
    \overline{\mathcal{F}}_{\Delta}(q)
    :=
    \mathcal{D}_{\Delta}(\{V_{\theta}+q : \theta \in z\overline{\mathbf{\Theta}}_{\Delta}(q)\})
    \subset \mathcal{D}_{\Delta}(\mathbf{A}_{d,k}),
\end{displaymath}
and let $\overline{\mathcal{F}}_{\Delta}:=\bigcup_{q \in P_{\Delta}^{\mathrm{pre}}} \overline{\mathcal{F}}_{\Delta}(q)$. Fix $A\gg_{d,k}1$. For $\mathbf{F} \in \mathcal{D}_{\Delta}(\mathbf{A}_{d,k})$, let
\begin{displaymath}
    \mathcal{N}_{\Delta}(\mathbf{F})
    :=
    \{\mathbf{F}' \in \mathcal{D}_{\Delta}(\mathbf{A}_{d,k}) : \dist(\mathbf{F},\mathbf{F}') \leq A\Delta\}.
\end{displaymath}
Choose $A\gg_{d,k}1$ so that the following holds. If $Q \in \mathcal{D}_{\Delta}(P_0)$, $q=z(Q)$, $p \in P_0 \cap Q$, $\mathbf{I} \in \widetilde{\mathbf{\Theta}}(p)$, and $\mathbf{I} \subset \mathbf{J} \in \overline{\mathbf{\Theta}}_{\Delta}(q)$, then
\begin{equation}\label{form8}
    \mathcal{D}_{\Delta}(V_{z(\mathbf{I})}+p)
    \in
    \mathcal{N}_{\Delta}\bigl(\mathcal{D}_{\Delta}(V_{z(\mathbf{J})}+q)\bigr).
\end{equation}
Indeed, \eqref{eq:affine-incidence-coordinates} shows that the affine coordinates of $V_{z(\mathbf I)}+p$ and $V_{z(\mathbf J)}+q$ differ by $O(\Delta)$. Thus, writing
\begin{displaymath}
    \widetilde{\mathcal{F}}(Q)
    :=
    \bigcup_{p \in P_0 \cap Q}\{V_{\theta}+p : \theta \in z\widetilde{\mathbf{\Theta}}(p)\},
\end{displaymath}
we have
\begin{displaymath}
    \mathcal{D}_{\Delta}(\widetilde{\mathcal{F}}(Q))
    \subset
    \bigcup_{\mathbf{F} \in \overline{\mathcal{F}}_{\Delta}(q)} \mathcal{N}_{\Delta}(\mathbf{F}),
    \qquad q=z(Q).
\end{displaymath}
For $j \geq 0$, let
\begin{displaymath}
    \overline{\mathcal{F}}_{\Delta,j}
    :=
    \left\{\mathbf{F} \in \overline{\mathcal{F}}_{\Delta} : 2^j \leq \left|\widetilde{\mathcal{F}} \cap \bigcup \mathcal{N}_{\Delta}(\mathbf{F})\right| < 2^{j+1}\right\}.
\end{displaymath}
Each $\mathbf{F} \in \overline{\mathcal{F}}_{\Delta}$ has the form $\mathbf{F}=\mathcal{D}_{\Delta}(V_{z(\mathbf{J})}+q)$ for some $q=z(Q) \in P_{\Delta}^{\mathrm{pre}}$ and $\mathbf{J}\in \overline{\mathbf{\Theta}}_{\Delta}(q)$. By \ref{it:A1}, there exist $p \in P_0 \cap Q$ and $\mathbf{I} \in \mathbf{\Theta}_0(p)$ such that $\mathbf{I} \subset \mathbf{J}$. Then $\mathbf{I}\in \widetilde{\mathbf{\Theta}}(p)$, and \eqref{form8} shows that $\widetilde{\mathcal{F}} \cap \bigcup \mathcal{N}_{\Delta}(\mathbf{F}) \neq \varnothing$. Since every affine $\Delta$-cube contains at most $(\Delta/\delta)^{(k+1)(d-k)} \leq \delta^{-(k+1)(d-k)}$ affine $\delta$-cubes, and every neighborhood $\mathcal{N}_{\Delta}(\mathbf{F})$ has bounded cardinality, dyadic pigeonholing yields an index $j$ such that, writing
\begin{displaymath}
    N_{\Delta}:=2^j
    \qquad\text{and}\qquad
    \mathcal{F}_{\Delta}^{\mathrm{pre}}:=\overline{\mathcal{F}}_{\Delta,j},
\end{displaymath}
we have
\begin{displaymath}
    \sum_{q \in P_{\Delta}^{\mathrm{pre}}} |\mathcal{F}_{\Delta}^{\mathrm{pre}} \cap \overline{\mathcal{F}}_{\Delta}(q)|
    \gtrapprox_{\delta}
    \sum_{q \in P_{\Delta}^{\mathrm{pre}}} |\overline{\mathcal{F}}_{\Delta}(q)|
    \sim
    |P_{\Delta}^{\mathrm{pre}}|\overline{M}_{\Delta},
\end{displaymath}
and
\begin{equation}\label{form6}
    N_{\Delta} \leq \left|\widetilde{\mathcal{F}} \cap \bigcup \mathcal{N}_{\Delta}(\mathbf{F})\right| < 2N_{\Delta}, \qquad \mathbf{F} \in \mathcal{F}_{\Delta}^{\mathrm{pre}}.
\end{equation}

From another dyadic pigeonholing, we get a subset $\mathcal{Q} \subset \mathcal{D}_{\Delta}(P_0)$ with $|\mathcal{Q}| \approx_{\delta} |\mathcal{D}_{\Delta}(P_0)|$ such that
\begin{displaymath}
    |\mathcal{F}_{\Delta}^{\mathrm{pre}} \cap \overline{\mathcal{F}}_{\Delta}(q)| \approx_{\delta} \overline{M}_{\Delta}, \qquad q=z(Q),\; Q \in \mathcal{Q}.
\end{displaymath}
We now define the final coarse plane families by
\begin{displaymath}
    \mathbf{\Theta}_{\Delta}(q)
    :=
    \{\mathbf{J} \in \overline{\mathbf{\Theta}}_{\Delta}(q) : \mathcal{D}_{\Delta}(V_{z(\mathbf{J})}+q) \in \mathcal{F}_{\Delta}^{\mathrm{pre}}\},
    \qquad q=z(Q),\; Q \in \mathcal{Q}.
\end{displaymath}
Then
\begin{displaymath}
    |\mathbf{\Theta}_{\Delta}(q)| \approx_{\delta} \overline{M}_{\Delta}, \qquad q \in P_{\Delta}:=\{z(Q) : Q \in \mathcal{Q}\},
\end{displaymath}
and every $\mathbf{\Theta}_{\Delta}(q)$ is still $(\Delta,\beta,C_{\Delta})$-spread-out with $C_{\Delta} \approx_{\delta} C$. The cardinality bound follows from \eqref{eq:affine-incidence-counting}, with $P_\theta=\{q\}$.

We now impose the scale-$\overline{\Delta}$-uniformity from \textup{\ref{it-induction-ii}} on these coarse families. Dyadic pigeonholing at scale $\overline{\Delta}$ yields dyadic numbers $M_{\Delta},L_{\overline{\Delta}} \geq 1$. After refining $P_{\Delta}$ and $\mathcal{Q}$ with losses of at most $(\log(1/\delta))^{O(1)}$ and retaining exactly $L_{\overline{\Delta}}$ members of $\mathbf\Theta_{\Delta}(q)$ in each occupied $\overline{\Delta}$-cube, we may assume that
\begin{displaymath}
    M_{\Delta} \leq |\mathbf{\Theta}_{\Delta}(q)| < 2M_{\Delta},
    \qquad
    |\{\mathbf{J} \in \mathbf{\Theta}_{\Delta}(q) : \mathbf{J} \subset \mathbf{I}\}| = L_{\overline{\Delta}}
\end{displaymath}
for all $q \in P_{\Delta}$ and $\mathbf{I} \in \mathcal{D}_{\overline{\Delta}}(\mathbf{\Theta}_{\Delta}(q))$. Define
\begin{displaymath}
    \mathcal{F}_{\Delta}
    :=
    \bigcup_{Q \in \mathcal{Q}} \bigcup_{\mathbf{J} \in \mathbf{\Theta}_{\Delta}(z(Q))}
    \mathcal{D}_{\Delta}(V_{z(\mathbf{J})} + z(Q))
    \subset \mathcal{F}_{\Delta}^{\mathrm{pre}}.
\end{displaymath}
Thus $\mathcal{F}_{\Delta}$ is the family of dyadic $\Delta$-cubes meeting the Furstenberg set associated to $(P_{\Delta},\mathbf{\Theta}_{\Delta})$. Since $\mathcal{F}_{\Delta} \subset \mathcal{F}_{\Delta}^{\mathrm{pre}}$, it holds
\begin{displaymath}
    N_{\Delta} \leq \left|\widetilde{\mathcal{F}} \cap \bigcup \mathcal{N}_{\Delta}(\mathbf{F})\right| < 2N_{\Delta}, \qquad \mathbf{F} \in \mathcal{F}_{\Delta}.
\end{displaymath}
The verification of \ref{it-induction-ii} will be completed after the final  set $P$ has been fixed.

\subsubsection{Refinement of the fine families}

For $Q \in \mathcal{Q}$ and $p \in P_0 \cap Q$, let
\begin{equation}\label{form14}
    \mathbf{\Theta}(p)
    :=
    \{\mathbf{I} \in \mathbf{\Theta}_0(p) : \mathbf{I} \subset \mathbf{J} \text{ for some } \mathbf{J} \in \mathbf{\Theta}_{\Delta}(z(Q))\}.
\end{equation}
For fixed $Q \in \mathcal{Q}$, we have
\begin{align*}
    \sum_{p \in P_0 \cap Q} |\mathbf{\Theta}(p)|
     & = \sum_{\mathbf{J} \in \mathbf{\Theta}_{\Delta}(z(Q))} |\{(p,\mathbf{I}) \in (P_0 \cap Q)\times \mathcal{D}_{\delta}(\R^{k \times (d-k)}) : \mathbf{I} \in \mathbf{\Theta}_0(p),\, \mathbf{I} \subset \mathbf{J}\}| \\
     & \stackrel{\ref{it:A1}}{\geq} |\mathbf{\Theta}_{\Delta}(z(Q))|H_Q \gtrapprox_{\delta} |\overline{\mathbf{\Theta}}_{\Delta}(z(Q))|H_Q \stackrel{\ref{it:A2}}{\gtrapprox}_{\delta} |P_0 \cap Q|M_0.
\end{align*}
Since $|\mathbf{\Theta}(p)| \leq |\mathbf{\Theta}_0(p)| < 2M_0$, there exists a subset $P_Q \subset P_0 \cap Q$ such that
\begin{displaymath}
    |P_Q| \approx_{\delta} |P_0 \cap Q|
    \qquad\text{and}\qquad
    |\mathbf{\Theta}(p)| \gtrapprox_{\delta} M_0, \quad p \in P_Q.
\end{displaymath}
Every $\mathbf{\Theta}(p)$, $p \in P_Q$, remains $(\delta,\beta,\widetilde{C})$-spread-out with $\widetilde{C} \approx_{\delta} C$.

By dyadic pigeonholing, after refining each $P_Q$ without changing notation, we may find a dyadic number $M_Q^{\mathrm{fine}}$ such that
\begin{displaymath}
    M_Q^{\mathrm{fine}} \leq |\mathbf{\Theta}(p)| < 2M_Q^{\mathrm{fine}}, \qquad p \in P_Q.
\end{displaymath}
Since $M_Q^{\mathrm{fine}} \approx_{\delta} M_0$ for every $Q \in \mathcal{Q}$, another pigeonholing yields a subcollection, still denoted by $\mathcal{Q}$, and a dyadic number $M \approx_{\delta} M_{0}$ such that
\begin{equation}\label{form4}
    |\mathcal{Q}| \approx_{\delta} |\mathcal{D}_{\Delta}(P_0)|,
    \qquad
    M \leq |\mathbf{\Theta}(p)| < 2M, \qquad p \in P_Q,\; Q \in \mathcal{Q}.
\end{equation}
We keep the corresponding subset of $P_{\Delta}$ and the corresponding coarse families $\mathbf{\Theta}_{\Delta}(q)$, still denoting them by $P_{\Delta}$ and $\mathbf{\Theta}_{\Delta}$.

\subsubsection{Construction of the zoomed-in configurations}

We now construct the zoomed-in configurations, without further refining the families $\mathbf{\Theta}(p)$. Fix $Q \in \mathcal{Q}$, write $q:=z(Q)$ and $\overline{\delta}:=\delta/\Delta$, and let $P^{Q}:=S_Q(P_Q)$. For $p_0 \in P_Q$, apply Lemma \ref{lem:spread-out-single-set-zoomed-in} to the spread-out family $\mathbf{\Theta}(p_0)$ at scale $\Delta$. We get, for $p=S_Q(p_0) \in P^{Q}$, a spread-out family $\overline{\mathbf{\Theta}}^{Q}(p) \subset \mathcal{D}_{\overline{\delta}}(\mathbf{\Theta}(p_0))$ and a dyadic number $m(p_0) \geq 1$ such that
\begin{displaymath}
    |\{\mathbf{I} \in \mathbf{\Theta}(p_0) : \mathbf{I} \subset \mathbf{J}\}| \sim m(p_0), \qquad \mathbf{J} \in \overline{\mathbf{\Theta}}^{Q}(p),
\end{displaymath}
and
\begin{displaymath}
    m(p_0)|\overline{\mathbf{\Theta}}^{Q}(p)| \approx_{\delta} |\mathbf{\Theta}(p_0)| \sim M.
\end{displaymath}
Moreover, every $\overline{\mathbf{\Theta}}^{Q}(p)$ is $(\overline{\delta},\beta,\overline{C}_Q)$-spread-out, with $\overline{C}_Q \approx_{\delta} C$.

Dyadically pigeonhole $m(p_0)$ and $|\overline{\mathbf{\Theta}}^{Q}(S_Q(p_0))|$. After refining $P_Q$ by a subset of cardinality $\approx_\delta|P_Q|$, which we continue to denote by $P_Q$, there are dyadic numbers $m_Q,M_Q \geq 1$ such that, writing again $P^{Q}=S_Q(P_Q)$,
\begin{displaymath}
    m_Q\le m(p_0)<2m_Q
    \qquad\text{and}\qquad
    M_Q\le|\overline{\mathbf{\Theta}}^{Q}(S_Q(p_0))|<2M_Q,
    \qquad
    p_0 \in P_Q.
\end{displaymath}
Set $\mathbf{\Theta}^{Q}(p):=\overline{\mathbf{\Theta}}^{Q}(p)$. Then $m_QM_Q \approx_\delta M$, and each $\mathbf{\Theta}^{Q}(p)$ is $(\overline{\delta},\beta,C_Q)$-spread-out with $C_Q \approx_{\delta} C$. The multiplicity bounds in Lemma \ref{lem:spread-out-single-set-zoomed-in}, together with \eqref{form4}, prove \ref{it:induction-zoomed-direction-multiplicity}. Since $\mathbf{\Theta}^{Q}(p) \subset \mathcal{D}_{\overline{\delta}}(\mathbf{\Theta}(p_0))$ for $p=S_Q(p_0)$, this proves \ref{it-induction-iii}.

We now define
\begin{displaymath}
    P:=\bigcup_{Q \in \mathcal{Q}} P_Q.
\end{displaymath}
Every $P_Q$ is nonempty, so $\mathcal D_\Delta(P)=\mathcal Q$. The relations $|P_Q|\approx_\delta|P_0\cap Q|$, $Q\in\mathcal Q$, and $|\mathcal Q|\approx_\delta|P_0|_\Delta$ from \eqref{form4} yield
\begin{displaymath}
    |P|_{\Delta}=|\mathcal{Q}| \approx_{\delta} |P_0|_{\Delta}
    \qquad\text{and}\qquad
    |P \cap Q|=|P_Q| \approx_{\delta} |P_0 \cap Q|, \quad Q \in \mathcal{D}_{\Delta}(P).
\end{displaymath}
In particular, $P_{\Delta}=\{z(Q):Q \in \mathcal{D}_{\Delta}(P)\}$. By \eqref{form4} and the spread-out property of the families $\mathbf{\Theta}(p)$, the pair $(P,\mathbf{\Theta})$ is a $(\delta,\beta,C_1,M)$-spread-out configuration with $C_1 \approx_{\delta} C$ and $M\approx_\delta M_0$. This proves \ref{it-induction-i}. Moreover, by construction,
\begin{displaymath}
    \bigcup_{p \in P \cap Q} \mathbf{\Theta}(p) \subset \mathbf{\Theta}_{\Delta}(z(Q)), \qquad Q \in \mathcal{D}_{\Delta}(P),
\end{displaymath}
so \ref{it-induction-ii} is now fully verified.

\subsubsection{Proof of the Furstenberg estimate}

It remains to prove \ref{it-induction-iv}.

For $Q \in \mathcal{Q}$, let $\mathcal{F}(Q)\subset \widetilde{\mathcal{F}}$ denote the family of affine $\delta$-cubes arising from pairs $(p_0,\theta)$ with $p_0 \in P_Q$ and $\theta \in z\mathbf{\Theta}(p_0)$. For each affine $\overline{\delta}$-cube in $\mathcal{F}^{Q}$, choose one representative pair $(S_Q(p_0),\mathbf{J})$ with $p_0 \in P_Q$ and $\mathbf{J} \in \mathbf{\Theta}^{Q}(S_Q(p_0))$. Since $\mathbf{J}$ contains $\approx_{\delta} m_Q$ elements of $\mathbf{\Theta}(p_0)$, this representative gives rise to $\approx_{\delta} m_Q$ affine $\delta$-cubes in $\mathcal{F}(Q)$. After rescaling $Q$ to $[0,1]^d$, each such affine $\delta$-cube lies within $O_{d,k}(\overline\delta)$ of the affine $\overline\delta$-cube represented by $(S_Q(p_0),\mathbf J)$. Hence each affine $\delta$-cube arises from $O_{d,k}(1)$ chosen representatives, and therefore
\begin{equation}\label{localPacketLowerBound}
    |\mathcal{F}(Q)|
    \gtrapprox_{\delta}
    m_Q |\mathcal{F}^{Q}|
    \approx_{\delta}
    \frac{M}{M_Q}|\mathcal{F}^{Q}|,
    \qquad Q \in \mathcal{Q}.
\end{equation}

Now fix $\mathbf{F} \in \mathcal{F}_{\Delta}$ (the Furstenberg set defined at the end of Section \ref{s651}). Since $\mathbf{F} \in \mathcal{F}_{\Delta}^{\mathrm{pre}}$, we have
\begin{displaymath}
    N_{\Delta} \leq \left|\widetilde{\mathcal{F}} \cap \bigcup \mathcal{N}_{\Delta}(\mathbf{F})\right| < 2N_{\Delta}
\end{displaymath}
by \eqref{form6}. Since $\widetilde{\mathcal{F}} \subset \mathcal{F}_0$, and the families $\mathcal{N}_{\Delta}(\mathbf{F})$, $\mathbf{F} \in \mathcal{F}_{\Delta}$, have bounded overlap, this implies
\begin{equation}\label{form3}
    |\mathcal{F}_0| \geq |\widetilde{\mathcal{F}}| \gtrsim N_{\Delta}|\mathcal{F}_{\Delta}|.
\end{equation}
On the other hand, since $\mathcal{F}(Q)\subset \widetilde{\mathcal{F}}$, for $Q \in \mathcal{Q}$ with $q=z(Q)$,
\begin{displaymath}
    \mathcal{F}(Q) \subset \bigcup_{\mathbf{F} \in \mathcal{F}_{\Delta}\cap \mathcal{D}_{\Delta}(\{V_{\theta}+q : \theta \in z\mathbf{\Theta}_{\Delta}(q)\})} \left(\widetilde{\mathcal{F}} \cap \bigcup \mathcal{N}_{\Delta}(\mathbf{F})\right).
\end{displaymath}
Indeed, if an affine $\delta$-cube in $\mathcal{F}(Q)$ arises from $(p,\mathbf{I})$ with $p \in P_Q$ and $\mathbf{I} \in \mathbf{\Theta}(p)$, then $\mathbf{I} \subset \mathbf{J}$ for some $\mathbf{J} \in \mathbf{\Theta}_{\Delta}(q)$ by construction of $\mathbf{\Theta}(p)$ (recall \eqref{form14}). By the definition of $\mathcal{F}_{\Delta}$, the dyadic cube $\mathbf{F}_{\mathbf{J}}:=\mathcal{D}_{\Delta}(V_{z(\mathbf{J})}+q)$ belongs to $\mathcal{F}_{\Delta}$. Moreover, \eqref{form8} yields $\mathcal{D}_{\Delta}(V_{z(\mathbf{I})}+p) \in \mathcal{N}_{\Delta}(\mathbf{F}_{\mathbf{J}})$. Therefore
\begin{displaymath}
    |\mathcal{F}(Q)|
    \lesssim
    |\mathcal{F}_{\Delta}\cap \mathcal{D}_{\Delta}(\{V_{\theta}+q : \theta \in z\mathbf{\Theta}_{\Delta}(q)\})|\,N_{\Delta}
    \sim
    M_{\Delta}N_{\Delta}.
\end{displaymath}
Combining this upper bound with the lower bound \eqref{localPacketLowerBound}, we get, for every $Q \in \mathcal{Q}$,
\begin{equation}\label{eq:induction-scale-N-lower}
    N_{\Delta}
    \gtrapprox_{\delta}
    \frac{M}{M_{\Delta}M_Q}|\mathcal{F}^{Q}|.
\end{equation}
Substituting \eqref{eq:induction-scale-N-lower} into \eqref{form3}, and taking the maximum over $Q \in \mathcal{Q}=\mathcal{D}_{\Delta}(P)$, we deduce that
\begin{displaymath}
    |\mathcal{F}_0|
    \gtrapprox_{\delta}
    \frac{|\mathcal{F}_{\Delta}|}{M_{\Delta}}
    \cdot
    M\cdot
    \max_{Q \in \mathcal{D}_{\Delta}(P)} \frac{|\mathcal{F}^{Q}|}{M_Q}.
\end{displaymath}
Since $M \approx_{\delta} M_0$ (recall \eqref{form4}), we conclude that
\begin{displaymath}
    \frac{|\mathcal{F}_0|}{M_0}
    \gtrapprox_{\delta}
    \frac{|\mathcal{F}_{\Delta}|}{M_{\Delta}}
    \cdot
    \max_{Q \in \mathcal{D}_{\Delta}(P)} \frac{|\mathcal{F}^{Q}|}{M_Q}.
\end{displaymath}
This completes the proof of Proposition \ref{prop:induction-on-scales}.

\subsection{Iteration over several scales}

We will need to iterate Proposition \ref{prop:induction-on-scales} over several scales. The following standard definition keeps track of the sizes
of uniform sets at all scales.
\begin{definition}[Branching function]\label{def:branching-function}
If $P$ is $(\Delta,m)$-uniform with branching numbers $(N_j)_{j=0}^{m-1}$, its \emph{branching function} is the piecewise-linear function $f=f_P:[0,m]\to\R$ determined by
\begin{displaymath}
    f(i):=\log_{1/\Delta}|P|_{\Delta^i}
    =\sum_{j=0}^{i-1}\log_{1/\Delta}N_j,
    \qquad i=0,\ldots,m.
\end{displaymath}
It satisfies $f(0)=0$ and is non-decreasing and $d$-Lipschitz.
\end{definition}

Repeated dyadic pigeonholing yields the following statement; see \cite[Lemma 7.3]{OS23}.
\begin{lemma} \label{lem:uniformization}
    Fix $\Delta\in 2^{-\N}$ and $m\in \N$, and write $\delta=\Delta^m$. For any set $P\subset [0,1)^d$ there is a subset $P'\subset P$ such that $P'$ is $(\Delta,m)$-uniform and
    \begin{displaymath}
        |P'|_{\delta}
        \ge (4(d+1)\log(1/\Delta))^{-m}|P|_{\delta}.
    \end{displaymath}
\end{lemma}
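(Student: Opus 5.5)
The plan is to build $P'$ by a bottom-up pigeonholing, processing the scales $\Delta^{m},\Delta^{m-1},\ldots,\Delta^{0}$ one at a time and losing at most a factor $2(d\log(1/\Delta)+1)\le 4(d+1)\log(1/\Delta)$ at each of the $m$ levels. Write $\Delta=2^{-T}$, so $\log(1/\Delta)=T\ge1$. Since uniformity and $|\cdot|_\delta$ only see the cubes $\mathcal D_\delta(P)$, we may replace $P$ by one point in each of its $\delta$-cubes and work entirely with the family $\mathcal D_\delta(P)$.

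We construct sets $P=P_m\supset P_{m-1}\supset\cdots\supset P_0=:P'$. Each $P_j$ will have the following property: it is uniform from scale $\Delta^{j}$ down to $\delta$. That is, for each $j\le i\le m-1$ there is $N_i$ such that every $Q\in\mathcal D_{\Delta^i}(P_j)$ contains exactly $N_i$ cubes of $\mathcal D_{\Delta^{i+1}}(P_j)$. In particular, every cube of $\mathcal D_{\Delta^j}(P_j)$ contains the same number $K_j=N_j\cdots N_{m-1}$ of $\delta$-cubes of $P_j$. For $j=m$ this holds vacuously with $K_m=1$.

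For the inductive step, pass from $P_{j+1}$ to $P_j$ as follows.
\begin{itemize}
\item For each $Q\in\mathcal D_{\Delta^{j}}(P_{j+1})$, let $n(Q)\in[1,2^{dT}]$ be the number of children in $\mathcal D_{\Delta^{j+1}}(P_{j+1})$ contained in $Q$.
\item There are at most $dT+1$ dyadic classes $[2^\ell,2^{\ell+1})$ for $n(Q)$. Choose the class $\ell$ maximizing $\sum_{Q:\,n(Q)\in[2^\ell,2^{\ell+1})}n(Q)$.
\item Keep only those $Q$ in class $\ell$, and inside each such $Q$ keep exactly $N_j:=2^\ell$ of its children, discarding all points in the other children.
\end{itemize}
By the inductive hypothesis every child carries exactly $K_{j+1}$ $\delta$-cubes. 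Hence the $\delta$-count of $P_{j+1}$ equals $K_{j+1}\sum_Q n(Q)$, and the pigeonholing retains at least a $(dT+1)^{-1}$ fraction of $\sum_Q n(Q)$. Trimming to exactly $2^\ell$ children keeps at least half within each retained $Q$. So
\begin{displaymath}
|P_j|_\delta\ge \bigl(2(dT+1)\bigr)^{-1}|P_{j+1}|_\delta .
\end{displaymath}

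We then check that uniformity at the finer levels survives. The step only deletes whole $\Delta^{j+1}$-cubes, together with whole $\Delta^{j}$-cubes. Each surviving $\Delta^{i}$-cube with $i\ge j+1$ keeps all of its descendants, so the branching numbers $N_i$, $i\ge j+1$, are unchanged, and level $j$ now has branching $N_j$. After $m$ steps, $P'$ is $(\Delta,m)$-uniform. Since $2(dT+1)\le 4(d+1)T$ for $T\ge1$, we get $|P'|_\delta\ge(4(d+1)\log(1/\Delta))^{-m}|P|_\delta$.

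The only delicate point is the order of processing. One must go bottom-up, so that at each step all children carry equal $\delta$-mass; only then does trimming to exactly $N_j$ children lose no more than a factor $2$ in $\delta$-count. A top-down pass would require comparing children of unequal mass. With the bottom-up order, the rest is bookkeeping.
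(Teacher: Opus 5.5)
Your proof is correct and is essentially the standard argument behind the paper's citation of \cite[Lemma 7.3]{OS23}; the paper itself gives no proof beyond ``repeated dyadic pigeonholing''. The argument processes scales bottom-up, sorts the child counts $n(Q)\in[1,2^{dT}]$ into $dT+1$ dyadic classes, and trims to exactly $2^{\ell}$ children. Because every child carries the same $\delta$-mass $K_{j+1}$, each level costs a factor $2(dT+1)\le 4(d+1)T$, and finer branching numbers are preserved since only whole $\Delta^{j+1}$-cubes are deleted.
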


If $S$ is $(\Delta,m)$-uniform and $T\subset S$ satisfies $|T|_{\Delta^m}\ge\Delta^u|S|_{\Delta^m}$, then each occupied $\Delta^i$-cube of $S$ contains the same number of occupied $\Delta^m$-cubes, so
\begin{equation}\label{eq:uniform-subset-branching}
    \log_{1/\Delta}|S|_{\Delta^i}-u
    \le\log_{1/\Delta}|T|_{\Delta^i}
    \le\log_{1/\Delta}|S|_{\Delta^i}
    \qquad(0\le i\le m,\ i\in\Z).
\end{equation}

\begin{corollary}\label{cor:multiscale-induction-on-scales}
    Fix $C,\gamma>0$, $\beta\in(0,1]$, and $J_0\in\N$. Let $\Delta\in2^{-\N}$ satisfy $\Delta\ll_{d,k,\beta,J_0,\gamma}1$, take $m\gg_{d,k,\beta,J_0,\gamma,\Delta}1$, and write $\delta=\Delta^m$. Let $1\le J\le J_0$ and let $0=a_0<a_1<\cdots<a_J=m$ be integers. Let $(P_0,\mathbf\Theta_0)$ be a $(\delta,\beta,C,M_0)$-spread-out configuration whose point set is $(\Delta,m)$-uniform, and let $f$ be the branching function of $P_0$. For $1\le j\le J$, there is an intermediate configuration $(P_j,\mathbf\Theta_j)$ such that
    \begin{displaymath}
        (P_j,\mathbf\Theta_j)\text{ is }
        (\Delta^{a_j-a_{j-1}},\beta,\delta^{-\gamma}C,M_j)\text{-spread-out},
    \end{displaymath}
    with Furstenberg set $\mathcal F_j$, and
    \begin{equation}\label{eq:multiscale-induction-product}
        \frac{|\mathcal F(P_0,\mathbf\Theta_0)|_\delta}{M_0}
        \gtrapprox_\delta
        \prod_{j=1}^J
        \frac{|\mathcal F_j|_{\Delta^{a_j-a_{j-1}}}}{M_j},
    \end{equation}
    where the implicit exponent in $\gtrapprox_\delta$ may depend on $J_0$. Moreover, the  sets $P_j$ are $(\Delta,a_j-a_{j-1})$-uniform, and their branching functions $g_j$ satisfy
    \begin{equation}\label{eq:multiscale-induction-branching}
        \left|g_j(i)-f(a_{j-1}+i)+f(a_{j-1})\right|
        \le\gamma m  \quad\text{for } 0\le i\le a_j-a_{j-1}.
    \end{equation}
\end{corollary}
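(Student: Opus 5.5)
The proof is an induction on $J$ using Proposition \ref{prop:induction-on-scales}. At each step we split off the coarsest block of scales and recurse on the zoomed-in configuration.

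\textbf{Base case.} For $J=1$ there is nothing to prove: take $(P_1,\mathbf\Theta_1)=(P_0,\mathbf\Theta_0)$.

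\textbf{Inductive step.} For $J\ge2$, apply Proposition \ref{prop:induction-on-scales} to $(P_0,\mathbf\Theta_0)$ with the intermediate scale $\Delta^{a_1}$ (and, say, $\overline\Delta=1$). Part \ref{it-induction-ii} gives the coarse configuration $(P_{\Delta^{a_1}},\mathbf\Theta_{\Delta^{a_1}})$. It is $(\Delta^{a_1},\beta,C',M_1)$-spread-out with $C'\approx_\delta C$, and we declare it to be $(P_1,\mathbf\Theta_1)$. Part \ref{it-induction-iii} gives, for each $Q\in\mathcal D_{\Delta^{a_1}}(P)$, a configuration $(P^Q,\mathbf\Theta^Q)$ at scale $\Delta^{m-a_1}$ with constant $C_Q\approx_\delta C$. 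By \eqref{eq:lower-bound-Furstenberg}, for any fixed $Q$,
\begin{displaymath}
\frac{|\mathcal F_0|_\delta}{M_0}\gtrapprox_\delta \frac{|\mathcal F_1|_{\Delta^{a_1}}}{M_1}\cdot\frac{|\mathcal F^Q|_{\Delta^{m-a_1}}}{M_Q}.
\end{displaymath}
Pick one such $Q$ and apply the induction hypothesis to $(P^Q,\mathbf\Theta^Q)$ with the shifted partition $a_j-a_1$. This requires $P^Q$ to be $(\Delta,m-a_1)$-uniform. To arrange it, first replace $P^Q$ by a uniform subset via Lemma \ref{lem:uniformization}, and check via Lemma \ref{lem:spread-out-subset} that the configuration stays spread-out. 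Shading families are untouched by passing to a subset of points, so this is immediate. Multiplying the estimates gives \eqref{eq:multiscale-induction-product}. The losses compound over at most $J_0$ steps: polylogarithmic factors become $(\log 1/\delta)^{O_{J_0}(1)}$, and constants become $C(\log1/\delta)^{O_{J_0}(1)}\le\delta^{-\gamma}C$ for $m$ large.

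\textbf{Branching estimate \eqref{eq:multiscale-induction-branching}.} This is the main obstacle. Proposition \ref{prop:induction-on-scales}\ref{it-induction-i} only gives $|P|_{\Delta^{a_1}}\approx_\delta|P_0|_{\Delta^{a_1}}$ and $|P\cap Q|\approx_\delta|P_0\cap Q|$. The set $P$ itself need not be uniform, and neither need $P_1=P_{\Delta^{a_1}}$ nor $P^Q$.

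\emph{Approach.} Run everything inside uniform sets and track losses with \eqref{eq:uniform-subset-branching}. Since $P_0$ is uniform, $|P_0\cap Q|=\Delta^{-(f(m)-f(a_1))}$ for every $Q$. Hence $P$ retains a $\gtrapprox_\delta$ fraction of $P_0$, and inside each $Q$ the set $P\cap Q$ retains a $\gtrapprox_\delta$ fraction of $P_0\cap Q$.

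\emph{Coarse part.} Apply Lemma \ref{lem:uniformization} to $P_1$, losing a factor $(4(d+1)\log 1/\Delta)^{-a_1}$. This equals $\Delta^{-o(1)a_1}$ when $\Delta\ll1$, i.e.\ $\le\Delta^{-\gamma m/(2J_0)}$ in exponent. Discard the points of $P$ outside the kept cubes and the shadings not covered (this preserves (Cb) and the Furstenberg bound, since $\mathcal F_1$ only shrinks). Then \eqref{eq:uniform-subset-branching}, applied to the subset of the uniform set $\mathcal D_{\Delta^{a_1}}(P_0)$, bounds $|g_1(i)-f(i)|$ by the total exponent loss.

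\emph{Fine part.} The same argument applies to $S_Q(P\cap Q)\subset S_Q(P_0\cap Q)$, whose branching function is exactly $f(a_1+\cdot)-f(a_1)$. The uniformized subset has deviation at most the loss exponent.

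\emph{Accumulation.} Through the recursion the losses add up over $\le J_0$ levels, each $\le \gamma m/J_0$. Since subsequent pieces are subsets of uniform sets at every stage, \eqref{eq:uniform-subset-branching} keeps applying, provided $\Delta\ll_{J_0,\gamma}1$ and $m\gg_\Delta1$ so that $\log(1/\delta)$-factors are $\le\Delta^{-\gamma m/J_0}$.

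The delicate bookkeeping is ensuring that the refinements needed for uniformity do not destroy the spread-out constants or the Furstenberg inequality. I expect this to be handled by always refining points rather than shadings, except via Lemma \ref{lem:spread-out-subset} with $\approx_\delta$ losses.
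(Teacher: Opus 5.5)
Your proposal is correct and follows the paper's proof essentially step by step. You induct on $J$, split off the coarsest block with Proposition~\ref{prop:induction-on-scales} at the scale triple $(\Delta^m,\Delta^{a_1},1)$, and uniformize both the coarse and the chosen zoomed-in point sets via Lemma~\ref{lem:uniformization}. Since only points are restricted, the Furstenberg sets shrink and \eqref{eq:lower-bound-Furstenberg} persists. You then control the branching functions by \eqref{eq:uniform-subset-branching} relative to the uniform sets $\{z(Q):Q\in\mathcal D_{\Delta^{a_1}}(P_0)\}$ and $S_Q(P_0\cap Q)$.

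One point should be stated explicitly, as the paper does. The induction must be run on an auxiliary statement with unconverted losses: spread-out constant $C(\log(1/\delta))^{O_{\beta,J_0}(1)}$ and branching error $O_{J_0}(m\log_{1/\Delta}(4(d+1)\log(1/\Delta)))+O_{\Delta,J_0}(\log m)$. These are converted to $\delta^{-\gamma}C$ and $\gamma m$ only at the very end. The corollary in its final form cannot serve as the induction hypothesis, since $m-a_1$ need not be large. Also, the per-level polylogarithmic losses cannot be absorbed into $\Delta^{-\gamma a_1}$ when $a_1$ is small. Your ``losses compound over at most $J_0$ steps'' accounting amounts to exactly this auxiliary induction.
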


\begin{proof}
    We first obtain the asserted configurations with spread-out constants at most
    \begin{displaymath}
        C(\log(1/\delta))^{O_{\beta,J_0}(1)}
    \end{displaymath}
    and with the right-hand side of \eqref{eq:multiscale-induction-branching} replaced by
    \begin{equation}\label{eq:multiscale-induction-branching-error}
        O_{J_0}\bigl(m\log_{1/\Delta}(4(d+1)\log(1/\Delta))\bigr)
        +O_{\Delta,J_0}(\log m).
    \end{equation}
    We argue by induction on $J$ for these auxiliary conclusions. The case $J=1$ follows by taking $(P_1,\mathbf\Theta_1)=(P_0,\mathbf\Theta_0)$. Suppose that $J\ge2$, and apply Proposition \ref{prop:induction-on-scales} to the scale triple $(\Delta^m,\Delta^{a_1},1)$. Take the coarse configuration as $(P_1,\mathbf\Theta_1)$ and choose a zoomed-in configuration attaining the maximum in \eqref{eq:lower-bound-Furstenberg}. Lemma \ref{lem:uniformization} yields uniform subsets of both point sets containing at least $(4(d+1)\log(1/\Delta))^{-m}$ of their points; restrict the shadings accordingly. Apply the induction hypothesis to the zoomed-in configuration and the scales $0,a_2-a_1,\ldots,a_J-a_1=m-a_1$. These restrictions only decrease the right-hand side of \eqref{eq:lower-bound-Furstenberg}; multiplying that inequality by \eqref{eq:multiscale-induction-product} for the zoomed-in configuration, we get \eqref{eq:multiscale-induction-product} for $(P_0,\mathbf\Theta_0)$.

    Before uniformization, Proposition \ref{prop:induction-on-scales}\ref{it-induction-i}, \ref{it-induction-ii}\textup{(Ca)}, and \ref{it-induction-iii}\textup{(Za)} show that the coarse point set is a subset of
    $\{z(Q):Q\in\mathcal D_{\Delta^{a_1}}(P_0)\}$ of cardinality $\approx_\delta|P_0|_{\Delta^{a_1}}$, while the chosen zoomed-in set is a subset of $S_Q(P_0\cap Q)$ of cardinality $\approx_\delta|P_0\cap Q|$ for some $Q\in\mathcal D_{\Delta^{a_1}}(P_0)$. By \eqref{eq:uniform-subset-branching}, the branching functions of the uniformized coarse and zoomed-in point sets differ from $f(i)$ and $f(a_1+i)-f(a_1)$, respectively, by at most
    \begin{displaymath}
        m\log_{1/\Delta}(4(d+1)\log(1/\Delta))+O_\Delta(\log m).
    \end{displaymath}
    Combining these bounds with the induction hypothesis proves \eqref{eq:multiscale-induction-branching} with $\gamma m$ replaced by \eqref{eq:multiscale-induction-branching-error}. Since $J\le J_0$, the conditions on $\Delta$ and $m$ imply that the spread-out constant is at most $\delta^{-\gamma}C$ and that \eqref{eq:multiscale-induction-branching-error} is at most $\gamma m$.
\end{proof}

\subsection{Sticky spread-out Furstenberg sets}\label{sec:Furstenberg-sticky}

We are now able to complete the proof of Theorem \ref{thm:AhlforsFurstenberg}. We use the affine coordinates \eqref{eq:affine-incidence-coordinates}. Recall from Section \ref{s:bigcell} that the Grassmannian and Euclidean metrics are bi-Lipschitz equivalent on bounded subsets of these coordinates.

The proof follows \cite[Proof of Theorem 5.2]{OrponenShmerkin26}, with Proposition \ref{prop:induction-on-scales} replacing \cite[Proposition 5.2]{OS23}. 
\begin{proof}[Proof of Theorem \ref{thm:AhlforsFurstenberg}]
    Take $0<\eta\ll_{t}1$. Apply Theorem \ref{thm:AhlforsProjection} in dimensions $(d,d-k)$ with $(s,t,u)=(s/4,t,\min\{t,d-k\}-\eta/4)$. Choose $\e\ll_{\eta} 1$ such that the conclusion of that theorem is valid with parameter $3\e$, and set $N:=\e^{-3}\in\N$. There is $C_{\mathrm{loss}}=C_{\mathrm{loss}}(s,t,\eta,\e)$ such that the product of the non polylogarithmic losses from the $N-1$ applications of Proposition \ref{prop:induction-on-scales}, the regularity estimates for the sets $\check P_j$, and Lemmas \ref{lem:spread-out-discretization} and \ref{lem:spread-out-almost-cover-families} is at most $\delta^{-C_{\mathrm{loss}}\e'}$. Choose $0<\e'\ll_{s,t,\eta,\e}1$ such that $C_{\mathrm{loss}}\e'\le\e^4/4$.

    Set
    \begin{displaymath}
        \tau:=\delta^{1/N}=\delta^{\e^3},
        \qquad
        \Delta_j:=\delta\tau^{-j}=\tau^{N-j},
        \qquad 0\leq j\leq N-1.
    \end{displaymath}
    Then $\Delta_0=\delta$, $\Delta_{j-1}/\Delta_j=\tau$, and $\Delta_j\leq\tau$ for $0\leq j\leq N-1$. Taking $\delta\ll_{s,t,\eta,\e}1$, Theorem \ref{thm:AhlforsProjection} applies at scale $\tau$. Since $N=\e^{-3}$, we may also require that the product of all polylogarithmic factors from these applications is at most $\delta^{-\e^4/4}$. The product of all losses is therefore at most
    \begin{displaymath}
        \delta^{-\e^4/2}=\tau^{-\e/2}\leq\tau^{-\e}.
    \end{displaymath}
    Consequently, every spread-out and regularity constant used in the recursion is at most $\tau^{-\e}$.

    We now define the multiscale recursion, using checks for the configurations rescaled to $[0,1]^d$. Set $(P_0,\mathbf\Theta_0):=(P,\mathbf\Theta)$ and $M_0:=M$, and let $\mathcal F_0:=\mathcal F$. Suppose that the $(\Delta_{j-1},s,\delta^{-O(\e')},M_{j-1})$-spread-out configuration $(P_{j-1},\mathbf\Theta_{j-1})$ has been defined.
    
    By dyadic pigeonholing, there are $\mathcal Q_j\subset\mathcal D_{\Delta_j}(P_{j-1})$ and  $N_j^{\mathrm{pts}}\in 2^{\N_0}$ such that
    \begin{equation}\label{eq:sticky-point-occupancy-pigeonholing}
        N_j^{\mathrm{pts}}\leq |P_{j-1}\cap Q|<2N_j^{\mathrm{pts}}
        \quad(Q\in\mathcal Q_j),
        \qquad
        \sum_{Q\in\mathcal Q_j}|P_{j-1}\cap Q|\gtrapprox_\delta |P_{j-1}|.
    \end{equation}
    Set $\widetilde P_{j-1}:=P_{j-1}\cap\bigcup\mathcal Q_j$, let $\widetilde{\mathbf\Theta}_{j-1}$ be the restriction of $\mathbf\Theta_{j-1}$ to $\widetilde P_{j-1}$, and denote its Furstenberg set by $\widetilde{\mathcal F}_{j-1}$. 
    
    Apply Proposition \ref{prop:induction-on-scales} to $(\widetilde P_{j-1},\widetilde{\mathbf\Theta}_{j-1})$ with $(\delta,\Delta,\overline\Delta)=(\Delta_{j-1},\Delta_j,\tau)$. Denote the configuration in Proposition \ref{prop:induction-on-scales}\ref{it-induction-i} by $(\widehat P_{j-1},\widehat{\mathbf\Theta}_{j-1})$, and denote the coarse configuration in Proposition \ref{prop:induction-on-scales}\ref{it-induction-ii} by $(P_j,\mathbf\Theta_j)$, with shading parameter $M_j$ and Furstenberg set $\mathcal F_j$. Thus
    \begin{align*}
        P_j
        &=\{z(Q):Q\in\mathcal D_{\Delta_j}(\widehat P_{j-1})\}, \\
        \bigcup_{p\in\widehat P_{j-1}\cap Q}\widehat{\mathbf\Theta}_{j-1}(p)
        &\subset\mathbf\Theta_j(z(Q)),
        \qquad Q\in\mathcal D_{\Delta_j}(\widehat P_{j-1}).
    \end{align*}
    There is also an integer $L_j\geq1$ such that $|\{\mathbf J\in\mathbf\Theta_j(q):\mathbf J\subset\mathbf I\}|=L_j$ for $q\in P_j$ and $\mathbf I\in\mathcal D_\tau(\mathbf\Theta_j(q))$.
    For $Q\in\mathcal D_{\Delta_j}(\widehat P_{j-1})$, denote the configuration in Proposition \ref{prop:induction-on-scales}\ref{it-induction-iii} by $(\check P_{j,Q},\check{\mathbf\Theta}_{j,Q})$, its shading parameter by $M_Q$, and its Furstenberg set by $\check{\mathcal F}_{j,Q}$. Then
    \begin{align*}
        \check P_{j,Q}&=S_Q(\widehat P_{j-1}\cap Q), \\
        \check{\mathbf\Theta}_{j,Q}(S_Q(p))
        &\subset\mathcal D_\tau(\widehat{\mathbf\Theta}_{j-1}(p)),
        \qquad p\in\widehat P_{j-1}\cap Q.
    \end{align*}
    This defines the recursion for $1\leq j\leq N-1$.

    Choose $q_{N-1}\in P_{N-1}$, and let $Q_{N-1}\in\mathcal D_{\Delta_{N-1}}(\widehat P_{N-2})$ satisfy $z(Q_{N-1})=q_{N-1}$. Recursively, after choosing $Q_{j+1}$, choose $q_j\in\widehat P_j\cap Q_{j+1}$ and let $Q_j\in\mathcal D_{\Delta_j}(\widehat P_{j-1})$ be the cube with $z(Q_j)=q_j$. Then
    \begin{displaymath}
        Q_1\subset\cdots\subset Q_{N-1},
        \qquad
        q_j\in\widehat P_j\cap Q_{j+1},
        \quad 1\leq j\leq N-2.
    \end{displaymath}
    For $1\leq j\leq N-1$, set $(\check P_j,\check{\mathbf\Theta}_j):=(\check P_{j,Q_j},\check{\mathbf\Theta}_{j,Q_j})$ and $\check{\mathcal F}_j:=\check{\mathcal F}_{j,Q_j}$. Proposition \ref{prop:induction-on-scales}\ref{it-induction-iv} gives
    \begin{displaymath}
        \frac{|\mathcal F_{j-1}|_{\Delta_{j-1}}}{M_{j-1}}
        \geq \frac{|\widetilde{\mathcal F}_{j-1}|_{\Delta_{j-1}}}{M_{j-1}}
        \gtrapprox_\delta
        \frac{|\mathcal F_j|_{\Delta_j}}{M_j}
        \frac{|\check{\mathcal F}_j|_\tau}{M_{Q_j}}.
    \end{displaymath}
    Iterating and using $|\mathcal F_{N-1}|_{\Delta_{N-1}}\geq M_{N-1}$ yields
    \begin{equation}\label{eq:sticky-product}
        \frac{|\mathcal F|_\delta}{M}
        \gtrapprox_\delta
        \prod_{j=1}^{N-1}\frac{|\check{\mathcal F}_j|_\tau}{M_{Q_j}}.
    \end{equation}

   Each $(\check P_j,\check{\mathbf\Theta}_j)$ is a $(\tau,s,\delta^{-O(\e')},M_{Q_j})$-spread-out configuration, and $M_{Q_j}\lesssim\tau^{-k(d-k)}$. We first record the point estimates inherited by these configurations. The Frostman condition at scale $\delta$ and the covering bound in Definition \ref{def:Ahlfors-regular} give, respectively
    \begin{displaymath}
        |P|\geq\delta^{\e'}\delta^{-t},
        \qquad
        |P\cap B(x,R)|_r\lesssim_d\delta^{-\e'}(R/r)^t
        \quad(x\in\R^d,\ \delta\leq r\leq R).
    \end{displaymath}
    Every point of $P_j$ is the lower-left endpoint of a $\Delta_j$-cube meeting $P$, so
    \begin{equation}\label{eq:sticky-coarse-point-covering}
        |P_j\cap B(x,R)|_r\lesssim_d\delta^{-\e'}(R/r)^t
        \quad(x\in\R^d,\ \Delta_j\leq r\leq R).
    \end{equation}
    Equations \eqref{eq:sticky-point-occupancy-pigeonholing} and \eqref{eq:sticky-coarse-point-covering}, together with Proposition \ref{prop:induction-on-scales}\ref{it-induction-i}, imply inductively that, for $1\leq j\leq N-1$,
    \begin{equation*}
        |P_j|N_j^{\mathrm{pts}}
        \approx_{\Delta_{j-1}}|\widehat P_{j-1}|
        \gtrapprox_\delta|P_{j-1}|,
        \qquad
        |P_j|\geq\delta^{O(\e')}\Delta_j^{-t},
        \qquad
        \delta^{O(\e')}\tau^{-t}
        \le N_j^{\mathrm{pts}}
        \le\delta^{-O(\e')}\tau^{-t}.
    \end{equation*}
    For $Q\in\mathcal D_{\Delta_j}(\widehat P_{j-1})$, Proposition \ref{prop:induction-on-scales}\ref{it-induction-i} also gives
    \begin{displaymath}
        |\widehat P_{j-1}\cap Q|\approx_{\Delta_{j-1}}N_j^{\mathrm{pts}}.
    \end{displaymath}
    Applying $S_Q$, then using \eqref{eq:sticky-coarse-point-covering} with $j-1$ and the lower bound for $N_j^{\mathrm{pts}}$, shows that $\check P_j$ is $(\tau,t,\delta^{-O(\e')})_d$-regular. Hence
    \begin{equation}\label{eq:sticky-intermediate-regularity}
    \begin{gathered}
        \check P_j\text{ is }(\tau,t,\tau^{-\e})_d\text{-regular},\\
        \check{\mathbf\Theta}_j(p)\text{ is }(\tau,s,\tau^{-\e})_{d,k}\text{-spread-out}
        \qquad(p\in \check P_j).
    \end{gathered}
    \end{equation}

    For $1\leq j\leq N-2$, define
    \begin{displaymath}
        \overline\Theta_j
        :=\{z(\mathbf I):\mathbf I\in\mathcal D_\tau(\mathbf\Theta_j(q_j))\}
        \subset\tau\Z^{k(d-k)}.
    \end{displaymath}
    Lemma \ref{lem:spread-out-discretization}\ref{it:spread-out-discretization-cubes}, Corollary \ref{cor:spread-out-coarsening}, and scale $\tau$ uniformity show that
    \begin{equation}\label{eq:sticky-intermediate-direction-spread-out}
        m_{\overline\Theta_j}
        \text{ is }(\tau,s/4,\tau^{-\e})_{d,k}\text{-spread-out}.
    \end{equation}

    If $p=S_{Q_j}(p_0)\in \check P_j$, we get from Proposition \ref{prop:induction-on-scales}\ref{it-induction-ii}(Cb) and \ref{it-induction-iii}(Zb) that
    \begin{displaymath}
        \check{\mathbf\Theta}_j(p)
        \subset\mathcal D_\tau(\widehat{\mathbf\Theta}_{j-1}(p_0))
        \subset\mathcal D_\tau(\mathbf\Theta_j(q_j)).
    \end{displaymath}
    This implies the first inequality in
    \begin{equation*}
        M_{Q_j}\leq|\overline\Theta_j|
        =|\mathcal D_\tau(\mathbf\Theta_j(q_j))|
        \lessapprox_\delta M_{Q_{j+1}},
        \qquad 1\leq j\leq N-2.
    \end{equation*}
    For the second inequality, scale $\tau$ uniformity and Proposition \ref{prop:induction-on-scales}\ref{it-induction-i} imply that $|\widehat{\mathbf\Theta}_j(q_j)|\approx_\delta L_j|\overline\Theta_j|$. Proposition \ref{prop:induction-on-scales}\ref{it:induction-zoomed-direction-multiplicity} gives $m_{Q_{j+1}}M_{Q_{j+1}}\approx_\delta|\widehat{\mathbf\Theta}_j(q_j)|$, and $m_{Q_{j+1}}\leq L_j$.

    Let $E:=\{j\in\{1,\ldots,N-2\}:M_{Q_{j+1}}\geq\tau^{-\e}M_{Q_j}\}$. Multiplying the consecutive ratios and using $M_{Q_j}\lessapprox_\delta M_{Q_{j+1}}$ and $M_{Q_j}\lesssim\tau^{-k(d-k)}$, we obtain $|E|\leq2k(d-k)\e^{-1}$ for $\delta\ll_{d,k,\e}1$.

    We claim that, for every $j\in\{1,\ldots,N-2\}\setminus E$,
    \begin{equation}\label{eq:local-claim-general}
        \frac{|\check{\mathcal F}_j|_\tau}{M_{Q_j}}
        \geq\tau^{-\min\{t,d-k\}+\eta/4+3\e}.
    \end{equation}
    Granting this, we combine \eqref{eq:sticky-product} with the trivial bound $|\check{\mathcal F}_j|_\tau\geq M_{Q_j}$ to obtain
    \begin{displaymath}
        \frac{|\mathcal F|_\delta}{M}
        \gtrapprox_\delta
        \tau^{(-\min\{t,d-k\}+\eta/4+3\e)(N-|E|-2)}.
    \end{displaymath}
    The choice of $\e$ and the bound for $|E|$ imply
    \begin{displaymath}
        \frac{N-|E|-2}{N}
        \left(-\min\{t,d-k\}+\frac{\eta}{4}+3\e\right)
        \leq-\min\{t,d-k\}+\frac{\eta}{4}+4\e <\e.
    \end{displaymath}
    Since $\tau=\delta^{1/N}$,
    \begin{displaymath}
        \frac{|\mathcal F|_\delta}{M}
        \gtrapprox_\delta
        \delta^{-\min\{t,d-k\}+\eta/4+4\e}.
    \end{displaymath}
    Since $\e<\eta/25$, \eqref{form17} follows for $\delta\ll_{d,k,s,t,\eta,\e}1$, conditional on \eqref{eq:local-claim-general}.

    Fix $j\in\{1,\ldots,N-2\}\setminus E$. For $\theta\in\overline\Theta_j$, let $\overline P_\theta:=\{p\in \check P_j:\theta\in z\check{\mathbf\Theta}_j(p)\}$. Since $j\notin E$ and $|\overline\Theta_j|\lessapprox_\delta M_{Q_{j+1}}$,
    \begin{align*}
        \sum_{\theta\in\overline\Theta_j}|\overline P_\theta|
        &=\sum_{p\in \check P_j}|z\check{\mathbf\Theta}_j(p)|
        \sim |\check P_j|M_{Q_j} \\
        &\geq\tau^\e|\check P_j|M_{Q_{j+1}}
        \gtrapprox_\delta\tau^\e|\check P_j||\overline\Theta_j|.
    \end{align*}
    Let $\Theta'$ consist of the parameters $\theta\in\overline\Theta_j$ such that $|\overline P_\theta|\geq\tau^{2\e}|\check P_j|$. Discarding the remaining parameters from the displayed incidence sum yields $|\Theta'|\gtrapprox_\delta\tau^\e|\overline\Theta_j|$ for $\delta\ll_\e1$.

    Equation \eqref{eq:sticky-intermediate-direction-spread-out} and Lemma \ref{lem:spread-out-subset} show that $m_{\Theta'}$ is $(\tau,s/4,\tau^{-3\e})_{d,k}$-spread-out. Since $\Theta'\subset\overline\Theta_j\subset\tau\Z^{k(d-k)}$, it is $\tau$-separated. The linear isometry $\theta\mapsto-\theta^*$ maps $\mathfrak{H}_{d,k}$ onto $\mathfrak{H}_{d,d-k}$ and preserves the dyadic lattice. Thus Lemma \ref{lem:spread-out-discretization}\textup{\ref{it:spread-out-discretization-cubes}} implies that $\{-\theta^*:\theta\in\Theta'\}$ is a $\tau$-separated $(\tau,s/4,\tau^{-3\e})_{d,d-k}$-spread-out set. By \eqref{eq:sticky-intermediate-regularity}, the coordinate permutation $(x,y)\mapsto(y,x)$ sends $\check P_j$ to a $(\tau,t,\tau^{-\e})_d$-regular set. Apply Theorem \ref{thm:AhlforsProjection}, in dimensions $(d,d-k)$ and with parameter $3\e$, to this set and $\{-\theta^*:\theta\in\Theta'\}$. For every $\theta\in\Theta'$, the subset $\overline P_\theta$ satisfies the cardinality hypothesis of the theorem because
    \begin{displaymath}
        |\overline P_\theta|_\tau
        =|\overline P_\theta|
        \geq\tau^{2\e}|\check P_j|
        \geq\tau^{3\e}|\check P_j|_\tau,
        \qquad \theta\in\Theta'.
    \end{displaymath}
    Since $\pi_{-\theta^*}(y,x)=y-\theta^*x=\rho_\theta(x,y)$, the theorem yields $\Theta''\subset\Theta'$ with $|\Theta''|\geq(1-\tau^{3\e})|\Theta'|\geq\tfrac12|\Theta'|$ such that
    \begin{displaymath}
        |\rho_\theta(\overline P_\theta)|_\tau
        \geq\tau^{-\min\{t,d-k\}+\eta/4},
        \qquad \theta\in\Theta''.
    \end{displaymath}

    Equation \eqref{eq:affine-incidence-counting}, applied to $\Theta''$ and the sets $\overline P_\theta$, implies
    \begin{align*}
        |\check{\mathcal F}_j|_\tau
        &\geq\sum_{\theta\in\Theta''}|\rho_\theta(\overline P_\theta)|_\tau 
\geq|\Theta''|\tau^{-\min\{t,d-k\}+\eta/4} \\
        &\gtrapprox_\delta\tau^\e|\overline\Theta_j|
        \tau^{-\min\{t,d-k\}+\eta/4} \geq\tau^{3\e}M_{Q_j}
        \tau^{-\min\{t,d-k\}+\eta/4},
    \end{align*}
    where the last inequality follows from $\delta\ll_\e1$ and $M_{Q_j}\leq|\overline\Theta_j|$. This is \eqref{eq:local-claim-general}.
\end{proof}

\section{Examples and applications}
\label{sec:examples-applications}

\subsection{Uniformly perfect measures on \texorpdfstring{$\R$}{R} are spread-out}
\label{s:uniformly-perfect-are-spread-out}

Let us first recall (from the introduction) the notion of uniformly perfect measures:
\begin{definition}[$(D,\beta)$-uniformly perfect measure]\label{def:uniformlyPerfect1} Let $D > 1$ and $\beta \in [0,1)$. A locally finite Borel measure $\mu$ on a metric space $(X,\rho)$ is called \emph{$(D,\beta)$-uniformly perfect} if $\diam(\spt \mu) > 0$, and
\begin{displaymath} \mu(B(x,r)) \leq \beta \cdot \mu(B(x,Dr)) \end{displaymath}
for all closed balls $B(x,r) \subset X$ such that $\spt \mu \not\subset B(x,Dr)$. 
\end{definition}

\begin{proposition}\label{prop:uniform-is-spread-out}
    For every $D > 1$ and $\beta \in [0,1)$, there exists $\e = \e(D,\beta) > 0$ such that the following holds. Let $\mu$ be a $(D,\beta)$-uniformly perfect measure on $\mathbf{G}_{2,1}\cong\R$ such that 
    \begin{displaymath} 
        0<\mu(B(0,1)),\quad
        \mu(B(0,5)) \leq D,
        \quad\text{and}\quad
        \spt \mu \not\subset B(0,5).
    \end{displaymath}
    Then, for every $\delta\in 2^{-\N}$, $\mu|_{B(0,1)}$ is $(\delta,\e,D)_{1}$-spread-out.
\end{proposition}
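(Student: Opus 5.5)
The plan is to exploit the fact that on $\mathbf G_{2,1}\cong\R$ the ``hyperplanes'' are single points. A scale $r$ is $\e$-singular for $(\Theta,x)$ exactly when $\Theta\cap B(x,r)\subset(y-\e r,y+\e r)$ for some $y$. In that case $\Theta\cap B(x,r)\subset B(x,2\e r)$, so $\Theta$ has an empty annulus $B(x,r)\setminus B(x,2\e r)$ around $x$. Uniform perfectness will convert each such gap into a definite loss of mass. Fix $k\in\N$ with $D^{k}\ge 1/(8\e)$; since $\e$ is ours to choose, we instead fix $k$ first and then take $\e:=D^{-k}/8$. Iterating the $(D,\beta)$ condition $k$ times gives
\begin{displaymath}
\mu(B(c,2\e r))\le\beta^{k}\mu(B(c,r/4)).
\end{displaymath}
This holds for $c\in B(0,2)$ and $r\le1$, because then $D^k\cdot 2\e r\le 1/4$ and $\spt\mu\not\subset B(0,5)\supset B(c,r)$, so every intermediate ball is admissible.

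\emph{Clusters.} Let $\Theta\subset\spt\mu\cap B(0,1)$ have at least $\mathfrak s$ $\e$-singular scales at every point. For each $x\in\Theta$, thin its singular scales greedily so that consecutive retained ones differ by a factor at least $L:=2^{\ell}$, where $\ell\approx\log(1/\e)+3$. This retains at least $\mathfrak s/\ell-1$ scales per point. A retained scale $r$ for $x$ defines the cluster $C=\Theta\cap B(x,r)$. The cluster has diameter at most $4\e r$ and is separated from $\Theta\setminus C$ by distance at least $r/2$. Because of this gap structure, two clusters whose scales are comparable are either disjoint or equal. Moreover, a cluster at a scale smaller by a factor $L$ that meets a given cluster is contained in it. Attach to each cluster $C$ of scale $r$ the ball $B_C:=B(c_C,4\e r)$, with $c_C\in C$, which contains $C$, and the larger ball $B_C^{*}:=B(c_C,r/4)$.

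\emph{Generations.} Next I would organise the clusters into generations. Generation $1$ consists of the maximal clusters, that is, the coarsest retained cluster of each point; these are pairwise disjoint. Generation $j+1$ consists of the maximal clusters strictly inside some generation-$j$ cluster, at a scale at least $L$ times smaller. Every point of $\Theta$ lies in clusters of at least $\mathfrak s':=\lfloor\mathfrak s/\ell\rfloor-1$ generations. Within one generation, the balls $B_C^{*}$ are pairwise disjoint, thanks to the $r/2$-separation and the disjoint-or-equal property. If $C'$ is a generation-$(j+1)$ cluster inside the generation-$j$ cluster $C$, then its scale satisfies $r'\le r/L$, and hence $B_{C'}^{*}\subset B(c_C,4\e r+r/4L)\subset B(c_C,8\e r)$. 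To keep containment inside $B_C$, enlarge $B_C$ to radius $8\e r$ and adjust $k$ by one; this is harmless. Combining these facts,
\begin{displaymath}
\sum_{\mathrm{gen}\,j+1}\mu(B_{C'})\le\beta^{k}\sum_{\mathrm{gen}\,j+1}\mu(B^{*}_{C'})\le\beta^{k}\sum_{\mathrm{gen}\,j}\mu(B_C).
\end{displaymath}
Starting from $\sum_{\mathrm{gen}\,1}\mu(B_C)\le\mu(B(0,5))\le D$, we obtain
\begin{displaymath}
\mu(\Theta)\le D\beta^{k\mathfrak s'}.
\end{displaymath}
Since $\mathfrak s'\gtrsim\mathfrak s/\log(1/\e)$ and $\beta<1$, this is at most $D2^{-\e\mathfrak s}$ once $\e$ is small in terms of $D$ and $\beta$; the cases where $\mathfrak s'\le 0$ are covered by the trivial bound $\mu(B(0,1))\le D$. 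If needed, some constant factor is absorbed by shrinking $\e$ slightly.

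\emph{Main obstacle.} I expect the delicate step to be the generation bookkeeping. Different points have their singular scales at unrelated positions, so I must verify that the greedily chosen maximal clusters really nest, that each point survives in $\gtrsim\mathfrak s/\ell$ generations, and that the same-generation balls $B^{*}_C$ are disjoint. The disjoint-or-nested property of gapped clusters is what makes this work. I would prove it first as a separate lemma: if $\Theta\cap B(x,r)\subset B(x,4\e r)$ and $\Theta\cap B(x',r')\subset B(x',4\e r')$ with $r'\le r$ and the two clusters intersect, then $\Theta\cap B(x',r')\subset\Theta\cap B(x,r)$.
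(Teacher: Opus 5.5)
Your argument is correct in substance, but it follows a genuinely different route from the paper's.

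The paper never organises clusters into a tree. It keeps singular scales with consecutive ratio at most $\e$ and smooths them to $\rho_j(x):=\max\{r_j(y):y\in E,\ |x-y|<\tfrac12 r_j(y)\}$, which gives $E\cap B(x,\tfrac12\rho_j(x))\subset B(x,2\e\rho_j(x))$. At each level it covers $E\cap\tfrac12\mathbf B$ by balls $B(x,\tfrac12\rho_{j+1}(x))$ with overlap at most $2$. It then shows $\rho_{j+1}(x)\le 2\e\rho_j(x_0)$, so the shrunken balls $B(x,4\e\rho_{j+1}(x))$ nest inside $\mathbf B$. The estimate $\mu(B(x,8\e r))\le\tfrac14\mu(B(x,r))$ then halves the total mass at each level, giving $\mu(E)\le D2^{-\mathfrak s}$.

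You instead use the fact that a singular scale forces an empty annulus, so gapped clusters form a laminar family. This gives genuinely disjoint balls within each generation and removes the need for both the covering lemma and the smoothing of scales. The price is the selection of maximal clusters into generations. The paper gets this for free, since every point is covered at every level $j\le\mathfrak s$ by property (a) of its construction.

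Three bookkeeping points in your write-up need fixing.

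\emph{Generation $1$.} The coarsest retained clusters of different points can be nested, so they are not pairwise disjoint as you claim. Generation $1$ must consist of the maximal ones among them. Also, the same set $\Theta\cap B(x,r)$ can arise at several scales, so clusters should be indexed as (set, scale) pairs, keeping the largest scale among equal sets. With this convention, the generation-$j$ cluster containing $x$ has scale at least that of the cluster of $x$ it came from. Hence $x$'s next retained scale qualifies, and $x$ lies in at least as many generations as it has retained scales.

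\emph{``Strictly inside''.} This condition must be dropped: a child may coincide with its parent as a set, with only its scale dropping by a factor $L$. Otherwise, for example when $\Theta=\{x\}$ (where every scale is singular), $x$ lies in only one generation and your count fails. Since your decrement is for $\mu$ on balls, non-strict nesting costs nothing.

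\emph{The endgame.} As written, it does not give the constant $D$. For $1\le\mathfrak s<2\ell$ you have $\mathfrak s'\le 0$, and the trivial bound $\mu(\Theta)\le D$ is not $\le D2^{-\e\mathfrak s}$. A multiplicative loss cannot be absorbed by shrinking $\e$ when the constant is fixed. This is easily repaired:
\begin{enumerate}
\item Greedy thinning at ratio $2^{\ell}$ keeps at least $\lceil\mathfrak s/\ell\rceil$ scales per point; truncate every list to exactly this many.
\item The generation-$1$ balls $B_C^*$ are disjoint and lie in $B(0,5)$, so the decrement also applies at generation $1$.
\item Choose $k$ with $\beta^k\le\tfrac12$, and then $\e$ so small that $\e\le D^{-k}/32$ and $\e\ell\le 1$.
\end{enumerate}
This yields $\mu(\Theta)\le D2^{-\lceil\mathfrak s/\ell\rceil}\le D2^{-\e\mathfrak s}$.
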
 
 
 \begin{proof} 
 Put $c:=1/4$. Choose $N_0\in\N$ with $\beta^{N_0}\le c$, and then fix $0<\e=\e(D,\beta)\ll_{D,\beta}1$ so that $8D^{N_0}\e\le 1$ and each interval $[\e r,r]$ contains fewer than $\e^{-1}$ dyadic scales. Iterating uniform perfectness, we get
 \begin{equation}\label{form13} \mu(B(x,8\e r)) \leq c\mu(B(x,r)), \qquad B(x,r) \subset B(0,5). \end{equation} 
Indeed, the balls $B(x,D^i8\e r)$, $0\le i\le N_0$, are contained in $B(x,r)\subset B(0,5)$, so the non-containment condition in Definition \ref{def:uniformlyPerfect1} follows from $\spt\mu\not\subset B(0,5)$.

Fix $\delta\in2^{-\N}$. Let $E \subset B(0,1)\cap\spt\mu$ be a Borel set such that every point $x \in E$ has $\overline{\mathfrak{s}}$ many $\e$-singular scales in $2^{-\N}\cap[\delta,1]$, denoted $\{\overline{r}_{j}(x)\}_{j = 1}^{\overline{\mathfrak{s}}}$. If $E=\varnothing$ or $\overline{\mathfrak{s}}=0$, the desired estimate follows from $E\subset B(0,5)$ and $\mu(B(0,5))\le D$. Since each block $[\e r,r]$ contains fewer than $\e^{-1}$ dyadic scales, we may choose, after discarding extras, a fixed integer $\mathfrak{s}$ and subsequences $\{r_{j}(x)\}_{j = 1}^{\mathfrak{s}} \subset \{\overline{r}_{j}(x)\}_{j = 1}^{\overline{\mathfrak{s}}}$ such that 
\begin{equation}\label{eq:singular-scale-subsequence} \mathfrak{s} \geq \e \overline{\mathfrak{s}} \quad \text{and} \quad r_{j + 1}(x) \leq \e r_{j}(x) \text{ for all } 1 \leq j \leq \mathfrak{s} - 1\text{ and } x \in E. \end{equation}
For every selected scale, choose a point $a_j(x)\in\R$ witnessing singularity:
\begin{displaymath}
    E\cap B(x,r_j(x))\subset B(a_j(x),\e r_j(x)).
\end{displaymath}
The scales $r_{j}(x)$ may depend arbitrarily on $x$, and this poses a technical difficulty. To overcome it, we define a "smoother" version of the scales by the following formula:
\begin{displaymath} \rho_{j}(x) := \max \{r_{j}(y) : y \in E \text{ and } |x - y| < \tfrac12 r_{j}(y)\}, \qquad 1 \leq j \leq \mathfrak{s}. \end{displaymath}
(The $\max$ exists because $r_{j}(y)$ only takes values in $2^{-\N} \cap [\delta,1]$.) A key point is that whenever $|x - y| < \tfrac12 r_{j}(y)$, then $x \in E \cap B(y,r_{j}(y))$. Since the affine hyperplanes in $\R$ are points, the singularity of $r_j(y)$ gives
\begin{displaymath} 
    E \cap B\bigl(x,\tfrac{1}{2}r_{j}(y)\bigr) \subset E \cap B(y,r_{j}(y)) \subset B\bigl(a_j(y),\e r_{j}(y)\bigr) \subset B\bigl(x,2\e r_{j}(y)\bigr). 
\end{displaymath}
Applying this with the point $y$ which realizes $\rho_{j}(x)$, we conclude
\begin{equation}\label{form18} 
    E \cap B(x,\tfrac{1}{2}\rho_{j}(x)) \subset B(x,2\e \rho_{j}(x)), \qquad 1 \leq j \leq \mathfrak{s}. 
\end{equation}
For notational convenience, define $\rho_{0}(x) := \e^{-1}$ for all $x \in E$. We will next recursively construct families $\mathcal{B}_{j}$ of closed balls, $0 \leq j \leq \mathfrak{s}$, with the following properties:
\begin{itemize}
\item[(a)] $E \subset \bigcup_{B \in \mathcal{B}_{j}} \tfrac{1}{2}B$.
\item[(b)] $\sum_{B \in \mathcal{B}_{j + 1}} \mu(B) \leq \tfrac{1}{2} \sum_{B \in \mathcal{B}_{j}} \mu(B)$.
\item[(c)] Each ball in $\mathcal{B}_{j}$ has the form $B(x,4\e \rho_{j}(x))$ for some $x \in E$.
\end{itemize}
 To initialize the construction, set $\mathcal{B}_{0} := \{B(x_{0},4)\}$ for $x_{0} \in E$ arbitrary, so (c) is valid. Property (a) follows from $E \subset B(0,1)$. Property (b) is vacuous when only $\mathcal{B}_{0}$ has been constructed.

 We then assume that $\mathcal{B}_{j}$ has been constructed for some $0 \leq j \leq \mathfrak{s} - 1$, and we construct $\mathcal{B}_{j + 1}$. Fix $\mathbf{B} \in \mathcal{B}_{j}$. The plan is to construct a collection $\mathcal{B}_{j + 1}(\mathbf{B})$, and then set $\mathcal{B}_{j + 1} := \bigcup_{\mathbf{B} \in \mathcal{B}_{j}} \mathcal{B}_{j + 1}(\mathbf{B})$. To construct $\mathcal{B}_{j + 1}(\mathbf{B})$, let $\overline{\mathcal{B}}_{j + 1}(\mathbf{B})$ be the family of all closed balls of radius $\tfrac{1}{2}\rho_{j + 1}(x)$ centered at points $x\in E \cap \tfrac{1}{2}\mathbf{B}$. There exists a finite subfamily, still denoted $\overline{\mathcal{B}}_{j + 1}(\mathbf{B})$, with the properties
 \begin{equation}\label{form12} E \cap \tfrac{1}{2}\mathbf{B} \subset \bigcup_{B \in \overline{\mathcal{B}}_{j + 1}(\mathbf{B})} B \quad \text{and} \quad \sum_{B \in \overline{\mathcal{B}}_{j + 1}(\mathbf{B})} \mathbf{1}_{B} \leq 2. \end{equation}
Recall from \eqref{form18} that, writing $B := B(x,\tfrac{1}{2}\rho_{j + 1}(x))$, it holds 
\begin{displaymath} E \cap B \subset 4\e B = B(x,2\e \rho_{j + 1}(x)). \end{displaymath}
Consequently, writing $\mathcal{B}_{j + 1}(\mathbf{B}) := \{B(x,4\e \rho_{j + 1}(x)) : B(x,\tfrac{1}{2}\rho_{j + 1}(x)) \in \overline{\mathcal{B}}_{j + 1}(\mathbf{B})\}$, it holds
\begin{displaymath} E \cap \tfrac{1}{2}\mathbf{B} \subset \bigcup_{B \in \mathcal{B}_{j + 1}(\mathbf{B})} \tfrac{1}{2}B, \end{displaymath} 
and therefore 
\begin{displaymath} E \subset \bigcup_{\mathbf{B} \in \mathcal{B}_{j}} (E \cap \tfrac{1}{2}\mathbf{B}) \subset \bigcup_{\mathbf{B} \in \mathcal{B}_{j}} \bigcup_{B \in \mathcal{B}_{j + 1}(\mathbf{B})} \tfrac{1}{2}B \stackrel{\mathrm{def.}}{=} \bigcup_{B \in \mathcal{B}_{j + 1}} \tfrac{1}{2}B. \end{displaymath}
This verifies property (a), and (c) is immediate from the definition of $\mathcal{B}_{j + 1}(\mathbf{B})$. To verify (b), we need to check the following:
\begin{lemma}\label{claim1} The balls in $\mathcal{B}_{j + 1}(\mathbf{B})$ and $\overline{\mathcal{B}}_{j + 1}(\mathbf{B})$ are contained in $\mathbf{B}$. 
\end{lemma}

\begin{proof} Write $\mathbf{B}=B(x_{0},4\e\rho_j(x_{0}))$ for some $x_{0}\in E$, and let $x\in E\cap\tfrac12\mathbf{B}$ be the center of a ball chosen for $\overline{\mathcal{B}}_{j+1}(\mathbf{B})$.

If $j=0$, then $\mathbf{B}=B(x_{0},4)$ and $\rho_1(x)\le1$. Since $x\in B(x_{0},2)$ and $4\e\le\tfrac12$, both $B(x,\tfrac12\rho_1(x))$ and $B(x,4\e\rho_1(x))$ are contained in $B(x_{0},\tfrac52)\subset\mathbf{B}$.

Assume that $j\ge1$, and write $\rho_{j+1}(x)=r_{j+1}(y)$ for some $y\in E$. Then
\begin{displaymath}
    |x_{0}-y|<2\e\rho_j(x_{0})+\tfrac12r_{j+1}(y).
\end{displaymath}
If $2\e\rho_j(x_{0})\le r_{j+1}(y)$, then
\begin{displaymath}
    |x_{0}-y|<\tfrac32r_{j+1}(y)
    \le\tfrac32\e r_j(y)<\tfrac12r_j(y).
\end{displaymath}
Thus $y$ is admissible in the definition of $\rho_j(x_{0})$, and hence $\rho_{j+1}(x)\le\e\rho_j(x_{0})$. In the complementary case, $\rho_{j+1}(x)\le2\e\rho_j(x_{0})$. Therefore, in either case,
\begin{displaymath}
    \rho_{j+1}(x)\le2\e\rho_j(x_{0}).
\end{displaymath}
Since $|x-x_{0}|\le2\e\rho_j(x_{0})$, it follows that
\begin{displaymath}
    B(x,\tfrac12\rho_{j+1}(x))
    \subset B(x_{0},3\e\rho_j(x_{0}))
    \subset\mathbf{B},
\end{displaymath}
and, using $\e\le\tfrac14$,
\begin{displaymath}
    |x-x_{0}|+4\e\rho_{j+1}(x)
    \le(2\e+8\e^2)\rho_j(x_{0})
    \le4\e\rho_j(x_{0}).
\end{displaymath}
This also gives $B(x,4\e\rho_{j+1}(x))\subset\mathbf{B}$.
\end{proof} 
The ball in $\mathcal{B}_0$ is contained in $B(0,5)$, so Lemma \ref{claim1} shows inductively that every ball in $\mathcal{B}_j$, and every ball in $\overline{\mathcal{B}}_{j+1}(\mathbf B)$ for $\mathbf B\in\mathcal B_j$, is contained in $B(0,5)$. It now follows from \eqref{form13} and \eqref{form12} that
 \begin{displaymath} \sum_{B \in \mathcal{B}_{j + 1}(\mathbf{B})} \mu(B) = \sum_{B \in \overline{\mathcal{B}}_{j + 1}(\mathbf{B})} \mu(8\e B) \leq c \sum_{B \in \overline{\mathcal{B}}_{j + 1}(\mathbf{B})} \mu(B) \leq 2c\mu(\mathbf{B}) = \tfrac{1}{2}\mu(\mathbf{B}), \end{displaymath} 
Summing over $\mathbf{B} \in \mathcal{B}_{j}$ proves (b), and completes the construction of the families $\mathcal{B}_{j}$, $0 \leq j \leq \mathfrak{s}$.

Finally, we may deduce from (a)-(b) that
\begin{displaymath} \mu(E) \leq \sum_{B \in \mathcal{B}_{\mathfrak{s}}} \mu(B) \leq \tfrac{1}{2} \sum_{B \in \mathcal{B}_{\mathfrak{s} - 1}} \mu(B) \leq \ldots \leq 2^{-\mathfrak{s}} \mu(B(0,5)) \leq D2^{-\mathfrak{s}}. \end{displaymath}
Recalling from \eqref{eq:singular-scale-subsequence} that $\mathfrak{s} \geq \e \overline{\mathfrak{s}}$, in particular $\mu(E) \leq D2^{-\e \overline{\mathfrak{s}}}$, and we have shown that $\mu|_{B(0,1)}$ is $(\delta,\e,D)_{1}$-spread-out.
\end{proof}


\subsection{Applications to discretized sum-product estimates}
\label{s:discretized-sum-product}

The discretized sum-product problem has received considerable attention since the works of Katz and Tao \cite{KT01}, who introduced the problem, and Bourgain \cite{Bourgain03}, who proved the first nontrivial result:

\begin{theorem}[Bourgain]\label{thm:Bourgain-SP}
For every $s\in (0,1)$ there are $\e,\kappa>0$, depending only on $s$, such that the following holds for $\delta\ll_s 1$.

Let $A\subset [1,2]$ be $(\delta,s,\delta^{-\varepsilon})$-Frostman. Then
\begin{equation} \label{eq:discretized-SP}
    \max\{|A+A|_{\delta}, |A\cdot A|_{\delta}\} \ge \delta^{-(s+\kappa)}.
\end{equation}
\end{theorem}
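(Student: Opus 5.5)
This is Bourgain's theorem, recorded here for context; the plan below deduces it from a discretized projection estimate via the Elekes/Plünnecke reduction.

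\textbf{Reduction to projections.} Argue by contradiction. Suppose that $|A+A|_\delta$ and $|A\cdot A|_\delta$ are both at most $\delta^{-s-\kappa}$. By Lemma \ref{PRIneq} and its multiplicative analogue, obtained by taking logarithms on $[1,2]$, we may pass to a subset $A'\subset A$ with $|A'|_\delta\gtrsim|A|_\delta$. On $A'$ we get bounds of the form $|A'+cA'|_\delta\le\delta^{-s-O(\kappa)}$ for all $c$ in a large subset $C\subset A/A$ or $C\subset A$. In projection language, write $\pi_c(x,y)=x+cy$. Then $\pi_c(A'\times A')$ has $\delta$-covering number at most $\delta^{-s-O(\kappa)}$ for $c\in C$. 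Here $A'\times A'$ is $(\delta,2s,\delta^{-2\e})$-Frostman, and $C$ inherits an $(\delta,s,\delta^{-O(\e)})$-Frostman bound from $A$.

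\textbf{Projection estimate.} The goal is to show that some $c\in C$ satisfies $|\pi_c(G)|_\delta\ge\delta^{-s-\kappa'}$ for every large $G\subset A'\times A'$. This contradicts the previous paragraph once $\kappa\ll\kappa'$. It is an ABC-type statement with $k=d-k=1$: in Proposition \ref{prop:ABC}, take $A'$ as the ``$A$'' set with $\alpha=s<1$, take $A'$ as the Frostman set ``$B$'', and take $C$ as the direction set. Proposition \ref{prop:ABC} would give exactly the needed gain over $|A'|$. The problem is hypothesis (C): $C$ must be spread-out, whereas we only know that it is Frostman. A Frostman set can alternate between full branching and no branching, and then roughly half of its scales are singular.

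\textbf{Main obstacle: Frostman but not spread-out directions.} To handle this, first uniformize $A$ using Lemma \ref{lem:uniformization} at a scale $\Delta=\delta^{1/m}$. This gives a branching function $f$ of slope on average $s\in(0,1)$. By a Lipschitz-function pigeonholing, find a block of scales $[\Delta^{a},\Delta^{b}]$ of proportional length on which $f$ is roughly linear with slope strictly between $0$ and $1$. On such a block, the rescaled pieces of $A$, and hence of $C$, have no singular scales beyond a small proportion, and so they are spread-out. On the remaining blocks the sets look either like intervals or like points, and there the trivial bounds suffice.

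Next, apply Proposition \ref{prop:ABC} to the zoomed configurations inside the good block. Combine this with the trivial bounds on the other blocks using a multiscale product inequality for covering numbers of $\pi_c(A'\times A')$, in the spirit of Proposition \ref{prop:induction-on-scales}. This yields a total gain $\delta^{-\kappa'}$ with $\kappa'$ depending only on $s$.

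The delicate point is making the scale selection compatible across $A'$, $C$ and the sets $G$. It must also be robust under the passage to subsets caused by the Plünnecke refinements. I would first try to fix one uniform $A$ at the start and derive $C$ from it, so that all three objects share the branching function $f$.
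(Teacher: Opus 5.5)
The paper gives no proof of this theorem; it is quoted from \cite{Bourgain03} as background. Judged on its own, your plan has a genuine gap, and it sits exactly at the step you flag as the main obstacle.

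\textbf{The missing spread-out block.} You claim that Lipschitz pigeonholing produces a block $[\Delta^{a},\Delta^{b}]$ of proportional length on which the rescaled pieces of $A$, and hence of $C$, are spread-out. This is false. Rough linearity of $f$ on a block only controls the total growth across the block. Spread-outness (Definition \ref{def:spread-out-cube-family}) instead requires that, at every point, all but a tiny fraction of the dyadic scales be non-singular.

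For a concrete counterexample, fix $\Delta_0\ll_\gamma 1$ and let $A$ be uniform at scale $\Delta_0$ with branching alternating between $\Delta_0^{-1}$ and $1$. At each non-branching level, place the unique child in the middle of its parent. Then:
\begin{itemize}
\item $A$ is $(\delta,\tfrac12,O_{\Delta_0}(1))$-Frostman.
\item Its branching function is essentially linear of slope $\tfrac12$ on every block of length $\gg\log(1/\Delta_0)$.
\item At every point, about half of the dyadic scales in any such block are $\gamma$-singular. This is inherited by every subset, so by any $C\subset A$ or any rescaling of one.
\end{itemize}
A block of $L$ dyadic scales is therefore spread-out only with constant $\approx 2^{\gamma L/2}$, i.e.\ (scale)$^{-\gamma/2}$. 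This is far outside hypothesis (C) of Proposition \ref{prop:ABC}, where $\chi\ll\gamma$. Indeed every family is trivially $(\delta,\gamma,\delta^{-\gamma})$-spread-out, so (C) has content only for small $\chi$.

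If you instead cut the scales at the alternation period, every block has slope $0$ or $1$. There your ``trivial bounds'' give no gain at all. So your multiscale product yields $\kappa=0$ for these sets, even though Bourgain's theorem holds for them. The gain must come from an estimate that tolerates merely Frostman direction sets, and the spread-out machinery of this paper does not supply one. That is why the paper only quotes Bourgain's theorem and restricts its own bound, Theorem \ref{thm:discretized-SP}, to Ahlfors regular sets, where Proposition \ref{prop:uniform-is-spread-out} supplies spread-outness.

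\textbf{Wrong scale pairing.} There is a second, independent problem. A $\Delta^{b}$-piece of $A'\times A'$ projected at resolution $\Delta^{a}$ sees $c$ only at resolution $\Delta^{a-b}$. So the direction set of the zoomed problem is the coarse part of $C$ at scales $[\Delta^{a-b},1]$; compare the $\tau$-coarsenings $\overline\Theta_j$ in the proof of Theorem \ref{thm:AhlforsFurstenberg}. It is not the piece of $C$ living in $[\Delta^{a},\Delta^{b}]$. Good behaviour of $A$ (or $C$) on a block therefore says nothing about the directions that actually enter. The only way to zoom directions is a shear-and-rescale slab argument as in the proof of Proposition \ref{prop:AhlforsProjections-induction}, and that needs its own input.

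\textbf{Minor point.} Your reduction step needs a Katz--Tao type lemma, i.e.\ a large $A'\subset A$ with $|A'A'\pm A'A'|_\delta$ small, rather than just Lemma \ref{PRIneq} and a multiplicative analogue. It only gives $|A'|_\delta\ge\delta^{O(\kappa)}|A|_\delta$, not $|A'|_\delta\gtrsim|A|_\delta$. This part is standard and harmless.
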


It has been an important problem to understand the dependence of $\kappa$ on $s$; in Bourgain's proof it was tiny and left uncomputed. Quantitative bounds were provided in \cite{MR4283564, RaMa23, 23arXiv230808819R}. As a corollary of their resolution of the Furstenberg set problem, Ren and Wang \cite{23arXiv230808819R} showed that one can take $\kappa=(1/4)s$ for $s\in (0,2/3)$ (in the case $s\in (2/3,1)$ the sharp upper bound $\kappa=(1-s)/2$ was already known). 

The problem has also attracted attention in the special case in which $A$ has additional structure, and in particular in the Ahlfors-regular case. O'Regan \cite[Theorem~1.7]{ORegan25} proved that, for an $s$-Ahlfors regular set $A\subset[1,2]$ with $s\in(0,1/2]$, one has $\max\{|A+A|_\delta,|A\cdot A|_\delta\}\ge\delta^{-4s/3+\eta}$ for every $\eta>0$ and all sufficiently small $\delta$.  As a corollary of Theorem \ref{thm:AhlforsFurstenberg}, we obtain an exponent $3/2$ when the dimension of $A$ is at most $1/2$.

\begin{theorem} \label{thm:discretized-SP}
Let $s\in(0,1/2]$ and $C\ge1$, and suppose that $A\subset[1,2]$ is the support of an $s$-Ahlfors regular probability measure with regularity constant $C$. For every $\eta>0$ and all sufficiently small dyadic $\delta$, depending only on $s,C,\eta$, we have
\begin{equation} \label{eq:discretized-SP-strong}
    |A+A|_{\delta}\cdot |A\cdot A|_{\delta} \ge \delta^{\eta}\cdot |A|_{\delta}^3.
\end{equation}
In particular,
\begin{displaymath}
    \max\{|A+A|_{\delta}, |A\cdot A|_{\delta}\} \ge \delta^{\eta} \cdot |A|_{\delta}^{3/2}.
\end{displaymath}
\end{theorem}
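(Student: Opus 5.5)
The plan is to run Elekes' argument, with the Furstenberg set estimate Theorem \ref{thm:AhlforsFurstenberg} (for $d=2$, $k=1$) replacing Szemerédi--Trotter. Discretize: let $A_\delta:=A^{(\delta)}$, so $|A_\delta|\approx\delta^{-s}$ up to the constant $C$. Put $P:=A_\delta\times A_\delta\subset[1,2]^2$. Since $A$ carries an $s$-Ahlfors regular measure, the product measure is $2s$-Ahlfors regular with constant $C^2$. Hence $P$ is $(\delta,2s,\delta^{-\e'})$-regular once $\delta$ is small, after rescaling into $[-1,1]^2$. For the shadings, to each point $p=(a,b)\in P$ attach the family of lines
\begin{displaymath} \ell_{c}(p):=\{(x,y):y=c^{-1}(x-a)\cdot\text{(suitable normalization)}+b\},\qquad c\in A_\delta. \end{displaymath}
The normalization is chosen so that $\ell$ passes through $p$ with slope $\theta=\theta(c)$, where $c\mapsto\theta(c)$ is a bi-Lipschitz map on $[1,2]$ (for instance $\theta=1/c$). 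The Elekes incidence setup is then as follows. The line of slope $1/c$ through $(a,b)$ is $y=(x-a)/c+b$; in the dual version, the set of lines $\{y=c(x-b)\}$ is indexed by $(c,cb)$. Each such line contains the points $(a+b, a\cdot c)$, where $a$ ranges over $A$. Concretely, I use the standard form: the points are $(A+A)\times(A\cdot A)$, and the lines are $\ell_{b,c}=\{y=c(x-b)\}$ with $b,c\in A$. Each such line meets the point set in at least $|A|$ points $(a+b,ac)$.

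Working in the dual formulation of Theorem \ref{thm:AhlforsFurstenberg}, the "points" are $P=\{(b,c)\}\cong A\times A$, which is $2s$-regular. The shading at $(b,c)$ consists of the directions corresponding to the pencil through the dual point, indexed by $a\in A$. After point-line duality, the union of the shaded lines $\bigcup_{(b,c)}\{V_\theta+p\}$ corresponds, via the map $(b,c,a)\mapsto(a+b,ac)$, to a set of $\delta$-cubes. This set is contained, up to bi-Lipschitz distortion, in $(A+A)^{(\delta)}\times(A\cdot A)^{(\delta)}$, after an affine change of coordinates sending the parameters to the big cell $\mathbf A_{2,1}$. So I must check that the Furstenberg set $\mathcal F(P,\mathbf\Theta)$ satisfies
\begin{displaymath}|\mathcal F|_\delta\lesssim |A+A|_\delta|A\cdot A|_\delta.\end{displaymath}
The shading $\mathbf\Theta(p)$ is the image of $A_\delta$ under a bi-Lipschitz map depending smoothly on $p$, so $M\approx|A|_\delta$.

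For the spread-out hypothesis, Ahlfors regular measures with $s>0$ are uniformly perfect, so Proposition \ref{prop:uniform-is-spread-out} makes the measure on $A$ spread-out after rescaling. Bi-Lipschitz images preserve spread-outness up to changes in the constants, since in $\mathbf G_{2,1}\cong\R$ hyperplanes are points and Lipschitz maps preserve singular scales with bounded loss, in the same way as Lemma \ref{lem:spread-out-scaling}. Lemma \ref{lem:uniform-spread-out-discretization} then gives $(\delta,\e_0,\delta^{-\e'})$-spread-out shadings. Here $\e_0$ is fixed, and the main theorem is applied with $s$ replaced by this $\e_0$; its role there is only that of the spread-out exponent. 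Theorem \ref{thm:AhlforsFurstenberg} with $t=2s\le1=d-k$ gives
\begin{displaymath}|\mathcal F|_\delta\ge\delta^{\eta-2s}M\gtrsim\delta^{\eta}|A|_\delta^{3}.\end{displaymath}
Combining this with the upper bound gives \eqref{eq:discretized-SP-strong}; this is exactly where $s\le1/2$ is needed. The final assertion follows from $\max\{X,Y\}^2\ge XY$.

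The main obstacle is the bookkeeping in the duality step. The pencils must be genuine lines through points in the big-cell coordinates $(\theta,\rho_\theta(p))$. The map from $(b,c,a)$ to affine coordinates must be locally bi-Lipschitz, so that covering numbers change only by constants. I would verify this by writing the line through $(b,c)$ in the $(\theta,\beta)$-plane explicitly as $\beta=-b\theta+c\cdot(\text{stuff})$. After that, I would check that its intersection parameters are exactly $(a+b,ac)$ up to smooth reparametrization on $[1,2]$.
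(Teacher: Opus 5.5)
Your proposal is correct and is essentially the paper's proof: Elekes' argument in dual form, with a bi-Lipschitz image of $A\times A$ as the $2s$-regular point set, and reflected translates of $A$ as shadings made spread-out via Proposition \ref{prop:uniform-is-spread-out} and Lemma \ref{lem:uniform-spread-out-discretization}. As in the paper, the Furstenberg set lies in an $O(\delta)$-neighbourhood of $-(A+A)\times(A\cdot A)$, and Theorem \ref{thm:AhlforsFurstenberg} is applied with $t=2s\le1$. Concretely, the dual point of $\ell_{b,c}$ is $(c,-cb)$ (not $(b,c)$ itself) with pencil slopes $-(a+b)$. The paper's choice $p=(b/4,(4-ab)/16)$, $\mathbf\Theta(p)\subset\mathcal D_\delta(-(a+A)/4)$ is exactly this, after relabeling and the linear rescaling needed to put the points in the unit square and the slopes in $[-1,1]$.
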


\begin{proof}
We use Elekes' argument \cite{MR1472816} in dual coordinates. All implicit constants in this proof may depend on $s,C$.

Let
\begin{displaymath}
    P:=z\mathcal D_\delta\bigl(\{(b/4,(4-ab)/16):a,b\in A\}\bigr).
\end{displaymath}
The map $(a,b)\mapsto(b/4,(4-ab)/16)$ is bi-Lipschitz on $[1,2]^2$. Hence $P\subset[0,1]^2$ is $(\delta,2s,O(1))$-regular and $|P|\sim\delta^{-2s}$.

For every $p\in P$, choose $a,b\in A$ such that $p= z(\mathcal D_\delta(b/4,(4-ab)/16))$. The Ahlfors regular measures on $-(a+A)/4$ satisfy the hypotheses of Proposition \ref{prop:uniform-is-spread-out}, with constants depending only on $s,C$. Lemma \ref{lem:uniform-spread-out-discretization} therefore yields a family
\begin{displaymath}
    \mathbf\Theta(p)\subset\mathcal D_\delta(-(a+A)/4)
\end{displaymath}
which is $(\delta,\beta,\lessapprox_\delta1)_{2,1}$-spread-out for some $\beta>0$ depending only on $s,C$, and has cardinality $\gtrapprox_\delta\delta^{-s}$. Since $|A|_\delta\sim\delta^{-s}$, Lemma \ref{lem:spread-out-subset} allows us to restrict these families to a common cardinality $M\gtrapprox_\delta\delta^{-s}$, with the same spread-out bound.

If $\theta\in z\mathbf\Theta(p)$, choose $c\in A$ such that $|\theta+(a+c)/4|\le\delta$. The affine coordinates of $V_\theta+p$ differ by $O(\delta)$ from
\begin{displaymath}
    \left(-\frac{a+c}{4},\frac{4-ab}{16}+\frac{(a+c)b}{16}\right)
    =\left(-\frac{a+c}{4},\frac{4+bc}{16}\right).
\end{displaymath}
It follows that
\begin{displaymath}
    |\mathcal F(P,\mathbf\Theta)|_\delta
    \lesssim |A+A|_\delta\,|A\cdot A|_\delta.
\end{displaymath}
Theorem \ref{thm:AhlforsFurstenberg}, applied with $t=2s\le1$ and $\eta/2$ in place of $\eta$, implies
\begin{displaymath}
    |A+A|_\delta\,|A\cdot A|_\delta
    \gtrsim\delta^{\eta/2-2s}M
    \gtrapprox_\delta\delta^{\eta/2}|A|_\delta^3.
\end{displaymath}
For sufficiently small $\delta$, this proves \eqref{eq:discretized-SP-strong}.
\end{proof}

\begin{remark}
It is perhaps noteworthy that this is the first time in which a bound in the discretized sum-product problem has exceeded the best known bound in the discrete problem. Solymosi \cite{Solymosi09} proved that for any finite $A\subset\R$,
\[
    \max\{|A+A|,|A\cdot A|\}\gtrsim |A|^{4/3-o_{|A|\to\infty}(1)}.
\]
This has been improved by small amounts, with the current best exponent, due to Cushman \cite{Cushman25SumProduct}, equaling $4/3+10/4407$.
\end{remark}

\begin{remark}
    Another consequence of \eqref{eq:discretized-SP-strong} is
    \[
        |A+A|_\delta\lesssim|A|_\delta
        \quad\Longrightarrow\quad
        |A\cdot A|_\delta\gtrsim\delta^\eta|A|_\delta^2,
    \]
    and, with sums and products interchanged,
    \[
        |A\cdot A|_\delta\lesssim|A|_\delta
        \quad\Longrightarrow\quad
        |A+A|_\delta\gtrsim\delta^\eta|A|_\delta^2.
    \]
    This also shows that \eqref{eq:discretized-SP-strong} is sharp when $A$ is the $\delta$-neighborhood of a suitable arithmetic or geometric progression. In the Ahlfors-regular case, the ``few products implies maximal sums'' direction was already established by O'Regan \cite{ORegan25}. The other direction seems to be new even in the Ahlfors-regular case, but Demeter and O'Regan proved a general ``few sums implies maximal quotients'' result in \cite[Theorem 1.12]{DemeterORegan25} under only an $s$-Frostman assumption, $s\le 1/2$.
\end{remark}

A related sum-product problem concerns the size of $A+A\cdot A$ (see the work of Roche-Newton, Ruzsa, Shen and Shkredov \cite{RocheNewtonRuzsaShenShkredov19} for recent progress in the finite case of this problem). For Ahlfors regular sets, Theorem \ref{thm:projection-ADR} gives the following dimension bound.
\begin{corollary}
Let $A\subset[1,2]$ be a nonempty $s$-Ahlfors regular set with $s\in(0,1)$. Then
\begin{displaymath}
    \hdim(A+A\cdot A)\ge\min\{2s,1\}.
\end{displaymath}
\end{corollary}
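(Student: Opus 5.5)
We want $\hdim(A+A\cdot A)\ge\min\{2s,1\}$. The plan is to realize $A+A\cdot A$ as a union of projections of the product set $E:=A\times A\subset\R^2$, and then apply Theorem \ref{thm:projection-ADR} with a direction measure built from $A$ itself. Concretely, for $c\in A$ consider the linear map $(x,y)\mapsto x+cy$. Up to a bi-Lipschitz reparametrization of the image line, this is the orthogonal projection $\Pi_{V}$ onto a line $V=V_c\in\G(2,1)$. Its image of $E$ is $A+cA\subset A+A\cdot A$. Hence
\begin{displaymath}
\hdim(A+A\cdot A)\ge\sup_{c\in A}\hdim\Pi_{V_c}(E).
\end{displaymath}
It therefore suffices to find one $c\in A$ with $\hdim\Pi_{V_c}(E)\ge\min\{2s,1\}$.

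First, check the hypotheses on $E$. Since $A$ is $s$-Ahlfors regular, the product of the regular measure on $A$ with itself is $2s$-Ahlfors regular on $E=A\times A$. Consequently $\hdim E=\pdim E=2s$, using that Ahlfors regular sets have equal Hausdorff and packing (indeed box) dimension.

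Second, build the direction measure. Let $\mu$ be the $s$-Ahlfors regular measure on $A$. Push it forward under the smooth map $c\mapsto V_c$, which is a diffeomorphism from $[1,2]$ onto a compact arc of $\G(2,1)$ and hence bi-Lipschitz there. The image $\nu$ is still Ahlfors $s$-regular with $s>0$, and is therefore uniformly perfect, as noted in the introduction. Theorem \ref{thm:projection-ADR} then gives $\hdim\Pi_V(E)=\min\{2s,1\}$ for $\nu$-a.e. $V$. Since $\nu$ is nonzero, at least one such $V=V_c$ exists with $c\in\spt\mu=A$, which yields the bound.

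The only real subtlety is the hypothesis checks. One point is that the projection family is parametrized by $A$ itself, but Theorem \ref{thm:projection-ADR} allows an arbitrary uniformly perfect $\nu$, so this causes no difficulty. The other is that Ahlfors regularity of the set $A$ means $A$ is the support of a regular measure, and closedness may be needed so that $c\in\spt\mu$ actually lies in $A$. If $A$ is not closed, one can work with $\overline{A}$ and note that $\hdim$ and regularity are unaffected. Alternatively, one takes $\nu$-a.e. $c$, which lie in $A$ whenever $\mu(A^c)=0$.
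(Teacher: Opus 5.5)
Your proof is correct and is essentially the paper's argument. The paper also applies Theorem \ref{thm:projection-ADR} to $A\times A$, with the Ahlfors regular measure on $A$ pushed to the directions $\operatorname{span}(1,c)$, and concludes from $A+cA\subset A+A\cdot A$.

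Your closing aside about non-closed $A$ is unnecessary. In the paper, an Ahlfors regular set is by definition the support of a regular measure, so it is closed. The first fallback would also not work: passing to $\overline{A}$ bounds the larger set $\overline{A}+\overline{A}\cdot\overline{A}$ rather than $A+A\cdot A$, and taking the closure can change Hausdorff dimension.
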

\begin{proof}
The set $A\times A$ has Hausdorff and packing dimensions $2s$. Apply Theorem \ref{thm:projection-ADR} to $A\times A$ and the Ahlfors regular measure on the directions $\operatorname{span}(1,a)$, $a\in A$. For almost every $a$ with respect to the Ahlfors regular measure on $A$, we obtain $\hdim(A+aA)=\min\{2s,1\}$. Since $A+aA\subset A+A\cdot A$, the conclusion follows.
\end{proof}

\subsection{Triple sums for subsets of the parabola}
\label{s:triple-sums-parabola}

In this section, we write $\mathbb{P} := \{(x,x^{2}) : x \in [-1,1]\}$. Because the addition map $(x,y)\mapsto x+y$ is bi-Lipschitz on disjoint arcs of $\mathbb P$ (see Lemma \ref{lemma:cordoba} below), understanding sums $A+B$ or additive energies $\mathcal E(A,B)$ for $A,B\subset\mathbb{P}$ is straightforward. The situation is much more subtle for triple sums.

For finite subsets of $\mathbb P$, third and higher additive energies have been studied in \cite{BourgainDemeter15,Mudgal23DiameterFree} with a recent breakthrough achieved in \cite{CushmanDemeterWu26}; the corresponding fractal problem was considered in \cite{Orponen23Parabola,MR4956377}, see also \cite{2025arXiv251115899D,MR4869897} for closely related work. The first author \cite[Corollary~1.3]{Orponen23Parabola} proved that every closed $A\subset\mathbb P$ with $0<\hdim(A)<1$ satisfies $\hdim(A+A+A)\ge 2\hdim(A)+\varepsilon(\hdim(A))$ for some $\varepsilon(\hdim(A)) > 0$. In \cite{MR4956377} the result was quantified with $\varepsilon(\hdim A) = \min\{\hdim A/2,1-\hdim A\}$. As a particular case of their results, Algom and the first author \cite{AlgomOrponen26Flattening} established $L^2$-flattening of Ahlfors-regular sets on the parabola, which yields additive growth through iterated self-convolutions.

It follows from \cite[Theorem 1.1]{CushmanDemeterWu26} and Cauchy--Schwarz that, for every $\varepsilon>0$ and every finite $\Lambda\subset\mathbb P$, we have $|\Lambda+\Lambda+\Lambda|\gtrsim_{\varepsilon}|\Lambda|^{3-\varepsilon}$. A natural fractal version of this problem is to ask whether $\hdim(A+A+A)\ge 3\hdim(A)$ when $\hdim(A)$ is not too large. As an application of the Furstenberg set estimate in Theorem \ref{thm:Furstenberg-continuous}, we answer this question in the affirmative when $A$ is Ahlfors regular.

\begin{theorem}\label{thm:tripleSums} Let $s \in (0,1]$, $t \in [0,1]$. Let $A \subset \mathbb{P}$ be a non-empty $s$-Ahlfors regular set, and let $E \subset \R^{2}$ be a Borel set with $\hdim E=\pdim E=t$. Then $\hdim (A + E) \ge s+t$.
    In particular, if $s\le 1/2$, then $\hdim (A + A + A) =3s$.
\end{theorem}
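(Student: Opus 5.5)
We derive the first claim from Theorem \ref{thm:Furstenberg-continuous} by point--line duality, using tangent lines of the parabola. For $a=(x,x^2)\in\mathbb P$ the tangent line at $a$ has slope $2x$. Given $e\in E$, consider the translated arc $e+A$. We want a family of lines through points of a $t$-dimensional set whose union, with $s$-dimensional families at each point, sits inside a set of dimension controlled by $\hdim(A+E)$.

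The cleanest route is the following. Associate to the point $a+e\in A+E$ (with $a=(x,x^2)$) the line through $a+e$ with slope $2x$. This is the tangent line to $e+\mathbb P$ at $a+e$. In coordinates $y=mX+b$ this line is
\begin{displaymath}
    \ell_{e,x}:\; m=2x,\quad b=e_2+x^2-2x(e_1+x)=e_2-2xe_1-x^2 .
\end{displaymath}
In the dual picture, fix $e$ and let $x$ range over the projection $A_1$ of $A$ to the first coordinate. The lines $\ell_{e,x}$ are all the lines $\{V_\theta+ p_e\}$ through a single point only after a change of variables, so instead we use the Furstenberg statement in its primal form.

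The family $\mathcal L_e:=\{\ell_{e,x}:x\in A_1\}$ is the set of tangent lines to $e+\mathbb P$ at points of $e+A$. Every such line passes through a point of $A+E$, so this family gives the wrong direction of inequality. The correct duality is the one that makes $\hdim(A+E)$ the Furstenberg set. Under the map $(m,b)\mapsto$ point $(m/2,-b)$, which is affine, the line $\ell_{e,x}$ goes to $(x,\,x^2+2xe_1-e_2)$. For fixed $x$ and varying $e$, this point is an affine image of $e$ composed with a shear depending on $x$. Equivalently, for fixed $x$ the set $\{(x,x^2+2xe_1-e_2):e\in E\}$ is a projection.

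Cleaner still: the set $S:=\{(x,e):(x,x^2)\in A,\ e\in E\}$ maps onto $\{(x+e_1,x^2+e_2)\}=A+E$. We therefore view $A+E$ as a union of translates $e+A$, $e\in E$, and lift each point $z=a+e$ to the pair $(z,\text{tangent slope }2x)$, i.e.\ to a line through $z$. The lifted set of lines $\mathcal L:=\{\ell_{e,x}\}$ satisfies
\begin{displaymath}
    \hdim\mathcal L\le\hdim(A+E)+\hdim(\text{slopes})\le\hdim(A+E)+1 .
\end{displaymath}
This upper bound is lossy, so the primal form should be used instead.

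The primal form: through each $e\in E$, consider the lines $e+\ell$ with $\ell\in\Theta_e:=\{\text{lines through }0\text{ in direction } (1,x):x\in A_1\}$. This is not directly $A+E$ either. The correct encoding is to parametrize a line $\{(u,w):w=mu+b\}$ by the point $(m,b)$. Then
\begin{displaymath}
    a+e\;\longleftrightarrow\;(m,b)=\bigl(x+e_1,\;x^2+e_2\bigr),
\end{displaymath}
so $A+E$ itself is the image of the family of lines $\{(x+e_1,x^2+e_2)\}$. For fixed $e$, the dual lines of points $(m,b)$ with $(m,b)-e\in\mathbb P$ all pass through a common point, namely the dual of the parabola's tangency structure. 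Concretely, the points $(x+e_1,x^2+e_2)$ correspond to lines $w=(x+e_1)u+x^2+e_2$. For fixed $x$ these vary affinely in $e$. For fixed $e$, after the birational change $(m,b)\mapsto(m,b-m^2/4)$ (locally bi-Lipschitz on bounded sets), the point becomes $(x+e_1,\,e_2-e_1^2/4-xe_1/2+3x^2/4)$, which is still curved.

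I therefore commit to the following concrete plan. Apply the bi-Lipschitz map $\Phi(m,b):=(m,\,b-m^2)$ locally. Then $\Phi(a+e)=(x+e_1,\;e_2-e_1^2-2xe_1)$. For fixed $e$, this traces the line through $q_e:=\Phi(e)=(e_1,e_2-e_1^2)$ with slope $-2e_1$, evaluated at parameters $x\in A_1$. So $\Phi(A+E)$ is a union over $q\in\Phi(E)$ of $s$-dimensional subsets of the single line through $q$. That gives only $s+t$ when these lines are ``transversal'', which is exactly the primal Furstenberg setting with one line per point, and it is not what Theorem \ref{thm:Furstenberg-continuous} handles.

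So I switch roles. Interpret each point $z=a+e\in A+E$ as a line via $z=(m,b)\mapsto\{w=mu+b\}$. For fixed $a=(x,x^2)$, the lines $\{w=(x+e_1)u+x^2+e_2\}$, $e\in E$, all pass through the point $u=-x$ only after subtracting $e_1u+e_2$, which fails. Fixing $e$ instead, the lines $w=(x+e_1)u+x^2+e_2$, $x\in A_1$, are the tangent lines of the parabola $w=-u^2/4+e_1u+e_2-\dots$, and they are not concurrent.

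I therefore adopt a genuinely different approach: estimate $\hdim(A+E)$ through projections. Write $A+E\supset\bigcup_{e}(e+A)$, and use Theorem \ref{thm:Furstenberg-continuous} with the roles of point set $E'$ and direction measure chosen as follows. Split $A$ into two disjoint arcs $A',A''$ of positive measure, and write $A'+E=\bigcup_{a\in A'}(a+E)$. For $z\in A'+E$, the set of $a\in A'$ with $z\in a+E$ gives, after Lemma \ref{lemma:cordoba}-type bi-Lipschitz control, the tangent direction at $a$. Each point $e\in E$ contributes the curve $e+A'$, and locally $e+A'$ is a bi-Lipschitz image of the line piece through $e+a_0$ in the tangent direction. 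The main obstacle is exactly converting curves into lines. I would handle it by a $\delta$-discretized argument: at scale $\delta$ and within $\sqrt\delta$-arcs of $\mathbb P$, the parabola is $\delta$-close to its tangent line. So the local pieces of $e+A$ are $\delta$-tubes in directions ranging over an Ahlfors $s$-regular (hence, by Proposition \ref{prop:uniform-is-spread-out}, spread-out) set. Combining this with the multiscale Proposition \ref{prop:induction-on-scales} at scale ratio $\sqrt\delta$, and using Theorem \ref{thm:AhlforsFurstenberg} at each step, yields $\hdim(A+E)\ge s+\min\{t,1\}=s+t$.

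For the second claim with $s\le1/2$, apply the first claim with $E=A+A$. By Lemma \ref{lemma:cordoba}, $A+A$ contains a bi-Lipschitz image of $A'\times A''$, which is Ahlfors $2s$-regular, so $\hdim=\pdim=2s\le1$. The first claim then gives $\hdim(A+A+A)\ge3s$. The upper bound $\hdim(A+A+A)\le3s$ holds because $A+A+A$ is a Lipschitz image of $A^3$, and for Ahlfors regular $A$ the upper box dimension of $A^3$ is $3s$.
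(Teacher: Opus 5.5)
\textbf{Second claim.} Your reduction is fine. For the packing bound, either note that $A+A$ is a Lipschitz image of $A\times A$, so $\pdim(A+A)\le 2s$, or take $E=A'+A''$ directly, as the paper does.

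\textbf{First claim.} There is a genuine gap here: the argument you finally commit to is not a proof, and the idea you discarded along the way is exactly the one that works.

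Your map $\Phi(m,b)=(m,b-m^2)$ sends $e+\mathbb{P}$ onto the line $\ell_e=\{w=-2e_1u+e_2+e_1^2\}$. The map $e\mapsto\ell_e$ (slope $-2e_1$, intercept $e_2+e_1^2$) is locally bi-Lipschitz with polynomial inverse. So $\Phi(A+E)$ contains, on every line of a family of Hausdorff and packing dimension $t$, the Ahlfors $s$-regular set $\Phi(e+A)$.

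You dismissed this as ``one line per point'' and outside Theorem \ref{thm:Furstenberg-continuous}. In fact it is the primal Furstenberg configuration, and point--line duality converts it exactly into that theorem's hypotheses:
\begin{itemize}
\item the lines $\ell_e$ become a point set with $\hdim=\pdim=t$;
\item the points of $\Phi(e+A)\subset\ell_e$ become a pencil of lines through the dual point $\ell_e^*$, whose slopes form a bi-Lipschitz image of $e_1+A_1$ (with $A_1$ the first-coordinate projection of $A$), and hence carry an Ahlfors $s$-regular, thus uniformly perfect and $s$-Frostman, measure;
\item the union of these pencils is a reflection of $\Phi(A+E)$.
\end{itemize}
Hence $\hdim(A+E)\ge\min\{t,1\}+s=s+t$. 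This is the paper's proof, with $\Psi(x,y)=(x,x^2-y)$, which is your $\Phi$ followed by a reflection.

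The fallback does not replace this. Local straightness of $e+\mathbb{P}$ on $\sqrt\delta$-arcs only gives $\delta\times\sqrt\delta$ rectangles, one curved family per point $e$. That is a primal curve configuration. Theorem \ref{thm:AhlforsFurstenberg} and Proposition \ref{prop:induction-on-scales} concern dual plane configurations $(P,\mathbf\Theta)$, with planes passing through common points. You never say:
\begin{itemize}
\item which configuration you would use;
\item how $|A+E|_\delta$ controls its Furstenberg set;
\item how curvature at coarser scales is handled ``at each step'';
\item how $\delta$-covering bounds would become a Hausdorff dimension bound when only $\hdim E=\pdim E=t$ is assumed.
\end{itemize}
The missing step is to dualize the straightened picture you had already found.
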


The following is very standard; we include the short argument for completeness.
\begin{lemma}\label{lemma:cordoba} Assume that $I_{1},I_{2} \subset [-1,1]$ are intervals such that $\max I_{1} \leq \min I_{2} - c$ for some $c > 0$. Write $\mathbb{P}_{j} := \{(x,x^{2}) : x \in I_{j}\}$ for $j \in \{1,2\}$. Then $(z_{1},z_{2}) \mapsto z_{1} + z_{2}$ is $O(c^{-1})$-bi-Lipschitz on $\mathbb{P}_{1} \times \mathbb{P}_{2}$. 
\end{lemma}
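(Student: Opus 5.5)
The plan is to show directly that the differential of the sum map is uniformly invertible, and then upgrade this to a global bi-Lipschitz statement using the one-dimensionality of the arcs. Parametrize $\mathbb P_1\times\mathbb P_2$ by $(x,y)\in I_1\times I_2$ and set
\begin{displaymath}
    F(x,y):=(x,x^2)+(y,y^2)=(x+y,\,x^2+y^2).
\end{displaymath}
The parametrization $x\mapsto(x,x^2)$ is bi-Lipschitz on $[-1,1]$ with absolute constants, since its derivative has norm between $1$ and $\sqrt5$. It therefore suffices to prove that
\begin{displaymath}
    |F(x,y)-F(x',y')|\sim_c |(x,y)-(x',y')|, \qquad (x,y),(x',y')\in I_1\times I_2,
\end{displaymath}
with lower constant $\gtrsim c$. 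The upper Lipschitz bound is trivial, because $F$ is a polynomial on $[-1,1]^2$.

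For the lower bound, I would compute the differences exactly instead of going through the Jacobian and an inverse function argument. Write $u:=x-x'$ and $v:=y-y'$. Then
\begin{displaymath}
    F(x,y)-F(x',y')=\bigl(u+v,\; u(x+x')+v(y+y')\bigr).
\end{displaymath}
Subtracting $(x+x')$ times the first coordinate from the second gives
\begin{displaymath}
    v\bigl((y+y')-(x+x')\bigr).
\end{displaymath}
The separation hypothesis gives $y-x\ge c$ and $y'-x'\ge c$, so $(y+y')-(x+x')\ge 2c$.

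From this identity I will extract the two coordinate bounds. First,
\begin{displaymath}
    |v|\le \tfrac{1}{2c}\bigl(|F_2-F_2'|+2|F_1-F_1'|\bigr)\lesssim c^{-1}|F-F'|,
\end{displaymath}
where $F_1,F_2$ and $F_1',F_2'$ denote the coordinates of $F(x,y)$ and $F(x',y')$. Second, $|u|\le |u+v|+|v|\lesssim c^{-1}|F-F'|$, using $c\le 2$. Together these give $|(u,v)|\lesssim c^{-1}|F(x,y)-F(x',y')|$, which is the claimed $O(c^{-1})$-bi-Lipschitz bound.

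I do not expect any step to be a genuine obstacle. The only point needing care is using the \emph{linear} separation $y+y'-x-x'\ge 2c$, which holds globally on the product, instead of a local Jacobian bound. This is what makes the estimate global, and injectivity comes for free from it.
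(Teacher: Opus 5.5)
Your proof is correct and is essentially the paper's argument. The paper writes the same exact difference as the matrix $\begin{pmatrix}1&1\\ x_1+y_1&x_2+y_2\end{pmatrix}$ applied to $(x_1-y_1,x_2-y_2)$, observes that the determinant is at least $2c$ and the entries are bounded by $2$, and inverts; your row elimination does exactly this inversion by hand. Your opening sentence about differentials and the one-dimensionality of the arcs does not describe what you actually do, but the argument you carry out is complete.
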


\begin{proof}
For $x_j,y_j\in I_j$, $j=1,2$, we have
\begin{displaymath}
    \begin{pmatrix}
        x_1+x_2-y_1-y_2\\
        x_1^2+x_2^2-y_1^2-y_2^2
    \end{pmatrix}
    =
    \begin{pmatrix}
        1&1\\
        x_1+y_1&x_2+y_2
    \end{pmatrix}
    \begin{pmatrix}x_1-y_1\\x_2-y_2\end{pmatrix}.
\end{displaymath}
The matrix entries are bounded by $2$, and its determinant is at least $2c$. Its inverse therefore has norm $O(c^{-1})$. Since $x\mapsto(x,x^2)$ is bi-Lipschitz on $[-1,1]$, the claimed bound follows.
\end{proof} 

\begin{proof}[Proof of Theorem \ref{thm:tripleSums}]
We follow the reduction from parabola sums to Furstenberg sets used by the first author, Puliatti, and Py\"or\"al\"a \cite[\S1.1]{MR4956377}.

Set $F_{0}:=A+E$. For every $z\in E$, the intersection $F_{0}\cap(z+\mathbb P)$ contains the nonempty Ahlfors $s$-regular set $z+A$.

Consider the locally bi-Lipschitz invertible map $\Psi(x,y):=(x,x^{2}-y)$. For $z\in\R^{2}$, the set $\Psi(z+\mathbb P)$ lies on a line $\ell_z\in\A(2,1)$; see \cite[Proposition 2.6(b)]{MR4956377}. We have
\begin{displaymath}
    \Psi(F_0)\cap\ell_z\supset\Psi(F_0\cap(z+\mathbb P))\supset\Psi(z+A),
    \qquad z\in E.
\end{displaymath}
The map $z\mapsto\ell_z$ is locally bi-Lipschitz, so the family $\{\ell_z:z\in E\}$ has Hausdorff and packing dimension $t$. After point-line duality, Theorem \ref{thm:Furstenberg-continuous} implies that $\hdim(A+E)\ge s+t$.

For the final assertion, assume that $s\le1/2$. We can find subsets $A_1,A_2\subset A$ at positive distance from each other which are still $s$-Ahlfors regular. Lemma \ref{lemma:cordoba} shows that addition is $O(1)$-bi-Lipschitz on $A_1\times A_2$, with the constant depending on the separation of the sets. Thus 
\begin{displaymath}
    \hdim(A_1+A_2)=\pdim(A_1+A_2)=2s.
\end{displaymath}
 The first assertion, applied with $E=A_1+A_2$ and $t=2s$ (this is where we use $s\le 1/2$), yields $\hdim(A+A+A)\ge 3s$. Since addition on $A^3$ is Lipschitz and $\hdim(A^3)=3s$, the reverse inequality also holds.
\end{proof}


\def\cprime{$'$}

\end{document}